\newcommand{\nc}{\newcommand}
\nc{\one}{\mbox{\bf 1}}
\nc{\invtensor}{\underset{\leftarrow}{\otimes}}
\nc{\const}{\operatorname{const}}
\nc{\ad}{\operatorname{ad}}
\nc{\tr}{\operatorname{tr}}
\nc{\tp}{\operatorname{top}}
\nc{\rank}{\operatorname{rank}}
\nc{\corank}{\operatorname{corank}}
\nc{\codim}{\operatorname{codim}}
\nc{\sdim}{\operatorname{sdim}}
\nc{\mult}{\operatorname{mult}}
\nc{\spn}{\operatorname{span}}
\nc{\Sym}{\operatorname{Sym}}
\nc{\sym}{\operatorname{sym}}
\nc{\id}{\operatorname{id}}
\nc{\Id}{\operatorname{Id}}
\nc{\Ree}{\operatorname{Re}}
\nc{\htt}{\operatorname{ht}}
\nc{\sch}{\operatorname{sch}}
\nc{\str}{\operatorname{str}}
\nc{\Ker}{\operatorname{Ker}}
\nc{\rker}{\operatorname{rKer}}
\nc{\im}{\operatorname{Im}}
\nc{\osp}{\mathfrak{osp}}
\nc{\sgn}{\operatorname{sgn}}
\nc{\F}{\operatorname{F}}
\nc{\Mod}{\operatorname{Mod}}
\nc{\Mat}{\operatorname{Mat}}
\nc{\Soc}{\operatorname{Soc}}
\nc{\Iso}{\operatorname{Iso}}
\nc{\Hom}{\operatorname{Hom}}
\nc{\End}{\operatorname{End}}
\nc{\supp}{\operatorname{supp}}
\nc{\Card}{\operatorname{Card}}
\nc{\Ann}{\operatorname{Ann}}
\nc{\Ind}{\operatorname{Ind}}
\nc{\Coind}{\operatorname{Coind}}
\nc{\wt}{\operatorname{wt}}
\nc{\ch}{\operatorname{ch}}
\nc{\Stab}{\operatorname{Stab}}
\nc{\Sch}{{\mathcal S}\mbox{\em ch}}
\nc{\Irr}{\operatorname{Irr}}
\nc{\Spec}{\operatorname{Spec}}
\nc{\Prim}{\operatorname{Prim}}
\nc{\Aut}{\operatorname{Aut}}
\nc{\Ext}{\operatorname{Ext}}
\nc{\Fract}{\operatorname{Fract}}
\nc{\gr}{\operatorname{gr}}
\nc{\deff}{\operatorname{def}}
\nc{\HC}{\operatorname{HC}}
\nc{\red}{\operatorname{red}}
\nc{\wdchi}{\widetilde{\chi}}
\nc{\wdH}{\widetilde{H}}
\nc{\wdN}{\widetilde{N}}
\nc{\wdM}{\widetilde{M}}
\nc{\wdO}{\widetilde{O}}
\nc{\wdR}{\widetilde{R}}
\nc{\wdS}{\widetilde{S}}
\nc{\wdV}{\widetilde{V}}
\nc{\wdC}{\widetilde{C}}
\nc{\Obj}{\operatorname{Obj}}
\nc{\Dglie}{\operatorname{{\mathcal D}glie}}
\nc{\Fin}{\operatorname{{\mathcal F}in}}
\nc{\Adm}{\operatorname{\mathcal{A}dm}}
\nc{\Sg}{{\cS(\fg)}}
\nc{\Shg}{{\cS(\fhg)}}
\nc{\Ug}{{\cU(\fg)}}
\nc{\Uhg}{{\cU(\fhg)}}
\nc{\Sh}{{\cS(\fh)}}
\nc{\Uh}{{\cU(\fh)}}
\nc{\Uhh}{{\cU(\fhh)}}
\nc{\Zg}{{{\mathcal{Z}}(\fg)}}
\nc{\Vir}{{\mathcal{V}ir}}
\nc{\NS}{{\mathcal{N}S}}
\nc{\tZg}{{\widetilde{\mathcal Z}({\mathfrak g})}}
\nc{\Zk}{{\mathcal Z}({\mathfrak k})}
\nc{\Up}{{\mathcal U}({\mathfrak p})}
\nc{\Ah}{{\mathcal A}({\mathfrak h})}
\nc{\Ag}{{\mathcal A}({\mathfrak g})}
\nc{\Ap}{{\mathcal A}({\mathfrak p})}
\nc{\Zp}{{\mathcal Z}({\mathfrak p})}
\nc{\cR}{\mathcal R}
\nc{\cS}{\mathcal S}
\nc{\cT}{\mathcal{T}}
\nc{\cY}{\mathcal Y}
\nc{\cA}{\mathcal A}
\nc{\cB}{\mathcal B}
\nc{\cD}{\mathcal D}
\nc{\cE}{\mathcal E}
\nc{\cU}{\mathcal U}
\nc{\cH}{\mathcal H}
\nc{\cM}{\mathcal M}
\nc{\cL}{\mathcal L}
\nc{\cV}{\mathcal V}
\nc{\cF}{\mathcal F}
\nc{\fg}{\mathfrak g}
\nc{\fo}{\mathfrak o}
\nc{\CO}{\mathcal O}
\nc{\CR}{\mathcal R}
\nc{\Cl}{\mathcal {C}\ell}
\nc{\cW}{\mathcal{W}}
\nc{\bM}{\mathbf{M}}
\nc{\bL}{\mathbf{L}}
\nc{\bN}{\mathbf{N}}
\nc{\zq}{\mathpzc q}
\nc{\fl}{\mathfrak l}
\nc{\fn}{\mathfrak n}
\nc{\fm}{\mathfrak m}
\nc{\fp}{\mathfrak p}
\nc{\fh}{\mathfrak h}
\nc{\ft}{\mathfrak t}
\nc{\fk}{\mathfrak k}
\nc{\fb}{\mathfrak b}
\nc{\fs}{\mathfrak s}
\nc{\fB}{\mathfrak B}
\nc{\vareps}{\varepsilon}
\nc{\varesp}{\varepsilon}
\nc{\veps}{\varepsilon}
\nc{\fsl}{\mathfrak{sl}}
\nc{\fgl}{\mathfrak{gl}}
\nc{\fso}{\mathfrak{so}}
\nc{\fpq}{\mathfrak{pq}}
\nc{\fq}{\mathfrak q}
\nc{\fsq}{\mathfrak{sq}}
\nc{\fpsq}{\mathfrak{psq}}
\nc{\fhg}{\hat{\fg}}
\nc{\fhn}{\hat{\fn}}
\nc{\fhh}{\hat{\fh}}
\nc{\fhb}{\hat{\fb}}
\nc{\hrho}{\hat{\rho}}
\nc{\hsl}{\hat{\fsl}}
\nc{\fpo}{\mathfrak{po}}
\nc{\dirlim}{\underset{\rightarrow}{\lim}\,}
\nc{\nen}{\newenvironment}
\nc{\ol}{\overline}
\nc{\ul}{\underline}
\nc{\ra}{\rightarrow}
\nc{\lra}{\longrightarrow}
\nc{\Lra}{\Longrightarrow}
\nc{\bo}{\bar{1}}
\nc{\Lla}{\Longleftarrow}
\nc{\Llra}{\Longleftrightarrow}
\nc{\thla}{\twoheadleftarrow}
\nc{\lang}{(}
\nc{\rang}{)}
\nc{\hra}{\hookrightarrow}
\nc{\iso}{\overset{\sim}{\lra}}
\nc{\ssubset}{\underset{\not=}{\subset}}
\nc{\vac}{|0\rang}
\nc{\Thm}[1]{Theorem~\ref{#1}}
\nc{\Prop}[1]{Proposition~\ref{#1}}
\nc{\Lem}[1]{Lemma~\ref{#1}}
\nc{\Cor}[1]{Corollary~\ref{#1}}
\nc{\Conj}[1]{Conjecture~\ref{#1}}
\nc{\Claim}[1]{Claim~\ref{#1}}
\nc{\Defn}[1]{Definition~\ref{#1}}
\nc{\Exa}[1]{Example~\ref{#1}}
\nc{\Rem}[1]{Remark~\ref{#1}}
\nc{\Note}[1]{Note~\ref{#1}}
\nc{\Quest}[1]{Question~\ref{#1}}
\nc{\Hyp}[1]{Hypoth\`ese~\ref{#1}}
\begin{document}

\setcounter{section}{0}
\setcounter{tocdepth}{1}

\title[Characters of (relatively) integrable modules over Lie superalgebras]
{Characters of (relatively) integrable modules over  affine Lie superalgebras}

\author{Maria Gorelik~$^\dag$ }

\address[]{Dept. of Mathematics, The Weizmann Institute of Science,
Rehovot 76100, Israel}
\email{maria.gorelik@weizmann.ac.il}
\thanks{$^\dag$
Supported in part by BSF Grant No. 711623.}

\author{ Victor Kac~$^\ddag$}

\address{Dept. of Mathematics, 2-178, Massachusetts Institute of Technology,
Cambridge, MA 02139-4307, USA}
\email{kac@math.mit.edu}
\thanks{$^\ddag$ Supported in part by  Simons fellowship.}

\maketitle

\begin{abstract}
In the paper we consider the problem of computation of characters 
of relatively integrable irreducible highest weight modules $L$
over  finite-dimensional basic Lie superalgebras and
over affine Lie superalgebras $\fg$. The problems consists of two parts.
First, it is the reduction of the problem to the $\ol{\fg}$-module
$F(L)$, where $\ol{\fg}$ is the associated to $L$ integral
Lie superalgebra and $F(L)$ is an integrable irreducible highest
weight $\ol{\fg}$-module. Second, it is the computation
of characters of integrable highest weight modules. There is a general conjecture concerning
the first part, which we check in many cases. As for the second part, we prove
in many cases the KW-character formula, provided that the KW-condition
holds, including almost all finite-dimensional $\fg$-modules when $\fg$ is basic, 
and all maximally atypical non-critical integrable $\fg$-modules 
when $\fg$ is affine with 
non-zero dual Coxeter number.
\end{abstract}

\tableofcontents

\section{Introduction}
\label{intro}
Given a symmetrizable  Kac-Moody  algebra $\fg$ with a Cartan subalgebra
$\fh$ and an irreducible non-critical highest weight $\fg$-module 
$L=L(\lambda)$, one constructs the associated integral Kac-Moody 
algebra $\ol{\fg}^{\lambda}$ as follows.
Let $\Delta_{re}\subset\fh^*$ be the set of real roots of $\fg$
and let 
$$\Delta_{re}(\lambda)=\{\alpha\in\Delta_{re}|\ 
2(\lambda+\rho,\alpha)/(\alpha,\alpha)\in\mathbb{Z}\}$$
be the set of integral real roots. Then $\Delta_{re}(\lambda)$
is the set of real roots of a Kac-Moody  algebra $\ol{\fg}^{\lambda}$
with the same Cartan subalgebra $\fh$. An important result
of representation theory is the following relation between the
highest weight $\fg$-module $L$ and the (non-critical) 
highest weight $\ol{\fg}^{\lambda}$-module 
$\ol{L}=\ol{L}(\lambda+\rho-\ol{\rho})$:
\begin{equation}\label{1}
Re^{\rho}\ch L(\lambda)=\ol{R} e^{\ol{\rho}}\ch \ol{L}(\lambda+\rho-\ol{\rho}),
\end{equation}
where $R$ and $\ol{R}$ denote the Weyl denominators, and $\rho$ and $\ol{\rho}$
denote the Weyl vectors (see~\cite{F},\cite{KT1},\cite{KT2} and references there).

In the case when the $\ol{\fg}^{\lambda}$-module $\ol{L}(\lambda+\rho-\ol{\rho})$
is integrable, its character is given by the Weyl-Kac character 
formula~\cite{K1.5}, hence~(\ref{1}) gives an explicit formula 
for $\ch L(\lambda)$.

A $\fg$-module is called {\em relatively integrable} if the 
 $\ol{\fg}^{\lambda}$-module $\ol{L}(\lambda+\rho-\ol{\rho})$
is integrable; and it is called {\em admissible} if, in addition,
the $\mathbb{Q}$-span of the set of roots of  $\ol{\fg}^{\lambda}$
coincides with $\mathbb{Q}\Delta$. In particular, if $\lambda=\ol{\rho}-\rho$,
we obtain from~(\ref{1}) that the character is given by a product:
\begin{equation}\label{2}
\ch L(\ol{\rho}-\rho)=e^{\ol{\rho}-\rho}R^{-1}\ol{R}.
\end{equation}
For example, if $\fg$ is an affine Lie algebra with symmetric Cartan matrix,
then there exist admissible $\lambda$ of rational level $k$,
 provided that 
$k+h^{\vee}\geq h^{\vee}/u$, where $h^{\vee}$ is the dual Coxeter number
and $u$ is the denominator of $k$~\cite{KW2}. In this case the
character $\ch L(\lambda)$, suitably normalized, is a ratio of theta functions,
which is a modular function.

The main problems discussed in this paper are whether for 
a finite-dimensional 
basic Lie superalgebra or an associated (untwisted or twisted) affine Lie superalgebra $\fg$,
similar results hold. This is a class of Lie superalgebras, which is 
the closest to symmetrizable Kac-Moody  algebras. Of course, 
there are also Kac-Moody superalgebras, associated to symmetrizable generalized Cartan matrix, 
which have no real isotropic roots. For them the Weyl-Kac character formula
is proved in the same way as in Lie algebra case, and the relation~(\ref{1}) can be derived
using Enright functors~\cite{IK}. Therefore we exclude these superalgebras from consideration.

Given an irreducible highest weight $\fg$-module $L=L(\lambda)$,
we construct in~\S\ref{DeltaL} a natural generalization of the set
of integral real roots for the Lie superalgebra $\fg$ and the 
corresponding integral Lie superalgebra 
$\ol{\fg}^{\lambda}$ (which is also basic or affine, or a sum of such 
superalgebras), and we prove formula~(\ref{1}) in some cases. 
In particular, we prove 
formula~(\ref{2}), see~\Cor{cordimFL=1}.
We believe that~(\ref{1}) holds for arbitrary $\lambda$,
but there are not enough techniques to prove this, mainly due to the lack
of translation functors (used in~\cite{F} in the Lie algebra case),
and the lack of Enright functors, associated to isotropic simple roots
(\cite{KT1} and~\cite{KT2} use them in the Lie algebra case,
where all simple roots are non-isotropic). So far, in
full generality formula~(\ref{1}) is proved only 
for finite-dimensional $\fg$ of type $A(m,n)$, see~\cite{CMW}.

It would be natural to call an irreducible highest weight 
module $L$ over the Lie 
superalgebra $\fg$ integrable if it is integrable as a 
$\fg_{\ol{0}}$-module.
For finite-dimensional $\fg$, this definition is adequate, because
it is equivalent to $\dim L<\infty$. However, for affine $\fg$, such
non one-dimensional integrable irreducible highest weight modules exist
only if the Dynkin diagram of $\fg_{\ol{0}}$ is connected,
see~\cite{KW4}. For that reason, in the affine case, it is natural
to study {\em $\pi$-integrable} modules, where $\pi$ is a subset
of the set of simple roots $\Pi_0$ of $\fg_{\ol{0}}$, 
namely the $\fg$-modules $L$ for which all root spaces $\fg_{-\alpha}, \alpha\in\pi$,
act locally nilpotently. Roughly speaking, a $\fg$-module $L$ is called
integrable if it is $\pi$-integrable for the "largest possible" $\pi\subset \Pi_0$.
 The definition of ($\pi-$)relative integrability and admissibility of 
$L$ is the same as in the Lie algebra case.

For example, if $\Pi_0$ is of finite type or is connected, a $\fg$-module $L$
is called integrable if it is $\Pi_0$-integrable. Another example
is the non-twisted affine Lie superalgebra $\fg$, associated to a simple
finite-dimensional Lie superalgebra $\dot{\fg}$ with a non-degenerate Killing form
 $\kappa$ (then $\dot{\fg}$ is automatically basic); a 
 $\fg$-module $L$ is called  integrable
if it $\pi$-integrable for
 $$\pi=\{\alpha\in\Pi_0|\ \kappa(\alpha,\alpha)>0\}\cup \{\text{ simple roots
 for }\dot{\fg}_{\ol{0}}\};$$
 these modules are called principal integrable in~\cite{KW4}.
(This coincides with the definition of integrability of
modules over affine Lie algebras.) Note that the set of roots
$\alpha$ of $\fg$ with the property $\kappa(\alpha,\alpha)>0$
corresponds to the "largest" simple component of $\fg_{\ol{0}}$.
The choice of $\pi$ in the definition of integrability for an arbitrary (possibly twisted) affine Lie superalgebra is explained in~\S~\ref{defintegr}.

Let $L(\lambda)$ be either an integrable module over
a finite-dimensional basic Lie superalgebra $\fg$, or
an integrable module of non-critical level over an affine Lie superalgebra
$\fg$, and let $\Delta$ be the set of roots of $\fg$.
 Let $\Delta^{\perp}_{\lambda+\rho}$ be the set of roots of $\fg$,
orthogonal to $\lambda+\rho$, and choose a maximal linearly independent subset $S$ 
in $\Delta^{\perp}_{\lambda+\rho}$, which spans an isotropic subspace.
Assume that $S$ satisfies the KW-condition, namely $S$ can be included in a set of simple roots of $\Delta$.
A natural analogue of the Weyl-Kac character formula for integrable
highest weight modules over Kac-Moody algebras is the following KW-formula,
proposed in~\cite{KW3}, Section 3, for basic $\fg$, and in~\cite{KW4}, Section 9, for affine $\fg$:

\begin{equation}\label{3}
j_{\lambda} Re^{\rho}\ch L(\lambda)=\sum_{w\in W'} sgn(w) w\bigl(\frac{e^{\lambda+\rho}}
{\prod_{\beta\in S}(1+e^{-\beta})}\bigr)
\end{equation}
for some positive integer $j_{\lambda}$, where $W'$ is a certain subgroup
of the Weyl group $W$.

In Section~\ref{sectKW} of the present paper we prove the KW-formula with $j_{\lambda}=1$ and $W'=W(\pi)$, 
in the case when $\mathbb{Q}S$ is a maximal isotropic
subspace in $\mathbb{Q}\Delta$, provided that either $\fg$ is basic,
or $\fg$ is affine with $h^{\vee}\not=0$ or is equal to 
$A(n,n)^{(1)}$ (in the case
$\lambda=0$ this was proved in~\cite{Gfin}, \cite{Gaff}, \cite{R}).
Recall that $h^{\vee}$ is  the half of the eigenvalue of the Casimir 
operator on basic $\fg$ (which is $0$ iff $\kappa=0$), and it is called 
the {\em dual Coxeter number} of any twisted affine superalgebra,
associated to $\fg$.

Incidentally, using a different character formula for level $1$
$\mathfrak{osp}(M,N)^{(1)}$-modules, obtained in~\cite{KW4},
we thereby derive in~\S~\ref{newid} an interesting identity for mock theta functions.

In Section~\ref{sectfindim} we prove the KW-formula for all irreducible finite-dimensional modules (satisfying the KW-condition) 
over a basic Lie superalgebra $\fg$, except for a few cases when $\fg$ is of type 
$D(m,n)$ \footnote{After this paper has been completed, we learned about the paper of
S.-J. Cheng and J.-H. Kwong "Kac-Wakimoto character formula for orthosymplectic Lie superalgebra", where  the KW formula is established by different methods for all
irreducible finite-dimensional $\mathfrak{osp}(m,n)$-modules, satisfying the KW-condition,
including the cases we were unable to settle.}. This formula has been previously verified only for 
$\fg$ of type $A(m,n)$,
see~\cite{CHR}, using the earlier work~\cite{S0},\cite{S05},
\cite{B},\cite{SZ} on computation of finite-dimensional characters of $\fgl(m,n)$. 
There have been a number of earlier papers, 
where the KW-formula was verified in the case $\# S=1$, see~\cite{BL},\cite{VJ1},\cite{VJ2},\cite{VJHKT},
\cite{KW3}.

In Section~\ref{sectKW0} we prove a similar to~(\ref{3}) 
result for vacuum modules over affine superalgebras, 
for which $h^{\vee}=0$, 
see~(\ref{vacform}).

In Section~\ref{sect10} we study another extremal case, - when $S$ is empty.
Such $\fg$-modules $L=L(\lambda)$ are called typical and it was proven in~\cite{K1.5}
that the usual Weyl character formula holds for them
if $\dim\fg<\infty$ and $\dim L(\lambda)<\infty$.
We prove that formula~(\ref{3}) with $S=\emptyset$ holds if we let $W'$ be 
the "integral" subgroup $W(L)$ of $W$, provided that $L$ is relatively 
integrable. In other words, we prove that in this case both conjectural
formulas~(\ref{1}) and~(\ref{3}) hold.
We also verify~(\ref{1}) in a few other instances of typical  and of 
relatively integrable modules. As a corollary, we obtain the character formula
for all relatively integrable modules over $A(0,n)^{(1)}$ and $C(n)^{(1)}$.

Note, however, that while we expect that~(\ref{1})
always holds, and that~(\ref{3}) holds for all irreducible finite-dimensional 
modules (satisfying the KW-condition) over  basic  $\fg$ ( cf. Section~\ref{sectfindim}), we do not expect~(\ref{3}) to hold
in full generality, except when $\mathbb{Q}S$ is  a maximal isotropic subspace of $\mathbb{Q}\Delta$.

We also prove formulas~(\ref{1}) and~(\ref{3}) for all admissible $\fg$-modules when 
$\ol{\fg}^{\lambda}$ is of small rank, see Sections~\ref{sect8},
\ref{sect10} and~\ref{sectexa}. In particular, we obtain the character formula
for all relatively integrable  $B(1,1)^{(1)}$-modules, and also for 
all admissible $A(1,1)^{(1)}$-modules, associated to integrable
vacuum $A(1,1)^{(1)}$-modules.

Our proofs use the ideas from \cite{KT1}, \cite{KT2}, and
\cite{Gfin}, \cite{Gaff}.

In the present paper we prove all character formulas, used 
in~\cite{KW5},\cite{KW6}  to show that for 
the affine Lie superalgebras $\fg=A(1,0)^{(1)}$, 
$A(1,1)^{(1)}$, and $B(1,1)^{(1)}$, the numerators of the characters
of the atypical admissible highest  weight $\fg$-modules $L$ are mock theta functions,
in the sense that their non-holomorphic
modifications in the spirit of Zwegers~\cite{Z} 
span an $SL(2,\mathbb{Z})$-invariant space 
(when $\dim \ol{L}=1$, they are ``honest'' theta functions).

The results of this paper were reported at the conferences 
in Uppsala in September 2012, in
Rome in December 2012, in Taipei in May 2013, and in Rio de Janeiro in June 2013.

{\em Acknowledgment.}
We are grateful to M.~Wakimoto and V.~Serganova for helpful discussions.
We would like to thank   S.-J.~Cheng and Sh.~Reif for the correspondence.
 A part of this work was done during  our stay at IHES.
We grateful to this institution for stimulating atmosphere and excellent
working conditions.

\section{Preliminaries}
Throughout the paper the base field is $\mathbb{C}$ and 
$\fg$ is either a  basic Lie superalgebra with
a non-degenerate invariant bilinear form $(-,-)$, or the associated
to it and its finite order automorphism, preserving $(-,-)$, symmetrizable
affine Lie superalgebra. Recall that a  basic Lie superalgebra $\fg$ is either
a simple finite-dimensional Lie algebra
or one of the simple finite-dimensional Lie superalgebras $\fsl(m,n) (m\not=n), 
\mathfrak{psl}(n,n) (n\geq 2),
\mathfrak{osp}(m,n), D(2,1,a), F(4), G(3)$ or $\fgl(m,n)$ \cite{K1},
and that the associated affine Lie superalgebras are constructed 
in the same way as in~\cite{K2}. Recall that the Killing form $\kappa$ of $\fg$
is non-degenerate iff $\fg=\mathfrak{sl}(m,n) (m\not=n), \mathfrak{osp}(m,n)$
($m$ is odd, or $m$ is even and $n\not=m-2\geq 2$), $F(4), G(3)$;
this is equivalent to the property that the dual Coxeter number ($=\frac{1}{2}$
eigenvalue of the Casimir operator on $\fg$) is non-zero. 
Recall that the dual Coxeter number associated to the Killing form is always a non-negative rational 
number, see~\cite{KW3}.
This number is also called the dual Coxeter number of the associated affine superalegbra.

It is  well known that for any affine Lie superalgebra the dual Coxeter number is
equal to $(\rho,\delta)$, where $\rho$ is the Weyl vector and $\delta$ is the primitive
imaginary root.

The invariant bilinear form
extends from the basic Lie superalgebra to the associated 
affine Lie superalgebra
and is denoted again by $(-,-)$.

Recall that one often uses the following notations: $A(m,n)=\fsl(m+1,n+1)$
or $\fgl(m+1,n+1)$ for $m\not=n$, $A(n,n)=\mathfrak{psl}(n,n)$ or
$\mathfrak{gl}(n,n)$, $B(m,n)=\mathfrak{osp}(2m+1,2n)$, 
$C(n)=\mathfrak{osp}(2,2n)$,$D(m,n)=\mathfrak{osp}(2m,2n) (m>1)$.
The associated affine Lie superalgebra, twisted by an automorphism
of $\fg$ of order $r$, is  denoted by  $\fg^{(r)}$.
We will often write $\Delta=A(m,n)$ to indicate that $\Delta$ is 
the root system of  $A(m,n)$, or 
$\Pi=A(m,n)$ to indicate that $\Pi$ is a subset of simple roots for
the root system of type $A(m,n)$.

Recall that we get all affine Lie superalgebras by picking an automorphism
in each connected component of the group of  automorphisms of $\fg$.
(The affine Lie superalgebra depends only on this connected component;
however, unlike in the Lie algebra case, some of the affine Lie superalgebras
corresponding to different connected components may be isomorphic.)

Let $\fh$ be a Cartan subalgebra of $\fg$.
As in the Lie algebra case, $\fg$ has the root space decomposition
with respect to $\fh$. Let $\Delta\subset\fh^*$ be the set of roots.
Denote by $\Delta_{\ol{0}}$ and $\Delta_{\ol{1}}$ the 
subsets of even and odd roots.  The restriction of $(-,-)$ to
$\fh$ is non-degenerate, hence it induces a bilinear form on $\fh^*$.
One can show that $\Delta_{\ol{0}}$ is a  union of 
a finite number of root systems  of affine Lie algebras
with the same primitive imaginary root $\delta$.

The Weyl group $W$ of $\Delta$ is the subgroup of $GL(\fh^*)$,
generated by reflections $r_{\alpha}$ in non-isotropic roots
$\alpha$, where 
$r_{\alpha}\lambda=\lambda-2(\lambda,\alpha)\alpha/(\alpha,\alpha)$.
One knows that $W$ coincides with the Weyl group of $\Delta_{\ol{0}}$
and that $W\Delta=\Delta$ (cf.~\cite{K2}, Chapter 3).

Let $Q:=\mathbb{Z}\Delta$ and  $Q_{\ol{0}}:=\mathbb{Z}\Delta_{\ol{0}}$
be the corresponding root lattices.

\subsection{Subsets of positive roots in $\Delta$}
Given a real valued additive function $\chi$ on $Q$, which
is positive on  $\delta$ and does not vanish on elements of $\Delta$,
we have the corresponding subsets of positive roots $\Delta^+$
and $\Delta_{\ol{0}}^+$ (on which $\chi$ is positive).

For different choices of $\chi$ the subsets of even 
positive roots may be different,
but they can be transformed to each other by the Weyl group.
Throughout the paper we will fix one of them, $\Delta_{\ol{0}}^+$,
and consider only the subsets of positive roots $\Delta^+$ in $\Delta$,
which contain $\Delta_{\ol{0}}^+$. This choice fixes a triangular decomposition 
of $\fg$, compatible with the triangular decomposition of $\fg_{\ol{0}}$, 
corresponding to $\Delta_{\ol{0}}^+$.

Recall that, given a subset of positive roots $\Delta^+$ 
(containing $\Delta_{\ol{0}}^+$)
and an odd simple root $\beta\in\Delta^+$ with $(\beta,\beta)=0$, we can construct a new subset of positive roots 
(containing $\Delta_{\ol{0}}^+$) by an {\em odd reflection} $r_{\beta}$:
\begin{equation}\label{Drbeta}
r_{\beta}(\Delta^+)=(\Delta^+\setminus\{\beta\})\cup\{-\beta\}.
\end{equation}

\subsubsection{Proposition~\cite{S}}\label{propS}

(a) Any two subsets of positive roots in $\Delta$ (containing $\Delta_{\ol{0}}^+$)
can be obtained from each other by a finite sequence of odd reflections.

(b) For a simple root $\alpha\in\Delta_{\ol{0}}^+$ there exists
a subset of positive roots, 
for which $\alpha$ or $\frac{\alpha}{2}$ is a simple root.

\subsubsection{}\label{choicerho}
Let $\Delta^+$ be a subset of positive roots in $\Delta$,
and denote by $\Pi$ the subset of its simple roots; we shall often write
$\Delta^+=\Delta^+(\Pi)$. One has $r_{\beta}\Delta^+=\Delta^+(\Pi')$, where
$$\Pi':=\{\alpha\in\Pi|\ \alpha\not=\beta, (\alpha,\beta)=0\}\cup\{\alpha+\beta|
\alpha\in\Pi, (\alpha,\beta)\not=0\}\cup\{-\beta\}.$$

Then we can choose a Weyl vector $\rho_{\Pi}\in\fh^*$,
such that the following two properties hold for each subset $\Pi$ of simple roots:

(i) $2(\rho_{\Pi},\alpha)=(\alpha,\alpha),\ \text{ if } \alpha\in\Pi$;

(ii) $\rho_{r_{\beta}\Pi}=\rho_{\Pi}+\beta,\ \text{ if }\beta\in\Pi, (\beta,\beta)=0$.

Indeed, choose a set of positive roots and  let $\Pi$ be its subset of simple roots;
 pick an arbitrary Weyl vector $\rho_{\Pi}$, satisfying (i). Define (cf.~\Prop{propS} (a)):
 
 $$\rho_{r_{\beta_1}\ldots r_{\beta_s}\Pi}:=\rho_{\Pi}+\beta_1+\ldots+\beta_s.$$
 
 Then (ii) obviously holds and (i) is straightforward to check. Finally, this
 is well defined since the equality
 
\begin{equation}\label{rst} 
 r_{\beta_1}\ldots r_{\beta_s}\Pi=r_{\gamma_1}\ldots r_{\gamma_t}\Pi
 \end{equation}
 forces the equality $\beta_1+\ldots+\beta_s=\gamma_1+\ldots\gamma_t$.
Indeed, let us show by induction that
$$\Delta^+(r_{\beta_1}\ldots r_{\beta_s}\Pi)=(\Delta^+(\Pi)\setminus S)\cup (-S),$$
where $S$ is obtained from a multiset $\{\beta_1,\ldots,\beta_s\}$
by removing all pairs of  opposite roots (i.e., the pairs of the form $(\beta,-\beta)$).
Moreover, if $r_{\beta_1}\ldots r_{\beta_s}\Pi$
is defined, then $S$ is a set (each element appears once) and $S\subset \Delta^+(\Pi)$.
This can be proven by induction on $s$.
For $s=1$ this follows from~(\ref{Drbeta}); if $r_{\beta_{j+1}}\ldots r_{\beta_1}\Pi$ is defined,
then $\beta_{j+1}$ lies in $r_{\beta_{j}}\ldots r_{\beta_1}\Pi$ and, in particular,
in $\Delta^+(r_{\beta_{j}}\ldots r_{\beta_1}\Pi)=(\Delta^+(\Pi)\setminus S)\cup (-S)$
by the induction hypothesis. This means that $\beta_{j+1}\not\in S$ and
$$\Delta^+(r_{\beta_{j+1}}\ldots r_{\beta_1}\Pi)=
(((\Delta^+(\Pi)\setminus S)\cup (-S))\setminus\{\beta_{j+1}\})\cup\{-\beta_{j+1}\}.$$
If $-\beta_{j+1}\not\in S$, then
$$\Delta^+(r_{\beta_{j+1}}\ldots r_{\beta_1}\Pi)=
(\Delta^+(\Pi)\setminus (S\cup\{\beta_{j+1}\}))\cup (-(S\cup\{\beta_{j+1}\})),$$
and, if $-\beta_{j+1}\in S$, then
$$\Delta^+(r_{\beta_{j+1}}\ldots r_{\beta_1}\Pi)=
(\Delta^+(\Pi)\setminus (S\setminus\{-\beta_{j+1}\}))\cup (-(S\setminus\{-\beta_{j+1}\})),$$
as required.

Now~(\ref{rst}) implies
$$(\Delta^+(\Pi)\setminus S)\cup\{-S\}=(\Delta^+(\Pi)\setminus T)\cup\{-T\},$$
where $S$ (resp., $T$)  is the set obtained from the set $\{\beta_1,\ldots,\beta_s\}$
(resp., $\{\gamma_1,\ldots \gamma_t\}$) by  removing  all pairs of opposite roots.
This gives $S=T$ so $\beta_1+\ldots+\beta_s=\gamma_1+\ldots+\gamma_t$, as required.

\subsubsection{}\label{ordering}
Let $Q^+:=\mathbb{Z}_{\geq 0}\Delta^+ (=\mathbb{Z}_{\geq 0}\Pi)$
and $Q^+_{\ol{0}}:=\mathbb{Z}_{\geq 0}\Delta^+_{\ol{0}}$,
where $\mathbb{Z}_{\geq 0} S$ denote the semigroup 
of linear combinations of elements from $S$ with coefficients from $\mathbb{Z}_{\geq 0}$.
The set $Q^+$ depends on the set $\Delta^+$ of positive roots,
and to emphasize this dependence we shall write $Q^+=Q^+(\Pi)$,
but the set $Q^+_{\ol{0}}$ does not (since we fixed $\Delta^+_{\ol{0}}$).

We consider the corresponding partial orderings on $\fh^*$. The first one is
$\nu\geq \mu$ if $\nu-\mu\in Q^+_{\ol{0}}$, and the second one is 
$\nu\geq_{\Pi} \mu$ if $\nu-\mu\in \mathbb{Z}_{\geq 0}\Pi$ (it depends on $\Pi$).
Given $S\subset\fh^*$, an element $\lambda\in S$ is called {\em maximal}
(resp., $\Pi$-maximal) if $\lambda\geq \nu$ (resp., $\lambda\geq_{\Pi}\nu$)
for all $\nu\in S$.

Note that the partial ordering $\geq_{\Pi}$ can be extended to a total ordering $\geq_{\Pi,tot}$ as follows.
Fix a total ordering on the basis $\Pi\cup\{\Lambda_0\}=\{\gamma_1,\ldots,\gamma_m\}$
of $\fh^*$ and the lexicographic order on $\mathbb{C}$. Then $\nu=\sum_i a_i\gamma_i\geq_{\Pi,tot} \mu=\sum_i b_i\gamma_i$
if $(a_1,\ldots,a_m)\geq (b_1,\ldots, b_m)$ in the lexicographic order.

\subsection{The algebra $\cR$}\label{cRPi}
We introduce the algebra $\cR=\cR(\Pi)$ as in~\cite{Gfin},\cite{Gaff}.
The new part is~\S~\ref{equivcV} and~\S~\ref{poles}--\ref{sectXX'}.

Let ${\cV}$ be the vector space over $\mathbb{Q}$ of all formal
sums (possibly infinite) $Y=\sum_{\nu\in\fh^*} b_{\nu} e^{\nu}, b_{\nu}\in\mathbb{Q}$,
and define the support of $Y$ by
$$\supp Y:=\{\nu|\ b_{\nu}\not=0\}.$$

Let $\cR(\Pi)$ be the subspace of $\cV$, consisting of finite linear
combinations of the elements of the form
$\sum_{\nu\in \mathbb{Z}_{\geq 0}\Pi} b_{\nu} e^{\lambda-\nu}$, where $\lambda\in\fh^*$. The space 
$\cR(\Pi)$ has an obvious  structure of a unital commutative algebra, induced by $e^{\mu}e^{\nu}=e^{\mu+\nu}, e^0=1$.
Moreover, $\cR(\Pi)$ is a domain. This is clear since for any $Y\in\cR(\Pi)$,
its support $\supp Y$ has a unique maximal element in the total ordering $\geq_{\Pi,tot}$
and  the maximal element in $\supp YY'$ is  equal to the sum of maximal elements in
$\supp Y$ and in $\supp Y'$.

For each $\Pi$ define a topology on $\cV$ by the set of open neighbourhoods 
$\cV_{\lambda}$ , consisting of $Y\in\cV$ such that $\supp Y\leq_{\Pi,tot}\lambda$.
This makes $\cR(\Pi)$ a topological algebra. 
We idenfity the convergent infinite sums of
elements of $\cR(\Pi)$ with their limits.

\subsubsection{}\label{equivcV}
Let 
$$\cV_{fin}:=\{Y\in \cV| \supp Y\ \text{ is finite}\}.$$
This is a subalgebra of all algebras $\cR(\Pi)$. Hence
\begin{equation}\label{eq2}
\cV_{fin}\cR(\Pi)\subset \cR(\Pi).
\end{equation}

Note also that $\cV$ is a $\cV_{fin}$-module (but not an algebra).
Introduce the equivalence relation $\sim $ on $\cV$
by: $X\sim X'$ if there exists $Y\in\cV_{fin}$ such that
$XY=X'Y$. Note that if $X\in\cR(\Pi), X'\in\cR(\Pi')$ and $Y\in\cV_{fin}$, then
$XY=X'Y\in \cR(\Pi)\cap\cR(\Pi')$ by~(\ref{eq2}).
Since $\cR(\Pi)$ is a domain, the equivalence of its two elements $X,X'\in\cR(\Pi)$
implies $X=X'$.

\subsubsection{Action of the Weyl group}\label{RW}

The Weyl group $W$ acts on $\cV$ in the obvious way:
$$w(\sum_{\nu} b_{\nu} e^{\nu}):=\sum_{\nu} b_{\nu} e^{w\nu}.$$
Obviously, $\cV_{fin}$ is $W$-invariant, but $\cR=\cR(\Pi)$ is not.
For a subgroup $W'\subset W$ introduce the following subalgebra of the algebra $\cR$
$$\cR_{W'}:=\{Y\in\cR|\ wY\in \cR \text{ for
each }w\in W'\}.$$

\subsubsection{Infinite products}\label{infprod}
A  product of the form 
\begin{equation}\label{eq3}
Y=\prod_{\alpha\in A}
(1+a_{\alpha}e^{-\alpha})^{d_{\alpha}},
\end{equation}
 where $A\subset \Delta$ is such that
the set $A\setminus\Delta^+(\Pi)$ is finite, and 
 $a_{\alpha}\in \mathbb{Q},\
\ d_{\alpha}\in\mathbb{Z}_{\geq 0}$, can be naturally viewed
as an element of $\cR$. Since $\Delta^+(\Pi)\setminus\Delta^+(\Pi')$ is a finite
set (by~\Prop{propS} (a)), the element $Y$ lies in all algebras $\cR(\Pi')$.
Hence the set $\cY$ of all such products is a multiplicative subset 
of each of the algebras $\cR(\Pi)$.

For any $w\in W$ the infinite product
$$wY:=\prod_{\alpha\in A}(1+a_{\alpha}e^{-w\alpha})^{d_{\alpha}},$$
is again an infinite product of the above form, since
 the set $w\Delta_+\setminus \Delta_+=-(w\Delta_-\cap\Delta_+)$
is finite. Hence $\cY$ is a $W$-invariant multiplicative subset 
of $\cR_W$ (for each $\Pi$).

Consider the localization $\cR_{W'}[\cY^{-1}]$ of the algebra $\cR_{W'}$
by the  multiplicative subset $\cY$. Let $\varphi_{\Pi}: \cR_{W'}[\cY^{-1}]\to\cR$
be an algebra homomorphism, defined by expanding in a geometric progression
for $\beta\in\Delta^+, a\in\mathbb{Q}\setminus\{0\}$:

$$\varphi_{\Pi}\bigl(\frac{e^{\lambda}}{1+ae^{-\beta}}\bigr)=e^{\lambda}(1-ae^{-\beta}+a^2e^{-2\beta}-\ldots);\ \ \ 
\varphi_{\Pi}\bigl(\frac{e^{\lambda}}{1+ae^{\beta}}\bigr)=\frac{1}{a}\varphi_{\Pi}\bigl(\frac{e^{\lambda-\beta}}{1+a^{-1}e^{-\beta}}\bigr).
$$
This homomorphism defines an embedding of $\cR_{W'}[\cY^{-1}]$ in $\cR$.

\subsubsection{}
We extend the action of $W'$
from $\cR_{W'}$ to $\cR_{W'}[\cY^{-1}]$ by setting $w(Y^{-1}X):=(wY)^{-1}(wX)$
for each $X\in\cR_{W'}, Y\in\cY$. Let $Y$ be as in~(\ref{eq3}). Then

\begin{equation}\label{expanwY}
\supp Y\subset\lambda'-Q^+,\ \text{ where }
\lambda':=-\sum_{\alpha\in A\setminus\Delta_+: a_{\alpha}\not=0}
d_{\alpha}\alpha.
\end{equation}

\subsubsection{}\label{compex}
Let $W'$ be a subgroup of $W$.
For $Y\in\cR[\cY^{-1}]$ we say that {\em $Y$ is $W'$-invariant
(resp., $W'$-skew-invariant)} if $wY=Y$
(resp., $wY=\sgn(w)Y$) for each $w\in W'$.

Note that $Y:=\sum_{\mu} a_{\mu}e^{\mu}\in\cR$ is a $W'$-skew-invariant  element of $\cR_{W'}$ if and only if
$a_{w\mu}=(-1)^{sgn(w)} a_{\mu}$. In particular, if $Y$ is a $W'$-skew-invariant  element of $\cR_{W'}$, then
$W'\supp (Y)=\supp(Y)$.

We will use the following fact: if $Y\in\cR_{W'}$ is  $W'$-skew-invariant
and $\rho+\supp Y$ consists of  non-critical weights, then $\supp Y$
is the union of regular $W'$-orbits, where regularity means that the 
elements of this orbit
have trivial stabilizers. This is an immediate corollary of the fact that for
an affine Lie algebra the $W$-orbit of each  weight of non-zero level contains
either maximal or minimal element and 
the stabilizer of this element in $W$ is generated by simple reflections; as a result the stabilizer of any  weight of non-zero level is generated by reflections. Since for such a reflection $r_{\alpha}$ we have $r_{\alpha}Y=-Y$,
we have $r_{\alpha}\lambda=\lambda\ \Longrightarrow\ \lambda\not\in\supp Y$.

Let $Y:=\sum_{\mu} a_{\mu}e^{\mu}$ be any element of $\cR_{W'}$. We claim that if 
$\sum_{w\in W'} sgn (w)\, w(Y)\in\cR$, then $\sum_{w\in W'} sgn (w)\, w(Y)$ is 
a $W'$-skew-invariant  element of $\cR_{W'}$. Indeed, $\sum_{w\in W'} sgn (w)\, w(Y)=\sum_{\nu} b_{\nu}e^{\nu}$,
where $b_{\nu}=\sum_{w\in W'}sgn(w) a_{w\nu}$, so $b_{w\nu}=sgn(w) b_{\nu}$, as required.

\subsubsection{}\label{RPi}
For each set of simple roots $\Pi'$ introduce  the following
infinite products 
$$R_{\ol{0}}:=\prod_{\alpha\in\Delta_{\ol{0}}^+}
(1-e^{-\alpha}),\ \ \ \
R(\Pi')_{\ol{1}}:=\prod_{\alpha\in\Delta_+(\Pi')\cap\Delta_{\ol{1}}}
(1+e^{-\alpha}).$$
One readily sees (by~\Prop{propS} (i))  
that $R_{\ol{0}}, R(\Pi')_{\ol{1}}\in\cY$.
 We view $R_{\ol{0}}, R(\Pi')_{\ol{1}}$ and
$$R(\Pi'):=\frac{R_{\ol{0}}}{R(\Pi')_{\ol{1}}}$$
as elements in $\cR(\Pi)$, as in~\S~\ref{infprod}. One readily sees
that $R(\Pi')e^{\rho_{\Pi'}}\in\cR(\Pi)$ does not depend on $\Pi'$,
so we write simply $Re^{\rho}$ (keeping in mind that this is an element of $\cR(\Pi)$
for particular $\Pi$). By~\S~\ref{infprod}, all these elements are equivalent 
(for different $\Pi$).  Since $R_{\ol{0}}, R(\Pi')_{\ol{1}}\in\cY\subset \cR_W$, 
the element $Re^{\rho}$ lies in $\cR_W[\cY^{-1}]$.
Clearly, $r_{\alpha}(R(\Pi')e^{\rho_{\Pi'}})=-R(\Pi')e^{\rho_{\Pi'}}$ for a non-isotropic root
$\alpha\in\Pi'$. 
From~\Prop{propS} (ii), we conclude that $Re^{\rho}$ is a $W$-skew-invariant element of $\cR_W[\cY^{-1}]$.

If $\Pi$ is fixed, we denote by 
$R_{\ol{1}}:=R(\Pi)_{\ol{1}},\ R:=R(\Pi)$
the corresponding elements in $\cR$.

If $\fg$ is finite-dimensional, or one of affine Lie superalgebras 
$A(0,n)^{(1)}, B(0,n)^{(1)},C(n)^{(1)}$, or $A(0,2n-1)^{(2)},
C(n+1)^{(2)}, A(0,2n)^{(4)}$,
then  we can introduce a Weyl vector $\rho_{\ol{0}}$ satisfying
$(\rho_{\ol{0}},\alpha^{\vee})=1$ for each $\alpha\in\Pi_0$. Then $R_{\ol{0}}e^{\rho_{\ol{0}}}$
is a  $W$-skew-invariant element of $\cR_W$, so $R_{\ol{1}}e^{\rho_{\ol{0}}-\rho}$
is a  $W$-invariant element of $\cR_W$. 

If $\fg$ is an affine Lie superalgebra and $\Delta_{\ol{0}}$ is not connected,
then the Weyl vector $\rho_{\ol{0}}$ does not exists. 
However, for each connected component $\pi\subsetneq \Pi_0$ there exists
a Weyl vector $\rho_{\pi}$ satisfying
$(\rho_{\pi},\alpha^{\vee})=1$ for each $\alpha\in\pi$; note that
$R_{\ol{0}}e^{\rho_{\pi}}$ is a  $W(\pi)$-skew-invariant element of $\cR_{W(\pi)}$
and  $R_{\ol{1}}e^{\rho_{\ol{0}}-\rho}$
is a  $W(\pi)$-invariant element of $\cR_{W(\pi)}$.

\subsubsection{Poles}
\label{poles}
For an odd isotropic root $\alpha\in\Pi$ we say that $X\in \cR(\Pi)$ has a {\em pole of order }$k$ at $\alpha$
if $k$ is minimal such that
$$(1+e^{-\alpha})^k X\in\cR(r_{\alpha}\Pi).$$

For example, $R$ has a pole of order $1$ at each odd isotropic root $\alpha\in\Pi$.  
Another important example appears in the next lemma.

\subsubsection{}\label{expA}
Consider $Y\in\cY$ (see~\S~\ref{infprod}) of the form  
$Y:=\prod_{\beta\in J} (1+e^{-\beta})$, where $J\subset\Delta$ is a finite set and,
for each $\beta\in\Delta_{\ol{1}}$, $J\cap\{\pm\beta\}$ contains at most one element.
Recall conventions of~\S~\ref{infprod} and view $Y^{-1}$ as an element in 
$\cR(\Pi)$,
which we denote by $Y^{-1}(\Pi)$.

Let $W'\subset W$ be a subgroup generated by simple reflections and
let $\lambda\in \fh^*\setminus\{0\}$ be such that 
 the orbit $W'\lambda$ has a unique maximal element. For each subset $W''\subset W'$
we introduce the following notation
$$\cF_{W''}\bigl(\frac{e^{\lambda}}{\prod_{\beta\in J} (1+e^{-\beta})}\bigr):=
\sum_{w\in W''} \sgn (w)\, e^{w\lambda}\prod_{\beta\in J} (1+e^{-w\beta})^{-1}.$$
From the lemma below it follows that $\cF_{W''}\bigl(\frac{e^{\lambda}}{\prod_{\beta\in J} (1+e^{-\beta})}\bigr)$
lies in $\cR(\Pi)$ (i.e., the corresponding partial sums converge in $\cR(\Pi)$,
cf.~\S~\ref{cRPi})
 and that these elements are equivalent for different choices of $\Pi$.

\begin{lem}{lemexpA}
Let $W'\subset W$ be a subgroup generated by simple reflections;
for each $w\in W'$ fix $x_w\in\mathbb{Q}$.  Write
$W'=W'_f\times W'_{aff}$, where
$W'_f$ is finite and $W'_{aff}$ is the product of affine  Weyl groups.
Let $\lambda\in \fh^*\setminus\{0\}$ be such that 
$(\lambda,\alpha^{\vee})\in\mathbb{Z}$
for each $\alpha\in\Pi_0$ such that $r_{\alpha}\in W'$, and that
$(\lambda,\delta)/(\alpha,\alpha)\geq 0$ for each 
$\alpha\in\Pi_0$ such that $r_{\alpha}\in W'_{aff}$.

(a) For each $\Pi$ the element 
$$X(\Pi):=\sum_{w\in W'} x_w e^{w\lambda}(wY)^{-1}(\Pi)$$
lies in $\cR(\Pi)$.

(b) All elements $X(\Pi)$ are equivalent (with respect to the relation introduced in~\S~\ref{equivcV}).

(c) For each odd isotropic root $\alpha\in\Pi$ the element $X(\Pi)$ has a pole of order
at most one at $\alpha$. Moreover, $X(\Pi)$ has a pole of order zero at $\alpha$ if
$ W'(J)\cap\{\pm\alpha\}=\emptyset$ .
\end{lem}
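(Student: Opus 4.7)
The plan for (a) is to establish convergence of the formal series in $\cR(\Pi)$ by support analysis. Using the expansion rules of \S\ref{infprod}, each factor $(1+e^{-w\beta})^{-1}(\Pi)$ has support in $-\mathbb{Z}_{\geq 0}(w\beta)$ when $w\beta\in\Delta^+(\Pi)$ and in $\mathbb{Z}_{\geq 1}(w\beta)$ when $-w\beta\in\Delta^+(\Pi)$; in both cases the maximum of the support in the $\Pi$-order is $\leq_\Pi 0$. Hence $\supp\bigl(e^{w\lambda}(wY)^{-1}(\Pi)\bigr)\subset w\lambda-Q^+(\Pi)$, and convergence reduces to showing that for each $\mu\in\fh^*$ only finitely many $w\in W'$ satisfy $w\lambda\geq_\Pi\mu$. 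The integrality condition on $W'_f$-generating roots together with the non-negative level condition $(\lambda,\delta)/(\alpha,\alpha)\geq 0$ on $W'_{aff}$-generating roots place $\lambda$ in the closure of the fundamental chamber of $W'$ in the Tits cone, forcing the orbit $W'\lambda$ to have a unique maximal element and to be locally finite in the $\leq_\Pi$ order, from which convergence follows.

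For (b), by \Prop{propS}(a) it suffices to treat a single odd reflection, say $\Pi'=r_\beta\Pi$ with $\beta\in\Pi$ isotropic. Since $\Delta^+(\Pi)$ and $\Delta^+(\Pi')$ differ only at $\pm\beta$, the expansions of $(1+e^{-w\beta'})^{-1}$ in $\cR(\Pi)$ and in $\cR(\Pi')$ coincide whenever $w\beta'\notin\{\pm\beta\}$. The hypothesis that $J\cap\{\pm\gamma\}$ contains at most one element for each odd $\gamma$ ensures that for each $w$ at most one factor in $(wY)^{-1}$ can differ between the two expansions. A direct check shows $(1+e^{-\beta})(1+e^{-\beta})^{-1}(\Pi)=1=(1+e^{-\beta})(1+e^{-\beta})^{-1}(\Pi')$ and similarly $(1+e^{\beta})(1+e^{\beta})^{-1}(\Pi)=1=(1+e^{\beta})(1+e^{\beta})^{-1}(\Pi')$, so the finite element $Z:=(1+e^{-\beta})(1+e^{\beta})\in\cV_{fin}$ collapses every differing factor. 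Therefore $Z\cdot X(\Pi)=Z\cdot X(\Pi')$ term by term, hence as sums in $\cV$, which gives the equivalence $X(\Pi)\sim X(\Pi')$.

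For (c), I proceed by term-by-term comparison of the $\Pi$- and $r_\alpha\Pi$-expansions. When $W'(J)\cap\{\pm\alpha\}=\emptyset$, no factor in any $(wY)^{-1}$ has $w\beta'=\pm\alpha$, so $(wY)^{-1}(\Pi)=(wY)^{-1}(r_\alpha\Pi)$ in $\cV$ for each $w$; summing and invoking (a) in both $\cR(\Pi)$ and $\cR(r_\alpha\Pi)$ yields $X(\Pi)=X(r_\alpha\Pi)\in\cR(r_\alpha\Pi)$, proving pole order zero. In the general case, at most one factor of each $(wY)^{-1}$ can differ, namely a factor of the form $(1+e^{\mp\alpha})^{-1}$; since $(1+e^{-\alpha})(1+e^{-\alpha})^{-1}(\Pi)=1$ and $(1+e^{-\alpha})(1+e^{\alpha})^{-1}(\Pi)=e^{-\alpha}$ (and both identities hold identically in $\cR(r_\alpha\Pi)$), multiplication by $(1+e^{-\alpha})$ replaces the problematic factor by a finite expression that agrees with its $r_\alpha\Pi$-counterpart. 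Summing, $(1+e^{-\alpha})X(\Pi)=(1+e^{-\alpha})X(r_\alpha\Pi)\in\cR(r_\alpha\Pi)$, giving pole of order at most one.

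The main obstacle I anticipate is the Tits-cone finiteness step in (a), particularly the delicate boundary case $(\lambda,\delta)=0$, where the stabilizer of $\lambda$ in $W'_{aff}$ may be infinite; here one must extract from the integrality and non-negativity hypotheses the uniqueness of the maximal element of $W'\lambda$ and ensure that this forces the set $\{w\in W':w\lambda\geq_\Pi\mu\}$ to contribute only finitely many distinct exponentials to each weight, bypassing potential pathologies on the boundary of the Tits cone.
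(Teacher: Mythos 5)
Your overall strategy matches the paper's: part (a) by support analysis and finiteness of $\{w\in W' : w\lambda\geq_\Pi\mu\}$, parts (b) and (c) by multiplying by a suitable element of $\cV_{fin}$ to collapse the single factor that can differ between the two expansions. In (b) you multiply by $(1+e^{-\beta})(1+e^{\beta})$ while the paper uses only $1+e^{\gamma}$ (checking, when $\gamma\in wJ$, that $(1+e^{\gamma})(1+e^{-\gamma})^{-1}(\Pi)=e^{\gamma}$); both multipliers work, the paper's is a bit leaner. Part (c) as you state it is essentially the paper's.

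The genuine gap is in (a), and you have in fact pointed at it yourself. You invoke the Tits cone and assert local finiteness without actually producing the finiteness estimate. The paper makes this concrete: after replacing $\lambda$ by the maximal element of $W'\lambda$, it observes that $\Stab_{W'}\lambda$ is finite (a maximal weight in an affine Weyl group orbit has a stabilizer generated by simple reflections, which is proper unless the weight is $0$) and then cites Lemma~1.3.2 of [Gaff] for the height bound $\htt(\lambda-w\lambda)\geq \#\{j : (\lambda,\alpha_{i_j}^{\vee})\neq 0\}$ along a reduced word, from which finiteness of $H_r(\lambda)=\{w : \htt(\lambda-w\lambda)\leq r\}$ follows as in Lemma~2.4.1(i) of [Gaff]. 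Without this (or an equivalent) estimate, "from which convergence follows" is unsupported.

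Your worry about the boundary case $(\lambda,\delta)=0$ is legitimate and in fact exposes a mild informality in the paper's own proof: if $\lambda$ restricted to some affine component of $W'_{aff}$ is proportional to $\delta$ (or zero), that component lies entirely in $\Stab_{W'}\lambda$, which is then infinite and $H_0(\lambda)$ already infinite; if the restriction is level-$0$ but not proportional to $\delta$, the orbit has no maximal element in that component, and the first sentence of the paper's proof fails. The paper's own remark following the lemma notes that uniqueness of the maximal element needs strict positivity $(\lambda,\delta)/(\alpha,\alpha)>0$ on the affine part, which is stronger than the stated hypothesis. In all of the paper's applications the relevant weight is non-critical, so strict positivity holds and this boundary does not arise, but you are right that the statement as written is not fully covered by the proof given.
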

\begin{proof}
The assumptions on $\lambda$ imply that the orbit $W'\lambda$ contains a 
unique maximal element. We may (and will)  assume
that $\lambda$ is maximal in its orbit
that is $\lambda-w\lambda\in \mathbb{Z}_{\geq 0}\Pi_0$ for any $w\in W'$.

For a fixed set of simple roots $\Pi$, we denote by $\htt_{\Pi}\mu$,
the height of $\mu=\sum_{\alpha\in\Pi} k_{\alpha}\alpha$, the number $\htt_{\Pi}\mu=\sum_{\alpha\in\Pi} k_{\alpha}$.

Note that $X(\Pi)$ is an infinite sum of elements in $\cR(\Pi)$; 
for (a) we have to show that
the partial sums converge (cf.~\S~\ref{cRPi}). From~(\ref{expanwY}) we obtain
$$\supp \bigl(e^{w\lambda}(wY)^{-1}(\Pi)\bigr)
\subset w\lambda-\mathbb{Z}_{\geq 0}\Pi.$$
In order to prove that $X(\Pi)\in \cR(\Pi)$, it is enough to verify that
for each $r$ the set 
$$H_r(\lambda):=\{w\in W'| \htt(\lambda-w\lambda)\leq r\}$$ 
is finite. 

Recall that (see e.g.~\cite{K2}, Chapter 3) for an affine Lie algebra 
the stabilizer of any element $\nu$ which
is maximal  in its Weyl group orbit is generated by simple reflections; thus 
this stabilizer is either finite or coincides with $W$ itself 
(in this case $\nu=0$). Hence $\Stab_{W'}\lambda$ is finite.

Let $\alpha_1,\ldots,\alpha_r$ be the simple reflections ($\alpha_i\in\Pi_0$) which generate $W'$. 
Since $\lambda$ is  maximal in its
$W'$-orbit, the value $(\lambda,\alpha_i^{\vee})$ is a non-negative integer. An easy argument (see, for instance, Lemma 1.3.2 
in~\cite{Gaff}) shows that for each reduced expression $w=r_{\alpha_{i_1}}\ldots r_{\alpha_{i_r}}$
$$\htt (\lambda-w\lambda)\geq \#\{j|\  (\lambda,\alpha_{i_j}^{\vee})\not=0.\}$$
Now the fact that $H_r$ is finite follows as in Lemma 2.4.1 (i) in~\cite{Gaff}.

(b) Since any two subsets of positive roots  are connected by a finite
chain of odd reflections, it is enough to verify that 
\begin{equation}\label{equiv1}
X(\Pi)(1+e^{\gamma})=X(r_{\gamma}\Pi)(1+e^{\gamma})\in\cR(\Pi)\cap \cR(r_{\gamma}\Pi)
\end{equation}
Indeed,  by the assumption on $J$, the intersection $wJ\cap\{\pm\gamma\}$
contains at most one element. If the intersection is empty, then
$e^{w\lambda}(wY)^{-1}(\Pi)=e^{w\lambda}(wY)^{-1}(\Pi')\in\cR(\Pi)\cap\cR(\Pi')$, see~\S~\ref{infprod}.
If  the intersection $wJ\cap\{\pm\gamma\}$ is non-empty, then
$e^{w\lambda}(wY)^{-1}(\Pi)(1+e^{\gamma})=e^{\lambda'}(Y')^{-1}(\Pi)$, where
$$Y'=\prod_{\beta\in wJ\setminus\{\pm\gamma\}} (1+e^{-\beta})$$
and $\lambda'=w\lambda$ if $-\gamma\in wJ$, $\lambda'=w\lambda+\gamma$ if $\gamma\in wJ$.
Since $X(\Pi)\in\cR(\Pi), X(r_{\gamma}\Pi)\in \cR(r_{\gamma}\Pi)$,
the sum $X(\Pi)(1+e^{\gamma})$ (resp., $X(r_{\gamma}\Pi)$)
 is  a well-defined element in $\cR(\Pi)$ (resp., in $\cR(r_{\gamma}\Pi)$)
 and since all summands $(1+e^{\gamma})w\bigl(\frac{e^{\lambda}}{1+e^{-\beta}}\bigr)$
lie in $\cR(\Pi)\cap\cR(r_{\gamma}\Pi)$, we obtain~(\ref{equiv1}). This proves (b)
and (c).
\end{proof}

\begin{rem}{}
For $\lambda\not=0$ the conditions

(i) the orbit $W'\lambda$ has a unique maximal element
in the $\geq$-ordering; 

(ii) $\langle \lambda,\alpha^{\vee}\rangle\in\mathbb{Z}$ 
for each $\alpha\in\Pi_0$ such that $r_{\alpha}\in W'$;

are equivalent if $W'$ is finite. In the case when $W'$ is infinite,
 (i) is equivalent to (ii)+ (iii), where

(iii) $(\lambda',\delta)/(\alpha,\alpha)>0$ for each $\alpha\in\Pi_0$ such that
$r_{\alpha}\in W'_{aff}$.
\end{rem}

\subsubsection{}\label{sectXX'}
\begin{lem}{lemXX'}
Let $\alpha\in\Pi$ be an isotropic root. Assume that
$X=\sum x_{\mu}e^{\mu}\in\cR(\Pi), X'=\sum x'_{\mu}e^{\mu}\in\cR(r_{\alpha}\Pi)$ are equivalent and that
$X$ has a pole of order $\leq 1$ at $\alpha$.
Then for each $\mu\in\fh^*$ one has
\begin{equation}\label{eqXX'}
\forall k\in\mathbb{Z}\ \ \ x_{\mu+k\alpha}-x'_{\mu+k\alpha}=(-1)^k(x_{\mu}-x'_{\mu}).
\end{equation}
Moreover, $x_{\mu}=x'_{\mu}$ if $\bigl(\supp X\bigr)\cap\{\mu+\mathbb{Z}\alpha\}$ is finite.
\end{lem}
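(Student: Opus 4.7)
The plan is to introduce $Z:=(1+e^{-\alpha})(X-X')$ and show $Z=0$ by a domain argument, after which the asserted coefficient relation drops out by a one-line expansion.

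First I would verify $Z\in\cR(r_\alpha\Pi)$: by the pole-order hypothesis, $(1+e^{-\alpha})X\in\cR(r_\alpha\Pi)$, and since $X'$ already lies in $\cR(r_\alpha\Pi)$ so does $(1+e^{-\alpha})X'$, whence $Z\in\cR(r_\alpha\Pi)$. To get $Z=0$ I would invoke the equivalence: pick a nonzero $Y\in\cV_{fin}$ with $XY=X'Y$, so $(X-X')Y=0$ and hence $Z\cdot Y=0$. Since $\cV_{fin}\subset\cR(r_\alpha\Pi)$, this is an equation in the domain $\cR(r_\alpha\Pi)$, and $Y\ne 0$ forces $Z=0$.

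Next I would extract coefficients. Writing $0=Z=(X-X')+e^{-\alpha}(X-X')$ and comparing the $e^{\mu+k\alpha}$-coefficients yields
\[
(x_{\mu+k\alpha}-x'_{\mu+k\alpha})+(x_{\mu+(k+1)\alpha}-x'_{\mu+(k+1)\alpha})=0
\]
for every $k\in\mathbb{Z}$. Setting $f(k):=x_{\mu+k\alpha}-x'_{\mu+k\alpha}$ gives $f(k+1)=-f(k)$, and an easy induction (in both directions) produces $f(k)=(-1)^k f(0)$, which is precisely \eqref{eqXX'}.

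For the last assertion I would exploit the one-sided boundedness of the two supports on the line $\mu+\mathbb{Z}\alpha$. The support of $X\in\cR(\Pi)$ is bounded above in $k$, because $\alpha\in\Pi$ is simple and each contributing cone $\lambda-\mathbb{Z}_{\geq 0}\Pi$ has its $\alpha$-coefficient bounded above; symmetrically, $-\alpha\in r_\alpha\Pi$, so the support of $X'\in\cR(r_\alpha\Pi)$ on this line is bounded below in $k$. If $\supp X\cap(\mu+\mathbb{Z}\alpha)$ is finite then it is also bounded below, so for $k$ sufficiently negative both $x_{\mu+k\alpha}$ and $x'_{\mu+k\alpha}$ vanish; substituting into $f(k)=(-1)^k f(0)$ then forces $f(0)=0$, i.e.\ $x_\mu=x'_\mu$. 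The only delicate step in the whole argument is the domain step above, and it is precisely the pole-of-order-$\leq 1$ hypothesis that lets $Z$ be viewed inside a domain rather than merely inside $\cV$.
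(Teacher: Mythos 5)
Your proof is correct and follows the same route as the paper: use the pole hypothesis to put $(1+e^{-\alpha})X$ in $\cR(r_\alpha\Pi)$, invoke equivalence plus the domain property of $\cR(r_\alpha\Pi)$ to conclude $(1+e^{-\alpha})(X-X')=0$, read off the two-term recursion on coefficients, and finish the last claim by noting that $X'\in\cR(r_\alpha\Pi)$ with $-\alpha\in r_\alpha\Pi$ forces $x'_{\mu-k\alpha}=0$ for $k\gg 0$.
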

\begin{proof}
 Since $X$  has a pole of order $\leq 1$ at $\alpha$, one has
 $(1+e^{-\alpha})X\in\cR(r_{\alpha}\Pi)$. Since $X,X'$ are equivalent, 
 the elements $(1+e^{-\alpha})X,(1+e^{-\alpha})X'\in\cR(r_{\alpha}\Pi)$ are equivalent and so 
 $(1+e^{-\alpha})X=(1+e^{-\alpha})X'$. 

Recall that $\cV$ is a $\cV_{fin}$-module; for $Y=\sum y_{\mu}e^{\mu}\in\cV$ one has
$$(1+e^{-\alpha})Y=0\ \Longrightarrow\ y_{\mu}+y_{\mu-\alpha}=0 \text{ for all}\ \mu.$$
This gives~(\ref{eqXX'}).

Finally, note that $x'_{\mu-k\alpha}=0$ for $k>>0$, because $X'\in \cR(r_{\alpha}\Pi)$
and $-\alpha\in r_{\alpha}\Pi$.
If $\supp X\cap\{\mu+\mathbb{Z}\alpha\}$
is finite, then $x_{\mu-k\alpha}=0$ for $k>>0$ and thus
$x_{\mu}-x'_{\mu}=x_{\mu-k\alpha}-x'_{\mu-k\alpha}=0$,
as required.
\end{proof}

\section{Root systems of basic and affine Lie superalgebras}
\label{sect3}
In this section we give some (mostly known) properties of Dynkin diagrams of  basic and affine
Lie superalgebras which are used in the main text. We call a Dynkin diagram of an indecomposable 
affine (resp., basic) Lie superalgebra affine (resp., finite) type Dynkin diagram.
We identify a set of simple roots $\Pi$ with the vertices of its Dynkin diagram.

In this section $\fg$ is   an indecomposable affine or basic Lie superalgebra with a set of simple roots 
$\Pi$ and  a symmetrizable Cartan matrix $A$. We denote by $\Delta$  the root system of $\fg$.

Throughout Sections~\ref{sect3}--\ref{sectKW0}, unless otherwise stated,
 we use the following
normalization of the invariant bilinear form $(-,-)$. If
the dual Coxeter number is non-zero, we normalize the form by the condition
$h^{\vee}\in\mathbb{Q}_{>0}$. If $\fg=D(n+1,n), D(n+1,n)^{(1)}$ or
$D(2,1,a), D(2,1,a)^{(1)}, a\in\mathbb{Q}$,
we normalize the form by the condition $(\alpha,\alpha)\in\mathbb{Q}_{>0}$ for some $\alpha\in D_{n+1}$ or $\alpha\in D_2=A_1\times A_1$; 
if the dual Coxeter number is zero
and $\fg\not=D(n+1,n),D(n+1,n)^{(1)}, D(2,1,a), D(2,1,a)^{(1)}, a\in\mathbb{Q}$,
we normalize the form by the condition $(\alpha,\alpha)\in\mathbb{Q}_{>0}$ for some $\alpha\in \Delta$
(note that in this case  all connected components of
${\Pi}_{\ol{0}}$ have the same number of elements).

\subsection{Affine Lie superalgebras}
\subsubsection{}
\begin{lem}{lemdeltasum}
 Let $\Pi$ be a set of simple roots of  an indecomposable affine Lie superalgebra
 and let $\delta$ be the minimal imaginary root.
 Then $\delta=\sum_{\alpha\in\Pi} x_{\alpha}\alpha$, where
 each coefficient $x_{\alpha} \not=0$.
\end{lem}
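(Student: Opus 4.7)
The plan is a proof by contradiction. Since $\delta$ is a positive imaginary root of $\fg$, its coefficients in the basis $\Pi$ are non-negative integers: $\delta = \sum_{\alpha \in \Pi} x_\alpha \alpha$ with $x_\alpha \in \mathbb{Z}_{\geq 0}$. I would assume for contradiction that $x_{\alpha_0} = 0$ for some $\alpha_0 \in \Pi$, so $\delta \in \mathbb{Z}_{\geq 0}(\Pi \setminus \{\alpha_0\})$.

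Setting $\Pi' := \Pi \setminus \{\alpha_0\}$, the Lie subsuperalgebra $\fg'$ generated by $\fh$ and the root spaces $\fg_{\pm\alpha}$ for $\alpha \in \Pi'$ is a Kac-Moody-type Lie subsuperalgebra whose Cartan matrix is the principal submatrix of the Cartan matrix of $\fg$ indexed by $\Pi'$. The desired contradiction is that $\fg'$ is finite-dimensional (a direct sum of basic Lie superalgebras), so its root system is finite and in particular cannot contain $\delta$, which in $\fg$ generates an infinite family $\mathbb{Z}_{>0}\delta$ of roots. For the distinguished set of simple roots $\Pi^\circ$, the ``remove-a-vertex-yields-finite-type'' property is directly visible from the explicit list of Dynkin diagrams of indecomposable affine Lie superalgebras (cf.~\cite{K2}); for an arbitrary $\Pi$, one invokes \Prop{propS}(a) to connect $\Pi$ to $\Pi^\circ$ by a chain of odd reflections, and verifies that the property is preserved under each reflection.

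The main anticipated obstacle is precisely this preservation check: an odd reflection $r_\beta$ can substantially reshape the Dynkin diagram, replacing adjacent simple roots $\alpha$ by $\alpha + \beta$ as recorded in \S\ref{choicerho}, so one must rule out the emergence of an ``affine-type'' subdiagram after the reflection. A cleaner alternative that avoids case analysis is a direct kernel argument: for an indecomposable affine Lie superalgebra, the symmetrized Cartan form on $\mathbb{Q}\Pi$ has one-dimensional radical spanned by $\delta$, so vanishing of $x_{\alpha_0}$ would place $\delta$ in the radical of the form restricted to the codimension-one subspace $\mathbb{Q}\Pi'$, forcing that restriction to be degenerate. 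Via the classification this degeneracy can be excluded except in cases (such as sub-diagrams of type $A(n,n)$) where the form is intrinsically degenerate but the subalgebra is nevertheless finite-dimensional, so the root-system argument of the previous paragraph still delivers the contradiction.
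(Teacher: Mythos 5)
Your argument is circular in its core step, and neither of your two proposed fixes closes the gap. You want to conclude that removing $\alpha_0$ from $\Pi$ yields a finite-type diagram, and derive a contradiction from that; but in the paper this ``any proper subdiagram of a connected affine Dynkin diagram is of finite type'' statement is exactly what is \emph{deduced from} Lemma~\ref{lemdeltasum} in \S\ref{cordeltasum}. So to use it here you would need an independent proof, and the two you sketch both stop short. The first (verify for the distinguished $\Pi^\circ$ by inspection, then propagate along odd reflections via Proposition~\ref{propS}(a)) leaves the propagation step, which you yourself flag as ``the main anticipated obstacle,'' entirely unaddressed; this is not a formality, since an odd reflection genuinely reshapes the diagram. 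The second (kernel/radical argument) does not give a contradiction on its own either: if $\delta\in\mathbb{Q}\Pi'$ then the restricted form is degenerate, but the converse fails — the restriction of $(-,-)$ to $\mathbb{Q}\Pi'$ can be degenerate without $\delta$ lying in $\mathbb{Q}\Pi'$ (e.g.\ when $\Pi'$ is of type $A(n,n)$ or $D(n+1,n)$), so degeneracy of the restriction tells you nothing, and you end up falling back on the first, incomplete, argument.

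The paper's proof is much more elementary and avoids all of this. Suppose $x_\alpha=0$. Since $\Delta$ is affine there is $r>0$ with $\Delta+r\delta=\Delta$, so $r\delta-\alpha\in\Delta$. Writing $r\delta-\alpha=\sum_{\beta\neq\alpha}rx_\beta\beta-\alpha$, the coefficient of $\alpha$ is $-1<0$, so $r\delta-\alpha$ is a negative root and therefore \emph{all} its coefficients in $\Pi$ are $\leq 0$; in particular $rx_\beta\leq 0$ for every $\beta$, forcing $x_\beta=0$ for all $\beta$ and hence $\delta=0$, a contradiction. You should reorganize your argument around this kind of direct root-coefficient reasoning rather than around structure of subdiagrams.
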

\begin{proof}
Take $\alpha\in\Pi$ such that $x_{\alpha}=0$.  
Since $\Delta$ is affine, $\Delta+r\delta=\Delta$ for some $r>0$, so
$r\delta-\alpha\in\Delta$. One has
$$r\delta-\alpha=\sum_{\beta\in\Pi,\beta\not=\alpha}rx_{\beta}\beta-\alpha,$$
that is $rx_{\beta}\leq 0$ for each $\beta$. Then $r\delta\in-\Delta^+$, a contradiction.
\end{proof}

\subsubsection{Finite parts}\label{cordeltasum}
For each $\Pi'\subset \Pi$ the set $\mathbb{Z}\Pi'\cap\Delta$
is the set of roots of a Kac-Moody superalgebra with the Cartan matrix
$A'$, which is the submatrix of $A$, corresponding to $\Pi'$.
Using~\Lem{lemdeltasum}, we conclude that any proper subdiagram of 
a connected Dynkin diagram of affine type is of finite type, i.e., if
$\Pi$ is a set of simple roots of  an indecomposable affine Lie superalgebra,
then for any proper subset $\Pi'\subset\Pi$ the root system $\mathbb{Z}\Pi'\cap\Delta$
is finite (and is the root system of a certain basic Lie superalgebra).

Let $X$ be an affine Dynkin diagram. We call a connected
subdiagram $\dot{X}$, obtained from $X$ by removing one node,
its {\em finite part}.
By above, $\dot{X}$ is of finite type. We call a root subsystem $\dot{\Delta}$
a finite part of affine root system $\Delta$ if $\dot{\Delta}$
admits a set of simple roots $\dot{\Pi}$ which is finite part of a set of simple 
roots for $\Delta$.  The finite parts of affine root systems are described in~\S~\ref{app3}.

\subsubsection{Definitions}\label{defintegr}
Let $\fg$ be an  affine Lie superalgebra with the  root system $\Delta$.
Let $\dot{\Delta}$ be a finite part of $\Delta$
(see~\S~\ref{cordeltasum}). An irreducible {\em vacuum module} is a module $L(\lambda)$ such that
$(\lambda,\dot{\Delta})=0$. 
Note that if $\Pi$ is a set of simple roots of $\Delta$
and $\dot{\Pi}$ is a finite part of $\Pi$, and
$(\lambda,\dot{\Pi})=0$, then $L(\lambda)$ is a vacuum module.

Let $\fg$ be a basic or affine Lie superalgebra. For each subset $\pi\subset\Pi_0$ 
we say that a $\fg$-module $N$ is {\em $\pi$-integrable} if $\fh$ acts diagonally on $N$
 and for each $\alpha\in\pi$
 the root spaces $\fg_{\pm\alpha}$ act locally nilpotently on $N$.
 
Note that if $N$ is $\pi$-integrable, then for each $w\in W(\pi)$ one has $\dim N_{\nu}=\dim N_{w\nu}$,
 so $\ch N$ is a $W(\pi)$-invariant element of $\cV$, see~\S~\ref{cRPi} for notation.
 In particular, if $N$ is a $\pi$-integrable irreducible
 highest weight module, then $\ch N$ is  a $W(\pi)$-invariant element of 
 $\cR_{W(\pi)}$,  see~\S~\ref{RW}.

Let $\fg$ be an affine Lie superalgebra, let $\dot{\fg}$ be its finite part,
and let $\dot{\Pi}_0$ be the subset of simple roots for $\dot{\fg}_{\ol{0}}=\fg_{\ol{0}}
\cap\dot{\fg}$. We say that  a $\fg$-module $N$ is {\em 
 integrable} if $N$ is $\pi$-integrable for 
$\pi=\dot{\Pi}_0\cup
\{\alpha\in\Pi_0|\ (\alpha,\alpha)\in\mathbb{Q}_{>0}\}$.

Note that $\pi$ is independent of our normalization of $(-,-)$ if $h^{\vee}\not=0$,
but $\pi$ changes if we change the sign of $(-,-)$ if  $h^{\vee}=0$.
In all cases, except for $D(2,1,a)^{(1)}$, $\pi$ is a connected component of $\Pi_0$.

\subsection{}
The sets of simple roots of basic Lie superalgebras which consist of isotropic 
roots are the following:
\begin{equation}\label{isofinite}
\begin{array}{ll}
A(n,n) & \{\vareps_1-\delta_1,\delta_1-\vareps_2,\ldots,\delta_{n-1}-\vareps_n,
\vareps_n-\delta_n\},\\
A(n+1,n) & \{\vareps_1-\delta_1,\delta_1-\vareps_2,\ldots,\vareps_n-\delta_n,
\delta_n-\vareps_{n+1}\},\\
D(n,n) & \{\delta_1-\vareps_1,\vareps_1-\delta_2,\ldots,\delta_n-\vareps_n,
\delta_n+\vareps_n\},\\
D(n+1,n) & \{\vareps_1-\delta_1,\delta_1-\vareps_2,\ldots,\vareps_n-\delta_n,\delta_n\pm\vareps_{n+1}\},
\end{array}\end{equation}
and for $D(2,1,a)$ it is as for $D(2,1)$. The invariant bilinear form (satisfying~\S~\ref{defintegr})
can be chosen in such a way that the vectors $\vareps_i,\delta_j$ are mutually orthogonal and 
$1=||\vareps_i||^2=-||\delta_j||^2$, except for the case $D(n,n)$,
where $1=-||\vareps_i||^2=||\delta_j||^2$.

We claim that the sets of simple roots of indecomposable
affine Lie superalgebras which consist of isotropic 
roots are the following:
\begin{equation}\label{isoaffine}
\begin{array}{ll}
A(n,n)^{(1)} & \{\delta-\vareps_1+\delta_n,
\vareps_1-\delta_1,\delta_1-\vareps_2,\ldots,\delta_{n-1}-\vareps_n,\vareps_n-\delta_n\},\\
D(n+1,n)^{(1)}  & \{\delta-\vareps_1+\delta_1,
\vareps_1-\delta_1,\delta_1-\vareps_2,\ldots,\vareps_n-\delta_n,\delta_n\pm\vareps_{n+1}\},\\
A(2n-1,2n-1)^{(2)} & \{\delta-\vareps_1+\delta_1,\vareps_1-\delta_1,\delta_1-\vareps_2,\ldots,\vareps_n-\delta_n,\vareps_n+\delta_n\}
\end{array}\end{equation}
and for $D(2,1,a)^{(1)}$ it is as for $D(2,1)^{(1)}$.

This can be explained as follows. 
If  $\Pi$ consists of isotropic roots and $\Delta(\Pi)$ is affine,
 then $\fg$ has zero dual Coxeter number and $\fg$ has a finite part which
appears in~(\ref{isofinite}). Using the tables in~\S~\ref{app3}
we conclude that this holds only for $\fg$ listed in~(\ref{isoaffine}). It is easy to see
that for these algebras all Dynkin diagrams consisting of isotropic roots are
as in~(\ref{isoaffine}).

Another result that we are going to use is the following.
\subsubsection{}
\begin{lem}{lembud}
Let $\beta,\beta'\in\Pi$ be isotropic roots with 
$(\beta,\beta')\not=0$. If $(\beta,\alpha)\not=0$ for some
non-isotropic $\alpha\in\Pi$, then
$(\beta,\beta')/(\alpha,\alpha)^2\in\mathbb{Q}_{>0}$.
\end{lem}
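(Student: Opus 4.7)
The plan is to prove the lemma by a case analysis on the Dynkin subdiagram spanned by the simple roots $\beta, \beta', \alpha$. By~\S\ref{cordeltasum}, if $\{\beta,\beta',\alpha\}\subsetneq\Pi$ then $\mathbb{Z}\{\beta,\beta',\alpha\}\cap\Delta$ is the root system of a basic Lie superalgebra of rank at most $3$; otherwise $\fg$ is itself of rank $3$, which is handled by direct inspection of the short list of rank-$3$ basic and rank-$2$ affine Lie superalgebras (using the tables in~\S\ref{app3} and the classification in~(\ref{isoaffine})).

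Using the realisation of simple roots of the $A, B, C, D$-type superalgebras via mutually orthogonal basis vectors $\vareps_i, \delta_j$ (as in~(\ref{isofinite})), two adjacent isotropic simple roots $\beta, \beta'$ with $(\beta,\beta')\neq 0$ take (up to signs) the form $\beta = v_1 - v_2$ and $\beta' = v_3 - v_1$ (sharing pivot $v_1$), or $\beta' = v_2 - v_3$ (sharing pivot $v_2$), where the pivot is of the type ($\vareps$ or $\delta$) opposite to that of the remaining two basis vectors; the isotropy conditions $(\beta,\beta)=(\beta',\beta')=0$ then yield $(v_i,v_i)$ of alternating sign. In either case $(\beta,\beta') = \pm(v,v)$ for $v$ the pivot. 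The non-isotropic $\alpha$ adjacent to $\beta$ must share a basis vector with $\beta$, and the absence of triangles in super Dynkin diagrams of types $A,B,C,D$ forces this shared basis vector to be the non-pivot one of $\beta$, which I denote $w$ (otherwise $\alpha$ would also be adjacent to $\beta'$, producing a triangle). Writing $\alpha$ in one of the standard forms $\pm(w-w'), \pm(w+w'), \pm 2w, \pm w$ one gets $(\alpha,\alpha) = c(w,w)$ for some $c\in\{1,2,4\}\subset\mathbb{Q}_{>0}$. Combined with the isotropy relation $(w,w) = -(v,v)$, this yields $(\alpha,\alpha) = -c(v,v)$, so $(\beta,\beta')$ and $(\alpha,\alpha)$ are positive rational multiples of one another; the sign of $(\beta,\beta')/(\alpha,\alpha)^2$ is then determined by the normalisation of~\S\ref{sect3}, which a direct inspection (case-by-case on $\fg\in\{A(m,n), B(m,n), C(n), D(m,n)\}$ and the position of the iso chain in the Dynkin diagram) confirms to be positive.

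The main obstacle is the bookkeeping of signs under the normalisation of~\S\ref{sect3} and the disposition of the exceptional algebras. For $D(2,1,a)$, an enumeration of all sets of simple roots via odd reflections starting from the distinguished set $\{\vareps_1-\delta_1-\delta_2, 2\delta_1, 2\delta_2\}$ shows that the hypothesis of the lemma is never satisfied: every Dynkin diagram of $D(2,1,a)$ has at most one isotropic simple root with a non-isotropic neighbour, or else all three simple roots are isotropic, so the lemma is vacuous in this case. For $F(4)$ and $G(3)$, a similar inspection of the finite odd-reflection orbits of their distinguished sets of simple roots completes the verification.
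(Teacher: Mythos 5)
Your approach is genuinely different from the paper's and, unfortunately, it also leaves a genuine gap precisely where the paper's argument does its real work.

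The paper's proof is a four-line deduction with no case analysis. The key observation is that since $(\beta,\beta')\neq 0$, the odd reflection $r_\beta$ takes $\beta'$ to the even root $\beta+\beta'$, which is therefore an even simple root of $r_\beta\Pi$ and hence lies in $\Pi_0$. Normalising so that $\|\alpha\|^2=2$, the Cartan-matrix inequality applied to the three distinct simple roots $\alpha,\beta,\beta'$ of $\Pi$ gives $(\alpha,\beta)<0$ and $(\alpha,\beta')\leq 0$, so $(\alpha,\beta+\beta')<0$; applying it a second time, now to the two even simple roots $\alpha$ (or $2\alpha$) and $\beta+\beta'$ of $\Pi_0$, forces $2(\alpha,\beta+\beta')/\|\beta+\beta'\|^2\leq 0$, hence $\|\beta+\beta'\|^2=2(\beta,\beta')>0$. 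The sign is thus determined entirely by the double application of the Cartan-matrix nonpositivity, uniformly for every Dynkin diagram, finite or affine, classical or exceptional, with no need to coordinatise.

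Your proposal reduces the problem to an enumeration of small diagrams and an explicit $\vareps,\delta$ computation. The structural part of the reduction is plausible as far as it goes, though the ``absence of triangles'' shortcut needs care: $A(1,0)^{(1)}$ and the all-isotropic $D(2,1,a)$ diagrams are triangles, so that step is only valid after the small-rank cases have already been disposed of by hand. But the real problem is your final step. You derive that $(\beta,\beta')=\pm(v,v)$ with an \emph{unresolved} $\pm$ (as you yourself note, the sign of a root like $v_1+v_2$ versus $v_1-v_2$ has to be absorbed somewhere), and $(\alpha,\alpha)=-c(v,v)$ with $c>0$, and then declare that the sign of the ratio ``is determined by the normalisation of~\S\ref{sect3}, which a direct inspection \dots\ confirms to be positive.'' That direct inspection is exactly the content of the lemma; it is not performed, and it cannot be reduced to a footnote. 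Nothing in the coordinate computation pins down the residual sign: the set-up $(\beta,\beta')=\pm(v,v)$, $(\alpha,\alpha)=-c(v,v)$ is equally consistent with the ratio being negative. The paper's proof resolves the sign by a structural fact (two applications of the Cartan inequality, the second one possible only because $\beta+\beta'\in\Pi_0$), which is the idea your argument is missing. Without it, your proof establishes only that $(\beta,\beta')$ and $(\alpha,\alpha)$ are rational multiples of one another of \emph{some} definite sign, not the asserted positivity, so the proof as proposed is incomplete.
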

\begin{proof}
Since $\beta+\beta'\in r_{\beta}\Pi$, we have $\beta+\beta'\in\Pi_0$.
Normalize the form in such a way that $||\alpha||^2=2$.
Then $(\alpha,\beta')\in\mathbb{Z}_{\leq 0}, (\alpha,\beta)\in\mathbb{Z}_{<0}$, 
so $(\alpha,\beta+\beta')\in\mathbb{Z}_{<0}$. Since $\beta+\beta'\in\Pi_0$,
we have $(\alpha, (\beta+\beta')^{\vee})=2(\alpha,\beta+\beta')/||\beta+\beta'||^2\in\mathbb{Z}_{<0}$, 
so  $||\beta+\beta'||^2=2(\beta,\beta')\in\mathbb{Q}_{>0}$.
\end{proof}

\subsubsection{}\label{corbud}
Let $\Pi$ be a connected Dynkin diagram which contains a non-isotropic node,
let  $Iso$ be its subdiagram consisting
of isotropic nodes, and let $\Pi'$ be a connected component of $Iso$. Note that
$\Pi'$ appears in~(\ref{isofinite}). Since $\Pi$ is connected, $\Pi'$ contains a node 
$\beta\in \Pi'$ which is connected to a node in $\Pi\setminus \Pi'$.
By~\Lem{lembud}, $\beta$ can be described as follows.  For $A(n+1,n), A(n,n)$, $\beta$
is one of the ending nodes; for  $D(n+1,n), D(n,n), n>1$
$\beta$ is the first node.  For $D(2,1,a)$,
$\beta\in \Pi$ is such that 
$(\beta,\beta_1)/(\beta,\beta_2)\in\mathbb{Q}_{>0}$, where $\Pi=\{\beta,\beta_1,\beta_2\}$;
such $\beta$ is unique and exists only if $a\in\mathbb{Q}$.
In particular,  $\Pi'$ of type $D(2,1,a)$ with $a\not\in\mathbb{Q}$ can not be a connected component
of $Iso$.

\subsection{Choice of  $\Pi, S,\pi$}\label{PiSpi}
Let $\Pi$ be a set of simple roots for $\Delta$ which satisfies the following 
property: $||\alpha||^2\in\mathbb{Q}_{\geq 0}$ for each $\alpha\in\Pi$.
Recall that such $\Pi$ exists for all root systems except for $A(2k,2k)^{(4)}, D(k+1,k)^{(2)}$:
for each non-twisted (resp., twisted) affine $\Delta$ the example of such 
$\Pi$ appears in the end of~\cite{Gaff}, (resp.,~\cite{R}), where such $\Pi$ was used for a proof of the denominator identity.

We consider $\fg\not=D(2,1,a), D(2,1,a)^{(1)}$ with $a\not\in\mathbb{Q}$. Set 
$$\pi:=\{\alpha\in\Pi_0|\ ||\alpha||^2>0\}.$$

Recall that the {\em defect} of a finite type root system $\Delta$ is the dimension
of maximal isotropic subspace in $\mathbb{Q}\Delta$; for $A(m-1,n-1), B(m,n), D(m,n)$ the defect is
equal to $min (m,n)$; for other cases of non Lie algebras it is one.
It is well-known that
it is equal to the maximal number of mutually orthogonal isotropic simple roots for 
for some choice of $\Pi$, a set of simple roots of $\Delta$.

From~\S~\ref{app3} it follows that for affine root system $\Delta$ 
all its finite parts have the same defect and that
the maximal number of mutually orthogonal isotropic simple roots for $\Delta$
is equal to the defect of its finite part.  We call this number the {\em defect} of the affine superalgebra.
A subset $S\subset \Pi$ is called {\em maximal isotropic} 
if $\mathbb{Q}S$ is  isotropic and $\dim \mathbb{Q} S$
is equal to the defect.

\subsubsection{}\label{choiceS} 
We say that an isotropic node $\beta\in\Pi$ is {\em "branching"} if the connected component of $Iso$ 
which contains $\beta$ is of the type $D(k+1,k)$ and $\beta$ is the branching node in 
this connected component. Note that $\Pi$ contains at most two "branching" nodes.

If $\Pi$ is not the diagram of $D(2,1,a)$,
consisting of isotropic nodes, we let
 $S\subset\Pi$
be a  maximal subset of mutually orthogonal isotropic simple roots,
which contains all "branching" isotropic nodes.
For example, if $\Pi=D(n+1,n)$ consists of isotropic roots (see~(\ref{isofinite})),
then $S=\{\vareps_i-\delta_i\}_{i=1}^n$.
If  $\Pi=D(2,1,a)$, consisting of isotropic nodes, we take
$S=\{\beta\}$, where
$\beta\in\Pi$ is such that $(\beta,\alpha^{\vee})>0$ for $\alpha\in\pi$ (such $\beta$ is unique, since
$\pi=A_1\times A_1$).


\subsubsection{Remark}\label{legitimate}
Let $\Pi'$ be a connected component of $Iso$. Using~\S~\ref{corbud}
one easily sees that 
$\pi':=\pi\cap\Delta(\Pi')$  and $S':=S\cap \Pi'$
is a "correct" choice of $\pi,S$ in the sense 
of~\S~\ref{choiceS} for $\Pi'$.

\subsubsection{Example}
For example, for $A(m,n)^{(1)}, m>n$, we have 
$\pi=\{\delta-\vareps_1+\vareps_m,\vareps_1-\vareps_2,\ldots,\vareps_{m-1}-\vareps_m\}$
and we can take
$$\Pi=\{\delta-\vareps_1+\vareps_m,\vareps_1-\delta_1,\ldots,
\delta_n-\vareps_{n+1},\vareps_{n+1}-\vareps_{n+2},
\ldots,\vareps_{m-1}-\vareps_m\};$$
and for $A(n,n)^{(1)}$ we can take
$$\Pi:=\{\delta-\vareps_1+\delta_n,\vareps_1-\delta_1,\delta_1-\vareps_2,\ldots,
\vareps_n-\delta_n\},\ \ \pi=\{\delta-\vareps_1+\vareps_n,\vareps_1-\vareps_2,\ldots,\vareps_{n-1}-\vareps_n\},$$
with $S=\{\vareps_i-\delta_i\}_{i=1}^n$ in both cases.

\subsubsection{}
\begin{lem}{lemap11}
Let $\Pi, S,\pi$ be as above.

(i) For each $\alpha\in\Pi$ with $||\alpha||^2=0$ one has $\alpha\in S$ or
$\alpha+\beta\in\pi$ for some $\beta\in S$. 

(ii) If $w\in W(\pi)$ is such that $w\rho=\rho$ and $wS\subset \Delta^+$, then $w= Id$.
\end{lem}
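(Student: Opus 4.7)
The plan is to prove part (i) by a case analysis based on the classification~(\ref{isofinite})--(\ref{isoaffine}) of the isotropic connected components of $\Pi$, and to deduce part (ii) from part (i) using the standard Coxeter-theoretic description of stabilizers.

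For part (i), suppose $\alpha \in \Pi$ is isotropic and $\alpha \notin S$. By the maximality of $S$ as an orthogonal set of isotropic simple roots, there is some $\beta \in S$ with $(\alpha, \beta) \neq 0$; the sum $\alpha + \beta$ is an even positive root, and the standard fact that the sum of two adjacent odd isotropic simple roots is a simple root of $\fg_{\ol{0}}$ gives $\alpha + \beta \in \Pi_0$. Since $||\alpha + \beta||^2 = 2(\alpha, \beta)$, the membership $\alpha + \beta \in \pi$ is equivalent to $(\alpha, \beta) > 0$, and the existence of $\beta \in S$ with this sign is verified case by case: in chain-type components ($A(n,n)$, $A(n+1,n)$, and their affine analogues), the normalization $||\alpha||^2 \in \mathbb{Q}_{\geq 0}$ on $\Pi$ forces the sign of $(\alpha, \beta)$ to alternate along the chain, and the choice of $S$ in~\S~\ref{choiceS} as every other node provides the required $\beta$; the $D$-type components are handled analogously using the inclusion of the ``branching'' isotropic nodes in $S$; and the case when $\Pi$ is of type $D(2,1,a)$ follows directly from the explicit description of $S$ and $\pi$. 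A byproduct useful for part (ii) is that in each of these cases, for any $\alpha = \beta + \gamma \in \pi$ with $\beta, \gamma \in \Pi$ adjacent odd isotropic, exactly one of $\beta, \gamma$ lies in $S$ (both cannot, by orthogonality of $S$ together with $(\beta, \gamma) \neq 0$, and the case when neither lies in $S$ is excluded by the same case-by-case inspection).

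For part (ii), suppose $w \in W(\pi)$ satisfies $w\rho = \rho$ and $wS \subset \Delta^+$. The relation $2(\rho, \alpha) = (\alpha, \alpha)$ from~\S~\ref{choicerho} gives $(\rho, \alpha) = 0$ for each isotropic $\alpha \in \Pi$, hence $(\rho, \alpha^{\vee}) = 1$ for $\alpha \in \pi \cap \Pi$ and $(\rho, \alpha^{\vee}) = 0$ for $\alpha \in \pi \setminus \Pi$, since each such $\alpha$ is of the form $\beta + \gamma$ with $\beta, \gamma \in \Pi$ adjacent odd isotropic. By the standard Coxeter-theoretic description of stabilizers, $\Stab_{W(\pi)}(\rho)$ is generated by the reflections $r_\alpha$ with $\alpha \in \pi \setminus \Pi$, so $w$ is a product of such $r_\alpha$. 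A short computation using $(\beta, \alpha^{\vee}) = (\beta, \gamma)/(\beta, \gamma) = 1$ gives $r_\alpha \beta = -\gamma$ and $r_\alpha \gamma = -\beta$; combined with the byproduct above, each generator $r_\alpha$ of $\Stab_{W(\pi)}(\rho)$ sends some element of $S$ to a negative root. I conclude by induction on the length of a reduced expression of $w$, tracking the orbits of elements of $S$ under successive reflections.

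The main obstacle is the length-induction: a priori, successive reflections could cancel their sign changes and return a flipped root to $\Delta^+$. Ruling this out requires the case-by-case structure of the connected components of $Iso$; for instance, in the $A(n,n)$ case one has $\Stab_{W(\pi)}(\rho) = W(\pi) = S_{n+1}$ acting on the $\vareps$-coordinates by permutations, and the condition $wS \subset \Delta^+$ translates to $w(i) \leq i$ for all $i$, which forces $w = \Id$; analogous computations in (possibly affine) symmetric groups acting on $S$-indexed coordinates dispose of the remaining cases.
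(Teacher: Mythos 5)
Your approach follows essentially the same structure as the paper's: reduce to a case analysis on the isotropic connected components for part (i), then identify the stabilizer of $\rho$ in $W(\pi)$ and verify case by case for part (ii). However, there is a gap in your identification of $\Stab_{W(\pi)}(\rho)$. You assert that $(\rho,\alpha^{\vee})=0$ for every $\alpha\in\pi\setminus\Pi$ ``since each such $\alpha$ is of the form $\beta+\gamma$ with $\beta,\gamma\in\Pi$ adjacent odd isotropic.'' That decomposition is not automatic and is itself the thing that needs proof: a priori an $\alpha\in\pi\setminus\Pi$ could have positive-length simple roots in its $\Pi$-expansion, giving $(\rho,\alpha)>0$. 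The paper's route avoids the assertion entirely: since $||\gamma||^2\geq 0$ for all $\gamma\in\Pi$, one has $(\rho,\alpha)\geq 0$ for every $\alpha\in\Delta^+$, so $\Stab_{W(\pi)}\rho$ is generated by $\{r_\alpha:\alpha\in\pi,\ (\rho,\alpha)=0\}$; and $(\rho,\alpha)=0$ happens exactly when $\alpha\in\mathbb{Z}_{\geq 0}\,Iso$, which then places $\alpha$ inside a connected component $\Pi'$ of $Iso$ and yields the decomposition as a consequence. You should re-derive your generators this way, deducing $\alpha=\beta+\gamma$ afterwards rather than using it as the hypothesis.

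A second, more stylistic difference concerns how part (ii) is finished. Your plan is a length induction on $w$, which you correctly flag as delicate because reflections applied in succession could cancel a sign change. The paper sidesteps this by first establishing the direct product decomposition $\Stab_{W(\pi)}\rho=\prod_{\Pi'}W(\pi')$ over connected components $\Pi'$ of $Iso$; a non-identity $w$ then has a non-identity projection $w'$ to some factor $W(\pi')$, and since distinct components are orthogonal, $w\beta=w'\beta$ for $\beta\in S\cap\Pi'$, so the failure is detected inside a single component. Both framings ultimately lead to the same explicit computations component by component ($A(n,n)$, $A(n+1,n)$, $D(n,n)$, $D(n+1,n)$, $D(2,1,a)$, $A(n,n)^{(1)}$), which you sketch for $A(n,n)$ exactly as the paper does; adopting the paper's product decomposition would let you drop the length induction and its attendant bookkeeping.
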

\begin{proof}
(i) In the light of~\S~\ref{legitimate}, it is enough to verify (i) for the case, when $\Pi$
contains only isotropic roots, that is $\Pi$ is either $A(n,n)^{(1)}$ or 
appears in~(\ref{isofinite}). It is easy to check that the claim holds in each case.

(ii) Consider first the case when $\Pi$ contains only isotropic roots. 

Recall that for $S=\{\vareps_i-\delta_i\}_{i=1}^n$ for $D(n+1,n)$.
For $A(n,n), D(n,n), A(n,n)^{(1)}$ the choice of $S$ is unique up to an automorphism of the Dynkin diagram $\Pi$,
so we  take $S=\{\vareps_i-\delta_i\}_{i=1}^n$ for $A(n,n), D(n+1,n), A(n,n)^{(1)}$ 
and  $S=\{\delta_i-\vareps_i\}_{i=1}^n$ for $D(n,n)$.

For $A(n,n)$,  $A(n,n)^{(1)}$ the group $W$ stabilizes $\sum_{\beta\in S}\beta$.
Therefore $wS\subset \Delta^+$ forces $w\beta=\beta$ for each $\beta\in S$, so
$w\vareps_i=\vareps_i, w\delta_i=\delta_i$ for each $i=1,\ldots,n$. This gives $w=Id$.

For $D(n,n)$, the group $W(\pi)$ acts by signed permutations on $\{\delta_i\}_{i=1}^{n}$,
and the condition $wS\subset\Delta^+$ gives for each $i=1,\ldots,n$ that
$w\delta_i=\delta_j$ for $j\leq i$. Thus $w=Id$.

For $D(n+1,n)$, 
the group $W(\pi)$ acts by signed permutations with even number of sign changes 
on the set $\{\vareps_i\}_{i=1}^{n+1}$,
and the condition $wS\subset\Delta^+$ gives for each $i=1,\ldots,n$ that
$w\vareps_i=\vareps_j$ for $j\leq i$. Thus $w=Id$.

For $A(n+1,n)$, the group $W(\pi)$ permutes $\{\vareps_i\}_{i=1}^{n+1}$
and $S$ is of the form $\{\vareps_i-\delta_i\}_{i=1}^k\cup\{\delta_i-\vareps_{i+1}\}_{i=k+1}^n$ for some
$k=0,\ldots,n$. The condition $wS\subset\Delta^+$ gives for each $i=1,\ldots,k$ (resp., $i=k+1,\ldots,n+1$) that
$w\vareps_i=\vareps_j$ for $j\leq i$ (resp., $j\geq i$). This implies $w=Id$.

Finally consider the case $D(2,1,a)$. Write $\Pi=\{\beta,\beta_1,\beta_2\}$ 
with $S=\{\beta\}$. Recall that $\pi=\{\beta+\beta_1,\beta+\beta_2\}$ and
$(\beta+\beta_1,\beta+\beta_2)=0$. Since $r_{\beta}\Pi=\{-\beta,\beta+\beta_1,\beta+\beta_2\}$,
for each $w\in W(\pi)$ one has $w(-\beta)=-\beta+a_1(\beta+\beta_1)+
a_2(\beta+\beta_2)$ for some $a_1,a_2\in\mathbb{Z}_{\geq 0}$ and $a_1=a_2=0$ only for $w=Id$.
Hence $w\beta\in\Delta^+$ forces $w=Id$, as required.

Now consider the general case. Let $X$ be the set of connected components
of $Iso$; for each connected component $\Pi'$ choose $\pi':=\Delta(\Pi')\cap\pi$,
see~\S~\ref{legitimate}. We claim that 
\begin{equation}\label{stab}
Stab_{W(\pi)}\rho=\prod_{\pi'\in X} W(\pi').
\end{equation}
Indeed, since $||\alpha||^2\geq 0$ for each $\alpha\in\Pi$,
one has $(\rho,\alpha)\geq 0$ for each $\alpha\in\Delta^+$. 
This gives
$(\rho,\alpha^{\vee})\geq 0$ for each $\alpha\in\pi$, so the stabilizer
of $\rho$ in $W(\pi)$ is generated by the reflections $\{r_{\alpha}|\ \alpha\in\pi,
(\rho,\alpha)=0\}$. Clearly, $(\rho,\alpha)=0$ for $\alpha\in\Delta^+$ means
that $\alpha\in\mathbb{Z}Iso$. Since $\alpha\in\pi$, we obtain $\alpha\in\pi'$ for some
$\Pi'\in X$. This establishes~(\ref{stab}).

Now take $w\in Stab_{W(\pi)}\rho$. If $w\not=Id$, then there exists $\Pi'\in X$ such that
the projection of $w$ to $W(\pi')$ is not $Id$. Denote this projection by $w'$.
Recall that  the set $S\cap \Pi'$ is a maximal isotropic set
in $\Pi'$. By above,  there exists
$\beta\in (S\cap \Pi')$ such that $w'\beta\in -\Delta^+$. Clearly, $w\beta=w'\beta$,
so $w\beta\in-\Delta^+$ for some $\beta\in S$. This proves (ii).
\end{proof}

\section{Proof of the KW-formula in the case when $\#S=defect (\fg)$ and $h^{\vee}\not=0$}
\label{sectKW}
Let $\fg$ be either a basic Lie superalgebra, except for $D(2,1,a)$ with $a\not\in\mathbb{Q}$, or
an affine Lie superalgebra with  non-zero dual Coxeter number, or $A(n,n)^{(1)}$,
with a subset of simple roots $\Pi$ such
that  $||\alpha||^2\in\mathbb{Q}_{\geq 0}$ for each $\alpha\in\Pi$
(see Section~\ref{sect3} for the normalization of $(-,-)$).
 Set $\pi:=\{\alpha\in\Pi_0|\ (\alpha,\alpha)>0\}$.

In this section we prove the  KW-formula for some maximally atypical 
(i.e. $\#S=defect (\fg)$) $\pi$-integrable 
$\fg$-modules, which include, in particular, 
the  integrable vacuum modules over
the affine Lie superalgebras with non-zero dual Coxeter number
and over  $A(n,n)^{(1)}$.

\subsection{Main result}\label{sectKWass}
If $\Pi$ is not the set of simple roots of $D(2,1,a)$,
consisting of isotropic roots, we let
 $S\subset\Pi$
be a  maximal set of mutually isotropic orthogonal simple roots,
which contains all "branching" isotropic nodes
(recall that an isotropic node $\beta\in\Pi$ is "branching" if the connected component of $Iso$ 
which contains $\beta$ is of the type $D(k+1,k)$ and $\beta$ is the branching node in 
this connected component, see~\S~\ref{choiceS}).
If  $\Pi=D(2,1,a)$ consists of isotropic roots, we take
$S=\{\beta\}$, where
$\beta\in\Pi$ is such that $(\beta,\alpha^{\vee})>0$ for $\alpha\in\pi$ (such $\beta$ is unique, since
$\pi=A_1\times A_1$).

Let $L(\lambda)$ be a non-critical $\pi$-integrable $\fg$-module 
with the property $(\lambda,\beta)=0$
for each $\beta\in S$. We claim that $\ch L$ is given by the KW-formula:

\begin{equation}\label{KWcharfor}
Re^{\rho}\ch L(\lambda)=\sum_{w\in W(\pi)}\sgn (w)\,\frac{e^{w(\lambda+\rho)}}{\prod_{\beta\in S} (1+e^{-w\beta})}.
\end{equation}

The condition that $L(\lambda)$ is $\pi$-integrable implies $(\lambda,\alpha)\geq 0$ for each $\alpha\in\pi$.
Combining with $(\lambda,\beta)=0$
for $\beta\in S$, we obtain, using~\Lem{lemap11} (i), that $(\lambda,\alpha)\geq 0$ for each $\alpha\in\Pi$.
In particular,  if $\fg\not=A(n,n)^{(1)}$, the condition that $\lambda$ is non-critical is superfluous.

If $\fg=A(m,n)^{(1)}$ with $m\not=n$, the conditions on $\lambda$ are equivalent to 
$(\lambda,\alpha)\in\mathbb{Z}_{\geq 0}$ for each $\alpha\in\Pi$, $(\lambda,\beta)=0$ for each $\beta\in S$.
For $\fg=A(n,n)^{(1)}$ the additional condition is $\lambda\not\in\mathbb{Z}\delta$.

If $h^{\vee}\not=0$ and 
$L=L(\lambda,\Pi')$ is a vacuum $\pi$-integrable module (i.e., $(\lambda,\dot{\Delta})=0$),
 then, by~\S~\ref{app3}, using the odd reflections from $\dot{\Delta}$ we can transform $\Pi'$ to
$\Pi$ with the above properties. Then $L=L(\lambda,\Pi)$.
Since the defect of $\dot{\Delta}$ is equal to the maximal number of
pairwise orthogonal isotropic roots in $\Pi$, we can choose $S$ in $\dot{\Delta}\cap \Pi$.
Hence $\# S= defect (\fg)$, and formula~(\ref{KWcharfor}) applies.

\subsection{Proof}\label{proofI}
Note that $S,\pi$ are  as in~\S~\ref{PiSpi}.

We rewrite formula~(\ref{KWcharfor}) in the form
$$Re^{\rho}\ch L(\lambda)=\sum_{w\in W(\pi)} sgn(w)\, Y_w,\ \text{ where } 
Y_w:=w\bigl(\frac{e^{\lambda+\rho}}{\prod_{\beta\in S} (1+e^{-\beta})}\bigr).$$
By~\Lem{lemexpA}, $\sum_{w\in W(\pi)} sgn(w)\, Y_w\in\cR$. One has
\begin{equation}\label{suppYw}
\supp  Y_w\subset
w(\lambda+\rho)+(\sum_{\beta\in S: w\beta\in\Delta^-} w\beta)-\mathbb{Z}_{\geq 0}\Pi.
\end{equation}

Since $L(\lambda)$ is $\pi$-integrable, $(\lambda,\alpha^{\vee})\in\mathbb{Z}_{\geq 0}$ for each $\alpha\in\pi$. It is easy to deduce from~\Prop{propS} that $r_{\alpha}\rho\in\rho-\Delta$ for each $\alpha\in\pi$.
Note that $(\rho,\alpha)\geq 0$ for each $\alpha\in\Delta^+$ (because $||\alpha||^2\geq 0$ 
for each $\alpha\in \Pi$), so  $r_{\alpha}\rho\in\rho-\Delta^+$ for each $\alpha\in\pi$.
Therefore $w(\lambda+\rho)\in \lambda+\rho-\mathbb{Z}_{\geq 0}\Pi$, 
so 
$$\supp(\sum_{w\in W(\pi)} sgn(w) Y_w)\subset\lambda+\rho-\mathbb{Z}_{\geq 0}\Pi.$$
Clearly, $\supp (Re^{\rho}\ch L(\lambda))\subset\lambda+\rho-\mathbb{Z}_{\geq 0}\Pi$.

Let us show that
\begin{equation}\label{uki}
(\lambda+\rho-\mathbb{Z}_{\geq 0} S)\cap \supp ( Y_w)=\emptyset\ \ 
\text{ for }\ \ w\in W(\pi), w\not=Id.
\end{equation}
Indeed, assume that the intersection is non-empty. Then, by~(\ref{suppYw}),
 $w\in Stab_{W(\pi)}(\lambda+\rho)$ and for each $\beta\in S$ one has either $w\beta\in\Delta^+$ or 
$-w\beta\in \mathbb{Z}_{\geq 0} S$. If $-w\beta \in \mathbb{Z}_{\geq 0} S$, then $\beta-w\beta\in\mathbb{Z}S$,
so $||\beta-w\beta||^2=0$; but $\beta-w\beta\in\mathbb{Z}\pi$, thus $\beta=w\beta$.
Therefore $w\in Stab_{W(\pi)}(\lambda+\rho)$ and $wS\subset \Delta^+$. 
By~\Lem{lemap11} (ii), $w=Id$,
as required.

We conclude that the coefficient of $e^{\lambda+\rho}$ in $\sum_{w\in W(\pi)} sgn(w)\, Y_w$ is $1$;
clearly, the coefficient of $e^{\lambda+\rho}$ in $Re^{\rho}\ch L(\lambda)$ is also $1$.
Let
$$Z:=Re^{\rho}\ch L(\lambda)-\sum_{w\in W(\pi)} sgn(w) w\bigl(\frac{e^{\lambda+\rho}}{\prod_{\beta\in S} (1+e^{-\beta})}\bigr).$$
Suppose that $Z\not=0$. By above,
$$\supp (Z)\subset \lambda+\rho-\mathbb{Z}_{\geq 0}\Pi,\ \lambda+\rho\not\in\supp (Z).$$
The action of the Casimir element gives  $||\nu||^2=||\lambda+\rho||^2$
for each $\nu\in\supp (Z)$.
Let $\lambda+\rho-\mu$ be a maximal element in $\supp (Z)$ with respect to the order
$\geq_{\Pi}$ (see~\S~\ref{ordering}).
We have
$$||\lambda+\rho-\mu||^2=||\lambda+\rho||^2,\ \mu\in\mathbb{Z}_{\geq 0}\Pi,\ \mu\not=0.$$

\subsubsection{}\label{maxinorbit}
By~\S~\ref{RPi}, $R_{\ol{0}}e^{\rho_{\pi}}$ is a $W(\pi)$-skew-invariant
element of $\cR_{W(\pi)}$. Since  $L(\lambda)$ is $\pi$-integrable, $\ch L(\lambda)$ is a $W(\pi)$-invariant element of $\cR_{W(\pi)}$, see~\S~\ref{defintegr}. Thus
$$R_{\ol{0}}e^{\rho_{\pi}}\ch L(\lambda)=R_{\ol{1}}e^{\rho_{\pi}-\rho}( Re^{\rho}\ch L(\lambda))$$
 is a $W(\pi)$-skew-invariant element of $\cR_{W(\pi)}$.
By~\S~\ref{RPi}, $R_{\ol{1}}e^{\rho_{\pi}-\rho}$ is a $W(\pi)$-invariant
element of $\cR_{W(\pi)}$, so 
$$R_{\ol{1}}e^{\rho_{\pi}-\rho}\sum_{w\in W(\pi)}\sgn (w)\,
\frac{e^{w(\lambda+\rho)}}{\prod_{\beta\in S} (1+e^{-w\beta})}=
\sum_{w\in W(\pi)}\sgn (w)\,
e^{w(\lambda+\rho_{\pi})}\prod_{\beta\in \Delta^+_{\ol{1}}\setminus S} (1+e^{-w\beta})$$
By~\S~\ref{infprod}, 
$e^{\lambda+\rho_{\pi}}\prod_{\beta\in \Delta^+_{\ol{1}}\setminus S} (1+e^{-\beta})\in\cR_{W(\pi)}$,
so, by~\S~\ref{compex}, the sum in the right-hand side is a $W(\pi)$-skew-invariant
element of $\cR_{W(\pi)}$. 

We conclude that 
$R_{\ol{1}}e^{\rho_{\pi}-\rho}Z$ is a non-zero $W(\pi)$-skew-invariant
element of $\cR_{W(\pi)}$.  Clearly, $\lambda-\mu+\rho_{\pi}$ is a maximal element
in its support. Hence  $(\lambda-\mu+\rho_{\pi},\alpha^{\vee})$
is a positive integer for each $\alpha\in\pi$ 
(positive, since $\lambda-\mu+\rho_{\pi}$ is a maximal element
in the support of a skew-invariant element of $\cR_{W(\pi)}$, and integer, since
$(\lambda,\alpha^{\vee})\in\mathbb{Z}_{\geq 0}$). Therefore
$$(\lambda-\mu,\alpha)\geq 0\ \text{ for each }\alpha\in\pi.$$

\subsubsection{}\label{proof11}
By~\Lem{lemap11}, for any $\gamma\in\Pi$  one has $\gamma\in\pi\cup S$, or $2\gamma\in\pi$,
or 
$\gamma=\alpha-\beta, \alpha\in\pi,\beta\in S$. Define a linear map
$p:\mathbb{Z}_{\geq 0}\Pi\to\frac{1}{2}\mathbb{Z}_{\geq 0}\pi$ which is zero on $S$ and 
the identity on $\pi$. Recall that $(\lambda+\rho, S)=(S,S)=0$.
We have $2(\lambda+\rho,\mu)=(\mu,\mu)$, so $2(\lambda+\rho, p(\mu))=(2\mu-p(\mu),p(\mu))$,
which implies
$$2(\lambda+\rho-\mu,p(\mu))=-||p(\mu)||^2.$$
Since $(\rho,\alpha),(\lambda-\mu,\alpha)\geq 0$ for each $\alpha\in\pi$,
the left-hand side is non-negative. Hence $||p(\mu)||^2\leq 0$. Since
$p(\mu)\in \frac{1}{2}\mathbb{Z}_{\geq 0}\pi$, we obtain $p(\mu)=0$ if $\Delta$ is finite, and
$p(\mu)=s\delta$ if $\Delta$ is affine. In the latter case
$2(\lambda+\rho, p(\mu))=(2\mu-p(\mu),p(\mu))$ gives $2(\lambda+\rho,s\delta)=0$.
Since $\lambda$ is non-critical, $s=0$, so $p(\mu)=0$. Hence
$\mu\in\mathbb{Z} S\setminus\{0\}$.

\subsubsection{}\label{ukiki}
By~(\ref{uki}),  for $\mu\in\mathbb{Z}S$,
the coefficient of $e^{\lambda+\rho-\mu}$ in $Y_w$ is equal to zero if $w\not=Id$.
One readily sees that $L(\lambda)_{\lambda-\nu}=0$ for each $\nu\in\mathbb{Z}S,\nu\not=0$.
Thus the coefficient of $e^{\lambda+\rho-\mu}$  in $Re^{\rho}\ch L$ 
is equal to the coefficient of $e^{\lambda+\rho-\mu}$ in 
$e^{\lambda+\rho}\prod_{\beta\in S}(1+e^{-\beta})^{-1}=Y_{Id}$. 
Hence the coefficient of $e^{\lambda+\rho-\mu}$ in $Z$ is equal to zero, a contradiction. 
This gives $Z=0$ and establishes the KW-formula.\qed

\subsection{Remark}\label{remint}
Let $\fg$ be of the type $A(0,n)^{(1)}$ or $C(n)^{(1)}$. 
Note that $\Delta_{\ol{0}}$
is indecomposable, so $\pi=\Pi_0, W(\pi)=W$. Let $L=L(\lambda,\Pi)$ be a $\pi$-integrable module.
If $L$ is typical, i.e., $(\lambda+\rho,\beta)\not=0$ for all $\beta\in\Delta_{\ol{1}}$,
then $\ch L$ is given by the Weyl-Kac character formula
$Re^{\rho}\ch L=\sum_{w\in W} sgn(w) e^{w(\lambda+\rho)}$.
If $L$ is atypical, then there exists $\Pi'$ such that $L=L(\lambda',\Pi')$ and
$(\lambda'+\rho',\beta')=0$ for some $\beta'\in\Pi'$, see the next paragraph.
By above, $\ch L$ is given by the KW-character formula 
$Re^{\rho}\ch L=\sum_{w\in W} sgn(w) e^{w(\lambda'+\rho')}/(1+e^{-w\beta'})$. This formula was proven earlier
in~\cite{S}.

Let us show that $(\lambda+\rho,\beta)=0$ for some $\beta\in\Delta_{\ol{1}}$ 
forces the existence of $\Pi'$ such that $L=L(\lambda',\Pi')$ and
$(\lambda'+\rho',\beta')=0$ for some $\beta'\in\Pi'$.  Indeed, it is easy to show  that 
for this $\fg$ for any odd root $\beta$ there exists $\Pi''$  such that 
$\beta\in\Pi''$. Recall that $\Pi''$ can be obtained from $\Pi$ by a chain
of odd reflections $r_{\beta_1},\ldots,r_{\beta_s}$. 
Writing $\Pi^0:=\Pi,\lambda^0:=\lambda^i$
and $\Pi^j=r_{\beta_j}\Pi^{j-1}$ (so $\Pi''=\Pi^{s+1}$), we introduce $\lambda^j$ by 
$L=L(\lambda^j,\Pi^j)$. Then $\lambda^j+\rho^j=\lambda^{j-1}+\rho^{j-1}$, 
(where $\rho^j$ is the Weyl vector for 
$\Pi^j$) if $(\lambda^{j-1}+\rho^{j-1},\beta_{j})\not=0$.
Therefore, if $(\lambda^{j-1}+\rho^{j-1},\beta_{j})\not=0$ for $j=1,\ldots,s$,
then $\lambda^{s+1}+\rho^{s+1}=\lambda+\rho$, and taking $\Pi':=\Pi''=\Pi^{s+1}$
we obtain $(\lambda'+\rho',\beta)=0$ and $\beta\in\Pi''$.
If $(\lambda^{j-1}+\rho^{j-1},\beta_{j})=0$ for some $j$, then
we take $\Pi':=\Pi^{j-1}$ and $\beta':=\beta_{j}$ ($\beta_j\in\Pi^{j-1}$
by the definition of an odd reflection).

\subsection{A new identity}\label{newid}
In~\cite{KW4}  a product character formula was obtained for
the  $\mathfrak{osp}(m,n)^{(1)}$-module $V_1:=L(\Lambda_0)$.
The bilinear form in this case is 
normalized there by $||\vareps_i||^2=1=-||\delta_j||^2$.
If $M\geq N+2$, this normalization coincides with
our normalization. On the other hand, we have established
in this section the KW-character formula for $V_1$.
Comparing these formulas, we obtain a new identity, see below.

\subsubsection{}
Let $\fg=\mathfrak{osp}(m,n)^{(1)}, M\geq N+2$.
Consider a set of simple roots 
$$\Pi=\{\delta-(\vareps_1+\delta_1),\vareps_1-\delta_1,\delta_1-\vareps_2,
\ldots,\vareps_n-\delta_n,\delta_n-\vareps_{n+1},\vareps_{n+1}-\vareps_{n+2},
\ldots\}.$$
The set $\Pi$ has an involution $\iota$ which exchanges the first two roots
(via $\vareps_1\mapsto \delta-\vareps_1$) and fixes the rest.
This involution induces an involution $\iota$ of $\fg$.
Consider the vacuum $\fg$-module $V_1$ and its twisted by $\iota$ 
 module $V_1^{\iota}$. By~\cite{KW4}, these are all 
integrable (i.e., $\pi$-integrable)  $\fg$-modules of level $1$.

One has $V_1=L(\Lambda_0)$, where $(\Lambda_0,\delta)=1, 
(\Lambda_0,\vareps_i)=(\Lambda_0,\delta_j)=0$, and 
$V_1^{\iota}=L(\Lambda_0+\vareps_1)$.
Formula~(\ref{KWcharfor}) gives
$$\begin{array}{l}
Re^{\rho}\ch V_1=\sum_{w\in W(\pi)}\sgn (w)\,
\frac{e^{w(\Lambda_0+\rho)}}{\prod_{i=1}^n
(1+e^{-w(\vareps_i+\delta_i)})}\\
Re^{\rho}\ch V_1^{\iota}=\sum_{w\in W(\pi)}\sgn (w)\,\frac{e^{w(\Lambda_0+\vareps_1+\rho)}}
{(1+e^{-w(\delta+\vareps_1+\delta_1)})\prod_{i=2}^n
(1+e^{-w(\vareps_i+\delta_i)})},
\end{array}$$
where $W(\pi)$ is the Weyl group of $\mathfrak{so}_M$ ($B_m$ if $M=2m+1$,
and $D_m$ if $M=2m$).

On the other hand, formula (7.5) from~\cite{KW4} gives
$$Re^{\rho}(\ch V_1\pm \ch V_1^{\iota})=Re^{\Lambda_0+\rho}\prod_{k=1}^{\infty}
\frac{(1\pm q^{k-1/2})^{p(M)}\prod_{i=1}^m(1\pm e^{\vareps_i}q^{k-1/2})
(1\pm e^{-\vareps_i}q^{k-1/2})}
{\prod_{j=1}^n (1\mp e^{\delta_j}q^{k-1/2})(1\mp e^{-\delta_j}q^{k-1/2})},$$
where  $q=e^{-\delta}$, $N=2n$, and either 
$p(M)=0, M=2m, \mathfrak{osp}(m,n)=D(m,n)$, or
$p(M)=1, M=2m+1, \mathfrak{osp}(m,n)=B(m,n)$.

Comparing the last character formula with the previous two,
gives a product formula for some mock theta functions (as defined in~\cite{KW5}).


\section{KW-character formula for finite-dimensional modules}\label{sectfindim}
We say that a highest weight module $L$ satisfies the KW-condition, if
 for some set of simple roots $\Pi$ one has $L=L(\lambda,\Pi)$
and $\Pi$ contains a subset $S$, which spans a maximal isotropic subspace in $\mathbb{Q}\Delta^{\perp}_{\lambda+\rho}$ (where $\Delta^{\perp}_{\lambda+\rho}=\{\alpha\in\Delta|\ 
(\lambda+\rho,\alpha)=0\}$). Sometimes we say that $L$ satisfies the
KW-condition for $\Pi$ (or for $(\Pi,S)$).

Note that $\dim\mathbb{Q}\Delta^{\perp}_{\lambda+\rho}$
is the invariant of $L$ (if $L=L(\lambda,\Pi)=L(\lambda',\Pi')$,
then $\dim\mathbb{Q}\Delta^{\perp}_{\lambda+\rho}=
\dim\mathbb{Q}\Delta^{\perp}_{\lambda'+\rho'}$), see~\cite{KW3}, Cor. 3.1.

Throughout this section $\fg$ is a basic Lie superalgebra $\fg$ and
$L$ is a finite-dimensional irreducible (hence highest weight) $\fg$-module, which 
satisfies the KW-condition for some $(\Pi,S)$, and
$r:=\dim\mathbb{Q}\Delta^{\perp}_{\lambda+\rho}$. Note that $\# S=r\leq \, defect (\fg)$.
We assume that $r>0$ (otherwise
$L$ is typical and $Re^{\rho}\ch L=\sum_{w\in W} \sgn (w)\, e^{w(\lambda+\rho)}$, by~\cite{K1.5},\cite{KW3}).

Recall that KW-formula~(\ref{3}) has the form 
\begin{equation}\label{333}
j_{\lambda}Re^{\rho}\ch L=
\sum_{w\in W} \sgn (w)\, \frac{e^{w(\lambda+\rho)}}{\prod_{\beta\in S}(1+e^{-w\beta})},
\end{equation}
where $j_{\lambda}\not=0$.
In this section we prove this formula for finite-dimensional modules,
satisfying the KW-condition, in all cases except for $\fg=D(m,n)$ with $S=\{\delta_k-\vareps_m\}$
or $S=\{\vareps_m-\delta_k\}$ with $k<n$. For $C(n)$ the formula was proved in Section~\ref{sectKW}.

The coefficient $j_{\lambda}$ is  equal to 
$r!$ for $A(m,n)$,
$2^r\,r!$ for $B(m,n)$,  to $1$ for $C(n)$,
to $2^r\,r!$ or $2^{r-1}\,r!$ 
for $D(m,n)$, and to $2$ for the exceptional Lie superalgebras,
cf.~(\ref{jlambda})  and~\S~\ref{521} below.

\subsection{Outline of the proof}
Let us explain the outline of the proof.
In~\S~\ref{findimmaxatyp} we deduce~(\ref{333}) from~(\ref{KWcharfor})
for $(\Pi,S)$ satisfying the assumptions
of~\S~\ref{sectKWass}, and then, using~\Lem{lemchPIS}, we deduce~(\ref{333}) 
for any $(\Pi,S)$ for the exceptional Lie superalegbras.
In~\S~\ref{ADr>1} we establish~(\ref{333}) under the assumption that
$(\rho,\alpha^{\vee})\geq 0$ for all $\alpha\in\Pi_0$ (this assumption
holds for some subsets of simple roots,  if $\fg$ is
$A(m-1,n-1)$ or $D(m,n)$). For $B(m,n)$ we establish~(\ref{333}) for  some special
subsets of simple roots in~\S~\ref{Bmnspecial}. Then, 
in~\S~\ref{changePiS}-\ref{chPIS}, we explain why for $A(m-1,n-1)$
(resp., $B(m,n)$)
the KW-formula for any $(\Pi',S')$ is equivalent to the KW-formula for
$(\Pi,S)$ as in~\S~\ref{ADr>1} (resp., in~\S~\ref{Bmnspecial}),
and why this holds for 
$D(m,n)$ except for the cases, when $S=\{\delta_k-\vareps_m\}$
or $S=\{\vareps_m-\delta_k\}$ with $k<n$.

For $D(m,n)$ one of the simplest cases, when we have not established the KW-formula,
 is  $D(3,2)$ with $\Pi=\{\vareps_1-\vareps_2,\vareps_2-\vareps_3,\vareps_3-\delta_1,
\delta_1-\delta_2,2\delta_2\}$ and $\lambda=4\vareps_1+4\vareps_2-\vareps_3+\delta_1$.

\subsection{Case of maximal atypicality}\label{findimmaxatyp}
Let $r$($=\# S$) be equal to the defect of $\fg$. Assume that $\Pi,S$ satisfy the assumptions
of~\S~\ref{sectKWass}. Take $\pi=\{\alpha\in\Pi_0|\ ||\alpha||^2>0\}$
for $\Delta\not=D(n+1,n), D(2,1,a)$, $\pi=D_{n+1}$ for $D(n+1,n)$, 
and $\pi=D_2=A_1\times A_1$ for $D(2,1,a)$.
Then, by~(\ref{KWcharfor}),
$$Re^{\rho}\ch L=
\cF_{W(\pi)}\bigl(\frac{e^{\lambda+\rho}}{\prod_{\beta\in S} (1+e^{-\beta})}\bigr),$$
see~\S~\ref{expA} for notation. Write $W=W(\pi)\times W(\Pi_0\setminus\pi)$. 
Since $L$ is finite-dimensional, the left-hand side of this formula 
is $W$-skew-invariant. Then
\begin{equation}\label{denomfin}
\begin{array}{ll}
|W(\Pi_0\setminus\pi)|Re^{\rho}\ch L &=\cF_{W(\Pi_0\setminus\pi)}(Re^{\rho}\ch L)=
\cF_{W(\Pi_0\setminus\pi)}
\cF_{W(\pi)}\bigl(\frac{e^{\lambda+\rho}}{\prod_{\beta\in S} (1+e^{-\beta})}\bigr)\\
&=\cF_W\bigl(\frac{e^{\lambda+\rho}}{\prod_{\beta\in S} (1+e^{-\beta})}\bigr).
\end{array}
\end{equation}

This establishes KW-formula~(\ref{333}) for this case with
 \begin{equation}\label{jlambda}
j_{\lambda}=|W(\Pi_0\setminus\pi)|;
\end{equation}
note that  $W(\Pi_0\setminus\pi)$ is the smallest factor
in the presentation of $W(\Pi_0)$ as the direct product
of Coxeter groups.
 
In particular, for $\lambda=0$
we obtain the denominator identity (for such $\Pi, S$).

Note that $\pi$ is the "largest part" of $\Pi_0$ in the following sense:
$\Pi_0\setminus\pi$ is a connected component of $\Pi_0$ with the property
$|W(\pi)|\geq |W(\Pi_0\setminus\pi)|$ (for $A(n,n), B(n,n), D(2,1,a)$
the choice of $\pi$ is not unique).

\subsubsection{}\label{521}
The coefficient $j_{\lambda}$  for a non-exceptional Lie superalgebra 
can be obtained as follows: it is not hard to show (see~\S~\ref{KWfin})
that $\Pi$ contains a connected subdiagram $\Pi'$ of defect $r$ with the property
$(\lambda,\alpha)=0$ for $\alpha\in\Pi'$; moreover,
$\Pi'$ is of "the same type"
as $\Pi$ (if $\Pi=A(m,n)$, then $\Pi'=A(m',n')$, etc.). Write
 $W(\Pi'_0)=W_1\times W_2$, where $W_1,W_2$ are the Weyl groups
of the components of $\Pi'_0$ (connected components if $\Pi'\not=D(2,1)$).
If we choose $W_2$ such that $|W_1|\geq |W_2|$, then $j_{\lambda}=|W_2|$.
If $\Pi=A(m,n)$ (resp., $B(m,n), D(m,n)$),  
then $\Pi'=A(m',n')$ (resp., $B(m',n'), D(m',n')$), 
with $r=\min (m',n')$. Therefore for $A(m,n)$ (resp., for $B(m,n)$)
one has $j_{\lambda}=|W(A_r)|=r!$ (resp., $j_{\lambda}=2^r\, r!$). For $D(m,n)$ one has 
either $\Pi'=D(m',r)$ for $m'>r$ and  $j_{\lambda}=|W(C_r)|=2^r\,r!$, or
$\Pi'=D(r,n')$ for $n'\geq r$ and  $j_{\lambda}=|W(D_r)|=2^{r-1}\,r!$.

\subsubsection{}
Now let $\fg$ be an exceptional Lie superalgebra and let 
$L=L(\lambda,\Pi)$ be a finite-dimensional $\fg$-module,
satisfying the KW-condition
for $(\Pi,S)$. We claim that  the KW-formula holds and $j_{\lambda}=2$.
Note that $\fg$ has defect one, so $r=1$, that is $S=\{\beta\}$ for some $\beta\in\Pi$.

Indeed, by above, if $\Pi,S$ satisfy the assumptions
of~\S~\ref{sectKWass}, then the KW-formula holds
(this was also proved previously, see~\cite{KW2}) and 
$j_{\lambda}=|W(\Pi_0\setminus\pi)|=2$.
Assume that $\fg\not=D(2,1,a)$ with $a\not\in\mathbb{Q}$
and $\Pi,S$ do not satisfy the assumptions
of~\S~\ref{sectKWass}.
It is easy to check that in this case $\beta$ is the only isotropic root
in $\Pi$ and that $(r_{\beta}\Pi, S=\{-\beta\})$  satisfy the assumptions
of~\S~\ref{sectKWass}. In particular, the KW-formula holds for $(r_{\beta}\Pi, S'=\{-\beta\})$.
 By~\Lem{lemchPIS}, 
this implies the KW-formula for $(\Pi,\{\beta\})$.

Now let $\fg= D(2,1,a)$ for irrational $a$ (this case is not covered by Section~\ref{sectKW}).
It is easy to see that, in this case,
the trivial module is the only finite-dimensional atypical module.
KW-formula for the trivial module is the denominator identity, which, clearly,
 does not depend on $a$; it holds for rational $a$, hence it holds in general.

This establishes KW-formula~(\ref{333}) with $j_{\lambda}=2$
for the exceptional Lie superalgebras.

\subsection{}\label{}
Denote by $j_{\lambda}$ the coefficient of $e^{\lambda+\rho}$ in 
$\cF_W (\frac{e^{\lambda+\rho}}{\prod_{\beta\in S}
(1+e^{-\beta})})$. Set
$$Z:=j_{\lambda} Re^{\rho}\ch L-\cF_W(\frac{e^{\lambda+\rho}}{\prod_{\beta\in S} (1+e^{-\beta})}).$$
The KW-formula is equivalent to $j_{\lambda}\not=0$ and $Z=0$.

If $Z\not=0$, we denote by $\lambda'$ a maximal element in $\supp Z$. 
The arguments of~\S~\ref{maxinorbit} show that
\begin{equation}\label{maxfin}
(\lambda'-\rho,\alpha^{\vee})\geq 0\ \text{ for each }\alpha\in\Pi_0.
\end{equation}

\subsubsection{}
\begin{lem}{lem0KW}
$\supp Z\subset W(\lambda+\rho-\mathbb{Z}S).$
\end{lem}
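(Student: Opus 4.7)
The plan is to combine the $W$-skew-invariance of $Z$ with an explicit description of the support of the $\cF_W$-term, and then reduce the remaining claim to a Casimir-eigenvalue analysis in the spirit of~\S~\ref{proofI}.

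First, $Z$ is $W$-skew-invariant: $\ch L$ is $W$-invariant because $L$ is finite-dimensional over basic $\fg$; $Re^{\rho}\in\cR_W[\cY^{-1}]$ is $W$-skew-invariant by~\S~\ref{RPi}; and the $\cF_W$-term is $W$-skew-invariant by construction (see~\S~\ref{compex}). Hence $\supp Z$ is a $W$-stable set. Second, expanding each factor $\frac{1}{1+e^{-w\beta}}$ as a geometric series in $\cR(\Pi)$ (as in~\Lem{lemexpA}), the exponents appearing in the $w$-th summand of $\cF_W$ all lie in $w(\lambda+\rho)+\sum_{\beta\in S}\mathbb{Z}(w\beta)=w(\lambda+\rho+\mathbb{Z}S)$, so that
$$\supp\cF_W\Bigl(\frac{e^{\lambda+\rho}}{\prod_{\beta\in S}(1+e^{-\beta})}\Bigr)\subset W(\lambda+\rho-\mathbb{Z}S).$$
Consequently any $\nu\in\supp Z\setminus W(\lambda+\rho-\mathbb{Z}S)$ must lie in $\supp(j_{\lambda}Re^{\rho}\ch L)\subset W\cdot(\lambda+\rho-Q^+)$, and by the $W$-stability of $\supp Z$ one may assume $\nu=\lambda+\rho-\mu$ with $\mu\in Q^+\setminus\mathbb{Z}_{\geq 0}S$.

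To rule out such a $\nu$, I would choose it $\Pi$-maximal among weights with this property. The Casimir eigenvalue constraint gives $\|\nu\|^2=\|\lambda+\rho\|^2$, i.e.\ $\|\mu\|^2=2(\lambda+\rho,\mu)$; decomposing $\mu=\mu_S+\mu'$ with $\mu_S\in\mathbb{Z}_{\geq 0}S$ and $\mu'\in\mathbb{Z}_{\geq 0}(\Pi\setminus S)$, and using $(\lambda+\rho,S)=(S,S)=0$, this simplifies to $\|\mu'\|^2=-2(\nu,\mu')$. From $W$-skew-invariance, $\nu$ has trivial $W$-stabilizer, and $\Pi$-maximality (compare~\S~\ref{maxinorbit}) then forces $(\nu,\alpha^{\vee})\in\mathbb{Z}_{>0}$ for every $\alpha\in\Pi_0$. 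One would then use the shape of $\Pi\setminus S$ for basic $\fg$ --- each $\alpha\in\Pi\setminus S$ being, up to adding an element of $S$, a non-negative integer combination of $\Pi_0$ (an analogue of~\Lem{lemap11}) --- to control the sign of $(\nu,\mu')$ and force $\mu'=0$, whence $\mu\in\mathbb{Z}_{\geq 0}S$, contradicting $\mu\notin\mathbb{Z}_{\geq 0}S$.

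The main obstacle is this last step. The bilinear form on the root lattice of a basic Lie superalgebra is indefinite (for instance $\|\delta_i-\delta_j\|^2<0$ in types $A(m,n)$, $B(m,n)$, $D(m,n)$), and $\Pi_0$-dominance of $\nu$ does not immediately fix the sign of $(\nu,\alpha)$ for an arbitrary $\alpha\in\Pi\setminus S$. Unlike~\S~\ref{proof11}, where the specific choice of $\pi$ and $S$ permitted a uniform projection $p:\mathbb{Z}_{\geq 0}\Pi\to\tfrac12\mathbb{Z}_{\geq 0}\pi$ annihilating $S$, here $(\Pi,S)$ is arbitrary subject only to the KW-condition, so a case-by-case analysis of the various basic Lie superalgebras will be required --- which is also why the later subsections~\ref{changePiS}--\ref{chPIS} exploit the flexibility to replace $(\Pi,S)$ by a more convenient pair before completing the KW-formula proof.
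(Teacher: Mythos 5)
Your first two observations are correct: $Z$ is $W$-skew-invariant, and the $\cF_W$-term visibly has support in $W(\lambda+\rho-\mathbb{Z}S)$. But the proposal does not close, as you yourself flag, and the obstruction is not a matter of case-by-case bookkeeping --- it is that the route via Casimir eigenvalue plus $\Pi_0$-dominance is simply too coarse. Dominance controls pairings with $\Pi_0$, whereas your $\mu'$ is a nonnegative combination of $\Pi\setminus S$, which contains isotropic roots that are not coroot-combinations of $\Pi_0$; so the sign of $(\nu,\mu')$ is not determined. And even if one could force $\|\mu'\|^2\leq 0$, the indefiniteness of the form blocks the conclusion $\mu'=0$. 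Unlike the situation in~\S~\ref{proof11}, there is no fixed isotropic map $p$ here because $(\Pi,S)$ is arbitrary subject only to the KW-condition.

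The paper takes a fundamentally different route. It does not argue via $W$-skew-invariance or Casimir at all. Instead it controls $\supp(Re^{\rho}\ch L)$ directly via \Prop{propKK} (the Kac--Kazhdan type result of~\cite{GK}): every $\nu$ in the support is reached from $\lambda+\rho$ by a finite chain of moves, each either a reflection $r_{\gamma}$ with $\gamma\in D_o\cup D_e$, or a subtraction $\nu\mapsto\nu-\gamma$ of an isotropic root $\gamma$ with $(\nu,\gamma)=0$. One then checks that the set $W(\lambda+\rho-\mathbb{Z}S)$ is stable under both kinds of moves. Reflections are trivially absorbed. For the subtraction of an isotropic $\beta$ orthogonal to $y(\lambda+\rho-\mu)$ with $\mu\in\mathbb{Z}S$, one reduces to $y=\mathrm{Id}$ and splits into two cases: if $(S,\beta)=0$ then the KW-condition forces $\beta\in\mathbb{Z}S$ and one stays put with $\mu'=\mu+\beta$; if $(\beta,\beta')\not=0$ for some $\beta'\in S$, then $\alpha:=\beta-x\beta'\in\Delta_{\ol{0}}$ for $x=\pm1$, $\alpha$ is orthogonal to $\lambda+\rho-\mu$, and $r_{\alpha}\beta=x\beta'$ yields $\lambda+\rho-\mu-\beta=r_{\alpha}(\lambda+\rho-\mu-x\beta')$, again in the set. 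The KW-condition enters precisely here, and its use is the key idea your proposal is missing.
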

\begin{proof}
Clearly,
$\supp \bigl(\frac{e^{w(\lambda+\rho)}}{\prod_{\beta\in S}(1+e^{-\beta})}\bigr)\subset 
 W(\lambda+\rho-\mathbb{Z}S).$

Let us show that
$\supp Re^{\rho}\ch L\subset W(\lambda+\rho-\mathbb{Z}S)$.

In the light of~\Prop{propKK} (in Section~\ref{sectDeltaL}), it is enough to verify that
for $y\in W, \mu\in\mathbb{Z}S$ and an isotropic root $\beta$, 
if $(y(\lambda+\rho-\mu),\beta)=0$, then $y(\lambda+\rho-\mu)-\beta=y'(\lambda+\rho-\mu')$
for some $y'\in W, \mu'\in\mathbb{Z}S$.

We start with the case $y=Id$, so $(\lambda+\rho-\mu,\beta)=0$.
If $(S,\beta)=0$, then
 $(\lambda+\rho,\beta)=0$ and, by the KW-condition, $\beta\in\mathbb{Z}S$.
 Therefore the claim holds for $y'=Id, \mu'=\mu+\beta$.
Now let  $(S,\beta)\not=0$, that is 
$(\beta,\beta')\not=0$ for some $\beta'\in S$.
Since $\fg_{\pm{\beta}}$ generate a copy of $\fsl(1,1)\subset\fg$,
either $\beta+\beta'$ or $\beta-\beta'$ is a root, i.e.
$\alpha:=\beta-x\beta'\in \Delta_{\ol{0}}$ for $x=1$ or $x=-1$.
Since $\lambda+\rho-\mu$ is orthogonal to $S$ and to $\beta'$,
it is orthogonal to $\alpha$. One has $r_{\alpha}\beta=x\beta'$, so
$$\lambda+\rho-\mu-\beta=\lambda+\rho-\mu-xr_{\alpha}\beta'=r_{\alpha}(\lambda+\rho-\mu-x\beta').$$
Thus the claim holds for $y'=r_{\alpha}, \mu'=\mu+x\beta$.

Now take an arbitrary $y\in W$.
Then $(y(\lambda+\rho-\mu),\beta)=0$ implies $(\lambda+\rho-\mu,y^{-1}\beta)=0$.
By above, $\lambda+\rho-\mu-y^{-1}\beta=w(\lambda+\rho-\nu)$ for some
$w\in W, \mu'\in\mathbb{Z}S$. Then
$y(\lambda+\rho-\mu)-\beta=yw(\lambda+\rho-\mu')$, as required.
\end{proof}

\subsection{Case $A(m-1,n-1)$ and $D(m,n), r>1$, or $D(m,n), S=\{\beta\}$,
where $\beta=\pm(\vareps_m-\delta_n)$}
\label{ADr>1}

In~\Cor{coreqAD} below we show that the KW-formula for any $(\Pi',S')$ is equivalent to the KW-formula for
$(\Pi,S)$ such that $(\rho,\alpha^{\vee})\geq 0$ for each $\alpha\in\Pi_0$. 
For such $(\Pi,S)$ we prove the formula below, proving thereby the KW-formula for $(\Pi',S')$.

\subsubsection{}\label{chundr}
Let $\fg$ be $A(m-1,n-1)$ or $D(m,n)$. We shall assume that
\begin{equation}\label{assmrho}
\forall\alpha\in\Pi_0\ \ 
(\rho,\alpha^{\vee})\geq 0.
\end{equation}
We will prove the KW-formula under this assumption, by showing that $j_{\lambda}\not=0$ and $Z=0$.

\subsubsection{}\label{Pi'0}
Since $L$ is finite-dimensional, $(\lambda,\alpha^{\vee})\geq 0$ for each $\alpha\in\Pi_0$.
Assumption~(\ref{assmrho}) implies that $\lambda+\rho$ is maximal in its $W$-orbit and
$\Stab_W(\lambda+\rho)=W(\Pi'_0)$, where
$$\Pi'_0:=\{\alpha\in\Pi_0|\ (\lambda+\rho,\alpha)=0\}=\{\alpha\in\Pi_0|\ (\lambda,\alpha)=(\rho,\alpha)=0\}.$$

Take $\alpha\in\Pi_0'$. Since $\Delta$ is not exceptional,
 $(\rho,\alpha)=0$ implies $\alpha=\beta+\beta'$, where
$\beta,\beta'\in\Pi$ are isotropic, see~\Lem{lemuse}.
 In this case, $(\beta,\alpha^{\vee})=(\beta,\alpha^{\vee})=1$.
If $(\lambda,\beta)\not=0$,
then $L=L(\lambda,\Pi)=L(\lambda-\beta,r_{\beta}\Pi)$ and $(\lambda,\alpha)=0$ gives
$(\lambda-\beta,\alpha^{\vee})=-1$,
which is impossible, since $L$ is finite-dimensional. Therefore $(\lambda,\beta)=0$;
similarly, $(\lambda,\beta')=0$.
We conclude that $\Pi'_0$ is spanned by $\Pi'$, where
$$\Pi':=\{\alpha\in\Pi|\ (\lambda,\alpha)=(\rho,\alpha)=0\}$$
(more precisely, $\Pi'_0$ is a set of simple roots for $\Delta(\Pi')_{\ol{0}}$).

\subsubsection{}
Since $\lambda+\rho$ is maximal in its $W$-orbit, using~(\ref{suppYw}) we obtain
$$\supp Z\subset \lambda+\rho-\mathbb{Z}_{\geq 0}\Pi.$$

Suppose that $Z\not=0$ and let $\lambda'$ be a maximal element in $\supp Z$.
Then $\lambda'=\lambda+\rho-\nu'$  for some
$\nu'\in \mathbb{Z}_{\geq 0}\Pi$.

Let us show that $\nu'\in\mathbb{Z}S$.
By~\Lem{lem0KW}, $\lambda':=w(\lambda+\rho-\mu)$ with $w\in W,\mu\in\mathbb{Z}S$.
Combining~(\ref{maxfin}) and~(\ref{assmrho}), we get
$(\lambda',\alpha^{\vee})\geq 0$ for each $\alpha\in\Pi_0$. Thus
$\lambda'=w(\lambda+\rho-\mu)$ is maximal in its $W$-orbit. Since
$\lambda+\rho-\mu$ lies in this orbit, 
we have $\lambda+\rho-\mu=\lambda'-\nu$ for some $\nu\in\mathbb{Z}_{\geq 0}\Pi_0$.
Thus $\mu=\nu+\nu'$, where $\mu\in\mathbb{Z}S$, $\nu\in\mathbb{Z}_{\geq 0}\Pi_0$ and
$\nu'\in \mathbb{Z}_{\geq 0}\Pi$. Since $S$ is a set of mutually orthogonal
isotropic roots, $\nu=0$ and
$\nu'=\mu$, as required.

\subsubsection{}\label{chundra}
Denote by $P:\cV\to \cV$ the projection
sending $\sum_{\nu\in\fh^*} a_{\nu} e^{\lambda+\rho-\nu}$ to
$\sum_{\nu\in\mathbb{Z}S} a_{\nu} e^{\lambda+\rho-\nu}$.
Since $\lambda'\in\lambda+\rho-\mathbb{Z}S$, it
 is enough to verify that $P(Z)=0$.

If $w\not\in W(\Pi'_0)$, then $w(\lambda+\rho)=(\lambda+\rho)-\gamma$, where
$\gamma\in \mathbb{Z}_{\geq 0}\Pi_0$, $\gamma\not=0$. By~(\ref{suppYw}),
this implies $P(Y_w)=0$. Hence
$$P\bigl(\cF_W(\frac{e^{\lambda+\rho}}{\prod_{\beta\in S} (1+e^{-\beta})})\bigr)=
P\bigl(\cF_{W(\Pi'_0)}(\frac{e^{\lambda+\rho}}{\prod_{\beta\in S} (1+e^{-\beta})})\bigr).$$

Since $(\rho,\alpha)=$ for each $\alpha\in\Pi'$, $\Pi'$ consists of isotropic roots.
Clearly, $S$ is a maximal set of  mutually orthogonal isotropic roots in $\Pi'$
(otherwise, $(S,\beta)=0$ for some $\beta\in\Pi'$, which
contradicts the KW-condition). In particular, from the denominator identity
(~(\ref{denomfin}) for $\lambda=0$) for $\Pi'$ 
one has
$$\cF_{W(\Pi'_0)}(\frac{e^{\lambda+\rho}}{\prod_{\beta\in S} (1+e^{-\beta})})\bigr)=
j(\Pi')R(\Pi') e^{\lambda+\rho},$$
where $j(\Pi')\not=0$.

Since $\Pi'\subset\Pi$ is orthogonal to $\lambda$, the highest weight vector
in $L(\lambda)$ generates the trivial module over the corresponding to $\Pi'$ subalgebra,
and so $L(\lambda)_{\lambda-\nu}=0$ for each $\nu\in\mathbb{Z}\Pi',\nu\not=0$.
Therefore $P(e^{\rho}\ch L)=1$. Since $S\subset \Pi$ we have 
$$P(Re^{\rho}\ch L)=P(R(\Pi') e^{\lambda+\rho}).$$

Then
$$P(Z)=P\bigl(j_{\lambda}Re^{\rho}\ch L-
\cF_W(\frac{e^{\lambda+\rho}}{\prod_{\beta\in S} (1+e^{-\beta})})\bigr)
=(j_{\lambda}-j(\Pi'))P(R(\Pi') e^{\lambda+\rho}).$$

The coefficient of $e^{\lambda+\rho}$ in the left-hand side is zero
and in the right-hand side is $j_{\lambda}-j(\Pi')$. Hence $j_{\lambda}=j_{\Pi'}\not=0$ and
$P(Z)=0$, as required. This completes the proof of the  KW-formula under the assumption~(\ref{assmrho}).

\subsection{Case $B(m,n)$}\label{Bmnspecial}
In this case there is no $\Pi$ such that
$(\rho,\alpha^{\vee})\geq 0$ for each $\alpha\in\Pi_0$.

We will show below that the KW-formula for any $(\Pi',S')$ is equivalent to the KW-formula for
$(\Pi,S)$ such that $(\rho,\alpha^{\vee})\geq 0$ for 
 all except one root in $\Pi_0$ (which is either 
 $\vareps_m$ or $2\delta_n$).
Moreover, by~(\ref{B1}), 
$\{\alpha\in\Delta|\ (\lambda+\rho,\alpha)=0\}$ is spanned
by
$$\Pi^0:=\{\alpha\in\Pi|\ (\lambda+\rho,\alpha)=0\}.$$
Since $(\lambda,\alpha^{\vee})\geq 0$ for each $\alpha\in\Pi_0$, $\Pi^0$ consists
of isotropic roots.
 
\subsubsection{}
Consider the case when $\alpha_{m+n}=\vareps_m$ (for $\alpha_{m+n}=\delta_n$
we interchange $\vareps$'s and $\delta$'s). Then $\alpha_{m+n-1}=
\delta_n-\vareps_m$.

Normalize $(-,-)$ by $||\vareps_i||^2=1$.
Then $(\lambda,\vareps_i)\geq 0\geq (\lambda,\delta_j)$ for each $i,j$. 
Since $r>1$, the KW-condition implies that 
$(\lambda,\vareps_n)=(\lambda,\delta_m)=0$ and
that $\Pi^0$ is connected (and contains $\delta_n-\vareps_m$).
Then
$$\Pi':=\Pi^0\cup \{\alpha_{m+n}\}$$
 is a connected subdiagram  containing $\alpha_{m+n}$.
Recall that $S$ is a maximal set of mutually orthogonal isotropic roots in $\Pi^0$,
so $\Pi'$ is $B(t,r)$ for $t=r$ or $t=r+1$, and
$(\lambda,\vareps_{m-i})=(\lambda,\delta_{n-j})=0$
for $0\leq i\leq r-1$.

Write 
$W(\Pi'_0)=W_+\times W_-$, where $W_-$ is the group of signed permutations of 
$\{\delta_{n-i}\}_{i=0}^{r-1}$ and $W_+$ is the group of signed permutations of 
$\{\vareps_{m-i}\}_{i=0}^{t-1}$.
We denote by $S_t, S_r$ the subgroups of unsigned permutations
in the corresponding groups, and by $w_-$ the longest element in
$W_-$ ($w_-\delta_i=-\delta_i$ for $i >n-r$).

\subsubsection{}
Since $(\lambda+\rho,\alpha^{\vee})\geq 0$ for $\alpha\in\Pi_0\setminus\{2\delta_n\}$,
$\lambda+\rho$ is maximal in $W(B_m)\times S_n$-orbit
(where $S_n$ is the subgroup of unsigned permutations in $W(C_n)\subset W$).

One has
$$(\lambda+\rho,\vareps_{m-i})=(\lambda+\rho,\delta_{n-j})=\frac{1}{2},$$
so $w_-(\lambda+\rho)$ is maximal in its $W$-orbit.
Since $\Delta^0$ is spanned by $\Pi^0$, we have
\begin{equation}\label{Bdd}
(\lambda+\rho,\delta_i)=\frac{1}{2}\ \text{ for } n-r<i\leq n;\ \ 
(\lambda+\rho,\delta_j)<-\frac{1}{2}\ \text{ for } 1\leq i\leq n-r.
\end{equation}
In particular,  $\Stab_W (\lambda+\rho)$ is $S_t\times S_r\subset W(\Pi'_0)$.

\subsubsection{}
Let $Z:=2^r r!Re^{\rho}\ch L-\cF_W\bigl(\frac{e^{\lambda+\rho}}{\prod_{\beta\in S}(1+e^{-\beta})}\bigr)$. 

Let us show that
\begin{equation}\label{nuka}
\supp Z\subset \lambda+\rho-(\mathbb{Z}_{\geq 0}\Pi\setminus\{0\}).
\end{equation}

Let $\rho'$ be the Weyl vector of $\Pi'$. Note that $\Pi',S$ satisfy the assumptions of~\S~\ref{sectKWass}, 
so~(\ref{denomfin}) for $B(t,r)$ gives
$$\cF_{W(\Pi'_0)} \bigl(\frac{e^{\rho'}}{\prod_{\beta\in S}(1+e^{-\beta})}\bigr)=
2^r r! \,\cF_{W_+} \bigl(\frac{e^{\rho'}}{\prod_{\beta\in S}(1+e^{-\beta})}\bigr).$$
The term in the left-hand side is  obviously $W(\Pi'_0)$-skew-invariant, so
$$\cF_{W(\Pi'_0)} \bigl(\frac{e^{\rho'}}{\prod_{\beta\in S}(1+e^{-\beta})}\bigr)=\sgn (w_-)
2^r r! \cF_{W_+}\, \bigl(w_-(\frac{e^{\rho'}}{\prod_{\beta\in S}(1+e^{-\beta})})\bigr).$$

Recall that $\sgn (w_-)=1$.
Since $\lambda$ and $\rho-\rho'$ are $W(\Pi'_0)$-invariant, 
using the denominator identity (see~(\ref{denomfin})) for $B(t,r)$, we obtain
\begin{equation}\label{BFW}
\begin{array}{l}
\cF_W\bigl(\frac{e^{\lambda+\rho}}{\prod_{\beta\in S}(1+e^{-\beta})}\bigr)=
\cF_{W/W(\Pi'_0)}\bigl(e^{\lambda+\rho-\rho'}
\cF_{W(\Pi'_0)}(\frac{e^{\rho'}}{\prod_{\beta\in S}(1+e^{-\beta})})\bigr)
\\=
2^r r! \,\cF_{W/W(\Pi'_0)}\bigl(e^{(\lambda+\rho-\rho'}
\cF_{W_+} \bigl(w_-(\frac{e^{\rho'}}{\prod_{\beta\in S}(1+e^{-\beta})})\bigr)\bigr)\\
=2^r r! \,\cF_{(W/W(\Pi'_0))\times W_+}\bigl(
w_-(\frac{e^{\lambda+\rho}}{\prod_{\beta\in S}(1+e^{-\beta})})\bigr),
\end{array}\end{equation}
where $W/W(\Pi'_0)$ is a set of coset representatives.

Hence
$$\cF_W\bigl(\frac{e^{\lambda+\rho}}{\prod_{\beta\in S}(1+e^{-\beta})}\bigr)=2^r r!\,
\sum_{w\in  (W/W(\Pi'_0))\times W_+} sgn (ww_-)\, Y_{ww_-},$$
where $Y_w:=\frac{e^{w(\lambda+\rho)}}{\prod_{\beta\in S}(1+e^{-w\beta})}$.

Write $W=W(B_m)\times W(C_n)$. Then $(W/W(\Pi'_0))\times W_+=(W(C_n)/W_-)\times W(B_m)$.
Consider the Bruhat order $\geq$ on $W(C_n)$. The following claim can be easily proven
by induction on the Bruhat order:
$$\forall y\in W(C_n)\ \ \exists z\in W_-\ \text{ such that } \  yz\geq w_-.$$
Using this claim we choose the set of representatives in $W(C_n)/W_-$ consisting of the 
elements which are larger than $w_-$ (with respect to the Bruhat order). Since
$w_-(\lambda+\rho)$ is maximal in its $W$-orbit, we get
$ww_-(\lambda+\rho)\leq \lambda+\rho$ for each $w\in (W(C_n)/W_-)\times W(B_m)$.

Take $w\in (W(C_n)/W_-)\times W(B_m)$.
Using~(\ref{suppYw}) we obtain 
$$\supp Y_{ww_-}\subset \lambda+\rho-\mathbb{Z}_{\geq 0}\Pi,$$
so $\supp Z\subset \lambda+\rho-\mathbb{Z}_{\geq 0}\Pi$.
Moreover, since $\Stab_W (\lambda+\rho)$ is equal
to $S_t\times S_r$ , we conclude that 
  $\lambda+\rho\in\supp Y_w$ forces  $w=w_-x$ for
$x\in W_+$. Combining~(\ref{BFW}) and~(\ref{denomfin})
 for $B(t,r)$, we 
conclude that the coefficient of $e^{\lambda+\rho}$ in
$\cF_W\bigl({e^{\lambda+\rho}}{\prod_{\beta\in S}(1+e^{-\beta})}\bigr)$ is equal to $2^r\,r!$.
Hence $\lambda+\rho\not\in\supp Z$. This establishes~(\ref{nuka}).
 
\subsubsection{}
Suppose that $Z\not=0$. Let $\lambda':=\lambda+\rho-\nu'$
be maximal in $\supp Z$. By~(\ref{nuka}), 
$\nu'\in\mathbb{Z}_{\geq 0}\Pi$. By~\Lem{lem0KW},
$\lambda'=w(\lambda+\rho-\mu)$
for $w\in W,\mu\in\mathbb{Z}S$.

Recall that $(\lambda'-\rho,\alpha^{\vee})\geq 0$ for $\alpha\in\Pi_0$.
One has $||\delta_i||^2=-1$ and
$(\rho,\delta_n)=\frac{1}{2}$; therefore $(\lambda',\delta_i)\leq \frac{1}{2}$ for each $i$
and the maximal element in $W$-orbit of $\lambda'$ is of the form
$$\lambda'':=\lambda'+\sum_{i|\  (\lambda'-\rho,\delta_i)=0}\delta_i.$$

Since  $(\lambda'-\rho,\delta_j-\delta_{j+1})\geq 0$, the set
$D:=\{\delta_i: (\lambda'-\rho,\delta_i)=0\}$ is either empty or of the form 
$\{\delta_{j},\delta_{j+1},\ldots,\delta_n\}$.
Assume that $\delta_{n-r}\in D$. Then $\delta_{i}\in D$ for $n-r\leq i\leq n$;
for such $i$ one has $(\rho,\delta_{i})=\pm\frac{1}{2}$, so
$(\lambda',\delta_{i})=\pm\frac{1}{2}$. 
Therefore $(\lambda+\rho-\mu, w\delta_{i})=\pm\frac{1}{2}$.
Recall that  for $j\leq n-r$ one has $(\mu,\delta_j)=0$
and $(\lambda+\rho,\delta_j)<-\frac{1}{2}$ by~(\ref{Bdd}).
Hence  for each $i=n-r,\ldots, n$ one has $w\delta_i=\pm\delta_j$, where
$n-r<j\leq n$, a contradiction. We conclude that
$$\lambda''=\lambda'+\sum_{i=s}^n \delta_i,\ \text{ where }n-r<s\leq n.$$

Since $\lambda''$ is maximal in $W\lambda'=W(\lambda+\rho-\mu)$, we have
$$\lambda+\rho-\mu=\lambda''-\nu=\lambda'+\sum_{i=s}^n \delta_i=\lambda+\rho-\nu'+\sum_{i=s}^n \delta_i,$$
for some $\nu\in\mathbb{Z}_{\geq 0}\Pi$.
Then
$$\nu+\nu'=\mu+\sum_{i=j}^n \delta_i\in \mathbb{Z}_{\geq 0}\Pi'$$
and $\nu,\nu'\in \mathbb{Z}_{\geq 0}\Pi$. Hence $\nu,\nu'\in\mathbb{Z}_{\geq 0}\Pi'$, that is
$\lambda'-\rho=\lambda-\nu'$ for $\nu'\in\mathbb{Z}_{\geq 0}\Pi'$.

Recall that $\Pi'$ is $B(t,r)$. Therefore $\mathbb{Q}\Pi'=\mathbb{Q} \Pi'_0$.
For each $\alpha\in\Pi'_0$ one has $(\lambda'-\rho,\alpha^{\vee})\geq 0$;
since $(\lambda,\alpha)=0$  we obtain $(\nu',\alpha^{\vee})\leq 0$. 
By Thm. 4.3 in~\cite{K2},
if $A$ is a Cartan matrix of a semisimple Lie algebra and $v$ is a vector
with rational coordinates, then $Av\geq 0$ implies $v>0$ or $v=0$.
Therefore 
$\nu'\in -\mathbb{Q}_{\geq 0}\Delta^+(\Pi')_{\ol{0}}$.
Since $\nu'\in\mathbb{Z}_{\geq 0}\Pi'$, $\nu'=0$, which contradicts~(\ref{nuka}).

This proves the KW-formula if $\Pi$ is such that $(\rho,\alpha^{\vee})\geq 0$ for all except
one root in $\Pi_0$.

\subsection{Dots and crosses diagrams}\label{dotcross}
Let $\fg$ be $A(m-1,n-1)=\fgl(m,n),\ B(m,n)$ or $D(m,n)$.

For a set of simple roots $\Pi$ we denote by $Iso$ the subdiagram of 
Dynkin diagram consisting of isotropic nodes. We call a set of $r$ mutually orthogonal isotropic 
simple roots {\em dense}
if $S$ is contained in a connected subdiagram consisting of $2r-1$ isotropic roots.

We will  show that if $L$ satisfies the KW-condition for some pair, 
then $L=L(\lambda,\Pi)$ satisfies the KW-condition 
for a pair $(\Pi,S)$, such that $Iso$  is connected and $S$ is dense
(plus some additional conditions for $B(m,n)$ and $D(m,n)$),
and the KW-formula for the former pair is equivalent to the KW-formula
for $(\Pi,S)$.

\subsubsection{}
In this section we encode subsets of simple roots for the root system
$\Delta(\fg)$ by diagrams described in~\cite{GKMP}. 
We recall this construction below.

Recall that the standard basis of $\fh^*$ consists of $\vareps_i$ with $i=1,\ldots,m$
and  $\delta_j$ with $j=1,\ldots,n$, which are mutually orthogonal. We can normalize
the form $(-,-)$ in such a way that 
$||\vareps_i||^2=1$ for each $i$ and $||\delta_j||^2=-1$ for each $j$.

Set 
$$\cE:=\{\vareps_i\}_{i=1}^m,\ \ \cD:=\{\delta_j\}_{j=1}^n,\ \ 
\cB=\cE\cup\cD.$$
We call two elements $v_1,v_2\in \cB$
{\em elements of the same type} if $||v_1||^2=||v_2||^2$ (i.e.,
$\{v_1,v_2\}\subset \cE$   or $\{v_1,v_2\}\subset\cD$)
and {\em elements of different types} otherwise.

Fix a total order $>$ on $\mathcal B=\{\xi_1>\ldots>\xi_{m+n}\}$ and define the corresponding set of simple roots $\Pi(\mathcal B,>)$
as follows

$$\begin{tabular}{l|l}
$\fg$&$\Pi(\mathcal B,>)$\\
\hline\\
$A(m-1,n-1)$&$\{\xi_i-\xi_{i+1}\}_{i=1}^{m+n-1}$\\
$B(m,n)$&$\{\xi_i-\xi_{i+1}\}_{i=1}^{m+n-1}\cup\{\xi_{m+n}\}$\\
$D(m,n)$&$\{\xi_i-\xi_{i+1}\}_{i=1}^{m+n-1}\cup\{2\xi_{m+n}\}$ if $\xi_{m+n}\in\cD$\\
 & $\{\xi_i-\xi_{i+1}\}_{i=1}^{m+n-1}\cup\{\xi_{m+n-1}+\xi_{m+n}\}$ if $\xi_{m+n}\in\cE$.
\end{tabular}
$$
We  encode a subset $\Pi$ of simple roots for the root system
$\Delta(\fg)$  by the ordered set $\mathcal B$, 
which is  pictorially represented by an ordered sequence of
dots and crosses, the former corresponding to vertices in $\cE$ and the latter to vertices in $\cD$.

For instance, the sequence
$\ \ \cdot \ \ \cdot \ \ \times \ \ \times\ \ $
encodes $\{\vareps_1-\vareps_2,\varesp_2-\delta_1,\delta_1-\delta_2\}$ for $A(1,1)$,
$\{\vareps_1-\vareps_2,\varesp_2-\delta_1,\delta_1-\delta_2,\delta_2\}$ for $B(2,2)$
and $\{\vareps_1-\vareps_2,\varesp_2-\delta_1,\delta_1-\delta_2,2\delta_2\}$ for $D(2,2)$.

For each $u,v\in\cB$, $u-v$ is a root. We call $u,v$ the {\em ends} of $\alpha=u-v$.
The root $u-v$ is isotropic if $u,v$ are  of different types and is simple if
$u,v$ are neighbors.

A simple odd reflection
$r_{v-w}$ with $v,w\in\mathcal B$ corresponds to  the switch of  consecutive
vertices $v,w$ in the ordered sequence ($v$ and $w$ should be of different types).

 For $v,w\in\mathcal B$ denote by $[v,w]$ the (ordered) set
of elements of $\mathcal B$ lying between $v$ and $w$, namely,  if $v>w$, then
$[v,w]=\{u\in \mathcal B|\ v\geq u\geq w\}$ and by $]v,w[$ the set $[v,w]\setminus\{v,w\}$.

Let $[v,w]$ be any interval and $|[v,w]\cap\cE|=k,\ |[v,w]\cap\cD|=l$.
We denote by $\Pi([u,v])$ the corresponding set of simple roots of $A(k,l)$-type:
if $[v,w]=\{u_1>u_2>\ldots>u_s\}$, then $\Pi([u,v]):=\{u_1-u_2,\ldots,u_{s-1}-u_s\}$.
Note that each permutation of dots and crosses in $[v,w]$ correspond to a sequence of odd reflections
and thus to a choice of another set of simple roots in $A(k,l)$.

Let $W_{[v,w]}$ be the Weyl group of $\Pi([u,v])$
(the subgroup of $W$ consisting of (non-signed) permutations of $[v,w]\cap\cE$ and of $[v,w]\cap\cD$), 
so $W_{[v,w]}\cong S_k\times S_l$.

We say that $[v,w]$ is {\em balanced} if $\Pi([v,w])$ has a maximal possible number
of mutually orthogonal isotropic roots; in other words, if $[v,w]$ consists of $k_1$ dots
and $k_2$ crosses, then $[v,w]$ is balanced if it contains $\min (k_1,k_2)$ disjoint pairs
consisting of neighboring vertices of different types. 

Any $\nu\in\fh^*$ can be written in the form $\nu=\sum_{u\in\cB} y_u u$ for some scalars $y_u$;
we define the {\em restriction of } $\nu$ to $[v,w]$ by the formula
$$\nu_{[u,v]}:=\sum_{u\in [v,w]} y_u u.$$
We say that $\nu_{[u,v]}$ is {\em trivial } if $(\nu,\alpha)=0$ for
all $\alpha\in \Pi([v,w])$; it means that $A(k,l)$-module of highest weight
$\nu_{[u,v]}$ is one-dimensional (where $\Pi([v,w])$ 
is a set of simple roots for $A(k,l)$).

 A set $S$ of mutually orthogonal simple 
isotropic roots is represented by a set of disjoint pairs
consisting of neighboring vertices of different types. It can be encoded by the set of 
these vertices, which we denote by $\supp S$. 
Note that $S$ is dense if
$\supp S\subset\cB$ form an interval consisting of the elements of alternating types.

\subsubsection{}
For the cases $A(m-1,n-1)$ and $B(m,n)$,
using the description of Borel subalgebras in~\cite{K1}, 
it is not difficult to show that
any set of positive  roots for $\fg$ (satisfying $\Delta^+(\Pi)\cap \Delta_{\ol{0}}=\Delta^+_{\ol{0}}$)
is of the form  $\Pi(\mathcal B,>)$  for  some total order  $>$ on $\mathcal B$.

Consider the case $D(m,n)$.
Recall that $D(m,n)$ has an autormorphism $\iota_D$ (induced by a
Dynkin diagram automorphism of $D_m$) which preserves $\fh$ and
satisfies $\iota_D(\delta_i)=\delta_i$ for $1\leq i\leq n$,
 $\iota_D(\vareps_i)=\vareps_i$ for $1\leq i\leq m-1$ and
  $\iota_D(\vareps_m)=-\vareps_m$.
Any set of positive  roots for $D(m,n)$ is of the form 
$\Pi(\mathcal B,>)$ or $\iota_D (\Pi(\mathcal B,>))$ 
 for  some total order  $>$ on $\mathcal B$.
If $\delta_n+\vareps_m\in S$, then $\delta_n\pm\vareps_m\in\Pi$ and
$\iota_D(\Pi)=\Pi$. Thus either $(\Pi,S)$ or $(\iota(\Pi),\iota(S))$
are such that $\Pi=\Pi(\mathcal B,>)$ and $S$ is of the form
$S=\{u_i-v_i\}_{i=1}^r$, $u_i,v_i\in\cB$.
Therefore, using $\iota_D$, we can (and will ) assume that $\Pi=\Pi(\mathcal B,>)$ and
$S$ is of this  form.

\subsubsection{}
Summarizing, we consider  $\Pi=\Pi(\mathcal B,>)$, which is encoded by 
a sequence of $m$ dots and $n$ crosses, and $S\subset \Pi$ is encoded by $\supp S$,
which is a subset of $\cB$, 
consisting of $r$ pairs of neihboring dots and  crosses.

\subsubsection{}
\begin{lem}{lemuse}
If $\alpha\in\Pi_0$ is such that $(\rho,\alpha)=0$, then $\alpha=\beta+\beta'$, where $\beta,\beta'\in\Pi$
are isotropic.
\end{lem}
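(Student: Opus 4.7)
The plan is to work in the dots-and-crosses parameterization $\Pi = \Pi(\cB,>)$ introduced in~\S\ref{dotcross} and to carry out a case analysis on $\alpha \in \Pi_0$. The key computational tool is that the simple-root relations translate into recursions along $\cB$: for consecutive entries $\xi_p, \xi_{p+1}$, the simple root $\xi_p - \xi_{p+1}$ gives $(\rho,\xi_p) - (\rho,\xi_{p+1}) = \|\xi_p\|^2 = \pm 1$ if $\xi_p, \xi_{p+1}$ are of the same type, and $= 0$ if they are of different types (so the simple root is isotropic). Recall that $\Pi_0$ consists of $\{\vareps_i-\vareps_{i+1}\}_{i<m} \cup \{\delta_j-\delta_{j+1}\}_{j<n}$, augmented by $\{\vareps_m, 2\delta_n\}$ for $B(m,n)$ and $\{\vareps_{m-1}+\vareps_m, 2\delta_n\}$ for $D(m,n)$.

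I first dispatch $\alpha = \vareps_i - \vareps_{i+1}$ (the case $\alpha = \delta_j-\delta_{j+1}$ being symmetric). Since $\Delta_{\ol{0}}^+ \subset \Delta^+(\Pi)$ forces $\vareps_1 > \cdots > \vareps_m$ within $\cB$, the elements strictly between $\vareps_i$ and $\vareps_{i+1}$ in $\cB$ are all crosses; let $k$ be their number. Telescoping the recursion between the positions of $\vareps_i$ and $\vareps_{i+1}$ produces the contribution $0$ from the two outermost (different-type) pairs and $-1$ from each of the $k-1$ cross/cross pairs, whence $(\rho,\alpha) = 1-k$. The hypothesis $(\rho,\alpha) = 0$ forces $k = 1$, and writing $\delta_j$ for the unique cross between $\vareps_i$ and $\vareps_{i+1}$ yields $\alpha = (\vareps_i - \delta_j) + (\delta_j - \vareps_{i+1})$, a sum of two isotropic simple roots of $\Pi$.

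What remains is the short even simple roots, for which the ``terminal'' simple root(s) of $\Pi$---namely $\xi_{m+n}$ for $B(m,n)$, and $2\xi_{m+n}$ or $\xi_{m+n-1}+\xi_{m+n}$ for $D(m,n)$---must be used to pin down $(\rho,\vareps_m)$ and $(\rho,\delta_n)$. For $B(m,n)$ the terminal relation gives $(\rho,\xi_{m+n}) = \pm\tfrac{1}{2}$, and telescoping then forces $(\rho,\vareps_m), (\rho,\delta_n) \in \tfrac{1}{2}+\mathbb{Z}$, so neither $(\rho,\vareps_m)$ nor $(\rho,2\delta_n)$ can vanish and the lemma is vacuous. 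The substantive case is $D(m,n)$, where both values are integers and the vanishing can genuinely occur; this is the main obstacle. I will resolve it by a short case analysis on the last two (resp.\ three) entries of $\cB$: using the terminal constraints together with $(\rho,\vareps_i-\vareps_{i+1}) = 1-k$, one sees that $(\rho,2\delta_n)=0$ forces the last two entries of $\cB$ to be $\delta_n,\vareps_m$, so that both $\delta_n \pm \vareps_m$ lie in $\Pi$ and are isotropic, giving $2\delta_n = (\delta_n-\vareps_m)+(\delta_n+\vareps_m)$; and $(\rho,\vareps_{m-1}+\vareps_m)=0$ forces the last three entries to be $\vareps_{m-1},\delta_n,\vareps_m$, giving $\vareps_{m-1}+\vareps_m = (\vareps_{m-1}-\delta_n)+(\delta_n+\vareps_m)$. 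In both cases the summands are isotropic simple roots of $\Pi$, proving the lemma.
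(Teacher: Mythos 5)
Your proposal is correct and proceeds by essentially the same argument as the paper: encoding $\Pi$ as a dot--cross sequence, propagating the marks $(\rho,\xi_p)$ via the same-type/different-type recursion, handling $u-v$ roots by the $1-t$ telescoping count, ruling out the terminal roots of $B(m,n)$ by half-integrality, and pinning down the last two (for $2\delta_n$) or three (for $\vareps_{m-1}+\vareps_m$) entries of $\cB$ in the $D(m,n)$ case. The only cosmetic difference is that the paper records the intermediate formulas $(\rho,\delta_n)=t'-1$ and $(\rho,\vareps_m)=-t'$ explicitly before extracting $t'=1$ (resp.\ $t'=0,\,t=1$), whereas you compress this into a quick case analysis, but the substance is identical.
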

\begin{proof}
Mark each element of $u\in \cB$ by the number $(\rho,u)$.
If $u>v$ are neighboring in $\cB$, then $(\rho,u)=(\rho,v)$ if $u,v$
are of different types and $(\rho,u)-(\rho,v)=||u||^2$ otherwise.

If $u-v\in\Pi_0$ for $u,v\in\cB$, then $u,v$ are of the same type and there are no other elements
of these type between them. This means that $(\rho,u)-(\rho,v)=(1-t)||u||^2$,
where $t$ is the number of elements in $]u,v[$.

Take $\alpha\in\Pi_0$ such that $(\rho,\alpha)=0$. 

If $\alpha=u-v$, then, by above,
 $]u,v[$ consists of one element, say $w$, and $u,w$ are of different types.
 Hence $u-w,v-w$ are isotropic simple roots and $u-v=(u-w)+(w-v)$
as required. This establishes the claim for $A(m,n)$.

For $B(m,n)$ the last mark is equal to $\pm 1/2$, so all marks are not integral.
If $\alpha\not=u-v$ for $u,v\in\cB$, then $\alpha=u$ or $\alpha=2u$ for some $u\in\cB$, and thus
$(\rho,\alpha)\not=0$.

For $D(m,n)$ the last mark is zero if the last element is a dot
and is $-1$ otherwise. If $\alpha\not=u-v$ for $u,v\in\cB$, then $\alpha=2\delta_n$,
 or $\alpha=\vareps_{m-1}+\vareps_m$. If $\alpha=2\delta_n$, then (since $\delta_n$
 is represented by the last cross), $(\rho,\delta_n)=t'1$, where
 $t'$ is the number of dots after the last cross. Hence $(\rho,\alpha)=0$ forces
 $t'=1$, that is $\delta_n\pm\vareps_m\in\Pi$, which gives
 $2\delta_n=(\delta_n-\vareps_m)+(\delta_n+\vareps_m)$, as required.
Consider the remaining case  $\alpha=\vareps_{m-1}+\vareps_m$.
By above,  $(\rho,\vareps_{m-1}-\vareps_m)=1-t$, where $t$ is the number
of elements in $]\vareps_{m-1},\vareps_m[$. Note that $\vareps_m$ is the last dot in $\cB$, so
$(\rho,\vareps_m)=-t'$, where
$t'$ is the number of crosses after $\vareps_m$. 
Then $0=(\rho,\vareps_{m-1}+\vareps_m)=1-t-2t'$, so $t'=0$ and $t=1$.
This means that $\vareps_{m-1}-\delta_n,\delta_n\pm\vareps_m$ are isotropic simple roots, and
$\vareps_{m-1}+\vareps_m=(\vareps_{m-1}-\delta_n)+(\delta_n+\vareps_m)$
is the required presentation.
\end{proof}

\subsection{Equivalence of the KW-formulas}\label{changePiS}
Let $L=L(\lambda,\Pi)$  be a finite-dimensional
$\fg$-module, satisfying the  KW-condition for $(\Pi,S)$.

In~\Lem{lemchPIS} we show that if $\beta\in\Pi$ is isotropic and
 $(\beta,S)=0$, then $L$ satisfies the KW-condition for
$(r_{\beta}\Pi,r_{\beta}S)$ and the corresponding KW-formulas
are equivalent (one has
$r_{\beta}S=S$ if $\beta\not\in S$ and
$r_{\beta}S=(S\setminus\{\beta\})\cup\{-\beta\}$ if $\beta\in S$).
In particular, we can permute dots and crosses in any interval
$[u,v]$ such that $[u,v]\cap \supp S=\emptyset$; then
$L$ satisfies the KW-condition for  the resulting $\Pi'$ and the original $S$,
and the corresponding KW-formulas are equivalent.

In~\Lem{lemmay2} we assume that $(\cB,>)$ contains an interval $[u,v]$
 such that $\supp S\subset [u,v]$ and
 $(\lambda,u-u')=0$ for each $u'\in [u,v]$. Let 
 the ordered set $(\cB,'>)$ be obtained from $(\cB,>)$ by some  permutation of dots and crosses
  in $[u,v]$ in such a way that with the new ordering this interval is balanced
 (with a  maximal set of mutually orthogonal isotropic roots $S'$). 
 In this case $L=L(\lambda,\Pi')$ and $L$ satisfies the KW-condition for $(\Pi',S')$, where $\Pi'=\Pi(\cB,'>)$; moreover, the corresponding KW-formulas are equivalent.

\subsubsection{}
\begin{lem}{lemchPIS}
Let $L=L(\lambda,\Pi)$ be a finite-dimensional $\fg$-module which satisfies the KW-condition for
$(\Pi,S)$. Let $\beta\in\Pi$ be an isotropic root orthogonal to all elements of $S$.

(i)  The KW-condition  holds  for $(\Pi',S'):=(r_{\beta}\Pi,r_{\beta}S)$.
 
(ii) Let $L=L(\lambda',\Pi')$ and $\rho'$ be the Weyl vector for $\Pi'$.
Then
$$\cF_{W} \bigl(\frac{e^{\lambda+\rho}}{\prod_{\beta\in S}(1+e^{-\beta})}\bigr)\in \cR(\Pi),\ \ 
\cF_{W} \bigl(\frac{e^{\lambda'+\rho'}}{\prod_{\beta\in S'}(1+e^{-\beta})}\bigr)\in\cR(\Pi')$$
are equivalent. 

In particular, formulas~(\ref{333}) for $(S,\Pi)$  and $(S',\Pi')$ 
are equivalent.
\end{lem}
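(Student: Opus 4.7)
The plan is to treat parts (i) and (ii) together by cases, distinguishing whether $\beta \notin S$ or $\beta \in S$. For (i), when $\beta \notin S$, I observe that $(\lambda, \beta) \neq 0$: since $(\rho, \beta) = 0$ for the isotropic simple root $\beta$, the assumption $(\lambda, \beta) = 0$ would put $\beta \in \Delta^{\perp}_{\lambda+\rho}$, and together with $(\beta, S) = 0$ this would make $\mathbb{Q}S + \mathbb{Q}\beta$ a strictly larger isotropic subspace of $\mathbb{Q}\Delta^{\perp}_{\lambda+\rho}$ (using linear independence of simple roots to rule out $\beta \in \mathbb{Q}S$), contradicting the KW-condition. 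Hence $\lambda' = \lambda - \beta$, $\lambda' + \rho' = \lambda + \rho$, $S' = S$, and (i) is immediate. When $\beta \in S$, the condition $\beta \in \Delta^{\perp}_{\lambda+\rho}$ together with $(\rho, \beta) = 0$ forces $(\lambda, \beta) = 0$, so $\lambda' = \lambda$, $\rho' = \rho + \beta$, $\lambda' + \rho' = \lambda + \rho + \beta$, and $S' = (S \setminus \{\beta\}) \cup \{-\beta\}$. A direct computation using $(\beta, S) = 0$ gives $S' \subset \Delta^{\perp}_{\lambda'+\rho'}$, and maximality holds since $\mathbb{Q}S' = \mathbb{Q}S$ and $\dim \mathbb{Q}\Delta^{\perp}_{\lambda+\rho}$ is an invariant of $L$.

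For (ii), write $X := \cF_W\bigl(\frac{e^{\lambda+\rho}}{\prod_{\gamma \in S}(1+e^{-\gamma})}\bigr) \in \cR(\Pi)$ and $X' := \cF_W\bigl(\frac{e^{\lambda'+\rho'}}{\prod_{\gamma \in S'}(1+e^{-\gamma})}\bigr) \in \cR(\Pi')$. In the case $\beta \notin S$, these are the same formal expression viewed in different rings, so \Lem{lemexpA}(b) immediately gives $X \sim X'$. In the case $\beta \in S$, I would compare the images of $X$ and $X'$ inside $\cR(\Pi')$ term-by-term: the $w$-summands of $X$ and $X'$ share the common factor $\prod_{\gamma \in S \setminus \{\beta\}}(1+e^{-w\gamma})^{-1}$, so it suffices to show that $\frac{e^{w(\lambda+\rho)}}{1+e^{-w\beta}}$ (from $X$) and $\frac{e^{w(\lambda+\rho)+w\beta}}{1+e^{w\beta}}$ (from $X'$) expand to the same element of $\cR(\Pi')$. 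Applying the expansion rules of \S\ref{infprod} and splitting by whether $w\beta \in \Delta^+(\Pi')$ or $\Delta^-(\Pi')$, both factors produce the identical series $\sum_{k \geq 0}(-1)^k e^{w(\lambda+\rho) - k w\beta}$ in the first sub-case and $\sum_{k \geq 0}(-1)^k e^{w(\lambda+\rho) + (k+1)w\beta}$ in the second. Hence the images of $X$ and $X'$ coincide as elements of $\cR(\Pi')$, and \Lem{lemexpA}(b) applied to $X$ then yields $X \sim X'$. The equivalence of formulas~(\ref{333}) for $(\Pi,S)$ and $(\Pi',S')$ then follows, since $Re^{\rho}\ch L$ represents the same equivalence class in $\cR(\Pi)$ and $\cR(\Pi')$ (by \S\ref{RPi}) and since $\cR(\Pi')$ is a domain in which $\sim$-equivalence between elements of $\cR(\Pi')$ forces equality.

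The main subtlety is the case $\beta \in S$: the naive rational-function identity $\frac{e^{\lambda+\rho+\beta}}{1+e^{\beta}} = \frac{e^{\lambda+\rho}}{1+e^{-\beta}}$ is not a literal identity at the level of expansions in a single ring (the two sides expand in opposite $\beta$-directions), and the technical point is to verify that they nevertheless yield the same element of $\cR(\Pi')$ via the explicit geometric-series rules of \S\ref{infprod}, splitting cases according to the sign of $w\beta$ relative to $\Pi'$.
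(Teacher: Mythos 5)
Your proposal is correct and follows essentially the same strategy as the paper's proof: split into the cases $\beta\notin S$ and $\beta\in S$, observe that in each case the rational function $e^{\lambda+\rho}/\prod_{\gamma\in S}(1+e^{-\gamma})$ coincides with $e^{\lambda'+\rho'}/\prod_{\gamma\in S'}(1+e^{-\gamma})$ in $\cR_W[\cY^{-1}]$, and invoke Lemma~\ref{lemexpA}(b) for the equivalence of the expansions. Your term-by-term geometric-series verification in the case $\beta\in S$ is a more explicit unwinding of the paper's terse observation that $X$ and $X'$ are expansions of the same element of the localization.
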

\begin{proof}
The proof of (i) is straightforward. For (ii) set
$$X:=\cF_{W} \bigl(\frac{e^{\lambda+\rho}}{\prod_{\beta\in S}(1+e^{-\beta})}\bigr)\in \cR(\Pi),\ \ 
X':=\cF_{W} \bigl(\frac{e^{\lambda'+\rho'}}{\prod_{\beta\in S'}(1+e^{-\beta})}\bigr)\in\cR(\Pi').$$

Note that $X,X'$ are finite sums; for each isotropic root
$\beta_1\in\Pi$ (resp., $\beta_1\in\Pi'$) $X$ (resp., $X'$) has a pole of order $\leq 1$
at $\beta_1$.  

If $\beta\not\in S$, then $S=S'$; moreover, the
KW-condition implies that $(\lambda+\rho,\beta)\not=0$,
so $\lambda+\rho=\lambda'+\rho'$. Hence  $X,X'$
are the expansions of the same element in $\cR(\Pi)$ and in $\cR(\Pi')$,
so they are equivalent.

If $\beta\in S$, then $\lambda=\lambda'$ and so
$$\frac{e^{\lambda'+\rho'}}{\prod_{\beta\in S'}(1+e^{-\beta})}=
\frac{e^{\lambda+\rho}}{\prod_{\beta\in S'}(1+e^{-\beta})}.$$
Then again $X,X'$
are the expansions of the same element and they are equivalent.
\end{proof}

\subsubsection{}
\begin{lem}{lemmay2}
Let $L=L(\lambda,\Pi)$ satisfy the KW-condition for $(\Pi,S)$ and 
let $\ \Pi=\Pi(\cB,>)$. Assume that  $[u,v]\in (\cB, >)$ 
is such that $\supp S\subset [u,v]$ and
$\lambda_{[u,v]}$ is trivial. Let the ordered set
 $(\cB,'>)$ be obtained from $(\cB,>)$ by permuting 
 some dots and crosses in $[u,v]$:
we denote the resulting interval in $(\cB,'>)$ by $[u',v']$ 
($[u,v]=[u',v']$ as non-ordered
sets). Then
 
(i) $L=L(\lambda,\Pi')$, where $\Pi':=\Pi(\cB,'>)$.

(ii)   If $[u',v']$ is balanced, then $L$ satisfies the KW-condition for $(\Pi',S')$,
where $S'$ is a maximal set of mutually orthogonal isotropic roots in $\Pi([u',v'])$.

Moreover, the KW-formula for $(\Pi,S)$ is equivalent to the KW-formula for $(\Pi',S')$.
\end{lem}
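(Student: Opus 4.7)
\emph{Plan.} For part (i), any permutation within $[u,v]$ decomposes as a sequence of transpositions of consecutive elements of different types, each realized by an odd reflection $r_\beta$ at an isotropic root $\beta = u_i - u_{i+1}$ with $u_i, u_{i+1} \in [u,v]$. Triviality of $\lambda_{[u,v]}$ is an ordering-independent condition equivalent to $(\lambda, u_i - u_j) = 0$ for every pair $u_i, u_j \in [u,v]$, since $\Pi([u,v])$ spans the same lattice of differences in any ordering. Hence $(\lambda, \beta) = 0$ at each step, and by the standard odd-reflection rule the highest weight is preserved throughout the sequence, giving $L = L(\lambda, \Pi')$.

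For the KW-condition in (ii), I would verify $(\lambda + \rho', \beta) = 0$ for each $\beta \in S'$: both endpoints of $\beta$ lie in $[u',v']$, so $(\lambda, \beta) = 0$ by triviality, while $(\rho', \beta) = \tfrac{1}{2}(\beta, \beta) = 0$ by the Weyl-vector property~\S\ref{choicerho}(i); thus $S' \subset \Delta^\perp_{\lambda + \rho'}$. For $|S'| = r$: since $[u',v']$ is balanced, $|S'| = \min(|\cE \cap [u,v]|, |\cD \cap [u,v]|)$; the existence of the $r$-element set $S \subset \Pi([u,v])$ bounds this minimum from below by $r$, while the maximal isotropic dimension in $\QQ\Delta^\perp_{\lambda + \rho'}$ equals the $L$-invariant $r$, bounding it from above.

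The substantive content is the equivalence of the two KW-formulas, which I would obtain via a denominator identity in an auxiliary subalgebra. Let $\dot\fg \subseteq \fg$ be the Lie subsuperalgebra with Cartan $\fh$ whose positive roots are $\Delta^+(\Pi([u,v]))$; it is isomorphic to $A(k-1, l-1)$ for $k = |\cE \cap [u,v]|$, $l = |\cD \cap [u,v]|$, with Weyl group $\dot W \cong S_k \times S_l \subseteq W$. A direct check shows that $\rho - \dot\rho$ (the half-sum of positive roots of $\fg$ outside $\dot\fg$) is $\dot W$-invariant, and $\lambda$ is $\dot W$-invariant by triviality of $\lambda_{[u,v]}$; choosing left coset representatives $T$ of $\dot W$ in $W$, I would factor
\[
\cF_W\!\left(\frac{e^{\lambda + \rho}}{\prod_{\beta \in S}(1 + e^{-\beta})}\right) = \cF_T\!\left(e^{\lambda + \rho - \dot\rho}\,\cF_{\dot W}\!\left(\frac{e^{\dot\rho}}{\prod_{\beta \in S}(1 + e^{-\beta})}\right)\right),
\]
and analogously for $(\Pi', S', \dot\rho')$. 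By the denominator identity for $A(k-1, l-1)$ — the $\lambda = 0$ case of the maximal-atypicality KW-formula~(\ref{denomfin}) of Section~\ref{sectKW} — each inner sum equals $r!\,\dot R\, e^{\dot\rho}$, resp.\ $r!\,\dot R'\, e^{\dot\rho'}$. These two products agree in $\cV$ (up to the equivalence of~\S\ref{equivcV}) since the Weyl denominator of $\dot\fg$ is intrinsic, and $\rho - \dot\rho = \rho' - \dot\rho'$ because the odd reflections effecting the permutation lie entirely within $[u,v]$. Substituting back yields the desired equivalence of the two $\cF_W$-expressions, which combined with $R\, e^\rho \sim R'\, e^{\rho'}$ gives the equivalence of the KW-formulas.

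I expect the main obstacle to be bookkeeping of the equivalence relation on $\cV$: the two formulas live in $\cR(\Pi)$ and $\cR(\Pi')$, and explicitly producing a common $Y \in \cV_{fin}$ that clears the poles at the isotropic roots of both $S$ and $S'$ will lean on Lemmas~\ref{lemexpA} and~\ref{lemXX'}. A secondary concern is that for $\fg$ of types $B$ and $D$ the outer coset reps $T$ must accommodate the sign-change reflections in $W$; these fix the restriction map to the span of $[u,v]$ and so should not interfere with the factorization, but the compatibility warrants explicit verification.
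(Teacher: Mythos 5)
Your proposal follows essentially the same route as the paper's proof: factor $\cF_W$ through $W_{[u,v]}\cong S_k\times S_l$, apply the denominator identity~(\ref{denomfin}) for the $A$-type subalgebra attached to $\Pi([u,v])$ on the inner sum, and use that the odd reflections converting $\Pi$ to $\Pi'$ all live inside $[u,v]$ so that $\rho-\rho_{[u,v]}=\rho'-\rho_{[u',v']}$. Part (i) in the paper is proved more directly by observing that the one-dimensional $A(k,l)$-module has the same highest weight for every choice of simple roots, rather than by tracking the odd-reflection rule step by step, but this is a cosmetic difference.

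Two of your stated worries are less serious than you suggest. The concern about sign-change reflections in types $B$ and $D$ is a non-issue: the identity $\cF_W = \cF_{W/W_{[u,v]}}\circ\cF_{W_{[u,v]}}$ needs only multiplicativity of $\operatorname{sgn}$ on a coset decomposition, not a semidirect-product structure, so the outer representatives $T$ need not interact compatibly with $W_{[u,v]}$ beyond being a transversal. And the bookkeeping of the equivalence relation in $\cV$ is handled cleanly once you notice that the whole chain of equalities takes place inside $\cR(\Pi)$ and $\cR(\Pi')$ as in the paper's~\S\ref{expA}; the only genuine appeal to equivalence is at the very last step, where $R(\Pi)e^{\rho_{\Pi}}\sim R(\Pi')e^{\rho_{\Pi'}}$ and likewise for the right-hand sides, so that the two KW-identities become identities of equivalent elements and hence imply one another (cf.~\S\ref{equivcV}, where equivalence of two elements of $\cR(\Pi)$ forces equality). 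One small imprecision worth fixing: in your factorization the numerator of the inner sum should be $e^{\lambda_{[u,v]}+\dot\rho}$ rather than $e^{\dot\rho}$; this does not affect the conclusion because $\lambda_{[u,v]}$ is $\dot W$-invariant (being orthogonal to $\Pi([u,v])$ by triviality), so it factors out and the inner sum is still a scalar multiple of $\dot R\,e^{\dot\rho}$, but it is the multiple $r!\,\dot R\,e^{\lambda_{[u,v]}+\dot\rho}$, which is what cancels against the corresponding factor for $(\Pi',S')$.
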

\begin{proof}
Let $\Pi([u,v])=A(k,l)$, where $k\leq l$. Then
(i) follows from the fact the one-dimensional $A(k,l)$-module
has the same highest weight  for any choice
of simple roots. 

For (ii) assume that  $[u',v']$ is balanced and $S'$ is a maximal set of mutually orthogonal isotropic roots in $\Pi([u',v'])$.  Then $S'$ contains $k$ elements. 
Since $L$ satisfies the KW-condition,
$[u,v]$ is balanced and $S$ contains $k$ elements. Since
$(\lambda,\alpha)=0$ for each $\alpha\in\Pi([u,v])$, one has $(\lambda,\alpha)=0$ for 
 each $\alpha\in S'$. Hence $L$ satisfies the KW-condition for $(\Pi',S')$.

It remains to show that the KW-formulas are equivalent.

Denote by $W/W_{[u,v]}$  a set of coset representatives.
Note that $\lambda-\lambda_{[u,v]}$ and $\rho-\rho_{[u,v]}$ 
are $W([u,v])$-stable.
Since the denominator identity for $A(k,l)$ holds for $(\Pi([u,v]),S)$ and for $(\Pi([u',v']), S')$,
we have
$$\begin{array}{l}
\cF_W \bigl( \frac{e^{\lambda+\rho}}{\prod_{\beta\in S}(1+e^{-\beta})}\bigr)=
\cF_{W/W_{[u,v]}}\cF_{W_{[u,v]}} \bigl( \frac{e^{\lambda+\rho}}{\prod_{\beta\in S}(1+e^{-\beta})}\bigr)\\
=\cF_{W/W_{[u,v]}} \bigl(e^{\lambda-\lambda_{[u,v]}+\rho-\rho_{[u,v]}}
\cF_{W_{[u,v]}} 
\bigl( \frac{e^{\lambda_{[u,v]}+\rho_{[u,v]}}}{\prod_{\beta\in S}(1+e^{-\beta})}\bigr)\bigr)\\=
\cF_{W/W_{[u,v]}} \bigl(e^{\lambda-\lambda_{[u,v]}+\rho-\rho_{[u,v]}}
\cF_{W_{[u,v]}} 
\bigl( \frac{e^{\lambda_{[u,v]}+\rho"}}{\prod_{\beta\in S'}(1+e^{-\beta})}\bigr)\bigr)\\=
\cF_W \bigl( \frac{e^{\lambda+\rho-\rho_{[u,v]}+\rho"}}{\prod_{\beta\in S'}(1+e^{-\beta})}\bigr),
\end{array}$$
where $\rho''$ is  the Weyl vector for $\Pi([u',v'])$. 
Recall that $\Pi([u',v'])$ can be obtained from
$\Pi([u,v])$  by a sequence of odd reflections $r_{\beta_1}\ldots r_{\beta_t}$. Then
 $\rho''=\rho_{[u,v]}+\sum_{i=1}^t \beta_i$. 
Since $\Pi'$ is obtained from $\Pi$ by the same sequence of odd reflections,  we have
$\rho'=\rho-\rho_{[u,v]}+\rho"$ is the Weyl vector for $\Pi'$.

We conclude that $\cF_W \bigl( \frac{e^{\lambda+\rho}}{\prod_{\beta\in S}(1+e^{-\beta})}\bigr)\in\cR(\Pi)$
and $\cF_W \bigl( \frac{e^{\lambda+\rho'}}{\prod_{\beta\in S'}(1+e^{-\beta})}\bigr)\in\cR(\Pi')$
are equivalent elements. The assertion follows.
\end{proof}

\subsection{Properties of $\Pi,S$}
\label{KWfin}
For each $u\in\cB$ set $y_u:=(\lambda,u)$.

Let $u>v\in\cB$ and $||u||^2=||v||^2$. Then $u-v\in\Pi_0$ and,
since $L(\lambda,\Pi)$ is finite-dimensional, $(\lambda,(u-v)^{\vee})=(\lambda,u-v)||u||^2\geq 0$. This gives
$y_u ||u||^2\geq y_v||v||^2$. Moreover, since the irreducible $A(1,0)$-module of the highest weight $a\vareps_1-b\delta_1+a\vareps_2$
is finite-dimensional only for $b=a$,  we obtain
that $y_u=y_v$ forces $y_w=y_u$ for each $w\in [u,v]$
($y_w$ is constant for  $w\in [u, v]$).

Let $u_S$ (resp., $v_S$) be the smallest (resp., largest) element in $\supp S$. Take $u',v'$ such that
$u_S-u',v_S-v'\in S$.
Recall that $u_S,u'$ and $v_S,v'$ are neighbors of different types and $y_{u_S}=y_{u'}, y_{v_S}=y_{v'}$.  
By above,  $y_w$ is constant for  $w\in [u_S, v_S]$. 
Since $[u_S,v_S]$ contains elements of different types, the set
$\{w\in \cB|\ y_w=y_{u_S}\}$ is an interval (containing $[u_S,v_S]$).
Denote this interval by $[u,v]$ ($u\geq u_S>v_S\geq v$). Then
$(\lambda,\alpha)=0$
for each $\alpha\in\Pi([u,v])$. In particular, $[u,v]$ is balanced and
$S\subset \Pi([u,v])$ is a maximal set of mutually orthogonal isotropic roots.

For $B(m,n)$, finite-dimensionality of $L$ implies $y_w ||w||^2\geq 0$ for each $w\in \cB$.
This gives $y_u=0$ and thus $y_w=0$ for each $w<u$.  Hence 
$v$ is the minimal element in $\cB$ and $\lambda_{[u,v]}=0$.

\subsubsection{}\label{assmD}
For  $D(m,n)$, finite-dimensionality of $L$ implies $y_w ||w||^2\geq 0$ for each $w\in \cB\setminus \{\vareps_{m}\}$
and $|y_{\vareps_i}|\geq |y_{\vareps_m}|$ for $i<m$.
In particular, if $(\lambda,u_S)\not=0$, then $r=1$
$S=\{\delta_k-\vareps_m\}$ or $S=\{\vareps_m-\delta_k\}$.
If $(\lambda,u_S)=0$, then 
 $(\lambda,u)=0$ for each $u\in\supp S$,
and, as for $B(m,n)$, we obtain  that $v$ is the minimal element in $\cB$
and $\lambda_{[u,v]}=0$. Note that $[u,v]$ contains at least $r$ elements
of each type, so $\vareps_m,\delta_n\in [u,v]$.

We assume for the rest of Section~\ref{sectfindim} that {\em either
$(\lambda,u_S)=0$ or $S=\{a(\delta_n-\vareps_m)\}$, where $a=\pm 1$}.
Thus we exclude the case $(\lambda,u_S)\not=0$ and
$S=\{a(\delta_k-\vareps_m)\}$ with
$k<n$ .

\subsubsection{}
\begin{lem}{lemlambda+}
Let the $\fg$-module $L=L(\lambda,\Pi)$ satisfy the KW-condition for $(\Pi,S)$, 
where $Iso$
is connected and $S$ is dense. Set $\lambda^0:=(\lambda+\rho,u_S)$.

(i) One has $(\lambda+\rho,\alpha^{\vee})\geq 0$ if $\alpha\in\Pi_0$ and 
$\alpha=u-v$ for $u,v\in\cB$;

(ii) The multiset $\{(\lambda+\rho,u)\}_{u\in\cB}$ contains $2r$ or $2r+1$ copies of
$\lambda^0$ and all other elements are distinct.
The set $\{x\in\cB|\ \ (\lambda+\rho,x)=\lambda^0\}$ form an interval $[u_0,v_0]$,
which contains $[u_S,v_S]$.
\end{lem}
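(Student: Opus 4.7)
The plan is to prove (i) as a direct consequence of the connectedness of $Iso$, and then to prove (ii) by combining KW-maximality, the statement of (i), and the structural analysis of the $\lambda$-plateau from \S\ref{KWfin}.

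For (i), let $\alpha = u - v \in \Pi_0$ with $u, v$ of the same type, and let $t$ be the number of opposite-type elements of $\cB$ strictly between them. By \Lem{lemuse}, $(\rho, \alpha^\vee) = 1 - t$. If $t \geq 2$, labelling the intervening elements $w_1 > w_2 > \cdots > w_t$, the isotropic simple roots $u - w_1$ and $w_t - v$ lie in $Iso$ but are separated in the Dynkin diagram by the $t - 1 \geq 1$ even simple roots $w_i - w_{i+1}$, contradicting connectedness of $Iso$. Hence $t \leq 1$, so $(\rho, \alpha^\vee) \geq 0$; combined with $(\lambda, \alpha^\vee) \geq 0$ from finite-dimensionality of $L$, this yields (i).

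For (ii), set $y_u := (\lambda+\rho, u)$ and write $\supp S = \{w_1 > w_2 > \cdots > w_{2r}\}$ with alternating types and $S = \{w_{2k-1} - w_{2k}\}_{k=1}^r$. The $S$-condition gives $y_{w_{2k-1}} = y_{w_{2k}}$. Applying (i) to the same-type pair $\alpha = w_{2k-1} - w_{2k+1} \in \Pi_0$ (consecutive in their type-subsequence, with one opposite-type element between) and to the other same-type pair $\alpha' = w_{2k} - w_{2k+2} \in \Pi_0$ yields the inequalities $y_{w_{2k-1}} \geq y_{w_{2k+1}}$ (dots, norm $+1$) and $y_{w_{2k}} \leq y_{w_{2k+2}}$ (crosses, norm $-1$); together with the $S$-equalities, these force $y$ to be constant on $\supp S$, equal to $\lambda^0$. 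Let $C := \{u \in \cB : y_u = \lambda^0\}$ have $k_1$ dots and $k_2$ crosses. The isotropic roots $u - v$ for $u, v \in C$ of different types span an isotropic subspace of $\Delta^\perp_{\lambda+\rho}$ of dimension $\min(k_1, k_2) \geq r$, so KW-maximality forces $\min(k_1, k_2) = r$; the same construction applied to $u, v \notin \supp S$ of different types with $y_u = y_v \neq \lambda^0$ produces an isotropic root orthogonal to $S$ (disjoint supports), contradicting KW-maximality. Hence no mixed-type $y$-coincidence occurs outside $C$.

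To complete (ii), I invoke the structural fact from \S\ref{KWfin}: the $\lambda$-plateau $[u, v] := \{w \in \cB : \lambda_w = \lambda_{u_S}\}$ is an interval of $\cB$ containing $\supp S$, is balanced, and $S$ is a maximal orthogonal set of isotropic roots in $\Pi([u, v])$. Since connectedness of $Iso$ makes $\rho$ constant (equal to some $\rho_{iso}$) throughout the isotropic region of $\cB$, the intersection of $C$ with the isotropic region coincides with the intersection of $[u, v]$ with it, which is itself an interval; combined with the balance property and $\min(k_1, k_2) = r$ from KW, this set has size $2r$ or $2r + 1$ and contains $\supp S$. Elements of $[u, v]$ lying in a prefix or suffix of $\cB$ (maximal runs of one type above or below the isotropic region) have $\rho$-values strictly exceeding $\rho_{iso}$ by a positive integer, while their $\lambda$-values are non-negative by monotonicity from finite-dimensionality, so their $y$-values strictly exceed $\lambda^0$ and they lie outside $C$; moreover, the integer $\rho$- and $\lambda$-differences force their $y$-values to be strictly monotonic and hence pairwise distinct. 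The main obstacle is verifying carefully that $C$ coincides with the intersection of $[u, v]$ with the isotropic region and has the claimed size, which requires using the balance property of $[u, v]$ together with the $\min(k_1, k_2) = r$ constraint from KW-maximality, and ruling out any potential $y$-value coincidence between prefix and suffix contributions by means of the finite-dimensionality constraints transported through odd reflections.
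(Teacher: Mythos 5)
Your proof of (i) is essentially the paper's argument, spelled out with a bit more combinatorial detail (the paper simply asserts that $Iso$ connected gives $(\rho,(u-v)^{\vee})\geq 0$ with equality only if $[u,v]$ alternates; you justify this via the observation that $t\geq 2$ separating elements would disconnect $Iso$). That part is fine.

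Your argument for (ii), however, has several concrete problems and, as you acknowledge, is not closed. First, the assertion that elements in a prefix or suffix have ``$\rho$-values strictly exceeding $\rho_{iso}$'' is false in general: moving away from the isotropic region through a run of dots increases $(\rho,\cdot)$, but moving through a run of crosses \emph{decreases} it. For example, in $B(3,2)$ with $\cB=\times\times\cdot\cdot\cdot$ the prefix cross $\xi_1$ and suffix dots $\xi_4,\xi_5$ all have $(\rho,\cdot)$ strictly \emph{less} than $\rho_{iso}$. What is true (and what you actually need) is only that $(\rho,x)\ne\rho_{iso}$ for $x$ outside the alternating region; the sign is irrelevant for excluding $x$ from $C$, but it matters if you try to argue monotonicity of $y$. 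Second, your KW argument only rules out $y$-coincidences between elements of \emph{different} types outside $\supp S$. It says nothing about two elements $u>v$ of the \emph{same} type, neither in $\supp S$, with $y_u=y_v$; this case must also be forbidden, and you don't address it. Third, the claim that the prefix/suffix $y$-values are ``strictly monotonic and hence pairwise distinct'' does not rule out a coincidence between a prefix value and a suffix value (or between a value in $[u,v]$ and one outside $[u,v]$); you flag this as the ``main obstacle'' but offer only a vague appeal to ``finite-dimensionality constraints transported through odd reflections.''

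The paper's route to (ii) is a single, cleaner observation that handles all of this at once. For $u>v$ of the same type with $(\lambda+\rho,u)=(\lambda+\rho,v)$, the inequality from (i) (summed along the chain of consecutive same-type pairs) gives $(\lambda+\rho,(u-v)^{\vee})=0$ with both $\lambda$- and $\rho$-contributions nonnegative, so both vanish: $\lambda$ and $\rho$ are constant on $[u,v]$, forcing $[u,v]$ to alternate. Then every pair of neighbors of different types in $[u,v]$ yields an isotropic root in $\Delta^{\perp}_{\lambda+\rho}$. If $[u,v]\setminus[u_S,v_S]$ contained two or more elements, one would produce an isotropic root orthogonal to $S$ (or, in the edge case where the excess is split one on each side, count $r+1$ mutually orthogonal isotropic roots in $\Delta^{\perp}_{\lambda+\rho}$, which is impossible since $[u,v]$ alternating with $2r+2$ elements would give $r+1$ of each type --- and in fact this split case is impossible because the endpoints would have to be of the same type as each other but of different types from $u_S$ and $v_S$ respectively, which are themselves of different types). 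Hence at most one excess element, and in particular $u$ or $v$ lies in $[u_S,v_S]$. Combined with your own different-types observation, this shows that any $y$-coincidence $y_u=y_v$ with $u\ne v$ requires one of $u,v$ to lie in $[u_S,v_S]$; that is, $y$ is injective on $\cB\setminus[u_S,v_S]$. Since $y\equiv\lambda^0$ on $[u_S,v_S]$ (which has $2r$ elements), the multiset has $2r$ or $2r+1$ copies of $\lambda^0$, all other values distinct, and $C$ is $[u_S,v_S]$ possibly extended by one adjacent element --- an interval. Your $\min(k_1,k_2)=r$ observation is essentially the paper's bound in disguise, but the rest of your plan (decomposing $\cB$ into prefix/iso-region/suffix and analysing $\rho$-values) is both harder to carry out correctly and incomplete as written.
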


\begin{rem}{}
Note that  $[u_S,v_S]$ contains $2r$ elements and $[u_0,v_0]$ contains $2r$ or $2r+1$ elements.
\end{rem}

\begin{proof}
Take $u\in [u_S,v_S]$. From~\S~\ref{KWfin} one has
$(\lambda,u)=(\lambda,u_S)$; since $S$ is dense
one  has $(\rho,u)=(\rho,u_S)$. Therefore 
$(\lambda+\rho,u)=(\lambda+\rho,u_S)$.

Let $u,v\in\cB$ be of the same type and $u>v$.
Since $Iso$ is connected,
$(\rho,(u-v)^{\vee})\geq 0$ and if  $(\rho,(u-v)^{\vee})=0$,
then $(\rho,w)$ is constant for $w\in [u,v]$ (so, $[u,v]$ 
consists of the elements of alternating types).
 By~\S~\ref{KWfin}
the same holds for $\lambda$. This proves (i) and, moreover, shows that $(\lambda+\rho,u)=(\lambda+\rho,v)$
implies  that $\lambda$ and $\rho$ are constant on $[u,v]$.
If $[u,v]\setminus [u_S,v_S]$ contains more than one element, then
it contains two neghboring elements $u',v'$ of different types.
However, $(\lambda+\rho,u'-v')=0$ and $(u'-v',S)=0$, which contradicts
the KW-condition.
Hence $[u,v]\setminus [u_S,v_S]$ contains at most one element
(in particular, $u$ or $v$ lies in $[u_S,v_S]$). This proves (ii).
\end{proof}

\begin{cor}{coreqAD}
Let $L=L(\lambda,\Pi)$ satisfy the KW-condition for $(\Pi,S)$, where $Iso$
is connected and $S$ is dense. If $\Pi$ is 
$A(m-1,n-1)$, or if $\Pi$ is $D(m,n)$ with $\delta_n-\vareps_m\in\Pi$, then
$$
(\rho,\alpha^{\vee})\geq 0\ 
\text{ for each }\alpha\in\Pi_0.$$
\end{cor}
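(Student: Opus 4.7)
The plan is to reduce the corollary to a step already carried out inside the proof of Lemma~\ref{lemlambda+}(i). That proof establishes, purely from connectedness of $Iso$, that for any $u,v\in\cB$ of the same type with $u>v$ one has $(\rho,(u-v)^\vee)\geq 0$. I would invoke this intermediate inequality directly, rather than the slightly weaker combined inequality $(\lambda+\rho,\alpha^\vee)\geq 0$ stated in the lemma.

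For $\fg=A(m-1,n-1)$ the corollary follows at once, since every $\alpha\in\Pi_0$ is of the form $\vareps_i-\vareps_{i+1}$ or $\delta_j-\delta_{j+1}$, hence of the required $u-v$ form with $u,v$ of the same type.

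For $\fg=D(m,n)$ with $\delta_n-\vareps_m\in\Pi$, the same step handles all $\alpha\in\Pi_0$ of the form $u-v$, leaving only $2\delta_n$ and $\vareps_{m-1}+\vareps_m$. The key structural observation, and the only step I expect to be slightly delicate, is that the hypothesis $\delta_n-\vareps_m\in\Pi$ forces $\delta_n+\vareps_m\in\Pi$ as well: under the standing conventions $\vareps_1>\cdots>\vareps_m$ on dots and $\delta_1>\cdots>\delta_n$ on crosses, $\vareps_m$ is the last dot and $\delta_n$ is the last cross, so placing $\vareps_m$ immediately after $\delta_n$ in $\cB$ forces $\vareps_m$ to be the final element of $\cB$ and $\delta_n$ the penultimate one; the table in \S\ref{dotcross} then identifies the last simple root of $\Pi$ as $\delta_n+\vareps_m$.

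Once $\delta_n\pm\vareps_m\in\Pi$ are identified as isotropic simple roots, the Weyl-vector conditions $2(\rho,\delta_n\pm\vareps_m)=0$ yield $(\rho,\delta_n)=(\rho,\vareps_m)=0$, so $(\rho,(2\delta_n)^\vee)=(\rho,\delta_n)/||\delta_n||^2=0$ and $(\rho,(\vareps_{m-1}+\vareps_m)^\vee)=(\rho,\vareps_{m-1})+(\rho,\vareps_m)=(\rho,\vareps_{m-1})$; the $u-v$ inequality applied to $\vareps_{m-1}-\vareps_m\in\Pi_0$ then gives $(\rho,\vareps_{m-1})\geq(\rho,\vareps_m)=0$. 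Apart from the combinatorial unwinding of $\Pi=\Pi(\cB,>)$ just indicated, no ingredient beyond what is already established in the previous lemma is required.
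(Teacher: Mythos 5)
Your proof is correct and is the argument the paper intends but leaves implicit: since Lemma \ref{lemlambda+}(i) as stated gives only $(\lambda+\rho,\alpha^\vee)\geq 0$, one does indeed need to invoke the stronger intermediate inequality $(\rho,(u-v)^\vee)\geq 0$ established inside its proof, exactly as you do. Your handling of the $D(m,n)$ case is the nontrivial part — showing that $\delta_n-\vareps_m\in\Pi$ forces $\vareps_m$ to be the last element of $\cB$ (so $\delta_n+\vareps_m\in\Pi$ by the construction of $\Pi(\cB,>)$, whence $(\rho,\delta_n)=(\rho,\vareps_m)=0$) and then disposing of the two non-difference simple even roots $2\delta_n$ and $\vareps_{m-1}+\vareps_m$ — and it is both necessary and correctly executed.
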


\subsection{Choice of $(\Pi,S)$}\label{chPIS}
Finally, we show that for $A(m,n), B(m,n)$ and $D(m,n)$, 
if a finite-dimensional  $\fg$-module $L$ 
satisfies the KW-condition for $(\tilde{\Pi},\tilde{S})$,
and, for $D(m,n)$ the assumption of~\S~\ref{assmD} is fulfilled,
 then the KW-formula is equivalent to the KW-formula
for $(\Pi,S)$, where
$Iso$ is connected, $S$ is dense, and, for $D(m,n)$, $\delta_n-\vareps_m\in \Pi$.

\subsubsection{}
Let $[u,v]$ be the interval constructed in the second paragraph in~\S~\ref{KWfin}.
Using~\Lem{lemmay2} we can rearrange dots and crosses in 
$[u,v]$ such that the resulting interval $[u',v'], u'>v'$, 
is balanced. We do this in such a way that
the interval has first a segment of the elements of same type and then a segment of elements of alternating types;
for $D(m,n)$ we choose the minimal element to be  $\vareps_m$. We choose
$S'$ such that $\supp S'$ consists of the last $2r$ elements
in $[u',v']$; then $S'$ is dense and $v'\in\supp S'$.

Using~\Lem{lemchPIS} we now permute dots and crosses in the intervals
$\{w\in\cB|\ w>u''\}$ and $\{w\in\cB|\ w<v'\}$; the second interval is empty for $B(m,n)$ and 
for $D(m,n)$ (if the assumption in~\S~\ref{assmD} holds).
We do this in such a way that in the resulting order we have $\cB=[u_0,u_+]\cup [u_+,v_+]\cup [v_+,v_0]$,
where 
$u_0\geq u_+\geq u''>v'\geq v_+\geq v_0$, the interval $[u_0,u_+]$ (reps., $[v_+,v_0]$) consists of the elements
of the same type, and the interval $[u_+,v_+]$ consists of the elements of alternating types.

\subsubsection{}
Consider the resulting ordering $(\cB,>)$ and set $\Pi:=\Pi(\cB,>)$, $S:=S'$.
Let $\rho$ be the corresponding Weyl vector and $\lambda$ be the highest weight of $L$.

From Lemmas~\ref{lemmay2}, \ref{lemchPIS} we conclude that the  KW-formula
is equivalent to the KW-formula for $(\Pi,S)$.

Since $\cB=[u_0,u_+]\cup [u_+,v_+]\cup [v_+,v_0]$ as above, we have obtained
$\Pi, S$ for which $Iso$ is connected and $S$ is dense, which completes
the proof for $A(m,n)$.

In addition, 
for $D(m,n)$ we have obtained that $\vareps_m$ is minimal in $\cB$ and $\delta_n-\vareps_m\in S$. Therefore $\Pi$ contains $\delta_n\pm\vareps_m$ and
 $S=\{\delta_{n-i}-\vareps_{m-i}\}_{i=0}^{r-1}$. This completes the proof for $D(m,n)$.

\subsubsection{}\label{lambda+B}
For $B(m,n)$ we have obtained
$S=\{\delta_{n-i}-\vareps_{m-i}\}_{i=0}^{r-1}$
or $S=\{\vareps_{m-i}-\delta_{n-i}\}_{i=0}^{r-1}$.
Retain notations of~\Lem{lemlambda+}.
Recall that $v_S=v_0$ is minimal in $\cB$
and $(\lambda,v)=0$ for $v\in [u_S,v_S]$.
We will show that $(\Pi,S)$ can be chosen in such a way that
$Iso$ is connected, $S$ is dense and for $u,v\in\cB$
we have
\begin{equation}\label{B1}
|(\lambda+\rho,u)|=|(\lambda+\rho,v)|\ \ \Longrightarrow\ (\lambda+\rho,u)
=(\lambda+\rho,v)=(\lambda+\rho,\alpha_{m+n}).
\end{equation}

Since $v_S$ is minimal, $\cB$ is of the following form: 
$\xi_1>\ldots >\xi_k$ are of the same type and
$\xi_k>\xi_{k+1}>\ldots >\xi_{m+n}=v_S$ are of alternating types
(i.e., $Iso$ is connected and contains $\alpha_{m+n-1}$).
This implies $(\rho,u-v)=0$ if $||u||^2=||v||^2=-||\xi_1||^2$.

Set $x_u:=(\lambda+\rho,u)$.
Normalize $(-,-)$ by the condition $||v_S||^2=1$.
Then $\lambda^0=\frac{1}{2}$.

Let us show that $|x_u|=|x_v|$ forces $u$ or $v$ in $\supp S$.

Indeed, if $x_u=x_v$, then, by~\Lem{lemlambda+}, $u$ or $v$ is in $\supp S$.
If  $|x_u|=|x_v|$
and $u,v$ are of different types, 
then KW-condition forces $u$ or $v$ in $\supp S$.
Consider the remaining case, when $x_u=-x_v$, $u,v$ are of the same type
and $u\not\in \supp S$.
 If $||u||^2=||v||^2=1$, then 
$x_u,x_v\geq \frac{1}{2}$, so $x_u\not=-x_v$.
If  $||u||^2=||v||^2=-1$, then 
$x_u,x_v\leq \frac{1}{2}$, so $x_u=-x_v$ forces
$|x_u|=|x_v|=\frac{1}{2}$. Then $x_u$ or $x_v$ is  $\frac{1}{2}$.
By~\Lem{lemlambda+},
 $u$ or $v$ is in $[u_0,v_0]$ and
$[u_0,v_0]\setminus [u_S,v_S]$ is either empty, or is 
$\{u_0\}$ and $||u_0||^2=-||u_S||^2=||v_S||=1$. Thus $u$ or $v$ is in $[u_S,v_S]=\supp S$,
as required.

We conclude that the multiset $\{|x_u|\}$ contains $t$ copies of $\frac{1}{2}$, where
$t=2r$ or $t=2r+1$, and all other elements are distinct. Set $B:=\{u\in \cB| \ |x_u|=\frac{1}{2}\}$.
Clearly, $[u_0,v_0]\subset B$ and~(\ref{B1})
 holds if $[u_0,v_0]=B$. In particular,~(\ref{B1}) holds if $[u_0,v_0]$ contains $2r+1$ elements.

Consider the case when~(\ref{B1}) does not hold.
Since $[u_0,v_0]$ contains $[u_S,v_S]$ which has $2r$ elements,
this means that $[u_0,v_0]=[u_S,v_S]$  and $B=[u_S,v_S]\coprod\{u\}$, where
$x_u=-\frac{1}{2}$. Then $||u||^2=-1$, so 
 $u,u_S$ are of the same type. 
If $w\in [u,u_S]$ is of the same type as $u$,  then $x_w\in [-\frac{1}{2},\frac{1}{2}]$
(since $x_{u_S}=\frac{1}{2}=-x_u$),
so $x_w$ is $\pm \frac{1}{2}$, that is $w\in B$. This means
that either $w=u$ or $w=u_S$. Hence
$]u,u_S[$ does not contain elements of the same type as $u,u_S$;
that is either $]u,u_S[=\emptyset$ or $]u,u_S[=\{v\}$ with
$||u||^2=-||v||^2$.
 If $]u,u_S[=\{v\}$, we make
the reflection with respect to $u-v$ and obtain a new ordered set
$(\cB,'>)$; one has $L=L(\lambda',\Pi')$ ($\Pi'=\Pi(\cB,'>)$), where $\lambda'$ is
such that $\lambda'+\rho'=\lambda+\rho$, so $(\lambda'+\rho',u)=-\frac{1}{2}$
and in $\cB'$ one has $]u,u_S[=\emptyset$.
Thus, in both cases (for $\cB$ if $]u,u_S[=\emptyset$, and for $\cB'$
otherwise)  we have $]u,u_S[=\emptyset$. Then $[u,v_0]$ contains
$2r+1$ elements ($r+1$ elements of type $u_S$ and $r$ elements of type $v_0$).
Note that the restriction of $\lambda$ (resp., $\lambda'$) to $[u,v_0]$
is zero weight, since $(\rho,u-u_S)=-1$. 

Using~\Lem{lemmay2} we can rearrange dots and crosses in $[u,v_0]$ 
in an alternating way;
the resulting interval is $[u,v']$, where $||v'||^2=||u||^2=-1$
and $v'$ is minimal in the new order on $\cB$. Since 
$\lambda_{[u,v_0]}=0$, this rearrangement
preserves $\lambda$ (i.e., $\lambda$ is the highest weight of $L$
with respect to the new set of simple roots). 
Using~\Lem{lemchPIS},
we rearrange dots and crosses in the rest of $\cB$ (in $[u_0,u[$)
in such a way that
in the resulting order we, again, have first several elements
of the same type and then a segment of elements of alternating types.
Let $\Pi''$ be the new set of simple roots, $\rho''$ be the new Weyl vector
and $\lambda''$ is such that $L=L(\lambda'',\Pi'')$. Then
$\lambda''_{[u,v']}=0$, so $(\lambda''+\rho'',w)=-\frac{1}{2}$
for each $w\in [u,v']$. Note that $Iso$ in $\Pi''$ is connected;
take $S''$ such that $\supp S''$ consists of last $2r$ elements
(i.e., $\supp S'=]u,v']$). Then $S''$ is dense.
Since $[u,v']$ contains $2r+1$ elements and $(\lambda''+\rho'',w)=-\frac{1}{2}$
for each $w\in [u,v_0]$,~(\ref{B1}) holds for $(\Pi'',S'')$.

\section{KW-character formula for integrable (near) vacuum modules and arbitrary $h^{\vee}$}
\label{sectKW0}

In this section $\fg$ is a symmetrizable affine Lie superalgebra (with arbitrary $h^{\vee}$). Let $\Delta$ be  the root system of $\fg$ and
let $\dot{\Delta}$ be a finite part of $\Delta$.

We say that a subset of simple  roots $\Pi$ for $\Delta$ is compatible
with $\dot{\Delta}$ if $\dot{\Pi}=\dot{\Delta}\cap\Pi$ is 
a subset of simple  roots for $\dot{\Delta}$ (in other words, 
$\Pi\setminus \dot{\Delta}$ contains only one root).

Let $L=L(\lambda,\Pi)$ be a  non-critical  integrable $\fg$-module
of maximal atypicality,
where $\Pi$ is compatible with  $\dot{\Delta}$ and $L$ satisfies
the KW-condition for $\Pi,S$ with $S\subset \dot{\Pi}$.

We prove the following KW-character formula:

\begin{equation}\label{vacform}
Re^{\rho}\ch L(\lambda)=j_{\lambda}^{-1}\sum_{w\in W(\dot{\Pi}_0\cup\pi)} sgn(w)\ w\bigl(\frac{e^{\lambda+\rho}}{\prod_{\beta\in S} (1+e^{-\beta})}\bigr),
\end{equation}
where $j_{\lambda}$ is the number of elements in the "smallest"
factor of $W(\dot{\Pi})$, see~(\ref{jlambda}).

We also prove this formula for non-critical $C_n^{(1)}$-integrable vacuum $D(n+1,n)^{(1)}$-modules.

\subsection{Vacuum modules over $\fg=D(2,1,a)^{(1)}$}\label{D21a}
Recall that $a\not=0,-1$. One has 
$D(2,1,a)_{\ol{0}}=A_1\times A_1\times A_1$; if we denote the root
in $i$th copy of $A_1$ by $2\vareps_i$, then
$||2\vareps_1||^2:||2\vareps_2||^2:||2\vareps_3||^2=1:a:(-a-1)$.

Let $L(\lambda)$ be a $\pi$-integrable vacuum module 
for some $\pi\subset \Pi_0$ and $k:=(\lambda,\delta)$.
If $\pi\setminus\dot{\Pi_0}$ contains one root, then
$\pi=\{\delta-2\vareps_r,\vareps_r\}$ and $L(\lambda)$ is $\pi$-integrable
 if and only if $2k/||2\vareps_r||^2\in\mathbb{Z}_{\geq 0}$. If $\pi\setminus\dot{\Pi_0}$ contains two roots, then
$\pi=\{\delta-2\vareps_r,\delta-2\vareps_q, 2\vareps_r, 2\vareps_q\}$, and 
 $L(\lambda)$ is  $\pi$-integrable  if and only if
$2k/||2\vareps_r||^2,\ 2k/||2\vareps_q||^2\in\mathbb{Z}_{\geq 0}$; in particular, if $k\not=0$,
then $||2\vareps_r||^2/||2\vareps_q||^2\in\mathbb{Q}_{>0}$, so $a\in\mathbb{Q}$. If  $\pi\setminus\dot{\Pi_0}$ contains three roots,
then $\pi=\Pi_0$ and $k=0$.

We consider a non-critical module $L(\lambda)$, that is $k\not=0$. 
We see that $L(\lambda)$ can be $A_1^{(1)}$-integrable for any copy 
$A_1^{(1)}$ in $\Pi_0$,
but it is $A_1^{(1)}\times A_1^{(1)}$-integrable  only if $a\in\mathbb{Q}$
and the roots of $\pi$ have positive integral square length for some normalization of $(-,-)$.

Let $\pi=\{\alpha\in\Pi_0|\ ||\alpha||^2\in\mathbb{Q}_{>0}\}$
for some normalization of $(-,-)$. 
If $a\not\in\mathbb{Q}$, then $\pi$ can be any copy of  $A_1^{(1)}$.
If  $a\in\mathbb{Q}$, then either ${\pi}=A_1^{(1)}$, 
which corresponds
to the longest root (the absolute value of $||2\vareps_i||^2$ is maximal; this
is $2\vareps_1$ if $-1<a<0$),
or ${\pi}=A_1^{(1)}\times A_1^{(1)}$ 
($\dot\pi=\{2\vareps_2,2\vareps_3\}$ if $-1<a<0$).

We fix $\Pi$ which consists of isotropic roots:
$$\Pi=\{\delta-\vareps_1-\vareps_2-\vareps_3,
-\vareps_1+\vareps_2+\vareps_3, \vareps_1+\vareps_2-\vareps_3,
\vareps_1-\vareps_2+\vareps_3\}.$$

\subsubsection{}
Recall that $j_{\lambda}=2$ and set
$$Z:=j_{\lambda}Re^{\rho}\ch L-\sum_{w\in W(\pi\cup\dot{\Pi}_0)} sgn(w)\, w\bigl(\frac{e^{\lambda+\rho}}
{\prod_{\beta\in S} (1+e^{-\beta})}\bigr).$$
Suppose that $Z\not=0$.

The $\pi\cup\dot{\Pi}_0$-integrability of $L(\lambda)$
gives $(\lambda,\alpha^{\vee})\geq 0$ for each $\alpha\in \pi\cup\dot{\Pi}_0$.
Since $\rho=0$, $\lambda+\rho=\lambda$ is maximal in its  $W(\pi\cup\dot{\Pi}_0)$-orbit,
so $\supp Z\subset \lambda+\rho-\mathbb{Z}_{\geq 0}\Pi$.
Let $\lambda-\mu$ be a maximal element in $\supp Z$ ($\mu\in \mathbb{Z}_{\geq 0}\Pi$).
The arguments of~\S~\ref{maxinorbit} show that
\begin{equation}\label{maxfinn}
2(\lambda+\rho,\mu)=(\mu,\mu) \ \text{ and }
(\lambda-\mu,\alpha^{\vee})\geq 0\ \text{ for each }\alpha\in\pi\cup\dot{\Pi}_0.
\end{equation}
The coefficient of $e^{\lambda}$ in $Z$ is equal to the coefficient of
$e^{\lambda}$ in 
$$j_{\lambda}e^{\lambda}-\sum_{w\in W(\pi\cup\dot{\Pi}_0): w\lambda=\lambda} sgn (w)\, w\bigl(\frac{e^{\lambda}}
{\prod_{\beta\in S} (1+e^{-\beta})}\bigr).$$
Since  $\lambda$ is maximal in its  $W(\pi\cup\dot{\Pi}_0)$-orbit,
the stabilizer of $\lambda$  in  $W(\pi\cup\dot{\Pi}_0)$ is equal to $W(\dot{\Pi}_0)$.
The KW-formula for $D(2,1,a)$ implies that the coefficient of $e^{\lambda}$ is zero.
Hence $\mu\not=0$.

Since $(\lambda,\dot{\Pi}_0)=0$, (\ref{maxfinn}) gives
$(\mu,\alpha^{\vee})\geq 0$ for each $\alpha\in\dot{\Pi}_0$.
Therefore $\mu=j\delta-\sum_{i=1}^3 e_i\vareps_i$, where 
$e_i\geq 0$ for each $i$. One readily  sees that $\mu\in\Pi$ forces 
$2j-e_i-e_s\geq 0$ for each $\{i,s\}\subset \{1,2,3\}$.

\subsubsection{}
Consider the case $a\not\in\mathbb{Q}$; without loss of generality
we assume $\pi=\{\delta-2\vareps_1,2\vareps_1\}$ and normalize the form
$(-,-)$ by $||\vareps_1||^2=1$. 

Since  $2(\lambda,\mu)=(\mu,\mu)$, we have
$e_2=e_3=0$, $2jk=e_1^2$. Moreover, $(\lambda-\mu,(\delta-2\vareps_1)^{\vee})\geq 0$
gives $k\geq 2e_1$. Since
$2j\geq e_1\geq 0$, we obtain $j=e_1=0$, that is $\mu=0$, a contradiction.

\subsubsection{}
Assume that $a\in\mathbb{Q}$. 

For the case  $\pi=A_1^{(1)}$, without loss of generality
we assume $\pi=\{\delta-2\vareps_1,2\vareps_1\}$ and  write
$\mu=j(\delta-2\vareps_1)+(2j-e_1)\vareps_1-e_2\vareps_2-e_3\vareps_3$.

For the case  $\pi=A_1^{(1)}\times A_1^{(1)}$, without loss of generality
we assume $\pi=\{\delta-2\vareps_i,2\vareps_i\}_{i=1,2}$ and write
$\mu=e_1/2(\delta-2\vareps_1)+(j-e_1/2)(\delta-2\vareps_2)
+(2j-e_1-e_2)\vareps_2-e_3\vareps_3$.

In both cases we obtain
$\mu=\sum_{\alpha\in X} x_{\alpha}\alpha$,
where $X\subset \pi\cup\dot{\Pi}_0$ and 
$x_{\alpha}||\alpha||^2\geq 0$ for each $\alpha\in X$
(recall that $||\alpha||^2>0$ for $\alpha\in\pi$). 
Since $2(\lambda,\mu)=(\mu,\mu)$ we have
$(\lambda,\mu)+(\lambda-\mu,\mu)=0$, that is
$$\sum_{\alpha\in X} x_{\alpha}\bigl((\lambda,\alpha)+
(\lambda-\mu,\alpha)\bigr)=0.$$
For each $\alpha\in \dot{\Pi}_0\cup \pi$ one has
 $(\lambda,\alpha)||\alpha||^2\geq 0$ and 
$(\lambda-\mu,\alpha)||\alpha||^2\geq 0$ (by~(\ref{maxfinn})). Therefore
for each $\alpha\in X$ we have $x_{\alpha}(\lambda,\alpha),
x_{\alpha}(\lambda-\mu,\alpha)\geq 0$. Hence
for each $\alpha\in X$ we obtain
$x_{\alpha}(\lambda,\alpha)=x_{\alpha}(\lambda-\mu,\alpha)=0$,
that is $x_{\alpha}(\mu,\alpha)=0$. 

Write $\mu=\mu'+\mu''$, where $\mu'\in\mathbb{Q}\pi$ and
$\mu''\in\mathbb{Q}(\dot{\Pi}_0\setminus\pi)$.
One has $||\mu'||^2=(\mu,\mu')=\sum_{\alpha\in X\cap \pi} x_{\alpha}
(\mu,\alpha)=0$. Similarly, $||\mu''||^2=0$.
Since $(-,-)$ is non-negatively (resp., negatively) definite on
$\mathbb{Q}\pi$ (resp., on $ \mathbb{Q}(\dot{\Pi}_0\setminus\pi)$),
we get $\mu''=0$ and $\mu=\mu'\in \mathbb{Q}\delta$. Then 
$2(\lambda,\mu)=(\mu,\mu)$ gives $\mu=0$, a contradiction.

\subsubsection{Remark}
The following denominator identity for $D(2,1,a)^{(1)}$ was proven in~\cite{GR}:
$$Re^{\rho}=\prod_{n=1}^{\infty}(1-e^{-n\delta})\sum_{t\in T} t(\dot{R}e^{\rho}),$$
where $T$ is the translation group of $A_1^{(1)}\subset 
D(2,1,a)^{(1)}_{\ol{0}}=
A_1^{(1)}\times A_1^{(1)}\times A_1^{(1)}$.
The corresponding embedding $A_1\subset D(2,1,a)_{\ol{0}}=
A_1\times A_1\times A_1$ is not
specified  in~\cite{GR} and we take an opportunity to correct this. 
This embedding is the same as described above, namely,
 any embedding if $a\not\in\mathbb{Q}$, and 
the copy with the maximal absolute value
of the square length of the root if $a\in\mathbb{Q}$.  
This choice is necessary for the proof
 of  Prop. 2.3.2~\cite{GR}, where it is used that  a  non-zero 
linear combination of the two remaining even roots has non-zero square length.

\subsection{Other forms of~(\ref{3}) for $\dot{\Delta}\not=D(2,1,a)$}
Let $L=L(\lambda,\Pi)$ be a  non-critical  integrable $\fg$-module
of maximal atypicality,
where $\Pi$ is compatible with  $\dot{\Delta}$ and $L$ satisfies
the KW-condition for $\Pi,S$ with $S\subset \dot{\Pi}$,
or let $L$ be a non-critical $C_n^{(1)}$-integrable vacuum $D(n+1,n)^{(1)}$-module.
In the first case we normalize $(-,-)$ as in Section~\ref{sect3};
in the second case
we normalize the form on $D(n+1,n)^{(1)}$ in such a way that
$||\alpha||^2=2$ for some $\alpha\in C_n^{(1)}$.
Then 
$$\pi:=\{\alpha\in\Pi_0|\ ||\alpha||^2>0\}$$
is a connected component of $\Pi_0$, and $L$ is $\pi$-integrable.

Let $\dot{\fg}$ be the subalgebra of $\fg$ with the root system
$\dot{\Delta}$ and the Cartan algebra $\fh$ (i.e., $\dot{\fg}=(\sum_{\alpha\in\dot{\Delta}} \fg_{\alpha})+\fh$)
and $\dot{L}=\dot{L}(\lambda,\dot{\Pi})$ be the irreducible
$\dot{\fg}$-module of the highest weight $\lambda$. Clearly, $\dot{L}$
is a finite-dimensional module satisfying the KW-condition for $\dot{\Pi}, S$.

By~\S~\ref{app3}, $\dot{\pi}=\pi\cap\dot{\Pi}_0$ is a finite part of $\pi$, i.e., 
$\dot{\pi}$ is connected and 
$\pi\setminus\dot{\pi}$ consists of one root, which we denote by
$\alpha^{\sharp}$ (we exclude the case $\Delta=G(3)^{(1)}, \dot{\Delta}=D(2,1,-3/4)$).

From~\cite{K2}, 6.5, it follows
that in the case when
 $\alpha^{\sharp}=j\delta-b\theta$, where $\theta\in{\Delta}(\dot{\pi}), b\in\mathbb{Q}$,  one has
$W(\pi)=W(\dot{\pi})\ltimes T$, where $T$ is a free abelian subgroup of $W(\pi)$.
Recall that $\pi$ is one of the root systems $A_n^{(r)}, B_n^{(1)}, C_n^{(1)}, D_n^{(r)}, G_2^{(1)}$
with $r=1,2$).
The condition $\alpha^{\sharp}=j\delta-\theta$ holds  for all pairs $(\pi,\dot{\pi})$ 
(where $\pi$ is as above and $\dot{\pi}$ is a finite part of $\pi$), 
except for $(B_n^{(1)}, D_n), (A_{2n-1}^{(2)},D_n)$
and $(G_2^{(1)},A_2)$. The last case is not possible, since $\pi=G_2^{(1)}$ 
appears only for $\Delta=G(3)^{(1)}$ and in this case $\dot{\Delta}=G(3), \dot{\pi}=G_2$, by above.
 
Assume that the pair $(\pi,\dot{\pi})$ is not $(B_n^{(1)}, D_n)$ or $(A_{2n-1}^{(2)}, D_n)$.
Then, by above, $W(\pi)=W(\dot{\pi})\ltimes T$ and so $W(\pi')=W(\dot{\Pi}_0)\ltimes T$,
since $\pi'\setminus\pi$ is a connected component of $\dot{\Pi}_0$. 
Let $\dot{\rho}$ be the Weyl vector
for $\dot{\Delta}=\Delta(\dot{\Pi})$. Notice that $(\rho-\dot{\rho},\dot{\Pi})=(\lambda,\dot{\Pi})=0$,
so $\lambda+\rho-\dot{\rho}$ is $W(\dot{\Pi}_0)$-invariant.
Then~(\ref{3}) can be rewritten as
$$Re^{\rho}\ch L(\lambda)=\sum_{t\in T}t\bigl(e^{\lambda+\rho-\dot{\rho}} j_{\lambda}^{-1}\,
\sum_{w\in W(\dot{\Pi}_0)} sgn(w)\ w\bigl(\frac{e^{\dot{\rho}}}{\prod_{\beta\in S} (1+e^{-\beta})}\bigr)\bigr).$$

Using the KW-formula for $\dot{L}$, we get
\begin{equation}\label{KWcharvac}
Re^{\rho}\ch L(\lambda)=
\sum_{t\in T}t\bigl(\dot{R}e^{\rho}\ch\dot{L}(\lambda)\bigr),
\end{equation}
where $\dot{R}$ is the Weyl denominator for   $\dot\Pi$. 

For  the cases $(B_n^{(1)}, D_n), (A_{2n-1}^{(2)}, D_n)$
we can extend $W(\pi)$ to $W(C_n^{(1)})$ and present $W(C_n^{(1)})=W(C_n)\ltimes T$
as in~\cite{R}; then we obtain~(\ref{KWcharvac}) for $T\subset W(C_n^{(1)})$.

Note that in~(\ref{KWcharvac}) there is no $S$; we do not assume that $\dot{\Pi}$ contains 
a  maximal isotropic subset. More precisely,
if the KW-formula holds for some $\Pi, S$ with $S\subset\dot{\Pi}$, then 
(\ref{KWcharvac}) holds for each set of simple roots $\Pi'$ compatible
with $\dot{\Delta}$. In particular, if $L$ satisfies 
KW-conditions for $\Pi,S$ and $\Pi',S'$, where
$\Pi,\Pi'$ are compatible with $\dot{\Delta}$ and 
$S, S'\subset \dot{\Delta}$, then the KW-formulas for $\Pi,S$ and $\Pi',S'$
are equivalent.

Note that $\dot{\pi}$ is the "largest part" of $\dot{\Pi}_0$
in the sense of~\S~\ref{findimmaxatyp},
except for the following cases: $\Delta=G(3)^{(1)}$ with $\dot{\Delta}=D(2,1,-3/4)$,
$\Delta=D(2,1,a)^{(1)}$ with $\pi=A_1^{(1)}$,
$\Delta=D(n+1,n)^{(1)}$ with  $\pi=C_n^{(1)}$, and $\Delta=A(2n-1,2n-1)^{(2)}$
with $\dot{\pi}=D_n$ (in the case $\Delta=A(2n-1,2n-1)^{(2)}$ one has
$\pi=A_{2n-1}^{(2)}$, $\dot{\Delta}=D(n,n)$,
and  $\dot{\pi}$ can be $D_n$ or $C_n$).
If $\dot{\pi}$ is the "largest part" of $\dot{\Pi}_0$, 
then, using~(\ref{denomfin})  we can rewrite~(\ref{vacform}) as
\begin{equation}\label{vacpi}
Re^{\rho}\ch L(\lambda)=\sum_{w\in W(\pi)} sgn(w)\ w\bigl(\frac{e^{\lambda+\rho}}{\prod_{\beta\in S} (1+e^{-\beta})}\bigr),
\end{equation}
cf.~(\ref{KWcharfor}).

\subsection{Case $h^{\vee}\not=0$ or $\Delta=A(n,n)^{(1)}$}

\subsubsection{Cases $G(3)^{(1)}, F(4)^{(1)}$}
One readily sees that if $L$ satisfies the KW-condition for $\Pi, S=\{\beta\}$,
then $L$ satisfies the KW-condition for $r_{\beta}\Pi, S'=\{-\beta\}$
and the KW-formulas are equivalent (cf.~\Lem{lemchPIS} for finite case).
For $G(3)^{(1)}, F(4)^{(1)}$ the assumptions of Section~\ref{sectKW}
are equivalent to $||\alpha||^2\geq 0$ for $\alpha\in\Pi$.  
For $G(3)^{(1)}, F(4)^{(1)}$ each $\Pi$ either satisfies this property
or contains a unique isotropic root $\beta$ and $r_{\beta}\Pi$
satisfies this property. This establishes the KW-formula
for these cases, due to the results of Section~\ref{sectKW}.

\subsubsection{}
Let $\fg\not=G(3)^{(1)}, F(4)^{(1)}$, i.e., 
$\dot{\Delta}$ is not exceptional. In this case 
$\dot{\pi}$ is the "largest part" of $\dot{\Pi}_0$.
Since formulas~(\ref{vacpi}) and~(\ref{KWcharfor}) are the same,
 it is enough to show that $L$ satisfies the KW-condition for some
$\Pi',S'$, where $\Pi'$ is compatible with $\dot{\Delta}$,
$S'\subset \dot{\Delta}$, and $\Pi',S'$
satisfy the assumptions of Section~\ref{sectKW}.

In the light of~\S~\ref{KWfin}, one has $(\lambda,\alpha)=0$ for each $\alpha\in\Pi^0$,
where $\Pi^0$ is a  Dynkin subdiagram of $\Pi$ and
$\Pi^0=A(k,l)$ (resp., $B(k,l), D(k,l)$) for $\dot{\Pi}=A(m,n)$ (resp., $B(m,n), D(m,n)$) and $\min (k,l)=\# S$ (and $=\min (m,n)$, since $\lambda$ has maximal
atypicality). Let $\cB^0\subset \cB$ be the ordered subset corresponding
to $\Pi^0$ (i.e., $\Pi^0=(\cB^0,>)$). We can rearrange dots and crosses in $\cB^0$ 
in such a way that  the last $2\min (k,l)$ elements are of alternating types, and
the last element in $\cB^0$ is of positive square length (resp.,
is $\vareps_m$) if  $\dot{\Delta}=B(m,n)$ (resp., if $\dot{\Delta}=D(m,n)$).

This rearrangement corresponds to a certain sequence of odd reflections
(with respect to roots in $\Delta(\Pi^0)\subset \dot{\Delta}$);
let $\Pi'$ be the subset of simple roots obtained from $\Pi$ by this
sequence of odd reflections. Clearly, 
$\dot{\Pi}':=\dot{\Delta}\cap \Pi'$ is a subset of simple roots for $\dot{\Delta}$
and the corresponding dot-cross diagram contains $\cB^0$ with the new order.
Thus the dot-cross diagram for $\dot{\Pi}$ contains a segment of
$2\min (m,n)$ elements of alternating types; moreover, if  
$\dot{\Delta}\not=A(m,n)$, then these are the last $2\min (m,n)$ elements
 and the last element 
is of positive square length for $B(m,n)$ and is $\vareps_m$ 
for $D(m,n)$. Since $\dot{\pi}$ is the "largest part" of $\dot{\Pi}_0$,
$||\alpha||^2\geq 0$ for each $\alpha\in\dot{\Pi}'$. 

One has $L=L(\lambda,\Pi)=L(\lambda,\Pi')$ and so $L$ satisfies the
KW-condition for $(\Pi',S')$, where $S'$ is any subset of $\dot{\Pi}'$
which contains $\min (m,n)$ mutually orthogonal isotropic roots. 
Thus the assumptions of Section~\ref{sectKW} are reduced to the condition
$||\alpha||^2\geq 0$ for each $\alpha\in {\Pi}'$.
Recall that $||\alpha||^2\geq 0$ for each $\alpha\in\dot{\Pi}'$
and that $\Pi'\setminus\dot{\Pi}'$ consists of one root, which we denote
by $\alpha_0$. Hence it remains to verify that $||\alpha_0||^2\geq 0$.

For ${\Delta}=A(m,n)^{(1)}$, any subset of simple roots
is naturally  encoded by a cyclic dot-cross
diagram, which contains  $m$ dots and $n$ crosses. Let $m\geq n$.
 Since the diagram for $\Pi'$ contains
$2n$ elements of alternating types, it does not contain
two neighboring crosses, so $||\alpha||^2\geq 0$ for each $\alpha\in {\Pi}'$.

Suppose $||\alpha_0||^2<0$ and  ${\Delta}\not=A(m,n)^{(1)}$, that is
$\dot{\Delta}\not=A(m,n)$.
One has $(\alpha_0,\alpha_1)\not=0$ or $(\alpha_0,\alpha_2)\not=0$,
where  $\alpha_1,\alpha_2$ are the first two roots in $\dot{\Pi}'$.
Thus $||\alpha_i||^2=0$ for $i=1$ or $i=2$.
By the construction of  $\dot{\Pi}'$,  the pair $\alpha_1,\alpha_2$ can be written as
$\vareps_1-\vareps_2, \vareps_2-\delta_1$, or
$\vareps_1-\delta_1, \delta_1-\vareps_2$, or 
$\vareps_1-\delta_1, \delta_1$ (case $B(1,1)$), 
or $\delta_1-\vareps_1,\vareps_1-\delta_2$,
where $(\vareps_i,\vareps_j)=-(\delta_i,\delta_j)=
\delta_{ij}$ 
and $(\delta_i,\vareps_j)=0$ for $i,j=1,2$. Since $||\alpha_0||^2<0$, one has
 $\alpha_0\in \Pi_0$ or $2\alpha_0\in\Pi_0$, so $\alpha_0$ or $2\alpha_0$
 is a root $\Pi_0\setminus \{\pi\cup \dot{\Pi}_0)$.
Thus $\alpha_0=\delta-x\delta_1$ for $x\in \{1,2\}$
or $\alpha_0=\delta-(\delta_1+\delta_2)$.
Then $\alpha_1,\alpha_2$ is the pair $\delta_1-\vareps_1,\vareps_1-\delta_2$
and $\alpha_0=\delta-x\delta_1$.
By the construction of  $\dot{\Pi}'$,  $||\alpha_1||^2=0$
forces $||\alpha||^2=0$ for each $\alpha\in\dot{\Pi}$ (resp., for
each $\alpha\in\dot{\Pi}\setminus\{\vareps_m\}$)
if $\dot{\Delta}=D(m,n)$ (resp., if $\dot{\Delta}=B(m,n)$).
If $||\alpha||^2=0$ for each $\alpha\in\dot{\Pi}$, then,
since $(\rho,\delta)=h^{\vee}\geq 0$, we get $(\rho,\alpha_0)=||\alpha_0||^2/2\geq 0$,
a contradiction.  Finally,
for $\dot{\Delta}=B(m,n)$ we obtain $\delta=\alpha_0+x(\sum_{\alpha\in \dot{\Pi}'}
\alpha)$, so $(\rho,\delta)=-x^2/2+x^2/2=0$, a contradiction.

\subsection{Case $h^{\vee}=0$}
The remaining cases are $D(n+1,n)^{(r)}, (n>1, r=1,2)$,
$A(2n-1,2n-1)^{(2)}$ and  $A(2n,2n)^{(4)}$. 

Note that if 
$L$ is of maximal atypicality, then $\dot{L}$
is a finite-dimensional $\dot{\fg}$-module of maximal atypicality, and
 \S~\ref{KWfin} implies $(\lambda,\dot{\Delta})=0$, except
 for the case $\Delta=D(n+1,n)^{(1)}$ 
 (since for other cases  $\dot{\Delta}=D(n,n)$ or $B(n,n)$).
Thus if $\Delta\not=D(n+1,n)^{(1)}$, then $L(\lambda,\Pi)$
is a non-critical integrable vacuum module. 

We set $\pi'':=\{\alpha\in \dot{\Pi}_0|\ ||\alpha||^2<0\}$.
Then $\pi\cup\dot{\Pi}=\pi\coprod\pi''$. One has
 $(\lambda,\pi'')=0$ (it is obvious if $L$ is a vacuum module; 
 otherwise  $\Delta=D(n+1,n)^{(1)}, \dot{\Delta}=D(n+1,n),\pi''=C_n$ and, by~\S~\ref{KWfin},
 $(\lambda,\alpha)=0$ for each $\alpha\in D(n,n)\subset D(n+1,n)$, in particular,
 for $\alpha\in \pi''$).

We introduce 
$$Z:=j_{\lambda}Re^{\rho}\ch L-\sum_{w\in W(\pi\cup\dot{\Pi})} sgn(w)\, w\bigl(\frac{e^{\lambda+\rho}}
{\prod_{\beta\in S} (1+e^{-\beta})}\bigr).$$
Suppose that $Z\not=0$. Let $\lambda'$ be a maximal element in $\supp Z$; we
write $\lambda'-\rho=\lambda-\mu$ for $\mu\in \mathbb{Z}\Pi$.
The arguments of~\S~\ref{maxinorbit} show that~(\ref{maxfinn}) holds.

\subsubsection{Cases $D(n+1|n)^{(1)}, n>1$ and $A(2n-1,2n-1)^{(2)}$}\label{A2n-12n-1} 
In these cases we choose $\Pi$, which consists
of isotropic roots, see~(\ref{isoaffine}).
 The proof is similar to the one in~\S~\ref{D21a}.

For $D(n+1,n)^{(1)}$ one has $\Pi_0=D_{n+1}^{(1)}\times C_n^{(1)},\ \dot{\Delta}=D(n+1,n)$. If $L(\lambda)$ is integrable, then
$\pi=D_{n+1}^{(1)}$ and $\pi''=C_n$; if $L(\lambda)$
is a $C_n^{(1)}$-integrable vacuum module, then
$\pi=C_{n}^{(1)}$ and $\pi''=D_{n+1}$. 

For $A(2n-1,2n-1)^{(2)}$ one has $\Pi_0=A_{2n-1}^{(2)}\times A_{2n-1}^{(2)},\ \dot{\Delta}=D(n,n)$. Note that
$\dot{\pi}=\pi\cap\dot{\Delta}$ can be $D_n$ or $C_n$ and $\pi''$
is $C_n$ or $D_n$ respectively.

Recall that $\rho=0$.
Since $L=L(\lambda,\Pi)$ is $\pi\cup\pi''$-integrable,
$\lambda=\lambda+\rho$ is maximal in its $W(\pi)\times W(\pi'')$-orbit, so 
$\supp Z\subset \lambda+\rho-\mathbb{Z}_{\geq 0}\Pi$. Thus 
$\mu\in \mathbb{Z}_{\geq 0}\Pi$. Observe that $\mathbb{Q}\Pi=\mathbb{Q}(\pi\cup\pi'')$.
Write 
$\mu=\mu'-\mu''$, where $\mu'\in \mathbb{Q}\pi, \mu''\in\mathbb{Q}\pi''$. 
We claim that 
\begin{equation}\label{mu'mu''}
\mu'\in \mathbb{Q}_{\geq 0}\pi,\ \  \mu''\in\mathbb{Q}_{\geq 0}\pi''.
\end{equation}
Indeed, by above, $(\lambda,\pi'')=0$. Using~(\ref{maxfinn}),
 we get $(\mu'',\alpha^{\vee})\leq 0$ for each $\alpha\in\pi''$;
 Thm. 4.3 in~\cite{K2} gives $\mu''\in \mathbb{Q}_{\geq 0}\pi''$.
This implies $\mu''\in \mathbb{Q}_{\geq 0}\Pi$, so 
$\mu'=\mu+\mu''\in \mathbb{Q}_{\geq 0}\Pi$.  Thus $\mu'\in \mathbb{Q}\pi\cap
\mathbb{Q}_{\geq 0}\Pi$. It remains to verify that 
\begin{equation}\label{QpiPi}
(\mathbb{Q}_{\geq 0}\Pi\cap \mathbb{Q}\pi)\subset \mathbb{Q}_{\geq 0}\pi.
\end{equation}
Observe that each $\alpha\in\pi$ is a sum of two
simple roots $\alpha=\beta(\alpha)+\beta'(\alpha)$ ($\beta(\alpha),\beta'(\alpha)\in\Pi$) and we can choose $\beta(\alpha)$ in such a way that
$\alpha\mapsto \beta(\alpha)$ is an injective map from $\pi$ to $\Pi$
(for instance, for $D(n+1,n)^{(1)}$ with $\pi=D_{n+1}^{(1)}$, one has
$\delta-\vareps_1-\vareps_2=(\delta-\vareps_1-\delta_1)+(\delta_1-\vareps_2)$,
$\vareps_i-\vareps_{i+1}=(\vareps_i-\delta_i)+(\delta_i-\vareps_{i+1})$,
$\vareps_n+\vareps_{n+1}=(\vareps_n-\delta_n)+(\delta_n-\vareps_{n+1})$,
so we can take $\beta(\delta-\vareps_1-\vareps_2)=\delta-\vareps_1-\delta_1$,
$\beta(\vareps_i-\vareps_{i+1})=\delta_i-\vareps_{i+1}$,
$\beta(\vareps_n+\vareps_{n+1})=\delta_n+\vareps_{n+1}$), 
cf. (B) in~\S~\ref{appendix2}.
Then $\sum_{\alpha\in \pi} a_{\alpha}\alpha=\sum_{\beta\in \Pi} b_{\beta}\beta$,
where $b_{\beta(\alpha)}=a_{\alpha}$; this establishes~(\ref{QpiPi})
and~(\ref{mu'mu''}).

In the light of~(\ref{mu'mu''}) we have
 $$\mu=\mu'-\mu''=\sum_{\alpha\in \pi\cup\pi''} x_{\alpha}\alpha,
\ \text{ where } x_{\alpha} ||\alpha||^2\geq 0.$$

The formula $2(\lambda+\rho,\mu)=(\mu,\mu)$ gives
\begin{equation}\label{xalp}
0=(\lambda,\mu)+(\lambda-\mu,\mu)=
\sum_{\alpha\in \pi\cup\pi''} x_{\alpha} (\lambda,\alpha)+ 
x_{\alpha}(\lambda-\mu,\alpha).\end{equation}

Take $\alpha\in \pi\cup\pi''$. The 
$\pi\cup\pi''$-integrability of $L(\lambda)$ gives
$(\lambda,\alpha)||\alpha||^2\geq 0$. Moreover, 
$(\lambda-\mu,\alpha)||\alpha||^2\geq 0$, by~(\ref{maxfinn}).
Thus $x_{\alpha} (\lambda,\alpha), x_{\alpha}(\lambda-\mu,\alpha)\geq 0$.
Using~(\ref{xalp}) we obtain 
$x_{\alpha}(\lambda,\alpha)=x_{\alpha}(\lambda-\mu,\alpha)=0$, that is
$x_{\alpha}(\mu,\alpha)=0$. One has 
$$(\mu',\mu')=(\mu,\mu')=\sum_{\alpha\in \pi} x_{\alpha} (\mu,\alpha)=0;\ \ \
(\mu'',\mu'')=(\mu,\mu'')=\sum_{\alpha\in \pi''} x_{\alpha} (\mu,\alpha)=0.$$
Since $(-,-)$ is negatively definite on $\mathbb{Q}\pi''$ and 
non-negatively definite on $\mathbb{Q}\pi'$, we obtain
$\mu''=0,\mu'\in\mathbb{Z}\delta$, that is $\mu=s\delta$.
Since $L$ is non-critical, $(\lambda+\rho,\delta)\not=0$, so
the formula $2(\lambda+\rho,\mu)=(\mu,\mu)$ gives  $\mu=0$,
that is $\lambda-\mu=\lambda\in\supp Z$.

It remains to verify that $\lambda\not\in\supp Z$.
Since $(\lambda,\alpha^{\vee})\geq 0$ for $\alpha\in \pi\cup \pi''$,
the coefficient of $e^{\lambda}$ in $Z$ is equal to the coefficient of
$e^{\lambda}$ in 
$$j_{\lambda}e^{\lambda}-\sum_{w\in W(\pi\cup\dot{\Pi}_0): w\lambda=\lambda} sgn (w)\, w\bigl(\frac{e^{\lambda}}
{\prod_{\beta\in S} (1+e^{-\beta})}\bigr).$$
If $\Stab_{W(\pi\cup\pi'')}\lambda\subset
W(\dot{\Pi}_0)$, then the KW-formula for $\dot{L}$ implies $\lambda\not\in\supp Z$
as required. Otherwise, $(\lambda,\alpha^{\sharp})=0$, where  $\alpha^{\sharp}$
is the "affine root" in $\pi$, i.e. $\pi=\dot{\pi}\cup \{\alpha^{\sharp}\}$.
Since $(\lambda,\alpha^{\sharp})=0$, $L$ 
 is not a vacuum module (since $L$ is non-critical), so
 $\fg=D(n+1,n)^{(1)}$ and  $(\lambda,\alpha)=0$ for each $\alpha\in D(n,n)\subset D(n+1,n)=\dot{\Delta}$. Let $\alpha_1$ be the first root in $\dot{\Pi}$ (i.e., $\alpha_1\in\Pi$
and $\alpha_1\not\in D(n,n)$). Then $(\lambda,\alpha_1)\not=0$. 
Note that 
$\Pi$  admits an involution $\sigma$, which interchanges $\alpha_0$ and $\alpha_1$
 and stabilizes  all other simple roots. 
 This involution preserves $\pi,\pi''$ and $S$.
 One has $(\sigma(\lambda),\alpha_1)=(\lambda,\alpha_0)=0$,
 so  $L(\sigma(\lambda),\Pi)$ is a vacuum module and, by above,
 its character satisfies the KW-formula. This implies
  the KW-formula for $L(\lambda,\Pi)$.
This completes the proof for  $D(n+1|n)^{(1)}, A(2n-1,2n-1)^{(2)}$.

\subsubsection{Cases $D(n+1,n)^{(2)}, A(2n,2n)^{(4)}$}\label{A2n4}
In these cases 
$\dot\Delta=B(n,n)$ and we choose $\dot\Pi$ in such a way
that $||\alpha||^2\geq 0$ for $\alpha\in\dot{\Pi}$ (recall that 
$||\alpha||^2>0$ for $\alpha\in\pi$). The Dynkin diagram is 
$$\odot-\otimes-\otimes-...-\otimes-\odot$$
where both ends are non-isotropic roots; for $A(2n,2n)^{(4)}$ the ends have the same parity and for $D(n+1,n)^{(2)}$ the ends have different parity. One has
$$\dot\Pi=\{\delta_1-\vareps_1,\vareps_1-\delta_2,\ldots,\delta_n-\vareps_n,
\vareps_n\},\ \alpha_0=\delta-\delta_1;$$
we can (and will) normalize the form $(-,-)$ by  $||\vareps_i||^2=1=-||\delta_i||^2$.
Note that $\dot{\Pi}$ contains 
$S=\{\delta_i-\vareps_i\}_{i=1}^n$. One has 
$$\pi=\{a(\delta-\vareps_1), \vareps_1-\vareps_2,
\ldots,\vareps_{n-1}-\vareps_n, a'\vareps_n\},$$
where  for $\fg=D(n+1,n)^{(2)}$ one has 
$a=a'=1$ (resp., $a=a'=2$) if  $\pi=D(n+1)^{(2)}$ (resp., if $\pi=C_n^{(1)}$),
and for $\fg=A(2n,2n)^{(4)}$ one has 
$a=1,a'=2$ or $a=2,a'=1$. Observe that
$\Delta$ contains $\delta-\vareps_1$ and
$\vareps_n\in\Delta$ (they are of the same parity for $D(n+1,n)^{(2)}$
and of different parity for $A(2n,2n)^{(4)}$).

Let $k=(\lambda,\delta)$.
Recall that $L(\lambda)$ is a non-critical vacuum module, so $k\not=0$.
 If $\delta-\vareps_1$ is even (resp., odd), then
$L(\lambda)$ is $\pi$-integrable if and only if 
$2k\in\mathbb{Z}_{\geq 0}$ (resp.,  $k\in\mathbb{Z}_{\geq 0}$).

One has 
$2\rho=\sum_{i=1}^n (\vareps_i-\delta_i)$.

Since $\dot{\pi}$ is the "largest part" of $\dot{\Pi}_0$
in the sense of~\S~\ref{findimmaxatyp}, (\ref{vacform}) can be rewritten as~(\ref{vacpi}). We have $(\rho,\alpha)\geq 0$ for $\alpha\in \dot{\pi}$,
$(\rho,\delta-\vareps_1)=-1/2$. Since $(\lambda,\delta-\vareps_1)=k\geq \frac{1}{2}$,
one has $(\lambda+\rho,\alpha)\geq 0$ for each $\alpha\in\pi$. Therefore 
$\lambda+\rho$ is maximal in its $W(\pi)$-orbit, so
$\supp Z\subset \lambda+\rho-\mathbb{Z}_{\geq 0}\Pi$, that is
$\mu \in\mathbb{Z}_{\geq 0}\Pi$.
Moreover, from~\Lem{lemap11} (ii) 
we obtain $\mu\not=0$. Write $\mu=j\delta-\sum_{i=1}^n (e_i\vareps_i+d_i\delta_i)$.
From~(\ref{maxfinn}) we obtain
$$\begin{array}{l}
2kj-\sum_{i=1}^n (e_i+d_i)=\sum_{i=1}^n (e_i^2-d_i^2),\\
0\leq e_n\leq e_{n-1}\leq\ldots\leq e_1\leq k;\ \ 0\leq d_n\leq d_{n-1}\leq\ldots\leq d_1.
\end{array}$$
Since $\mu\in\mathbb{Z}_{\geq 0}\Pi$ we have
$e_i,d_i\in\mathbb{Z}$ and $\sum_{i=1}^n (e_i+d_i)\leq j$.
Since $e_i,d_i\geq 0$, the equality $j=0$ forces $\mu=0$, which contradicts the above. 
Let us show that $j=0$.
Since $\sum_{i=1}^n (d_i-d_i^2)\leq 0$, we have $2kj\leq \sum_{i=1}^n (e_i^2+e_i)$.
If $k=1/2$, then $e_i=0$ for each $i$, 
so $j=0$. If $k>1$, then $\sum_{i=1}^n e_i\leq j$ and $e_i\leq k$
imply $2kj\leq kj+j$, that is $j=0$, as required.

Finally, for $k=1$ we have $e_i\in\{0,1\}$ for each $i$, which implies
$2j=\sum_{i=1}^n (2e_i+d_i-d_i^2)$. Combining with  $\sum_{i=1}^n (e_i+d_i)\leq j$,
we get $d_i=0$ for each $i$. This gives $\mu=j\delta-\sum_{i=1}^j\vareps_j$ and
 $j\leq n$.
Set $\alpha:=\delta_j-\vareps_j$. In the light of~\S~\ref{RPi} and~\Lem{lemexpA},
 $Z=Z(\Pi)$ is equivalent to $Z(r_{\alpha}\Pi)$ (where $Z(\Pi')$ stands for an element $Z$ defined
 for $\Pi'$ and viewed as an element of $\cR(\Pi')$). Clearly,
 $$\supp (Z(r_{\alpha}\Pi))\subset \lambda+\rho_{r_{\alpha}\Pi}-\mathbb{Z}_{\geq 0}(r_{\alpha}\Pi)=
\lambda+\rho+\alpha -\mathbb{Z}_{\geq 0}(r_{\alpha}\Pi).$$
 
Observe that $(\lambda+\rho-\mu,\alpha)\not=0$, so
 $\supp (Z)\cap \{\lambda+\rho-\mu+\mathbb{Z}\alpha\}=
 \{\lambda+\rho-\mu\}$ (since $||\lambda+\rho-\nu||^2=||\lambda+\rho||^2$ 
 for each $\nu\in \supp (Z)$). By~\Lem{lemexpA}, $Z$ has a pole of order $\leq 1$ at
 $\alpha$, and so, by~\Lem{lemXX'}, $\lambda+\rho-\mu \in \supp (Z(r_{\alpha}\Pi))$, that is
 $$\lambda+\rho-\mu \in\lambda+\rho+\alpha -\mathbb{Z}_{\geq 0}(r_{\alpha}\Pi),$$
which gives $\mu+\alpha\in \mathbb{Z}_{\geq 0}(r_{\alpha}\Pi)$;
one readily sees that this does not hold for
$\mu=j\delta-\sum_{i=1}^j\vareps_j$ (if $\mu\not=0$), a contradiction.
This completes the proof.

\section{The root system $\Delta(L)$}\label{sectDeltaL}
In this section we exclude $\fg$ of types $D(2,1,a)$ and 
$D(2,1,a)^{(1)}$ with $a\not\in\mathbb{Q}$ from consideration.
Then we can (and will) choose a normalization of the bilinear form $(-,-)$,
such that $(\alpha,\beta)\in \mathbb{Z}$ for each 
pair $\alpha,\beta\in\Delta$. Consequently, for each set of simple roots 
$\Pi$ one has
 $2(\rho_{\Pi},\alpha)\in\mathbb{Z}$ for each $\alpha\in\Pi$
and thence for each $\alpha\in\Delta$.

For the Lie superalgebras $D(2,1,a),\ D(2,1,a)^{(1)}$ with non-rational $a$  
all the results of this section
remain valid if we fix a standard symmetric Cartan matrix for 
$D(2,1,a)$ as in~\cite{K1} and replace $\mathbb{Z}$ by $\mathbb{Z}+\mathbb{Z}a$
in the construction of $\Delta(L)$.

\subsection{Notation}\label{normal}
We define $\alpha^{\vee}=2\alpha/(\alpha,\alpha)$ 
if $\alpha\in\Delta$ is a non-isotropic root;
for isotropic $\alpha\in\Delta$ we set $\alpha^{\vee}=\alpha$. Notice that
$\lang\mu,\alpha^{\vee}\rang$  does not depend on the normalization of
$(-,-)$ if $\alpha$ is non-isotropic.

\subsubsection{}
We will use the following fact.

\begin{prop}{prop311}
If $\gamma$ is a non-isotropic root and $\alpha$ is a root, 
then $(\alpha,\gamma^{\vee})$ is an integer (resp., even integer) 
if $\gamma$ is even (resp., odd). 
\end{prop}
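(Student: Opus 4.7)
The plan is to handle the two parities separately, reducing the odd case to the even case.

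For the even non-isotropic case, I would argue exactly as in the Kac--Moody (Lie algebra) setting. Since $\gamma$ is even and non-isotropic, the root spaces $\fg_{\pm\gamma}$ together with $h_\gamma := \gamma^\vee \in \fh$ (identified with $\fh^*$ via $(-,-)$) span an $\fsl_2$-subalgebra of $\fg_{\ol{0}}$. The adjoint action makes the $\gamma$-string through $\alpha$, namely $\bigoplus_{k} \fg_{\alpha+k\gamma}$, into a finite-dimensional representation of this $\fsl_2$ (finiteness follows from the fact that strings of roots through any real root of an affine Kac--Moody or finite-dimensional Lie superalgebra are finite, cf.~\cite{K2}). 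Each weight of a finite-dimensional $\fsl_2$-module is an integer, and the weight of $\fg_\alpha$ is precisely $(\alpha,\gamma^\vee)$. Hence $(\alpha,\gamma^\vee) \in \mathbb{Z}$.

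For the odd non-isotropic case, the key input is that whenever $\gamma$ is an odd non-isotropic root of $\fg$, the element $2\gamma$ is an even root. This is a standard structural fact for basic Lie superalgebras (non-isotropic odd roots occur only in $B(m,n)$-type pieces, where they are of the form $\pm\delta_j$ and $\pm 2\delta_j$ are even roots) and extends to the associated affine superalgebras, including the twisted ones, by the explicit description of their real root systems. Granting this, $(2\gamma)^\vee = 2(2\gamma)/(2\gamma,2\gamma) = \gamma/(\gamma,\gamma) = \tfrac{1}{2}\gamma^\vee$, so the even case applied to the even root $2\gamma$ gives
\[
\tfrac{1}{2}(\alpha,\gamma^\vee) = (\alpha,(2\gamma)^\vee) \in \mathbb{Z},
\]
i.e.\ $(\alpha,\gamma^\vee) \in 2\mathbb{Z}$, as required.

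The main obstacle is the verification that $2\gamma \in \Delta$ for every odd non-isotropic root $\gamma$; this is a case-by-case check over the list of basic Lie superalgebras and the construction of their (untwisted and twisted) affinizations, but it is routine given the classification recalled at the beginning of the preliminaries. Once this is in place, the $\fsl_2$-integrality argument carries the proof.
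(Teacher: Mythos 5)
Your proof is correct, but it takes a noticeably different route from the paper in the odd non-isotropic case, and it is worth comparing. The paper establishes finiteness of the $\gamma$-string through $\alpha$ by noting that $||\alpha + N\gamma||^2$ blows up as $N \to \infty$ (since $(\gamma,\gamma)\neq 0$), which gives local nilpotency of $\fg_{\pm\gamma}$ directly; you instead cite root-string finiteness as a standing fact, which is true but somewhat circular since that fact is usually deduced by exactly this argument. More substantively, for $\gamma$ odd non-isotropic the paper invokes finite-dimensional representation theory of $B(0,1)=\mathfrak{osp}(1|2)$ in one stroke, whereas you reduce to the even case by observing that $2\gamma$ is an even root and $(2\gamma)^\vee = \tfrac{1}{2}\gamma^\vee$. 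Your reduction is clean and avoids appealing to $\mathfrak{osp}(1|2)$ weight theory; what it costs you is the input that $2\gamma\in\Delta$, which you flag as a case-by-case classification check. In fact this can be proved uniformly and is not really a classification matter: for $e_\gamma\in\fg_\gamma$, $f_\gamma\in\fg_{-\gamma}$ with $[e_\gamma,f_\gamma]=h_\gamma$ one computes via the super Jacobi identity
\[
[[e_\gamma,e_\gamma],f_\gamma] \;=\; 2\,[e_\gamma,[e_\gamma,f_\gamma]] \;=\; -2\,\langle\gamma,\gamma^\vee\rangle\, e_\gamma \;=\; -4\,e_\gamma \neq 0,
\]
so $[e_\gamma,e_\gamma]\neq 0$ and $2\gamma$ is a root whenever $\gamma$ is an odd root with $(\gamma,\gamma)\neq 0$. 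With that supplied, your argument is a valid and arguably more elementary alternative, since it only uses $\fsl_2$-integrality; the paper's is shorter because it absorbs both parities into a single appeal to $A_1$/$B(0,1)$ representation theory.
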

\begin{proof}
Since $(\gamma,\gamma)\not=0$, $||\beta\pm N\gamma||^2\to \infty$ as 
$N\to\infty$, hence $\fg_{\pm \gamma}$ act locally nilpotently on $\fg$,
and thus $(\alpha,\gamma^{\vee})$ is an integer (resp., an even integer)
if $\gamma$ is even (resp., odd), using representation theory of
$A_1$ (resp., $B(0,1)$).
\end{proof}

\subsection{Definition of $\Delta(L)$}\label{DeltaL}
Let $\Delta^+$ be a subset of positive roots in $\Delta$. 
Consider an irreducible highest
weight module $L=L(\lambda,\Delta^+)$ over $\fg$, associated with $\Delta^+$.
If $\beta$ is a simple isotropic root, then  $L$ is again an irreducible highest
weight module, but associated with the subset of positive roots $r_{\beta}(\Delta^+)$.
Indeed, if $v_{\lambda}\in L$ is a highest weight vector for $\Delta^+$,
then $v_{\lambda}$ (resp., $e_{-\beta}v_{\lambda}$)
 is a highest weight vector for  $r_{\beta}(\Delta^+)$
if $(\lambda,\beta)=0$ (resp., if  $(\lambda,\beta)\not=0$).
Since $(\rho,\beta)=0$, we obtain $L(\lambda,\Delta^+)=L(\lambda',r_{\beta}\Delta^+)$, where
the highest weight $\lambda'$ of the module $L(\lambda',r_{\beta}\Delta^+)$ is given by
 \begin{equation}\label{lambdalambda'}\lambda'=\left\{
\begin{array}{ll}
\lambda-\beta &\text{ for }(\lambda+\rho,\beta)\not=0,\\
\lambda &\text{ for }(\lambda+\rho,\beta)=0.
\end{array}
\right.
\end{equation}

Thus the notion of an irreducible highest weight 
module is independent of the choice of $\Delta^+$ (by~\Prop{propS} (a)).

In this paper we consider only non-critical irreducible highest weight 
modules $L=L(\lambda,\Delta^+)$, i.e.  we assume that the highest weight $\lambda$ satisfies
$(\lambda+\rho_{\Pi},\delta)\not=0$. This property is independent
of the choice of $\Delta^+$, since, by~\S~\ref{choicerho}, one has
$\rho_{\Pi}-\rho_{\Pi'}\in\mathbb{Z}\Delta$ and $L(\lambda,\Pi)=L(\lambda',\Pi')$
forces $\lambda'-\lambda\in \mathbb{Z}\Delta$.

\subsubsection{}\label{DeltaLdef}
For each  $\lambda\in\fh^*$ we introduce the sets
$$\begin{array}{l}
D(\lambda)_{iso}=\{\alpha\in\Delta|\ (\alpha,\alpha)=0,\ \ (\lambda+\rho,\alpha)=0\},\\
D(\lambda)_{o}:=\{\alpha\in\Delta_{\ol{1}}|\ (\alpha,\alpha)\not=0, \lang\lambda+\rho,\alpha^{\vee}\rang\in 2\mathbb{Z}+1\},\\
D(\lambda)_{e}:=\{\alpha\in\Delta_{\ol{0}}|\ (\alpha,\alpha)\not=0, \frac{\alpha}{2}\not\in\Delta_{\ol{1}},\ \lang\lambda+\rho,\alpha^{\vee}\rang\in \mathbb{Z}\}.
\end{array}
$$

Let $W_{ess,\lambda}$ be the subgroup of $W$ generated by the reflections 
$\{r_{\alpha}|\ \alpha\in D(\lambda)_o\cup D(\lambda)_e\}$. 
From~\Prop{prop311}, it follows that
\begin{equation}\label{eq311}
D(\lambda)_o=D(\lambda+\nu)_o,\ D(\lambda)_e=D(\lambda+\nu)_e,\ 
W_{ess,\lambda}=W_{ess,\lambda+\nu} \text{ for each } \nu\in Q.
\end{equation}

We introduce the following subset of $\Delta$
$$\Delta_{ess}(\lambda)=D(\lambda)_o\cup D(\lambda)_e\cup 
\{2\alpha|\ \alpha\in D(\lambda)_o\}\cup
W_{ess,\lambda}D(\lambda)_{iso}.$$
Note that the even roots in
$\Delta_{ess}(\lambda)$ are $D(\lambda)_e\cup
 \{2\alpha|\ \alpha\in D(\lambda)_o\}$ and that
$D(\lambda)_o$ (resp., $W_{ess,\lambda}D(\lambda)_{iso}$) is the set of 
non-isotropic 
(resp., isotropic) odd roots in $\Delta_{ess}(\lambda)$. 
The main motivation for this definition is~\Prop{propKK}.

For an irreducible highest weight module $L=L(\lambda,\Pi)$ we set 
$\Delta_{ess}(L):=\Delta_{ess}(\lambda+\rho),
W_{ess}(L):=W_{ess;\lambda+\rho}$.
\Prop{propDeltaLwell} shows that $\Delta_{ess}(L)$ is well defined.

\subsubsection{}
\begin{lem}{lembetagamma}
For any $\beta\in D(\lambda+\rho)_{iso}$ and 
$\gamma\in D(\lambda+\rho+\beta)_{iso}$,
one has $\gamma\in \Delta_{ess}(\lambda+\rho)$.
\end{lem}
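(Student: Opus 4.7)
The plan is to set $\mu:=\lambda+\rho$ and exploit the identity $(\mu,\gamma)=-(\beta,\gamma)$, which is an immediate consequence of the two hypotheses $(\mu,\beta)=0$ and $(\mu+\beta,\gamma)=0$. First I dispose of the trivial cases: if $(\beta,\gamma)=0$ or if $\gamma=-\beta$, then $(\mu,\gamma)=0$ and the isotropicity of $\gamma$ places it directly in $D(\lambda+\rho)_{iso}\subset\Delta_{ess}(\lambda+\rho)$.

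Assume henceforth that $\beta,\gamma$ are linearly independent with $(\beta,\gamma)\neq 0$. The strategy is to produce an even non-isotropic root $\alpha\in\Delta\cap(\mathbb{Z}\beta+\mathbb{Z}\gamma)$ with $\alpha/2\notin\Delta_{\bar 1}$ such that the reflection $r_\alpha$ sends $\gamma$ into $\{\pm\beta\}\subset D(\lambda+\rho)_{iso}$, and then to check by direct computation that $\alpha\in D(\lambda+\rho)_e$. Once this is done, $r_\alpha\in W_{ess,\lambda+\rho}$ and $\gamma=r_\alpha(\pm\beta)\in W_{ess,\lambda+\rho}\,D(\lambda+\rho)_{iso}\subset\Delta_{ess}(\lambda+\rho)$, as required.

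The natural candidate is whichever of $\beta+\gamma$, $\beta-\gamma$ lies in $\Delta$ (by the standard rank-two analysis of a pair of linked isotropic odd roots in a basic or affine Lie superalgebra, exactly one does). Taking, say, $\alpha=\beta+\gamma\in\Delta$, one finds $(\alpha,\alpha)=2(\beta,\gamma)\neq 0$, so $\alpha$ is even non-isotropic, and $\langle\gamma,\alpha^\vee\rangle=1$, giving $r_\alpha\gamma=\gamma-\alpha=-\beta$. Using $(\mu,\gamma)=-(\beta,\gamma)$ one computes $\langle\mu,\alpha^\vee\rangle=(\mu,\gamma)/(\beta,\gamma)=-1\in\mathbb{Z}$, so $\alpha\in D(\lambda+\rho)_e$ as soon as $\alpha/2\notin\Delta_{\bar 1}$. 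The analogous calculation with $\alpha=\beta-\gamma$ handles the other sign.

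The main obstacle is the exceptional configuration in which the root $\alpha=\beta+\gamma$ (or $\beta-\gamma$) satisfies $\alpha/2=\delta'\in\Delta_{\bar 1}$, i.e.\ $\alpha$ is twice an odd non-isotropic root; then $\alpha\notin D(\lambda+\rho)_e$, and a direct calculation gives $\langle\mu,(\delta')^\vee\rangle=2\langle\mu,\alpha^\vee\rangle=-2$, which is even, so $\delta'\notin D(\lambda+\rho)_o$ either and $r_\alpha=r_{\delta'}$ is not obviously in $W_{ess,\lambda+\rho}$. To get around this one appeals to the structure of rank-two subsystems $\Delta\cap(\mathbb{Q}\beta+\mathbb{Q}\gamma)$ containing a pair of linked isotropic odd roots: the only such subsystem hosting an odd non-isotropic root is of type $B(1,1)$, and a case-by-case inspection of $B(1,1)$ shows that $(\beta-\gamma)/2$ is then an even non-isotropic root with $(\beta-\gamma)/4\notin\Delta_{\bar 1}$. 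A direct computation gives $r_{(\beta-\gamma)/2}\gamma=\beta$ together with $\langle\mu,((\beta-\gamma)/2)^\vee\rangle=-2\in\mathbb{Z}$, so $(\beta-\gamma)/2\in D(\lambda+\rho)_e$, and one concludes $\gamma\in W_{ess,\lambda+\rho}\,D(\lambda+\rho)_{iso}$. The mirror situation (in which $\beta-\gamma=2\delta''$) is treated symmetrically, completing the proof.
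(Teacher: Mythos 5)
Your proposal is correct and follows essentially the same route as the paper's own proof: dispose of $(\beta,\gamma)=0$ trivially, take $\alpha=\beta\pm\gamma\in\Delta$ (even, non-isotropic), compute $\langle\lambda+\rho,\alpha^\vee\rangle\in\mathbb{Z}$ from $(\lambda+\rho,\gamma)=-(\beta,\gamma)$ so that $\alpha\in D(\lambda+\rho)_e$ and $r_\alpha$ carries $\beta$ to $\mp\gamma$, and finally handle the degenerate situation $\alpha/2\in\Delta_{\ol{1}}$ by passing to $(\beta\mp\gamma)/2$. The only cosmetic difference is that you phrase the exceptional case as a rank-two $B(1,1)$ reduction, whereas the paper writes the roots $\beta,\gamma$ explicitly in the $B(m,n)/G(3)$ coordinate systems; the substance is the same.
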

\begin{proof}
If $(\beta,\gamma)=0$, then $\gamma\in D(\lambda+\rho)_{iso}$.
Assume that $(\beta,\gamma)\not=0$. Then $\beta+\gamma$ or $\beta-\gamma$ is an 
even root (this is proven in~\cite{VGRS} for the finite root systems;
the non-twisted affine case follows immediately); we denote this root by $\alpha$.
One has
$$(\lambda+\rho,(\beta\pm\gamma)^{\vee})=
\frac{2(\lambda+\rho,\gamma\pm\beta)}{(\gamma\pm\beta,\gamma\pm\beta)}=
\frac{2(\lambda+\rho,\gamma)}{\pm 2(\gamma,\beta)}=
\frac{2(-\beta,\gamma)}{\pm 2(\gamma,\beta)}
=\pm 1,$$
so $\alpha\in D(\lambda+\rho)_e$ if $\frac{\alpha}{2}\not\in\Delta_{\ol{1}}$.

Consider the case when $\frac{\alpha}{2}\in\Delta_{\ol{1}}$; then 
$\frac{\alpha}{2}$ is a non-isotropic odd root, so $\fg$ is of the types 
$B(m,n), G(3)$
or their affinizations. In these cases the roots $\beta,\gamma$ are of the form 
$k_1\delta\pm (\vareps_i+\delta_j),\ k_2\delta\pm (\vareps_i-\delta_j)$,
where $k_1,k_2\in\mathbb{Z}$ and $k_1+k_2$ is even. 
(Here and further we use the description of
root systems in~\cite{K1}.)
Then either $\alpha=\beta+\gamma$ and 
$\frac{\beta-\gamma}{2}\in\Delta_{\ol{0}}$, 
or $\alpha=\beta-\gamma$ and $\frac{\beta+\gamma}{2}\in\Delta_{\ol{0}}$;
observe that $\frac{\beta\pm\gamma}{4}\not\in\Delta$.
By above,
$$(\lambda+\rho,(\frac{\beta\pm\gamma}{2})^{\vee})=\pm 2$$
so $D(\lambda+\rho)_e$ contains $\frac{\beta-\gamma}{2}$ or $\frac{\beta+\gamma}{2}$.

Since $\gamma=r_{\beta-\gamma}\beta=-r_{\beta+\gamma}\beta$, we obtain $\gamma\in\Delta_{ess}(\lambda+\rho)$, as required.
\end{proof}

\subsubsection{}
\begin{prop}{propDeltaLwell}
If $\Pi$ and $\Pi'$ are two sets of simple roots and $L(\lambda,\Pi)=L(\lambda',\Pi')$, then
$\Delta_{ess}(\lambda+\rho)=\Delta_{ess}(\lambda'+\rho')$.
\end{prop}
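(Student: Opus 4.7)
The plan is to reduce to a single odd reflection and then use Lemma~\ref{lembetagamma} together with the invariance property~(\ref{eq311}).

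First, by Proposition~\ref{propS}(a), any two subsets of positive roots in $\Delta$ are connected by a chain of odd reflections. By induction on the length of such a chain it therefore suffices to prove the statement in the case $\Pi'=r_{\beta}\Pi$ for an isotropic simple root $\beta\in\Pi$. In this case~(\ref{lambdalambda'}) and property (ii) of~\S\ref{choicerho} give $\rho'=\rho+\beta$ and
\[
\lambda'+\rho'=\begin{cases}\lambda+\rho & \text{if }(\lambda+\rho,\beta)\neq 0,\\ (\lambda+\rho)+\beta & \text{if }(\lambda+\rho,\beta)=0.\end{cases}
\]
The first case is immediate, so I only have to handle the second, i.e.\ I must show $\Delta_{ess}(\mu)=\Delta_{ess}(\mu+\beta)$ whenever $\beta\in D(\mu)_{iso}$, where $\mu:=\lambda+\rho$.

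Second, the ``non-isotropic'' pieces match trivially. Since $\beta\in Q$, formula~(\ref{eq311}) yields $D(\mu)_o=D(\mu+\beta)_o$, $D(\mu)_e=D(\mu+\beta)_e$, and therefore $W_{ess,\mu}=W_{ess,\mu+\beta}$. Consequently both $\{2\alpha\mid\alpha\in D(\cdot)_o\}$ and the reflection group acting on the isotropic part agree for $\mu$ and $\mu+\beta$. What remains is to verify
\[
W_{ess,\mu}\,D(\mu)_{iso}=W_{ess,\mu}\,D(\mu+\beta)_{iso}.
\]

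Third, both inclusions are obtained from Lemma~\ref{lembetagamma}. For $\supseteq$: take $\gamma\in D(\mu+\beta)_{iso}$. Since $\beta\in D(\mu)_{iso}$ and $\gamma\in D(\mu+\beta)_{iso}$, the lemma (applied with $\lambda+\rho$ replaced by $\mu$) gives $\gamma\in\Delta_{ess}(\mu)$, and being odd isotropic it must lie in $W_{ess,\mu}\,D(\mu)_{iso}$. For $\subseteq$: observe that $-\beta\in D(\mu+\beta)_{iso}$ because $(\mu+\beta,-\beta)=-(\mu,\beta)-(\beta,\beta)=0$. Now for any $\gamma\in D(\mu)_{iso}=D((\mu+\beta)+(-\beta))_{iso}$, applying the lemma with ``$\lambda+\rho$'' replaced by $\mu+\beta$ and ``$\beta$'' replaced by $-\beta$ yields $\gamma\in\Delta_{ess}(\mu+\beta)$, hence $\gamma\in W_{ess,\mu+\beta}\,D(\mu+\beta)_{iso}=W_{ess,\mu}\,D(\mu+\beta)_{iso}$.

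The main (and essentially only) substantive content is Lemma~\ref{lembetagamma}, which has already been established. Beyond that, the argument is a bookkeeping check: reducing to one odd reflection, splitting into the two cases determined by whether $(\lambda+\rho,\beta)$ vanishes, and invoking~(\ref{eq311}) for the non-isotropic parts. The mild subtlety worth flagging is the symmetry step $-\beta\in D(\mu+\beta)_{iso}$, which makes the lemma applicable in both directions and thereby yields the equality rather than just one inclusion.
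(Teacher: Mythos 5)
Your proof is correct and follows essentially the same strategy as the paper: reduce to a single odd reflection via Proposition~\ref{propS}(a), split into the two cases of~(\ref{lambdalambda'}), match the non-isotropic parts via~(\ref{eq311}) (equivalently Proposition~\ref{prop311}), and invoke Lemma~\ref{lembetagamma} for the isotropic part. The only cosmetic difference is that you establish both inclusions explicitly (using $-\beta\in D(\mu+\beta)_{iso}$ for the reverse direction), whereas the paper proves a single inclusion and gets the other for free by noting that each odd reflection is invertible.
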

\begin{proof}
Since any two sets of simple roots are connected by a chain of odd reflections and
each odd reflection is invertible,
it is enough to show that $\Delta_{ess}(\lambda'+\rho')\subset\Delta_{ess}(\lambda+\rho)$ if
$\Pi'=r_{\beta}\Pi$, where $\beta$ is an odd isotropic root. 

If $\lambda+\rho=\lambda'+\rho'$,
then, obviously, $\Delta_{ess}(\lambda+\rho)=\Delta_{ess}(\lambda'+\rho')$. 
By~(\ref{lambdalambda'}), 
if $\lambda+\rho\not=\lambda'+\rho'$, then 
$\lambda'+\rho'=\lambda+\rho+\beta$ and $(\lambda+\rho,\beta)=0$.
From~\Prop{prop311} it follows that $D(\lambda'+\rho')_e=D(\lambda+\rho)_e$ and
$D(\lambda'+\rho')_o=D(\lambda+\rho)_o$; by~\Lem{lembetagamma},
$D(\lambda'+\rho')_{iso}\subset \Delta_{ess}(\lambda+\rho)$. 
The assertion follows.
\end{proof}

\subsubsection{}
\begin{prop}{propDeltaL}
One has $W_{ess}(L)\Delta_{ess}(L)=\Delta_{ess}(L)$.
\end{prop}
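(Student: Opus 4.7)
The plan is to set $\mu:=\lambda+\rho$ and recall that $W_{ess}(L)=W_{ess,\mu}$ is generated by the reflections $r_\gamma$ with $\gamma\in D(\mu)_o\cup D(\mu)_e$. Hence it suffices to verify that, for each such $\gamma$, the reflection $r_\gamma$ preserves each of the four pieces of
\[
\Delta_{ess}(L)=D(\mu)_o\,\cup\,D(\mu)_e\,\cup\,\{2\alpha:\alpha\in D(\mu)_o\}\,\cup\,W_{ess}(L)D(\mu)_{iso}.
\]
The fourth piece is stable tautologically (it is a $W_{ess}(L)$-orbit set), and the third is stable as soon as the first is, since $r_\gamma(2\alpha)=2r_\gamma\alpha$. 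So everything comes down to showing $r_\gamma D(\mu)_o\subseteq D(\mu)_o$ and $r_\gamma D(\mu)_e\subseteq D(\mu)_e$.

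For these, I would use three general facts about $W$: (a) $W\Delta=\Delta$; (b) $W$ preserves $(\cdot,\cdot)$, hence preserves the isotropic/non-isotropic dichotomy, and moreover, since the reflection formula $r_\gamma\beta=\beta-\langle\beta,\gamma^\vee\rangle\gamma$ (with $\langle\beta,\gamma^\vee\rangle\in\mathbb Z$ by \Prop{prop311}) sends roots to roots of the same parity, $W$ preserves parity; (c) $r_\gamma$ is linear, so $(r_\gamma\alpha)/2=r_\gamma(\alpha/2)$ lies in $\Delta_{\bar 1}$ iff $\alpha/2$ does. Combined with the identity
\[
\langle\mu,(r_\gamma\alpha)^\vee\rangle=\langle r_\gamma\mu,\alpha^\vee\rangle=\langle\mu,\alpha^\vee\rangle-\langle\mu,\gamma^\vee\rangle\cdot\langle\gamma,\alpha^\vee\rangle,
\]
(valid for $\alpha$ non-isotropic), this reduces the problem to a parity check on the correction term $c:=\langle\mu,\gamma^\vee\rangle\cdot\langle\gamma,\alpha^\vee\rangle$.

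The parity bookkeeping, which is the only substantive step, goes as follows. By the definitions of $D(\mu)_o$ and $D(\mu)_e$, the factor $\langle\mu,\gamma^\vee\rangle$ is an integer, and it is odd when $\gamma\in D(\mu)_o$. By \Prop{prop311}, $\langle\gamma,\alpha^\vee\rangle$ is an integer when $\alpha$ is non-isotropic even and an \emph{even} integer when $\alpha$ is non-isotropic odd. So $c$ is always an integer, and whenever $\alpha\in D(\mu)_o$ the factor $\langle\gamma,\alpha^\vee\rangle$ is even and hence $c$ is even. Plugging back, $\langle\mu,(r_\gamma\alpha)^\vee\rangle$ is an integer whenever $\langle\mu,\alpha^\vee\rangle$ is, and has the same parity as $\langle\mu,\alpha^\vee\rangle$ when $\alpha$ is odd non-isotropic. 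Together with the preservation of parity, non-isotropy and the condition $\alpha/2\notin\Delta_{\bar 1}$ from (b) and (c), this yields $r_\gamma\alpha\in D(\mu)_o$ for $\alpha\in D(\mu)_o$ and $r_\gamma\alpha\in D(\mu)_e$ for $\alpha\in D(\mu)_e$, completing the proof.

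There is no single hard step; the only point requiring care is the use of the stronger half of \Prop{prop311} (that $\langle\gamma,\alpha^\vee\rangle$ is \emph{even} when $\alpha$ is odd non-isotropic), which is exactly what forces $c$ to be even in the odd-root case and thereby preserves the odd-integer condition defining $D(\mu)_o$.
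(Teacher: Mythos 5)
Your argument is correct and matches the paper's proof: both reduce to the generating reflections, apply the same reflection identity $\langle\mu,(r_{\gamma}\alpha)^{\vee}\rangle=\langle\mu,\alpha^{\vee}\rangle-\langle\mu,\gamma^{\vee}\rangle\langle\gamma,\alpha^{\vee}\rangle$, and invoke \Prop{prop311} for the parity of the Cartan integer. The only differences are cosmetic (the paper names the reflecting root $\alpha$ and the reflected root $\gamma$, you swap them) and that you spell out the tautological stability of the pieces $\{2\alpha\colon\alpha\in D(\mu)_o\}$ and $W_{ess}(L)D(\mu)_{iso}$, which the paper leaves implicit.
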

\begin{proof}
It is enough to verify that for $\alpha,\gamma\in D(\lambda)_o\cup D(\lambda)_e$
one has $r_{\alpha}\gamma\in D(\lambda)_o\cup D(\lambda)_e$. We have
$$(\lambda,(r_{\alpha}\gamma)^{\vee})=(\lambda,\gamma^{\vee})-(\lambda,\alpha^{\vee})
(\alpha,\gamma^{\vee}).$$
By~\Prop{prop311}, $(\alpha,\gamma^{\vee})$ is an integer (resp., even integer) if $\gamma$ is even
(resp., odd). Thus $r_{\alpha}\gamma\in D(\lambda)_o\cup D(\lambda)_e$, as required.
\end{proof}

\subsection{The properties of $\supp (Re^{\rho}\ch L)$.}
Fix a choice of $\Delta^+$, and let $\lambda\in\fh^*$ be a non-critical weight.
Recall that $\lambda$ is called {\em typical} if $(\lambda+\rho,\beta)\not=0$ for all odd
isotropic roots $\beta$, and {\em atypical} otherwise.

The following proposition is proven in~\cite{KK} in the Lie algebra case and in~\cite{GK}
in the general Lie superalgebra case.

\subsubsection{}
\begin{prop}{propKK}
If $\mu\in \supp (Re^{\rho}\ch L(\lambda))$, then
there exists a chain $\mu=\mu_r<\mu_{r-1}<\ldots<\mu_0=\lambda+\rho$, where
either $\mu_k=r_{\gamma}\mu_{k-1}$ for 
$\gamma\in (D(\mu_{k-1}+\rho)_o\cup D(\mu_{k-1}+\rho)_e)\cap\Delta^+$
or
$\mu_k=\mu_{k-1}-\gamma$ for $\gamma\in D(\mu_{k-1}+\rho)_{iso}\cap\Delta^+$.
\end{prop}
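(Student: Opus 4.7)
The plan is to argue along the lines of Kac and Kazhdan~\cite{KK}, adapted to the superalgebra setting as in~\cite{GK}. The starting point is the Shapovalov--Kac--Kazhdan determinant formula for the Verma module $M(\lambda)$: the determinant of the contravariant form on the $(\lambda-\eta)$-weight space factors as a product indexed by $\alpha\in\Delta^+$ and $n\in\mathbb{Z}_{>0}$, with linear factors proportional to $(\lambda+\rho,\alpha)-\frac{n}{2}(\alpha,\alpha)$ when $\alpha$ is non-isotropic (with the parity-adjusted integrality condition $\langle\lambda+\rho,\alpha^\vee\rangle\in\mathbb{Z}$ for even $\alpha$ and $\in 2\mathbb{Z}+1$ for odd non-isotropic $\alpha$) and proportional to $(\lambda+\rho,\alpha)$ when $\alpha$ is isotropic. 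At each such zero one produces either an embedding $M(r_\alpha\cdot\lambda)\hookrightarrow M(\lambda)$ in the non-isotropic case or a nonzero map $M(\lambda-\alpha)\to M(\lambda)$ in the isotropic case. Note also the handy identity $Re^\rho\ch M(\lambda)=e^{\lambda+\rho}$, which follows from $R=R_{\bar 0}/R_{\bar 1}$ and the Weyl denominator formula, and lets us transfer statements about $\supp\ch L(\lambda)$ into statements about $\supp(Re^\rho\ch L(\lambda))$.

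The proof itself would proceed by descending induction on the $\geq$-ordering inside the cone $\lambda+\rho-Q^+$. The base case $\mu=\lambda+\rho$ is the chain of length $r=0$. For the inductive step, given $\mu\in\supp(Re^\rho\ch L(\lambda))$ with $\mu<\lambda+\rho$, one uses the Jantzen-type filtration on $M(\lambda)$, which realises $L(\lambda)$ as the simple head and whose successive subquotients are governed by precisely the Shapovalov zeros described above. Iterating this filtration one obtains, for each composition factor $L(\mu')$ of $M(\lambda)$, a tower whose successive links correspond to one-step operations of the two allowed types. Tracing back from $\mu$, one selects an intermediate $\mu_{k-1}$ in the support for which a Shapovalov zero at $\mu_{k-1}-\rho$ supplies the required predecessor: either $\mu=r_\gamma\mu_{k-1}$ for a non-isotropic $\gamma\in\Delta^+$ satisfying the relevant parity-adjusted integrality of $\langle\mu_{k-1},\gamma^\vee\rangle$, or $\mu=\mu_{k-1}-\gamma$ for an isotropic $\gamma\in\Delta^+$ orthogonal to $\mu_{k-1}$. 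Concatenating the predecessor chain supplied by the inductive hypothesis for $\mu_{k-1}$ with this last step yields the chain asserted in the statement.

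The main obstacle is the verification that every $\gamma$ appearing in the chain lies in $D(\mu_{k-1}+\rho)_o\cup D(\mu_{k-1}+\rho)_e\cup D(\mu_{k-1}+\rho)_{iso}$, i.e.\ matching the three factor-types in the Shapovalov determinant with the three subsets in the definition of~\S\ref{DeltaLdef}. Two delicate points arise. First, the parity-dependent integrality conditions (even versus odd integer for $\langle\lambda+\rho,\alpha^\vee\rangle$) must be carefully paired with the Kac--Kazhdan factors for odd non-isotropic roots, using the fact that $\fg_{\pm\alpha}$ generates a copy of $B(0,1)$ in that case. Second, the degenerate situation $\alpha/2\in\Delta_{\bar 1}$, which occurs for $B(m,n)$ and $G(3)$ (and their affinizations), must be handled by a separate comparison of the contributions of $\alpha$ and $2\alpha$ to the determinant, exactly as in the argument of Lemma~\ref{lembetagamma}. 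Closure of $\Delta_{ess}(L)$ under $W_{ess}(L)$, established in~\Prop{propDeltaL}, together with the behaviour of $D(\lambda)_o$ and $D(\lambda)_e$ under shifts by $Q$ recorded in~(\ref{eq311}), guarantees that the operations remain within the allowed set throughout the induction.
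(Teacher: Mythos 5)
The paper does not actually prove this proposition: immediately after stating it, it writes ``The following proposition is proven in~\cite{KK} in the Lie algebra case and in~\cite{GK} in the general Lie superalgebra case.'' Your sketch correctly reconstructs the mechanism in those references --- the Shapovalov--Kac--Kazhdan determinant, the identification of its three factor types with $D(\cdot)_o$, $D(\cdot)_e$, $D(\cdot)_{iso}$, the Jantzen filtration, and the two genuinely delicate points (odd non-isotropic roots governed by $B(0,1)$-theory, and the degenerate case $\alpha/2\in\Delta_{\bar 1}$ treated as in Lemma~\ref{lembetagamma}). So the approach matches what the paper defers to.

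However, the inductive scaffolding you propose has a gap. You run a descending induction inside $\supp(Re^{\rho}\ch L(\lambda))$, picking at each stage ``an intermediate $\mu_{k-1}$ in the support'' one allowed step above $\mu$. The proposition does not assert that the $\mu_k$ lie in $\supp(Re^{\rho}\ch L(\lambda))$, and it is not true in general that from a point of this support one can retreat to another point of this support by a \emph{single} allowed step. The clean transfer goes through the decomposition matrix: write $\ch L(\lambda)=\sum_{\nu} B_{\lambda,\nu}\,\ch M(\nu)$, where $B$ is the inverse of the unitriangular multiplicity matrix $A=\bigl([M(\eta):L(\nu)]\bigr)$. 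Then $\supp(Re^{\rho}\ch L(\lambda))=\{\nu+\rho:B_{\lambda,\nu}\neq 0\}$, and the identity $BA=\Id$, read off-diagonally, gives $B_{\lambda,\nu}=-\sum_{\eta>\nu}B_{\lambda,\eta}A_{\eta,\nu}$; hence $B_{\lambda,\mu-\rho}\neq 0$ forces a finite chain $\lambda=\eta_0>\eta_1>\cdots>\eta_m=\mu-\rho$ with $B_{\lambda,\eta_i}\neq 0$ and $[M(\eta_{i-1}):L(\eta_i)]\neq 0$ for each $i$. The Kac--Kazhdan theorem for composition factors of Verma modules --- which is exactly what your Jantzen-filtration argument establishes --- then refines each link $\eta_{i-1}\to\eta_i$ into a chain of single allowed steps, possibly passing through weights \emph{outside} $\supp(Re^{\rho}\ch L(\lambda))$. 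Concatenating these refinements produces the chain asserted in the proposition. With this amendment your outline coincides with the argument of~\cite{KK} and~\cite{GK}.
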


\subsubsection{}
Taking into account~(\ref{eq311}) and~\Lem{lembetagamma},
we obtain by induction on $k$ that $D(\mu_k+\rho)_e=D(\lambda+\rho)_e, D(\mu_k+\rho)_o=D(\lambda+\rho)_o$
and $\Delta_{ess}(\mu_k+\rho)=\Delta_{ess}(\lambda+\rho)$.

This leads to the following useful properties of $\supp (Re^{\rho}\ch L(\lambda))$.

\subsubsection{}
\begin{cor}{corsuA}
Let $L=L(\lambda)$ be a non-critical irreducible highest weight module. For each  
$\nu\in\supp (Re^{\rho}\ch L(\lambda))$ one has

(i) $\nu\in\lambda+\rho-\mathbb{Z}_{\geq 0}(\Delta_{ess}(\lambda)\cap\Delta^+)$, 
$||\lambda+\rho-\nu||^2=||\lambda+\rho||^2$.

(ii) If $L(\lambda)$ is typical, then $\nu\in W_{ess}(L)(\lambda+\rho)$.

If $L(\lambda)$ is atypical, then $(\nu,\beta)=0$ for some
odd isotropic root $\beta$.

(iii) $\Delta_{ess}(\nu)=\Delta_{ess}(\lambda+\rho)$.
\end{cor}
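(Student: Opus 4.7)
The plan is to deduce all three claims from the chain of weights supplied by Proposition~\ref{propKK} together with the inductive observation in the paragraph immediately following that proposition. Fix $\nu\in\supp(Re^{\rho}\ch L(\lambda))$ and a chain $\lambda+\rho=\mu_0>\mu_1>\cdots>\mu_r=\nu$ whose $k$-th step is either a reflection $\mu_k=r_{\gamma_k}\mu_{k-1}$ with $\gamma_k\in(D(\mu_{k-1}+\rho)_o\cup D(\mu_{k-1}+\rho)_e)\cap\Delta^+$, or an isotropic shift $\mu_k=\mu_{k-1}-\gamma_k$ with $\gamma_k\in D(\mu_{k-1}+\rho)_{iso}\cap\Delta^+$. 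By the cited observation, the sets $D(\mu_k+\rho)_o,\ D(\mu_k+\rho)_e$ and $\Delta_{ess}(\mu_k+\rho)$ are constant in $k$ and equal to their counterparts at $\mu_0+\rho$, so every $\gamma_k$ lies in $\Delta_{ess}(\lambda+\rho)\cap\Delta^+$. Summing the contributions $\mu_{k-1}-\mu_k$ (which equal $\langle\mu_{k-1},\gamma_k^{\vee}\rangle\gamma_k$ or $\gamma_k$, each a non-negative integer multiple of $\gamma_k$) yields the inclusion in (i), while taking $k=r$ in the same observation gives (iii). The norm equality in (i) is the Casimir eigenvalue identity, preserved at every step: reflections act isometrically in the $\rho$-shifted picture, and an isotropic shift $\mu\mapsto\mu-\gamma$ with $(\gamma,\gamma)=0$ and $(\mu+\rho,\gamma)=0$ preserves $(\mu,\mu+2\rho)$ by direct expansion.

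For (ii), the typical case is immediate: if $L$ is typical, then $D(\lambda+\rho)_{iso}=\varnothing$ propagates via the cited observation to $D(\mu_k+\rho)_{iso}=\varnothing$ for every $k$, so no isotropic-shift step can occur and $\nu=w(\lambda+\rho)$ with $w=r_{\gamma_r}\cdots r_{\gamma_1}\in W_{ess}(L)$. In the atypical case, my plan is to transport an orthogonal isotropic root inductively along the chain: choose an isotropic $\beta_0$ satisfying $(\lambda+\rho,\beta_0)=0$; set $\beta_k:=r_{\gamma_k}\beta_{k-1}$ at each reflection step, which preserves both isotropy and the orthogonality relation; and at each isotropic-shift step either keep $\beta_{k-1}$ (when $(\gamma_k,\beta_{k-1})=0$), or else invoke Lemma~\ref{lembetagamma} to produce a fresh isotropic replacement lying in $\Delta_{ess}(\mu_k+\rho)=\Delta_{ess}(\lambda+\rho)$ and orthogonal to the updated $\mu_k$. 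The terminal $\beta=\beta_r$ is the desired isotropic root with $(\nu,\beta)=0$.

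The main obstacle is precisely the replacement step just mentioned: when an isotropic-shift step introduces a $\gamma_k$ with $(\gamma_k,\beta_{k-1})\ne 0$, the tracked root must be swapped out for one that is simultaneously isotropic, a member of $\Delta_{ess}(L)$, and orthogonal to $\mu_k$. Lemma~\ref{lembetagamma} together with the even root $\gamma_k\pm\beta_{k-1}$ appearing in its proof supplies the right candidate, but the bookkeeping of the various $\rho$-shifts in the definition of $D(\cdot)_{iso}$ and of the orthogonality statement in (ii), across both reflection and shift steps, requires a careful case analysis that I expect to be the bulk of the work; everything else amounts to routine manipulation of the chain.
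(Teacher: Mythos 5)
Your handling of (i), (iii), and the typical half of (ii) is correct and follows the same route the paper intends: all three come directly from the chain in Proposition~\ref{propKK} together with the inductive observation that $D(\mu_k+\rho)_o$, $D(\mu_k+\rho)_e$, and $\Delta_{ess}(\mu_k+\rho)$ are constant along the chain. The norm identity in (i) and the conclusion of (iii) are read off at the endpoint $k=r$, and the vanishing of $D(\cdot)_{iso}$ in the typical case forces every step to be a reflection, giving $\nu\in W_{ess}(L)(\lambda+\rho)$.

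Where you diverge is the atypical half of (ii), and here you have overcomplicated matters and, by your own admission, not finished. The root-tracking scheme is not needed; the paper's intent is that this clause is an immediate consequence of (iii). If $L$ is atypical then $D(\lambda+\rho)_{iso}\neq\emptyset$, so the isotropic odd part $W_{ess,\lambda+\rho}D(\lambda+\rho)_{iso}$ of $\Delta_{ess}(\lambda+\rho)$ is nonempty; by (iii) the same set equals the isotropic odd part $W_{ess,\nu}D(\nu)_{iso}$ of $\Delta_{ess}(\nu)$, which forces $D(\nu)_{iso}\neq\emptyset$, i.e.\ $(\nu,\beta)=0$ for some odd isotropic root $\beta$. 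That is the whole argument. Moreover, even if you wanted to push your chain-tracking version through, the ``replacement step'' you flag as the main obstacle is trivial: after an isotropic shift $\mu_k=\mu_{k-1}-\gamma_k$ you may simply set $\beta_k:=\gamma_k$, since $\gamma_k$ is isotropic and $(\mu_k,\gamma_k)=(\mu_{k-1},\gamma_k)-\|\gamma_k\|^2=0$; Lemma~\ref{lembetagamma} is not needed for this. So there is no delicate case analysis to do — the only genuine content you were missing is that the atypical clause of (ii) is already contained in (iii).
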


\subsubsection{}
\begin{cor}{corsuA2}
Let $L=L(\lambda)$ be a non-critical irreducible highest weight module and
$\Delta_{ess}$ is not irreducible, that is
$$\Delta_{ess}=\coprod_{i\in X} \Delta_{ess}^{i},\ \text{ where } (\Delta^i_{ess},\Delta^j_{ess})=0\ 
\text{ for all } i\not=j.$$

If $\lambda+\rho-\mu\in\supp (Re^{\rho}\ch L(\lambda))$, then $\mu$ can be written as 
$\mu=\sum_{i\in X} \mu_i$
with the property that for each $i\in X$ there exists a chain 
$\lambda-\mu^i=\mu_r<\mu_{r-1}<\ldots<\mu_0=\lambda+\rho$, where
either $\mu_k=r_{\gamma}\mu_{k-1}$ for 
$\gamma\in (D(\mu_{k-1}+\rho)_o\cup D(\mu_{k-1}+\rho)_e)\cap\Delta^+\cap\Delta_{ess}^i$
or
$\mu_k=\mu_{k-1}-\gamma$ for $\gamma\in D(\mu_{k-1}+\rho)_{iso}\cap\Delta^+\cap\Delta_{ess}^i$.

In particular, $\mu^i\in \mathbb{Z}_{\geq 0}(\Delta^i_{ess}(\lambda)\cap\Delta^+)$
and $||\lambda+\rho-\mu^i||^2=||\lambda+\rho||^2$.
\end{cor}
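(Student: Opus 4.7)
The plan is to deduce this from Proposition~\ref{propKK} by exploiting the orthogonality of the pieces $\Delta_{ess}^{i}$ to decouple the chain it supplies.

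First, I apply Proposition~\ref{propKK} to obtain a chain $\lambda+\rho-\mu=\mu_{r}<\mu_{r-1}<\ldots<\mu_{0}=\lambda+\rho$. The argument recorded just before Corollary~\ref{corsuA} (which combines~(\ref{eq311}) with Lemma~\ref{lembetagamma}) shows that each root $\gamma_{k}$ appearing as a step in this chain lies in $\Delta_{ess}(\lambda+\rho)$; by hypothesis $\gamma_{k}$ therefore belongs to a unique component $\Delta_{ess}^{i(k)}$.

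The central observation is that consecutive steps whose roots belong to different components commute without breaking validity. Indeed, if $\gamma\in\Delta_{ess}^{i}$, $\gamma'\in\Delta_{ess}^{j}$, and $i\not=j$, then $(\gamma,\gamma')=0$, so $r_{\gamma}\gamma'=\gamma'$ and the operations $r_{\gamma}$ and $r_{\gamma'}$ commute; moreover, for any intermediate weight $\nu$, the identities $(r_{\gamma}\nu,\gamma')=(\nu,\gamma')=(\nu-\gamma,\gamma')$ imply that the membership of $\gamma'$ in $D(\cdot+\rho)_{iso}$, $D(\cdot+\rho)_{o}$ or $D(\cdot+\rho)_{e}$ is preserved when an $i$-step is inserted before or after a $j$-step. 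Using finitely many adjacent swaps I rearrange the chain so that, relative to some chosen linear order on $X$, all steps involving $\Delta_{ess}^{i}$ form a single consecutive block, for each $i\in X$. Let $\mu^{i}$ denote the total shift contributed by the $i$-th block; then $\mu=\sum_{i\in X}\mu^{i}$ and $\mu^{i}\in\mathbb{Z}_{\geq 0}(\Delta_{ess}^{i}\cap\Delta^{+})$ by construction.

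Next, I re-base each block so that it begins at $\lambda+\rho$ instead of at the intermediate weight produced by the earlier blocks. The difference between the actual starting point of the $i$-th block and $\lambda+\rho$ lies in $\mathbb{Z}\bigl(\bigcup_{j\not=i}\Delta_{ess}^{j}\bigr)$, hence is orthogonal to $\Delta_{ess}^{i}$; applying the same pairing identity to this difference shows that the $D(\cdot+\rho)$-conditions that govern the steps in the $i$-th block are identical whether evaluated at the actual intermediate weight or at $\lambda+\rho$. This yields the required chain $\lambda+\rho-\mu^{i}=\nu_{s}<\ldots<\nu_{0}=\lambda+\rho$ taking place entirely inside $\Delta_{ess}^{i}\cap\Delta^{+}$. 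The norm equality $||\lambda+\rho-\mu^{i}||^{2}=||\lambda+\rho||^{2}$ is then automatic, since each elementary step of Proposition~\ref{propKK} preserves the quadratic form (reflection is obvious, and the isotropic subtraction $\nu\mapsto \nu-\gamma$ uses a $\gamma$ orthogonal to $\nu$). The main obstacle is precisely this combined commutation-and-rebasing step: although each individual pairing identity is immediate from $(\gamma,\gamma')=0$, one must verify uniformly across the isotropic, odd non-isotropic, and even non-isotropic types of step that the relevant $D$-memberships are stable both under adjacent swaps and under the final descent to $\lambda+\rho$; once this bookkeeping is carried out, the decomposition of $\mu$ with the stated properties follows.
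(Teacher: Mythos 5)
The paper does not write out a proof of Corollary~\ref{corsuA2}; it is stated as an immediate consequence of Proposition~\ref{propKK} together with the remark (combining~(\ref{eq311}) with Lemma~\ref{lembetagamma}) that every root appearing as a step of the chain lies in $\Delta_{ess}(\lambda+\rho)$. Your argument fills in exactly that gap by the natural route: take the chain from Proposition~\ref{propKK}, observe that each step-root belongs to a unique component $\Delta_{ess}^i$, and use the pairwise orthogonality $(\Delta_{ess}^i,\Delta_{ess}^j)=0$ to commute adjacent steps in different components, sort the chain into blocks, and finally translate each block back to begin at $\lambda+\rho$. This is the intended proof, and your verification that the $D(\cdot)$-membership conditions are invariant under adding a vector orthogonal to the step-root is the right computation.

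One small point that you gesture at but do not check explicitly: in addition to the $D$-memberships, the strict inequalities $\mu_k<\mu_{k-1}$ must survive both the adjacent swaps and the rebasing. They do, and by the same mechanism: if $\gamma\in\Delta_{ess}^i$ and $\gamma'\in\Delta_{ess}^j$ with $i\neq j$, then $(\gamma,\gamma')=0$ forces the increment contributed by a $\gamma'$-step to be unchanged when it is pushed past a $\gamma$-step (for a reflection, $\nu-r_{\gamma'}\nu=\langle\nu,\gamma'^{\vee}\rangle\gamma'$ depends on $\nu$ only through $(\nu,\gamma')$, which is stable under adding multiples of $\gamma$; for a subtraction the increment is literally $\gamma'$). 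Likewise the ordering is translation-invariant, so rebasing the $i$-th block by $\sum_{j<i}\mu^j$ preserves all $<$'s. With that observation added, the proof is complete and matches the paper's (implicit) argument.
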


\subsubsection{}\label{WL}
A subset $E$ of the set of real roots 
$\Delta_{re}=\Delta\setminus\mathbb{Z}\delta$ 
is called a {\em root subsystem} if the following properties hold:

(i) if $\alpha\in E$, then $-\alpha\in E$;

(ii) if $\alpha\in E$ is not isotropic, then $r_{\alpha}E=E$;

(iii) if $\alpha\in E$ is isotropic and $\beta\in E$ is such that 
$(\alpha,\beta)\not=0$,
then either $\beta+\alpha$ or $\beta-\alpha$ lies in $E$.

Note that any subset $E'$ of $\Delta_{re}$ is contained in a unique minimal root subsystem
of $\Delta_{re}$. Indeed, in order to construct a minimal root subsystem containing $E'$,
we should add to $E'$ an element $-\alpha$ if $\alpha\in E'$,
$r_{\alpha}\beta$ if $\alpha,\beta\in E'$ and $\alpha$ is non-isotropic, and
one of the elements $\beta\pm\alpha$ which is in $\Delta_{re}$ if
$\alpha$ is isotropic with $(\alpha,\beta)\not=0$ (exactly one of
them is in $\Delta_{re}$), and repeat this procedure. 
It follows from the results of~\cite{VGRS}
that if this minimal root subsystem is finite, then it is a root system 
of a finite-dimensional basic simple Lie superalgebra. 
If this minimal root subsystem is infinite, then it is not hard to show 
that it is the set of real roots
of an affine Lie superalgebra. 

If $E'=E'_1\coprod E'_2$ such that $(E_1,E_2)=0$, then 
the minimal root subsystem of $\Delta_{re}$ containing $E'$ is  of the form
$E_1\coprod E_2$, where $E_1,E_2$  minimal root subsystems containing 
$E_1', E_2'$ respectively.

Let $\Delta(L)$ (resp., $\Delta(\nu)$) 
be the minimal root subsystem of $\Delta_{re}$ containing $\Delta_{ess}(L)$ 
(resp., $\Delta_{ess}(\nu)$). Denote by $W(L)$ the Weyl group of $\Delta(L)$, 
i.e. the subgroup of $W$ generated by reflections in non-isotropic
roots from $\Delta(L)$.

All results~\Lem{lembetagamma}---\Cor{corsuA}
remain valid if we replace $\Delta_{ess}(L)$ by $\Delta(L)$.

\subsection{Examples}\label{exaDeltaess}
Assume that $L:=L(\lambda)$ is non-critical.

If $L(\lambda)$ is typical, then $\Delta_{ess}(L)=\Delta(L)$
consists of non-isotropic roots. 

If $L(\lambda)$ is finite-dimensional, then
$\Delta(L)=\Delta_{ess}(L)=\Delta_{re}$ if $L$ is atypical, and 
$\Delta(L)=\Delta_{ess}(L)=\{\alpha\in\Delta|\ (\alpha,\alpha)\not=0\}$ if $L$ is typical.

If $\fg=A(m,n)$ or $A(m,n)^{(1)}$, then $\Delta_{ess}(L)=\Delta(L)$.
However it is not true in general, as the following example shows.

\subsubsection{}
Let $\fg=D(m,n)$  and 
let $\lambda$ be such that $(\lambda,\vareps_i)=(\lambda,\delta_i)=\frac{1}{2}$.
Then $\Delta_{ess}(\lambda)_{\ol{0}}=
\{\pm\vareps_i\pm\vareps_j;\pm\delta_i\pm\delta_j:
i\not=j\}$ and $\Delta_{ess}(\lambda)_{\ol{1}}=\Delta_{\ol{1}}$,
so $\Delta_{ess}(\lambda)$ is not a root system;
in this case $\Delta(\lambda)=\Delta$.

\subsection{The sets $\Pi(L)$}\label{PiL}
Fix a set of positive roots $\Delta^+$, and let $\Pi$ be the subset of simple roots.
Let $\Delta(L)^+:=\Delta(L)\cap\Delta^+$ and
denote the corresponding set of simple roots  by $\Pi(L)$.
Denote by $\Pi_0(L)$ the set of simple roots of $\Delta(L)\cap\Delta_{\ol{0}}^+$
(it does not depend on the choice of $\Pi$). Note that
both $\Pi(L)$ and $\Pi_0(L)$ are linearly dependent, if $\Delta(L)$ has more
than one affine component.

\subsubsection{}
\begin{lem}{lemsuA2}
Let $\alpha\in\Pi(L(\lambda))$ be a non-isotropic root.  
For each $\nu\in supp(R e^{\rho}\ch L)$
one has $r_{\alpha}\nu\in\lambda+\rho-\mathbb{Z}_{\geq 0}\Pi(L(\lambda)))$.
\end{lem}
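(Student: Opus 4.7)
\emph{Plan.} The proof splits into two steps: first reduce the claim, using Cor.~\ref{corsuA}, to the statement that the support of $Re^\rho\ch L$ is $r_\alpha$-stable; then establish this stability from the $W$-skew-invariance of $Re^\rho$ together with the $\{r_\alpha\}$-integrability of $L$.

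By the $\Delta(L)$-version of Cor.~\ref{corsuA}(i), which is valid by the remark in~\S~\ref{WL}, one has
$$\supp(Re^{\rho}\ch L)\ \subset\ \lambda+\rho-\mathbb{Z}_{\geq 0}(\Delta(L)\cap\Delta^+)\ =\ \lambda+\rho-\mathbb{Z}_{\geq 0}\Pi(L),$$
since $\Pi(L)$ is the basis of positive roots of $\Delta(L)^+$, so every element of $\Delta(L)^+$ is itself a non-negative integral combination of simple roots from $\Pi(L)$. Consequently it suffices to prove that $r_\alpha\nu\in\supp(Re^\rho\ch L)$: applying the displayed containment to $r_\alpha\nu$ in place of $\nu$ then yields the desired conclusion $r_\alpha\nu\in\lambda+\rho-\mathbb{Z}_{\geq 0}\Pi(L)$.

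For the $r_\alpha$-stability of $\supp(Re^{\rho}\ch L)$ I use two ingredients. First, by~\S~\ref{RPi}, the element $Re^{\rho}\in\cR_W[\cY^{-1}]$ is $W$-skew-invariant, so in particular $r_\alpha(Re^{\rho})=-Re^{\rho}$. Second, $\ch L$ is $r_\alpha$-invariant: since $\alpha$ is a non-isotropic simple root of $\Delta(L)^+$, the integrality conditions encoded in $\Delta_{ess}(L)$ (namely $\langle\lambda+\rho,\alpha^{\vee}\rangle\in\mathbb{Z}$ for $\alpha\in D(\lambda)_e$, and $\langle\lambda+\rho,\alpha^{\vee}\rangle\in 2\mathbb{Z}+1$ for $\alpha\in D(\lambda)_o$) together with the integrability hypothesis on $L$ appropriate to the context imply that the subalgebra of $\fg$ generated by $\fg_{\pm\alpha}$ (and $\fg_{\pm 2\alpha}$ in the odd case), which is a copy of $\fsl_2$ or $\fosp(1,2)$, acts locally finitely on $L$; standard $\fsl_2$/$\fosp(1,2)$ theory then gives that weight multiplicities along any $\alpha$-string are $r_\alpha$-symmetric, i.e. $\ch L$ is $r_\alpha$-invariant. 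Combining,
$$r_\alpha(Re^{\rho}\ch L)\ =\ r_\alpha(Re^{\rho})\cdot r_\alpha(\ch L)\ =\ -Re^{\rho}\ch L,$$
so writing $Re^{\rho}\ch L=\sum_{\mu} c_\mu e^\mu$ we deduce $c_{r_\alpha\mu}=-c_\mu$; hence $\supp(Re^{\rho}\ch L)$ is $r_\alpha$-stable, and combined with the reduction in the preceding paragraph this proves the lemma.

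The main obstacle is justifying the $\{r_\alpha\}$-integrability of $L$ from the purely combinatorial input $\alpha\in\Pi(L)$: the integrality of $\langle\lambda+\rho,\alpha^{\vee}\rangle$ built into the definition of $\Delta_{ess}(L)$ controls only the spacing of the $\alpha$-string, not the sign needed to pass from a Verma-type situation to an honest integrable $\fsl_2$- or $\fosp(1,2)$-action, and the positivity input — which is what forces $\fg_{\pm\alpha}$ to act locally nilpotently rather than freely — is supplied by the integrability/dominance hypothesis on $L$ ambient in the applications of this lemma.
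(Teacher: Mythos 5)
Your proof contains a genuine gap, which you yourself identify in the final paragraph but do not resolve: it requires $\ch L$ to be $r_\alpha$-invariant for $\alpha\in\Pi(L)$, and you supply this from an ``integrability/dominance hypothesis on $L$ ambient in the applications.'' But the lemma is stated in Section~\ref{sectDeltaL} for an \emph{arbitrary} non-critical irreducible highest weight module $L$, with no integrability hypothesis; it is a purely combinatorial statement about the linkage structure of the support. Moreover, even if one were allowed to assume some integrability of $L$, the root $\alpha\in\Pi(L)$ need not belong to $\Pi_0$, and establishing $r_\alpha$-invariance of $\ch L$ for such $\alpha$ is precisely the content of \Lem{lemFenright} and \Cor{propreint}, which are proved only later (Sections~\ref{enr}--\ref{sectrelint}) by an Enright-functor reduction to the simple case. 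Using them here would create a forward dependency.

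There is also a conceptual overreach: your plan proves the strictly stronger statement $r_\alpha\nu\in\supp(Re^\rho\ch L)$, i.e.\ that the support is $r_\alpha$-stable. That is false in the generality of the lemma (and even under mild dominance assumptions cannot be deduced without the machinery above), whereas the lemma only asserts the weaker containment $r_\alpha\nu\in\lambda+\rho-\mathbb{Z}_{\geq 0}\Pi(L)$.

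The paper's proof avoids all of this. It proceeds by induction on the length of the Kac--Kazhdan chain of \Prop{propKK}: write $\nu=\mu_r=\mu_{r-1}-a\gamma$ with $\gamma\in\Delta_{ess}(L)\cap\Delta^+$ and $a\in\mathbb{Z}_{>0}$. If $\gamma=\alpha$, then $r_\alpha\nu=\mu_{r-1}$ and one is done, since every chain element lies in $\lambda+\rho-\mathbb{Z}_{\geq 0}\Pi(L)$. If $\gamma\neq\alpha$, then because $\alpha$ is \emph{simple} in $\Delta(L)^+$, $r_\alpha\gamma$ stays in $\Delta(L)^+$, so $r_\alpha\nu=r_\alpha\mu_{r-1}-a\,r_\alpha\gamma$ and the inductive hypothesis applied to $\mu_{r-1}$ finishes the argument. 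The only input is the simplicity of $\alpha$ in $\Pi(L)$ and the chain structure; no invariance of $\ch L$ under $r_\alpha$ is needed. Your first reduction step (the containment $\supp(Re^\rho\ch L)\subset\lambda+\rho-\mathbb{Z}_{\geq 0}\Pi(L)$) is fine, but the stability argument should be replaced by this chain induction.
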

\begin{proof}
We prove the assertion by induction on the length of the chain in~\Prop{propKK}.
One has $\nu=\mu_r=\mu_{r-1}-a\gamma$ for $\gamma\in\Delta_{ess}(L(\lambda))^+$,
$a\in\mathbb{Z}_{>0}$. If $\gamma=\alpha$, then $\mu_r=r_{\alpha}\mu_{r-1}$
and $r_{\alpha}\nu=\mu_{r-1}\in\lambda+\rho-\mathbb{Z}_{\geq 0}\Pi(L(\lambda)))$ by~\Cor{corsuA} (i).
If $\gamma\not=\alpha$, then $r_{\alpha}\gamma\in\Delta(L(\lambda))^+$ and thus
$r_{\alpha}\mu_r=r_{\alpha}\mu_{r-1}-ar_{\alpha}\gamma$. By the induction hypothesis,
$r_{\alpha}\mu_{r-1}\in\lambda+\rho-\mathbb{Z}_{\geq 0}\Pi(L(\lambda)))$, so
$r_{\alpha}\mu_{r}\in\lambda+\rho-\mathbb{Z}_{\geq 0}\Pi(L(\lambda)))$, as required.
\end{proof}

\subsubsection{}
Denote by $R_L$ the analogue of $R$:
$$R_L=\frac{\prod_{\alpha\in\Delta_{\ol{0}}^+(L)}(1-e^{-\alpha})}
{\prod_{\alpha\in\Delta_{\ol{1}}^+(L)}(1+e^{-\alpha})}.$$
Note that $R_L\in\cR(\Pi)$.

Fix $\rho_{L}\in\fh^*$ such that $2(\rho_L,\alpha)=(\alpha,\alpha)$ for all
$\alpha\in\Pi(L)$.

\subsubsection{Remark}
Since $R_L,\rho_L$ depend on $\Pi$,
we will sometimes write $R_{L,\Pi},\rho_{L,\Pi}$ to prevent a confusion.
We choose $\rho_{L,\Pi}$ compatible for all subsets  of simple roots
$\Pi$, proceeding as in~\S~\ref{choicerho}.
The elements $R_{L,\Pi}e^{\rho_{L,\Pi}}$
are equivalent for all $\Pi$.

\subsubsection{}\label{Pilambda}
Let $\beta\in\Pi$ be an odd simple root and let $r_{\beta}$ 
be the corresponding odd reflection. Then
$$r_{\beta}\Delta^+\cap\Delta(L)=\left\{\begin{array}{ll}
       \Delta(L)^+& \text{ if }\beta\not\in\Pi(L)\\
         \Delta(L)^+\setminus\{\beta\}\cup\{-\beta\}  & 
\text{ if }\beta\in\Pi(L).
                                      \end{array}
\right.$$

 Recall that $r_{\beta}\Delta^+$ has
the set of simple roots $r_{\beta}\Pi:=\{r_{\beta}\alpha|\ \alpha\in\Pi\}$,
where
$$r_{\beta}\alpha=\left\{\begin{array}{ll}
-\beta &\text{ if }\alpha=\beta,\\
\alpha+\beta&\text{ if }(\alpha,\beta)\not=0,\\
\alpha &\text{ otherwise.}
\end{array}\right.$$

As a result, the set of simple roots for $r_{\beta}\Delta^+\cap\Delta(L)$ 
coincides
with $\Pi(L)$ if $\beta\not\in\Pi(L)$ and is equal to $r_{\beta}\Pi(L)$, defined
by the above formulas, if $\beta\in\Pi(L)$.

\subsection{Character formulas for different choices of $\Delta^+(\Pi)$}
Let $L$ be a irreducible highest weight module. 
Then $\ch L\in\cR(\Pi)$ for each $\Pi$.
Suppose that $\Delta(L)=\Delta$ and that for some $\Pi$ the following formula holds in
$\cR(\Pi)$:
$$Re^{\rho}\ch L=\sum_{w\in W'} x_w w\bigl(\frac{e^{\nu}}
{\prod_{\beta\in J}(1+e^{-\beta})}\bigr),$$
where $\nu\in \fh^*\setminus\{0\}, J\subset\Delta, x_w\in\mathbb{Q}$, 
and $W'$ satisfy the conditions 
of~\S~\ref{expA}. Combining~\S~\ref{RPi} and~\S~\ref{expA}, 
we conclude that the above formula holds
in $\cR(\Pi')$ for any $\Pi'$.

An important  special case is when $\nu=\lambda+\rho_{\Pi}, L=L(\lambda,\Pi), J=J(\Pi)$.
In this case we can obtain a formula of  similar
form  for certain other subsets of positive roots $\Delta^+(\Pi')$ as follows.
Let $\Pi'=r_{\beta}\Pi$ for an odd isotropic root $\beta\in\Pi$.
Then $L(\lambda,\Pi)=L(\lambda',\Pi')$, where $\lambda'$ is given by~(\ref{lambdalambda'}).
This implies the formula
$$Re^{\rho}\ch L(\lambda',\Pi')=\sum_{w\in W'} x_w 
w\bigl(\frac{e^{\lambda'+\rho_{\Pi'}}}
{\prod_{\beta\in J(\Pi')}(1+e^{-\beta})}\bigr),$$
where $J(\Pi')=J(\Pi)$ if $(\lambda+\rho,\beta)\not=0$ and
$J(\Pi')=(J(\Pi)\setminus\{\beta\})\cup\{-\beta\}$ if $(\lambda+\rho,\beta)=0$
and $\beta\in J(\Pi)$ (this method does not work if $(\lambda+\rho,\beta)=0$ and
$\beta\not\in J(\Pi)$).

\subsubsection{Remark}
If $L=L(\lambda,\Pi)$ with $\lambda\not=-\rho$, similar results hold if 
we substitute $W$ by the integral Weyl group $W(L)$, see~\S~\ref{WL}: 
in this case we take $W'$ generated
by reflections $r_{\alpha}, \alpha\in I$, where $I\subset \Pi_0(L)$.

\subsection{The map $F:\ L\mapsto \ol{L}$}\label{ollambda}
Fix $\Delta^+(\Pi)$ and a non-critical irreducible highest weight module 
$L=L(\lambda,\Pi)$. Set
$$\ol{\lambda}=\lambda+\rho-\rho_L.$$

Let $\fg'$ be the Kac-Moody  superalgebra with the set of real roots
$\Delta(L)$ and the set of simple roots $\Pi(L)$. This Kac-Moody superalgebra
is of finite or affine type (with a symmetrizable
Cartan matrix-- its symmetrization is given by the restriction of $(-,-)$
to $\Pi(L)$). Let $\fh'$ be a Cartan subalgebra of $\fg'$ and let $\fh''$
be a commutative Lie algebra of dimension $\dim\fh-\dim\fh'$.
Consider the Lie superalgebra $\ol{\fg}^{\lambda}:=\fg'\times\fh''$ and identify its Cartan subalgebra with $\fh$
in such a way that the simple roots of $\fg'$ are identified with $\Pi(L)$.
We denote by $\ol{L}(\ol{\lambda})$
an irreducible highest weight module
with highest weight $\ol{\lambda}$ over the Lie superalgebra $\ol{\fg}^{\lambda}$.

Recall that $L$ is non-critical and $\fg'$ is  either basic finite-dimensional (if $\Delta(L)$ is finite) or affine. 
It is easy to see that in the second case $\ol{L}(\ol{\lambda})$ 
is also non-critical. 
Indeed, let $\delta'\in\Delta(L)$ be the primitive imaginary root of $\fg'$. 
Then $||\delta'||^2=0$, so
$\delta'$ is an imaginary root in $\Delta_{\ol{0}}$, hence 
$\delta'=k\delta$, where $k\not=0$ and 
$\delta\in\Delta_{\ol{0}}^+$
is a primitive imaginary root. But 
$(\ol{\lambda}+\rho_L,\delta')=k(\lambda+\rho,\delta)\not=0$,
 so $\ol{L}(\ol{\lambda})$ is non-critical.

The following lemma shows that the map $F:\ L\to \ol{L}$ is a well-defined
map of  non-critical irreducible highest weight modules.

\subsubsection{}
\begin{lem}{FLproperty}
Let $\Pi,\Pi'$ be two sets of simple roots.
If $L=L(\lambda,\Pi)=L(\lambda',\Pi')$, then
$$F\bigl(L(\lambda,\Pi)\bigr)\cong F\bigl(L(\lambda',\Pi')\bigr).$$
\end{lem}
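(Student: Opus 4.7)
By Proposition~\ref{propS}(a), any two sets of simple roots are connected by a finite chain of odd reflections, so it suffices to prove the statement in the case $\Pi'=r_{\beta}\Pi$ for a single odd isotropic $\beta\in\Pi$. Moreover, Proposition~\ref{propDeltaLwell} already tells us that $\Delta_{ess}(L)$, and hence the root subsystem $\Delta(L)$, the set $\Pi_0(L)$ and the Kac--Moody superalgebra $\ol{\fg}^{\lambda}$, are intrinsic invariants of $L$. Thus the target Lie superalgebra for the two constructions $F(L(\lambda,\Pi))$ and $F(L(\lambda',\Pi'))$ is the same; only the choice of Borel inside it, namely $\Pi(L)$ vs.\ $r_{\beta}\Pi(L)$ or $\Pi(L)$ (see~\S~\ref{Pilambda}), and the chosen highest weight, can differ. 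The plan is to check on one odd reflection that these two data describe isomorphic irreducible highest weight $\ol{\fg}^{\lambda}$-modules.

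I will split into the two cases described in~\S~\ref{Pilambda}. \emph{Case 1:} $\beta\notin\Pi(L)$. Since $\beta\in\Pi$ is simple in $\Delta^+$, it is also indecomposable in $\Delta(L)^+=\Delta(L)\cap\Delta^+$, so if $\beta$ were in $\Delta(L)$, it would lie in $\Pi(L)$. Thus $\beta\notin\Delta(L)$, and in particular $\beta\notin D(\lambda+\rho)_{iso}$, so $(\lambda+\rho,\beta)\neq 0$. Then~(\ref{lambdalambda'}) gives $\lambda'=\lambda-\beta$, while $\rho_{\Pi'}=\rho_{\Pi}+\beta$ by~\S~\ref{choicerho}, and $\rho_{L,\Pi'}=\rho_{L,\Pi}$ since $\Pi(L)$ is unchanged. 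A direct computation yields $\ol{\lambda}'=\lambda'+\rho_{\Pi'}-\rho_{L,\Pi'}=\lambda+\rho_{\Pi}-\rho_{L,\Pi}=\ol{\lambda}$, and the Borel of $\ol{\fg}^{\lambda}$ is unchanged as well; hence the two modules coincide.

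\emph{Case 2:} $\beta\in\Pi(L)$. Here $r_{\beta}\Pi(L)$ is the simple root set for $\Delta(L)^+$ after the odd reflection inside $\ol{\fg}^{\lambda}$, and the compatibility condition of~\S~\ref{choicerho} gives $\rho_{L,\Pi'}=\rho_{L,\Pi}+\beta$. A short calculation shows that $(\ol{\lambda}+\rho_{L,\Pi},\beta)=(\lambda+\rho_{\Pi},\beta)$, so the vanishing of this pairing for the ambient $L$ is equivalent to its vanishing for $\ol{L}$; therefore formula~(\ref{lambdalambda'}) applied to $\ol{L}(\ol{\lambda},\Pi(L))$ produces exactly the same shift as formula~(\ref{lambdalambda'}) applied to $L(\lambda,\Pi)$. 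Writing out both subcases one checks:
\[
(\lambda+\rho_{\Pi},\beta)\neq 0:\ \ \ol{\lambda}'=\ol{\lambda}-\beta,\qquad
(\lambda+\rho_{\Pi},\beta)=0:\ \ \ol{\lambda}'=\ol{\lambda},
\]
which matches precisely the rule that defines $\ol{L}(\ol{\lambda}',r_{\beta}\Pi(L))=\ol{L}(\ol{\lambda},\Pi(L))$ inside $\ol{\fg}^{\lambda}$. This gives $F(L(\lambda',\Pi'))\cong F(L(\lambda,\Pi))$.

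The main technical point, and where I expect to have to be most careful, is the coherence of the choices of Weyl vectors $\rho_{L,\Pi}$ along the chain of odd reflections: one must use the construction of~\S~\ref{choicerho} applied to the root system $\Delta(L)$ and verify that when $\beta\in\Pi(L)$ the transformation law $\rho_{L,r_{\beta}\Pi}=\rho_{L,\Pi}+\beta$ is consistent across arbitrary chains (so that iterating the two cases above gives a well-defined map independent of the path of odd reflections used to connect $\Pi$ to $\Pi'$). This consistency is guaranteed by the proof of~\S~\ref{choicerho}, applied simultaneously to $\Delta$ and to $\Delta(L)$, after noting that an odd reflection along $\beta\notin\Pi(L)$ leaves $\Pi(L)$ pointwise fixed so contributes $0$ to $\rho_{L,\Pi}$, while an odd reflection along $\beta\in\Pi(L)$ contributes $\beta$ to both $\rho_{\Pi}$ and $\rho_{L,\Pi}$, making the difference $\rho_{\Pi}-\rho_{L,\Pi}$ transform in exactly the way needed to match~(\ref{lambdalambda'}) at the two levels.
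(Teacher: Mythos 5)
Your proof is correct and follows essentially the same strategy as the paper's: reduce to a single odd reflection, then compare the data $(\ol{\lambda}', \Pi'(L))$ with $(\ol{\lambda},\Pi(L))$ by direct computation using~(\ref{lambdalambda'}) and the transformation rules of~\S~\ref{choicerho} and~\S~\ref{Pilambda}. You organize the case split by whether $\beta\in\Pi(L)$ (and then unify the two subcases via the observation $\ol{\lambda}+\rho_{L,\Pi}=\lambda+\rho_{\Pi}$), while the paper splits first on $(\lambda+\rho,\beta)\ne 0$; the underlying calculations are the same, and your closing remark on coherence of the $\rho_{L,\Pi}$ choices is handled by the paper in a short remark after the lemma.
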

\begin{proof}
Recall that $\Pi'$ can be obtained from $\Pi$ by a sequence of odd reflections.
Let $\beta\in\Pi$ be an isotropic root and $\Pi'=r_{\beta}\Pi$. Denote by $\Pi(L)$
(resp., by $\Pi'(L)$) the set of simple roots for $\Delta^+\cap \Delta(L)$
(resp., for $(r_{\beta}\Delta^+)\cap \Delta(L)$) and choose $\rho_{L}$;
we may (and will) choose $\rho'_L:=\rho_L+\beta$.

Consider the case when $(\lambda+\rho,\beta)\not=0$. Then
$\lambda'+\rho_{r_{\beta}\Pi}=\lambda+\rho$ so
$\ol{\lambda}+\rho_{L}=\ol{\lambda}'+\rho'_{L}$.

If $\beta\not\in\Delta(L)$, then, by Section~\ref{Pilambda},
 $\Pi(L)=\Pi'(L)$
and thus $\ol{\lambda'}=\ol{\lambda}$ and
$$F(L(\lambda',r_{\beta}\Pi))=\ol{L}(\ol{\lambda}, \Pi(L))=
F(L(\lambda),\Pi)).$$

If $\beta\in\Delta(L)$, then, by Section~\ref{Pilambda}, $\Pi(L)=r_{\beta}\Pi'(L)$
and, in particular, we choose $\rho_L'=\rho_L+\beta$. Thus  $\ol{\lambda'}=
\ol{\lambda}-\beta$ and
$$F\bigl(L(\lambda',r_{\beta}\Pi)\bigr)=\ol{L}(\ol{\lambda}-\beta, r_{\beta}\Pi(L))\cong
\ol{L}(\ol{\lambda},\Pi(L))=
F\bigl(L(\lambda,\Pi)\bigr).$$

Consider the  case when $(\lambda+\rho,\beta)=0$. Then 
$\beta\in\Pi(L)$ and $\rho'=\rho+\beta,\
\rho_L'=\rho_L+\beta$. One has $\lambda'=\lambda$, so 
$\lambda'+\rho'=\lambda+\rho+\beta$ that is
$\ol{\lambda'}+\rho'_L=\ol{\lambda}+\rho_L+\beta$ which gives
$\ol{\lambda'}=\ol{\lambda}$. Therefore
$F\bigl(L(\lambda',r_{\beta}\Pi)\bigr)=\ol{L}(\ol{\lambda'}, 
r_{\beta}\Pi(L))\cong
\ol{L}(\ol{\lambda}, \Pi(L))=
F\bigl(L(\lambda,\Pi)\bigr)$.
\end{proof}

\subsubsection{}\label{KRWconjecture}
In~\cite{KRW} it was conjectured that  the characters of an admissible $\fg$-module 
$L:=L(\lambda,\Pi)$ and the $\ol{\fg}^{\lambda}$-module $F(L)$ are related by the formula (cf.~(\ref{1}))

\begin{equation}\label{KRWconj}
  Re^{\rho}\ch L=R_Le^{\rho_L}\ch F(L).
\end{equation}

Since $\Delta(L)^+\subset \Delta^+(\Pi)$ for each $\Pi$, the algebra
$\cR(\Pi(L))$ can be naturally embedded in $\cR(\Pi)$, so 
we consider the above formula as an equality in $\cR(\Pi)$.
By~\S~\ref{RPi} and~\Lem{FLproperty} the elements in the left-hand side (resp., the elements in the right-hand side) are equivalent for all subsets of 
simple roots.
Therefore if the formula holds for some $\Pi$, it holds for all other choices
of $\Pi$.

In the next sections we verify formula~(\ref{KRWconj}) for certain cases.

\section{Linkage $L\sim L'$}\label{enr}
Let $\fg$ be a basic or  affine Lie superalgebra 
and $\Delta^+(\Pi)$ be a subset of positive roots in the set of roots $\Delta$.
Let $\Pi_0$ be the set of simple roots for $\Delta^+_{\ol{0}}$. Define
the standard dot action
of the Weyl group by $w.\lambda:=w(\lambda+\rho)-\rho$.

\subsection{Enright functor}
Fix $\alpha\in\Pi_0$ and $a\in\mathbb{C}/\mathbb{Z}$.
Let $\mathbb{M}(\fg,a)$ (resp., $\mathbb{M}(\fg_{\ol{0}},a)$)
be the category of $\fg$-modules (resp., $\fg_{\ol{0}}$-modules) 
$M$ with a locally
nilpotent action of  a root vector
$e_{\alpha}$, a diagonal action of $\fh$,
i.e., $M=\oplus_{\mu\in\fh^*} M_{\mu}$, and such that
$(\mu,\alpha^{\vee})\equiv a\ \mod\mathbb{Z}$ if $M_{\mu}\not=0$.

For each $\lambda\in\fh^*$ denote by $\CO_{\lambda}$ the full subcategory of 
the category $\CO$, whose objects are $\fg$-modules $N$ satisfying $N_{\mu}=0$ if 
$\mu-\lambda\not\in\mathbb{Z}\Delta$.
Note that  $\CO_{r_{\alpha}.\lambda}=\CO_{r_{\alpha}\lambda}$.

We denote by $\breve{M}(\nu)$ (resp., $\breve{L}(\nu)$) 
the Verma (resp., irreducible) $\fg_{\ol{0}}$-module with the highest weight $\nu$ 
(since $\Pi_0$ is fixed, these modules do not depend on the choice of $\Pi$).

We will use the following theorem which can be easily deduced from~\cite{KT2},\cite{IK}.

\subsubsection{}
\begin{thm}{thmEnr}
For each $a\in\mathbb{C}/\mathbb{Z}$
there exists a left exact
functor $T_{\alpha}(a): \mathbb{M}(\fg_{\ol{0}},a)\to \mathbb{M}(\fg_{\ol{0}},a)$ ({\em Enright functor})
which induces  a left exact functor 
$T_{\alpha}(a): \mathbb{M}(\fg,a)\to \mathbb{M}(\fg,-a)$ with the following properties.

(a) Assume that $\alpha\in\Pi$ or $\frac{\alpha}{2}\in\Pi$, $M(\lambda,\Pi)\in \mathbb{M}(\fg,a)$ and 
$(\lambda,\alpha^{\vee})\not\in\mathbb{Z}$. One has

$$\begin{array}{l}
T_{\alpha}(a)(M(\lambda,\Pi))=M(r_{\alpha}.\lambda,\Pi),\ \ 
T_{\alpha}(a)(L(\lambda,\Pi))=L(r_{\alpha}.\lambda,\Pi),\\
T_{\alpha}(a)(\breve{M}(\lambda,\Pi))=\left\{
\begin{array}{ll}
\breve{M}(r_{\alpha}.\lambda,\Pi)\ & \text{ if } \alpha\in\Pi,\\
\breve{M}(r_{\alpha}.\lambda-\frac{\alpha}{2},\Pi)\ & \text{ if } \alpha/2\in\Pi.\\
\end{array}
\right. \\
T_{\alpha}(a)(\breve{L}(\lambda,\Pi))=\left\{
\begin{array}{ll}
\breve{L}(r_{\alpha}.\lambda,\Pi)\ & \text{ if } \alpha\in\Pi,\\
\breve{L}(r_{\alpha}.\lambda-\frac{\alpha}{2},\Pi)\ & \text{ if } \alpha/2\in\Pi.\\
\end{array}
\right.
\end{array}$$

(b) If  $0\not=a\in\mathbb{C}/\mathbb{Z}$, then $T_{\alpha}(a)$ is an equivalence of categories 
$\mathbb{M}(\fg,a)\to \mathbb{M}(\fg,-a)$ and the inverse is given by
$T_{\alpha}(-a)$.
 
(c) If  $\alpha\not\in\Delta(\lambda)$ and  $a\equiv (\lambda+\rho,\alpha^{\vee})$, 
then $T_{\alpha}(a)$ provides an equivalence of categories 
$\CO_{\lambda}\iso \CO_{r_{\alpha}\lambda}$.
\end{thm}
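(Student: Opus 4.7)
The plan is to deduce this from the constructions in \cite{KT2} for the Lie algebra case and \cite{IK} for the extension to Lie superalgebras, verifying the explicit formulas on highest weight modules by direct calculation. I will organize the argument in four steps.

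\emph{Step 1: Construction of the functor.} First I would recall the Enright completion construction. For $M \in \mathbb{M}(\fg_{\ol{0}},a)$, one sets $T_\alpha(a)M$ to be the unique object (up to isomorphism) with a locally nilpotent action of both $e_\alpha$ and $e_{-\alpha}$ (i.e.\ $\fsl_2\langle\alpha\rangle$-locally finite), together with a canonical map from $M$ whose cokernel has no $e_{-\alpha}$-locally finite vectors; equivalently, one can realize $T_\alpha(a)$ via the twist by $r_\alpha$ of the $\fsl_2\langle\alpha\rangle$-dual, or through Arkhipov's twisting/Deodhar--Mathieu localization. This is the Lie algebra construction in \cite{KT2}. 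To pass from $\mathbb{M}(\fg_{\ol{0}},a)$ to $\mathbb{M}(\fg,a)$, I would invoke the extension worked out in \cite{IK}: because $\alpha \in \Pi_0$ is even, the subalgebra $\fsl_2\langle\alpha\rangle \subset \fg_{\ol{0}}$ acts on any $\fg$-module, and the Enright completion on the underlying $\fg_{\ol{0}}$-module inherits a compatible $\fg$-action (the extra $\fg$-structure is transported because completion is functorial with respect to maps commuting with $\fsl_2\langle\alpha\rangle$). Left exactness is inherited from the left exactness of the completion functor. The shift on weights from $a$ to $-a$ reflects the fact that $T_\alpha$ exchanges a highest-weight vector of weight $\mu$ with its $r_\alpha$-reflected image.

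\emph{Step 2: Action on Verma and simple modules (item (a)).} Under the assumption $(\lambda,\alpha^\vee) \notin \mathbb{Z}$, the $\fsl_2\langle\alpha\rangle$-module generated by the highest weight vector of $\breve{M}(\lambda,\Pi)$ is irreducible of infinite-dimensional non-integral type, so Enright completion produces the analogous Verma with highest weight $r_\alpha . \lambda$; when $\alpha/2 \in \Pi$, the $B(0,1)$-type structure on root vectors forces the shift by $-\alpha/2$, yielding the stated formula. The corresponding statements for $M(\lambda,\Pi)$ and for the irreducibles follow from the exactness of $T_\alpha(a)$ on the relevant subcategory and the fact that $T_\alpha(a)$ takes the unique simple quotient to the unique simple quotient (since its construction is characterized by universal properties; this is made explicit in \cite{KT2}, Prop.~2.1 in the Lie algebra case, and the super extension goes through because the odd root vectors intertwine the two $\fsl_2\langle\alpha\rangle$-modules at $\mu$ and $r_\alpha.\mu$). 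The claim for $L(\lambda,\Pi)$ then follows from the Verma case by passing to quotients.

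\emph{Step 3: Equivalence for $a \neq 0$ (item (b)).} The key point is that when $a \not\equiv 0 \pmod{\mathbb Z}$, the action of $e_\alpha$ on any weight space is invertible in the Enright-completed module, so completion with respect to $e_\alpha$ and completion with respect to $e_{-\alpha}$ are mutually inverse procedures. Thus $T_\alpha(-a) \circ T_\alpha(a) \cong \mathrm{id}$ on $\mathbb{M}(\fg,a)$ and similarly on the opposite side; this was shown in \cite{KT2} for $\fg_{\ol 0}$ and passes to $\fg$ since the natural transformations involved are $\fg$-equivariant by Step 1.

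\emph{Step 4: Block equivalence (item (c)).} The subtle point is why $T_\alpha(a)$ sends $\CO_\lambda$ into $\CO_{r_\alpha \lambda}$. When $\alpha \notin \Delta(\lambda)$ the integrality condition $(\lambda+\rho,\alpha^\vee) \equiv a \pmod{\mathbb{Z}}$ with $a \notin \mathbb{Z}$ holds throughout the block, so $T_\alpha(a)$ is defined on every object of $\CO_\lambda$, and the weights of the output lie in $r_\alpha(\lambda + \mathbb{Z}\Delta) = r_\alpha\lambda + \mathbb{Z}\Delta$. Combined with (b), this gives the desired equivalence. The hard part here is verifying that the superalgebra extension in \cite{IK} preserves the category $\CO$ properties (in particular, that highest weight modules go to highest weight modules with the predicted weight), which is precisely where the calculation in Step 2 is needed; once that is in hand, Step 4 is formal.
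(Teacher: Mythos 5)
The paper does not give a proof of this theorem: it states that it ``can be easily deduced from~\cite{KT2},\cite{IK}'' and leaves the verification to the reader. Your proposal, which fleshes out such a deduction, is thus attempting precisely what is asked, and your Steps~2--4 follow the intended route at the level of organization: use the formulas on Verma modules plus exactness to get the behaviour on simple quotients, use $T_\alpha(-a)\circ T_\alpha(a)\cong\mathrm{id}$ for $a\not\equiv 0$, and combine with the weight-shift to get the block equivalence $\CO_\lambda\iso\CO_{r_\alpha\lambda}$.

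However, Step~1 contains a genuine error, and it is not a cosmetic one. You characterize $T_\alpha(a)M$ as ``the unique object with a locally nilpotent action of both $e_\alpha$ and $e_{-\alpha}$ (i.e.\ $\fsl_2\langle\alpha\rangle$-locally finite), together with a canonical map from $M$ whose cokernel has no $e_{-\alpha}$-locally finite vectors.'' That is the classical \emph{Enright completion} in the integral case, and it cannot be the construction used here. Part~(a) is stated under the hypothesis $(\lambda,\alpha^\vee)\notin\mathbb{Z}$, and the asserted output $M(r_\alpha.\lambda,\Pi)$ is again a Verma module, on which $e_{-\alpha}$ acts \emph{freely}, not locally nilpotently; there is simply no $\fsl_2\langle\alpha\rangle$-locally finite object in sight. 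The functor of \cite{KT2},\cite{IK} in the non-integral setting is the Deodhar--Mathieu localization / Arkhipov-type twisting along $f_\alpha$ (pass to $M[f_\alpha^{-1}]$, then extract the $e_\alpha$-locally finite part, up to a twist by $r_\alpha$); you do name these constructions, but then assert they are ``equivalent'' to the locally finite completion, which is false outside the integral case. The same confusion reappears in Step~3, where you claim ``the action of $e_\alpha$ on any weight space is invertible in the Enright-completed module'': by the very definition of $\mathbb{M}(\fg,-a)$, $e_\alpha$ acts locally nilpotently on $T_\alpha(a)M$, so it cannot be invertible; the correct statement is that $f_\alpha$ is inverted in the intermediate localization, and that the two localizations (at $f_\alpha$ and at $e_\alpha$, suitably twisted) are mutually inverse when $a\not\equiv 0$. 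To make the proposal correct, Step~1 must be rewritten to use the localization/twisting description from the start, and Step~3's invertibility claim must be recast in terms of $f_\alpha$ on the localized module rather than $e_\alpha$ on the completed one. Once that is done, the rest of the argument (Step~2's computation on Vermas via the $\fsl_2\langle\alpha\rangle$-string generated by the highest weight vector, the $\alpha/2$ shift for $B(0,1)$-type roots, and the block equivalence in Step~4) is sound and matches the intended deduction.
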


\subsection{The linkage $\sim$}
\label{equivcat}
Let $\Theta$ be the set of functors $T: \CO_{\lambda}\to\CO_{\lambda'}$
which can be presented as compositions of  Enright functors $T_{\alpha}(a):\CO(\lambda)\iso \CO_{r_{\alpha}\lambda}$
with $\alpha\not\in\Delta(\lambda),\ a=(\lambda,\alpha^{\vee})$. 
By~\Thm{thmEnr} (c),
each $T\in\Theta$ is an equivalence of categories.

We will say that the highest weight irreducible modules $L$ and $L'$ are {\em linked}
if $L'=T(L)$ for $T\in\Theta$, and denote it by $L\sim L'$.

\subsubsection{}\label{simseq}
Let $L\sim L'$ be two linked highest weight irreducible modules 
and $\Pi,\Pi'$ be two subsets of simple roots. Then there exists a finite chain
$$L=L^1=L(\lambda^1,\Pi^1), 
L^2=L(\lambda^2,\Pi^2),\ldots, L^t=L(\lambda^t,\Pi^t)=L',$$
where $\Pi^1=\Pi, \Pi^t=\Pi'$, and
for each $i$ we have
$$\begin{array}{l}
L^{i+1}=L^i, \Pi^{i+1}=r_{\beta}\Pi^i\ \text{ for some isotropic } \beta\in\Pi^i,\ \ \ \ \ \text{ or }\\
\Pi^{i+1}=\Pi^i \text{ and }L^{i+1}=T(L_{i}) \text{ for some } T=T_{\alpha}(a)\in\Theta:
\alpha\in\Pi^i\text{ or }\frac{\alpha}{2}\in\Pi^i.
\end{array}$$

Note that in the first case $\lambda^i+\rho^i=\lambda^{i+1}+\rho^{i+1}$
if $(\lambda^i+\rho^i,\beta)\not=0$ and $\lambda^i+\rho^i+\beta=\lambda^{i+1}+\rho^{i+1}$ if  $(\lambda^i+\rho^i,\beta)=0$,
see~(\ref{lambdalambda'}); in particular, one has
$||\lambda^i+\rho^i||^2=||\lambda^{i+1}+\rho^{i+1}||^2$.
In the second case
($\Pi^{i+1}=\Pi^i$) one has $\alpha\not\in\Delta(\lambda^i)$ and 
$\lambda^{i+1}=r_{\alpha}(\lambda^i+\rho^i)-\rho^i$,
where $\rho^i$ is the Weyl vector for $\Pi^i$. This implies the following useful property of the linkage:
$$L(\lambda,\Pi)\sim L(\lambda',\Pi')\ \ \Longrightarrow\ \
||\lambda+\rho||^2=||\lambda'+\rho'||^2.$$

\subsubsection{}\label{FLFL'}
Fix $\Pi$. Let $L'=T_{\alpha}(a)(L)$ for some $\alpha\not\in\Delta(L)$ such that
$\alpha$ or $\frac{\alpha}{2}$ lies in $\Pi$. Write $L=L(\lambda,\Pi), 
L'=L(\lambda',\Pi)$
and recall that $\lambda=r_{\alpha}.\lambda'$. Then 
$\Delta(L')=r_{\alpha}(\Delta(L))$.
The conditions on $\alpha$ imply 
$r_{\alpha}(\Delta(L))\cap\Delta^+=\Delta(L)\cap\Delta^+$
so $\Pi(L')=r_{\alpha}(\Pi(L))$. 
 
Recall the algebra $\ol{\fg}^{\lambda}$ and the map $L\to F(L)$ introduced in~\S~\ref{ollambda}.
Since $\Pi(\lambda')=r_{\alpha}(\Pi(\lambda))$, there exists
a natural isomorphism of the Kac-Moody superalgebras
$\iota: [\ol{\fg}^{\lambda},\ol{\fg}^{\lambda}]\iso [\ol{\fg}^{\lambda},\ol{\fg}^{\lambda}]$ with the property
$\iota: \ol{\fg}^{\lambda}_{\beta}\to\ol{\fg}^{\lambda}_{r_{\alpha}\beta}$
for each $\beta\in\Pi(L)$. This isomorphism can be extended to
the isomorphism $\iota:\ol{\fg}^{\lambda}\iso\ol{\fg}^{\lambda}$ such that
$\iota(h)=r_{\alpha}(h)$ for each $h\in\fh$. Then 
$\ol{\lambda}(h)=\ol{\lambda'}(\iota(h))$
(see~\S~\ref{ollambda} for notation), 
so there exists an isomorphism $F(L)\iso F(L')$ 
 compatible with $\iota$
(i.e. $\iota'(av)=\iota(a)v$ for all $a\in \ol{\fg}^{\lambda}, v\in F(L)$).

\subsubsection{}
\begin{cor}{corFLsim}
If $L(\lambda,\Pi)\sim L(\lambda',\Pi')$, then  there exists
an isomorphism $\iota:\ol{\fg}^{\lambda}\iso \ol{\fg}^{\lambda}$
and an isomorphism $F: F(L)\iso F(L')$ compatible with $\iota$.
\end{cor}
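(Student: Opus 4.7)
My plan is to unpack the definition of the linkage relation from \S\ref{simseq}: any $L(\lambda,\Pi) \sim L(\lambda',\Pi')$ is realized by a finite chain $L = L^1, L^2, \ldots, L^t = L'$ in which each elementary step is either (i) an odd reflection $\Pi^{i+1} = r_\beta \Pi^i$ with $L^{i+1} = L^i$, or (ii) an Enright functor application $L^{i+1} = T_\alpha(a)(L^i)$ with $\Pi^{i+1} = \Pi^i$, $\alpha \not\in \Delta(L^i)$, and $\alpha$ or $\alpha/2$ in $\Pi^i$. I will therefore reduce the corollary to proving, for each elementary step, the existence of an isomorphism $\iota_i: \ol{\fg}^{\lambda^i} \iso \ol{\fg}^{\lambda^{i+1}}$ together with a compatible module isomorphism $F(L^i) \iso F(L^{i+1})$, and then compose these data along the chain.

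For a step of type (i), the Lie superalgebra $\ol{\fg}^{\lambda^i}$ is determined by the isomorphism class of the module $L^i$ alone (its construction in \S\ref{ollambda} does not depend on the chosen $\Pi$), so I can take $\iota_i = \id$; the required module isomorphism is then exactly the content of \Lem{FLproperty}, which already provides $F(L(\lambda^i,\Pi^i)) \cong F(L(\lambda^{i+1},\Pi^{i+1}))$ case by case (depending on whether $\beta \in \Delta(L^i)$ and on whether $(\lambda^i+\rho^i,\beta)=0$). For a step of type (ii), the construction is essentially already carried out in \S\ref{FLFL'}: using the identity $\Pi(L^{i+1}) = r_\alpha \Pi(L^i)$ (which follows from $\alpha \not\in \Delta(L^i)$), one builds an isomorphism $\iota_i$ of Kac-Moody superalgebras sending the root space $\ol{\fg}^{\lambda^i}_\beta$ onto $\ol{\fg}^{\lambda^{i+1}}_{r_\alpha \beta}$ for each $\beta \in \Pi(L^i)$, extended by $h \mapsto r_\alpha(h)$ on $\fh$; the identity $\ol{\lambda^i}(h) = \ol{\lambda^{i+1}}(\iota_i(h))$ then produces the compatible isomorphism of irreducible highest weight modules $F(L^i) \iso F(L^{i+1})$.

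To finish, I will compose: setting $\iota := \iota_{t-1} \circ \cdots \circ \iota_1$ and likewise composing the module isomorphisms yields the required pair $(\iota, F)$. Compatibility of $F$ with $\iota$ is preserved under composition, so the composed isomorphism is again compatible.

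I do not anticipate any real obstacle; the corollary is bookkeeping once the two elementary cases are in hand. The only point deserving a moment of care is verifying that the list of elementary moves recalled from \S\ref{simseq} is indeed exhaustive and that each move falls into exactly one of the two situations treated above, so that no further case needs to be introduced. Since \S\ref{simseq} constructs the chain precisely from these two kinds of steps, this check is immediate.
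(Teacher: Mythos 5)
Your proposal matches the paper's intended proof: \S\ref{simseq} decomposes the linkage into a finite chain of elementary steps, \Lem{FLproperty} treats the odd-reflection steps, \S\ref{FLFL'} treats the Enright-functor steps, and the corollary follows by composing these data along the chain. The only cosmetic difference is that you phrase the step isomorphism as $\iota_i:\ol{\fg}^{\lambda^i}\iso\ol{\fg}^{\lambda^{i+1}}$ while the paper regards it as an automorphism of $\ol{\fg}^{\lambda}$; after the natural identification of $\ol{\fg}^{\lambda^i}$ with $\ol{\fg}^{\lambda^{i+1}}$ (which holds since $\Pi(L^{i+1})=r_\alpha\Pi(L^i)$), these are the same data.
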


\subsubsection{}
\begin{prop}{propsimch}
If $L\sim L'$ and~(\ref{KRWconj}) holds for $L$, then~(\ref{KRWconj}) holds for $L'$.
\end{prop}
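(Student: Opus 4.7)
The plan is to prove the proposition by induction on the length of the linkage chain recorded in \S\ref{simseq}. It suffices to treat the two elementary moves: (1) keeping $L$ fixed and replacing $\Pi$ by $r_\beta\Pi$ for an isotropic $\beta\in\Pi$; and (2) keeping $\Pi$ fixed and replacing $L$ by $T_\alpha(a)(L)$ for an Enright functor with $\alpha\notin\Delta(L)$ and $\alpha\in\Pi$ (or $\tfrac{\alpha}{2}\in\Pi$). Move (1) needs no new argument: the remark closing \S\ref{KRWconjecture} already records that, via \S\ref{RPi} and \Lem{FLproperty}, both sides of (\ref{KRWconj}) represent $\Pi$-independent classes under the equivalence of \S\ref{equivcV}, so validity of (\ref{KRWconj}) for one choice of simple roots forces it for every other.

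For move (2), write $L=L(\lambda,\Pi)$ and $L'=T_\alpha(a)(L)=L(\lambda',\Pi)$, with $\lambda'$ as determined by \Thm{thmEnr} (a), and introduce the multiplicative automorphism $\sigma$ of $\cV$ defined by $\sigma(e^\mu)=e^{r_\alpha\mu}$. My strategy is to show that both sides of (\ref{KRWconj}) transform via $\sigma$ under $L\mapsto L'$, so that the identity for $L'$ results from applying $\sigma$ to the hypothesised identity for $L$.

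For the left-hand side: since $a:=(\lambda+\rho,\alpha^\vee)\notin\mathbb{Z}$, \Thm{thmEnr} (b) makes $T_\alpha(a)$ an exact equivalence, and (a) tells us it sends $M(\mu)\mapsto M(r_\alpha.\mu)$ and $L(\mu)\mapsto L(r_\alpha.\mu)$ for every weight $\mu$ occurring in $\CO_\lambda$. Consequently the coefficients $a_{\mu,\nu}$ in the formal expansion $[L(\mu)]=\sum_\nu a_{\mu,\nu}[M(\nu)]$ are invariant under the simultaneous substitution $(\mu,\nu)\mapsto(r_\alpha.\mu,r_\alpha.\nu)$. Combining this with $Re^\rho\ch M(\nu)=e^{\nu+\rho}$ and reindexing yields $Re^\rho\ch L'=\sigma(Re^\rho\ch L)$.

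For the right-hand side: \Cor{corFLsim} supplies an isomorphism $F(L)\iso F(L')$ compatible with $\iota\colon\ol{\fg}^\lambda\iso\ol{\fg}^{\lambda'}$ acting as $r_\alpha$ on $\fh$, giving $\ch F(L')=\sigma(\ch F(L))$. From \S\ref{FLFL'} we have $\Pi(L')=r_\alpha\Pi(L)$ and thus $\Delta^+(L')=r_\alpha\Delta^+(L)$, whence $R_{L'}=\sigma(R_L)$ and, with the compatible Weyl-vector choice of \S\ref{choicerho}, $e^{\rho_{L'}}=\sigma(e^{\rho_L})$. Multiplying gives $R_{L'}e^{\rho_{L'}}\ch F(L')=\sigma(R_L e^{\rho_L}\ch F(L))$, completing the verification. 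The main technical point is the rigorous use of the Verma expansion on the left: one must check that $[L(\mu)]=\sum_\nu a_{\mu,\nu}[M(\nu)]$ defines a bona fide element of $\cV$, with coefficients determined top-down by the known composition multiplicities, and that the exact equivalence $T_\alpha(a)$ respects this expansion termwise via \Thm{thmEnr} (a); the case $\tfrac{\alpha}{2}\in\Pi$ requires only a harmless uniform shift by $-\tfrac{\alpha}{2}$ in every Verma and irreducible, which is absorbed into the character computation after multiplication by $Re^\rho$.
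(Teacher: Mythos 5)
Your proof is correct and follows the paper's own approach: reduce to the two elementary moves using \S\ref{simseq}, dispose of odd reflections via \S\ref{KRWconjecture}, and for an Enright move $L'=T_\alpha(a)(L)$ transport both sides of (\ref{KRWconj}) by $r_\alpha$, obtaining $R_{L'}e^{\rho_{L'}}\ch F(L')=r_\alpha\bigl(R_Le^{\rho_L}\ch F(L)\bigr)$ from \S\ref{FLFL'} and $Re^\rho\ch L'=r_\alpha\bigl(Re^\rho\ch L\bigr)$ from the categorical equivalence of \Thm{thmEnr}; your Verma-expansion of $[L]$ simply spells out the last step, which the paper leaves terse. One caution: in the case $\tfrac{\alpha}{2}\in\Pi$, \Thm{thmEnr}~(a) gives $T_\alpha(a)(M(\lambda,\Pi))=M(r_\alpha.\lambda,\Pi)$ and $T_\alpha(a)(L(\lambda,\Pi))=L(r_\alpha.\lambda,\Pi)$ with \emph{no} $-\tfrac{\alpha}{2}$ shift on $\fg$-Vermas and $\fg$-irreducibles; the shift appears only for the $\fg_{\ol{0}}$-modules $\breve{M},\breve{L}$, which your argument never invokes, so there is nothing to ``absorb.'' In fact, had the shift been present on $\fg$-Vermas it would \emph{not} be absorbed but would leave a stray factor $e^{-\alpha/2}$ in $Re^\rho\ch L'$, destroying the match with the right-hand side, so your argument quietly relies on the shift being absent rather than harmless.
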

\begin{proof}
Recall~\S~\ref{KRWconjecture} that if~(\ref{KRWconj})  
holds in $\cR(\Pi)$, it holds in $\cR(\Pi')$ for each $\Pi'$.

In the light of~\S~\ref{simseq} it is enough to consider the case when 
$L'=T_{\alpha}(a)(L)$ for some $\alpha\not\in\Delta(L)$ such that 
$\alpha\in\Pi\text{ or }\frac{\alpha}{2}\in\Pi$. By~\S~\ref{FLFL'}, in this case
$$\ch F(L')=r_{\alpha}\bigl(\ch F(L)\bigr)$$
in $\cR(\Pi)$ 
(both elements lie in $\cR(\Pi)$ since $\Pi(L), \Pi(L')=r_{\alpha}(\Pi(L))$
lie in $\Delta^+$). One has $R_{L'}e^{\rho_{L'}}=r_{\alpha}\bigl(R_{L}e^{\rho_L}\bigr)$ so
$$R_{L'}e^{\rho_{L'}}\ch F(L')=r_{\alpha}\bigl(R_{L}e^{\rho_{L}}\ch F(L)\bigr)$$
in $\cR(\Pi)$.

Write $L=L(\lambda,\Pi)$. By~\Thm{thmEnr}, $T_{\alpha}(a)$ is an equivalence of categories
$\CO_{\lambda}\iso\CO_{\lambda'}$ and $T_{\alpha}(a)(M(\nu))=M(r_{\alpha}.\nu)$
for each $\nu\in \CO_{\lambda}$. Since $L'=T_{\alpha}(a)(L)$, we obtain
$$Re^{\rho}\ch L'=r_{\alpha}\bigl(Re^{\rho}\ch L\bigr)$$
in $\cR(\Pi)$. This completes the proof.
\end{proof}

\subsection{}\label{WbetainDeltaL}
We will also use the following simple fact.

\begin{lem}{}
For each $w\in W$ there exists $L'\sim L$ such that
$\Delta(L')=w\Delta(L)$.
\end{lem}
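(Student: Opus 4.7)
The plan is to reduce $w$ to a product of simple reflections and iterate one reflection at a time. First, I would write $w = s_n s_{n-1} \cdots s_1$ with $s_j = r_{\alpha_j}$ and $\alpha_j \in \Pi_0$; this is possible because $W$ coincides with the Weyl group of $\Delta_{\ol{0}}$. I would then inductively build a sequence of pairwise linked irreducible highest weight modules $L = L^0 \sim L^1 \sim \cdots \sim L^n =: L'$ satisfying $\Delta(L^i) = s_i s_{i-1} \cdots s_1 \Delta(L)$ for each $i$.

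For the inductive step, set $\Delta_{i-1} := \Delta(L^{i-1})$. If $\alpha_i \in \Delta_{i-1}$, then $s_i = r_{\alpha_i}$ lies in $W(L^{i-1})$ and fixes $\Delta_{i-1}$; I set $L^i := L^{i-1}$ and the inductive property holds trivially. Otherwise, I would first use odd reflections (by \Prop{propS}(b)---an allowed move in the linkage chain of \S~\ref{simseq}, since odd reflections do not alter the underlying module) to pass to a presentation $L^{i-1} = L(\mu, \Pi^*)$ with $\alpha_i \in \Pi^*$ or $\alpha_i/2 \in \Pi^*$. Since $\alpha_i \notin \Delta(L^{i-1})$, the Enright functor $T_{\alpha_i}(a)$ with $a = (\mu+\rho, \alpha_i^\vee)$ belongs to $\Theta$ by \Thm{thmEnr}(c) and provides an equivalence $\CO_\mu \iso \CO_{r_{\alpha_i}.\mu}$. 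I would set $L^i := T_{\alpha_i}(a)(L^{i-1}) \sim L^{i-1}$; by \Thm{thmEnr}(a), one has $L^i = L(\mu^i, \Pi^*)$ with $\mu^i + \rho = r_{\alpha_i}(\mu+\rho)$.

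The only substantive verification, occurring in the second case, is the identity $\Delta(L^i) = r_{\alpha_i}\Delta(L^{i-1})$. The sets $D(\cdot)_{iso}, D(\cdot)_o, D(\cdot)_e$ of \S~\ref{DeltaLdef} are defined by conditions on the shifted highest weight $\mu+\rho$ of the form $(\mu+\rho,\beta)=0$ or $\lang \mu+\rho, \beta^\vee\rang \in \mathbb{Z}$ with specified parity; replacing $\mu+\rho$ by $r_{\alpha_i}(\mu+\rho)$ translates these into the same conditions on $r_{\alpha_i}\beta$, yielding $\Delta_{ess}(L^i) = r_{\alpha_i}\Delta_{ess}(L^{i-1})$. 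Since $r_{\alpha_i}$ permutes root subsystems, taking minimal root-subsystem completions gives $\Delta(L^i) = r_{\alpha_i}\Delta(L^{i-1}) = s_i \Delta_{i-1}$, closing the induction. Taking $L' := L^n$ yields $L' \sim L$ and $\Delta(L') = w\Delta(L)$. The main bookkeeping point, rather than a genuine obstacle, is to remember that odd reflections freely adjust the presentation of $L^{i-1}$ without altering the module, so they may be interspersed at will in the linkage chain.
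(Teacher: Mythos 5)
Your proof is correct and takes essentially the same approach as the paper's: reduce to simple reflections $\alpha\in\Pi_0$, do nothing when $\alpha\in\Delta(L)$ (since a root subsystem is invariant under reflection in its own non-isotropic roots), and otherwise apply an Enright functor $T_\alpha(a)$, noting that $\Delta(T_\alpha(a)L)=r_\alpha\Delta(L)$. The paper states this for a single simple reflection and leaves the induction implicit, and cites the identity $\Delta(L')=r_\alpha\Delta(L)$ from the earlier discussion in \S\ref{FLFL'} rather than re-deriving it from the definitions of $D(\cdot)_{iso}$, $D(\cdot)_o$, $D(\cdot)_e$ as you do, but the substance is identical.
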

\begin{proof}
Take $\alpha\in\Pi_0$.  If $\alpha\in\Delta(L)$, then 
$r_{\alpha}\Delta(L)=\Delta(L)$; if $\alpha\not\in\Delta(L)$, then 
$\Delta(L')=r_{\alpha}\Delta(L)$ for $L'=T_{\alpha}(a)(L)$.
\end{proof}

Note that for $\Delta=A(m,n), C(n), A(m,n)^{(1)}, C(n)^{(1)}$ any odd root
lies in a set of simple roots. Hence, in these cases, for any $\beta\in \Delta(L)$
we can choose $\Pi$ such that $\beta\in\Pi$.

If $\Delta\not=A(m,n), C(n), A(m,n)^{(1)}, C(n)^{(1)}$, then 
$W$ acts transitively on the set of odd isotropic roots, so
if $\Delta(L)$ contains odd isotropic roots, then for any
odd isotropic root $\beta\in\Delta$ there exists $L'\sim L$ such that
$\beta\in \Delta(L')$.

\section{Typical case for $\ol{\fg}^{\lambda}$ with even root subsystem of rank $\leq 2$}
\label{sect8}
Recall that for an affine Weyl group every orbit on  non-zero level has 
a unique maximal element in the order $\geq$ (if the level is not a 
negative rational number)
or a unique minimal element (if the level is not a positive rational number).
We call $\lambda\in\fh^*$ {\em extremal}
if for each connected component $\pi$ of $\Pi_0(\lambda)$, $\lambda$
is maximal or minimal in its $W(\pi)$-orbit.

Fix a non-critical level $k$ and denote by $W_+$ (resp., $W_-$)
the subgroup of $W(\lambda)$ generated by $r_{\alpha}$
such that $k(\alpha,\alpha)>0$  (resp., $k(\alpha,\alpha)<0$).
We introduce a partial  order on the Weyl group $W(L)$ as follows:

for $x_+,y_+\in W_+, x_-,y_-\in W_-$ let $x_+x_-\leq_k y_+y_-$ 
if $x_+\leq y_+, x_-\geq y_-$ in the Bruhat order on $W$.

\subsection{}
\begin{prop}{propextr}
If $\lambda$ is extremal, then for each $y\leq_k w$ there exists  an embedding
$M(w.\lambda)\to M(y.\lambda)$ and $\Hom (M(w.\lambda), M(y.\lambda))=1$.
\end{prop}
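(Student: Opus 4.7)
The plan is to reduce the statement to a product of classical BGG/Kac--Kazhdan embedding theorems for the individual connected components of $\Pi_0(\lambda)$. Under the extremality hypothesis, different connected components of $\Pi_0(\lambda)$ span orthogonal subspaces (this is automatic for connected components of even simple roots of a basic or affine Lie superalgebra), hence the associated reflection groups commute and $W(\lambda)=\prod_\pi W(\pi)$; moreover, within any single component $\pi$ all roots $\alpha$ have the same sign of $(\alpha,\alpha)$, so the component is contained entirely in $W_+$ or in $W_-$. Consequently the decomposition $W(\lambda)=W_+\times W_-$ is compatible with the decomposition into components, the order $\leq_k$ is the product of the Bruhat order on the $W_+$-part and the opposite Bruhat order on the $W_-$-part, and it is enough to treat each component separately, so from now on I fix one $\pi$.

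For a component $\pi$ contained in $W_+$, extremality forces $\lambda$ to be maximal in its $W(\pi)$-orbit (rather than minimal): indeed, if $\pi$ is finite and $\alpha\in\pi$, then $(\lambda+\rho,\alpha^\vee)\in\mathbb{Z}$ (as $\alpha\in\Delta(\lambda)$), and maximality amounts to $(\lambda+\rho,\alpha^\vee)\geq 0$; if $\pi$ is affine then $(\lambda+\rho,\delta')$ is a positive rational multiple of $k$ with positive sign, which is exactly the "dominant" condition making $\lambda$ maximal. Symmetrically, for $\pi\subset W_-$ the sign flips and $\lambda$ is minimal. Having placed $\lambda$ at the extremal point of its orbit for each component, I would invoke the Verma embedding theorem of Kac--Kazhdan (in the finite case this is BGG): for $y_\pm\leq w_\pm$ in the Bruhat order of $W_\pm$ one has $w_\pm.\lambda\leq y_\pm.\lambda$ in the weight order, and there is a nonzero homomorphism $M(w_\pm.\lambda)\hookrightarrow M(y_\pm.\lambda)$ which, being between Verma modules of a symmetrizable Kac--Moody algebra, is automatically injective. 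Composing a reduced expression $w_\pm=s_{i_1}\cdots s_{i_r}$ with $y_\pm$ produces a chain of such embeddings and yields the embedding $M(w.\lambda)\hookrightarrow M(y.\lambda)$ for the full $y\leq_k w$.

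For the dimension statement, the standard argument is that any nonzero $\phi\colon M(w.\lambda)\to M(y.\lambda)$ must send the highest weight vector of $M(w.\lambda)$ to a singular vector of $M(y.\lambda)$ of weight $w.\lambda$; but in a non-critical Verma module $M(y.\lambda)$ the space of singular vectors of any fixed weight is at most one-dimensional (this is Kac--Kazhdan for Kac--Moody algebras applied to the subalgebra generated by the root spaces $\mathfrak g_{\pm\alpha}$, $\alpha\in\Delta(L)_{\bar 0}$; the argument carries over to the super setting because $W(L)$ is generated by reflections in even, non-isotropic roots, so the relevant Shapovalov determinant restricts to the even Kac--Moody subalgebra). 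Hence $\dim\Hom(M(w.\lambda),M(y.\lambda))=1$.

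The main obstacle is the second step: justifying the BGG/Kac--Kazhdan embedding theorem in the present super/affine context. The cleanest route is to apply Enright functors (already used in Section~\ref{enr}) to transport the question, one simple reflection at a time, to a form where the highest weight of each Verma module lies in $\Delta(L)$ and the classical argument of Kac--Kazhdan for the even Kac--Moody subalgebra with root system $\Delta(L)_{\bar 0}$ applies verbatim; extremality is exactly what guarantees that the hypotheses of the classical theorem (dominance or antidominance of $\lambda+\rho$ with respect to each component) are met, so the induction on $\ell(w)$ in $W_\pm$ runs without obstruction.
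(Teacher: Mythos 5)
Your overall strategy — push the problem through Enright functors to the level of $\Pi_0$ and then invoke classical Verma-module theory — is the same engine the paper uses, but you organize the argument quite differently, and one of your steps has a real gap.

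On the structural level, you decompose $W(\lambda)$ component-by-component and argue that each component is entirely in $W_+$ or $W_-$, reducing to a classical dominant/antidominant BGG-type statement within each factor. The paper instead keeps the two-factor decomposition $W_+\times W_-$ and avoids citing BGG directly: it first proves an \emph{abstract characterization} of the Bruhat order (any order $\leq'$ with the two stated exchange properties refines Bruhat), and then verifies, by induction using Enright functors, that the relation ``$M(w)\hookrightarrow M(y)$'' satisfies those two properties. This characterization is what makes the induction run cleanly. Your component-wise reduction is a legitimate reorganization, but it doesn't by itself dispense with the need to establish the embedding for $\mathfrak{g}$-Verma modules (not just Verma modules for the even Kac--Moody subalgebra), so you still end up needing essentially the paper's Enright-functor induction in step 2 — which you acknowledge, and that part of the proposal is fine.

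The genuine gap is in the claim $\dim\Hom(M(w.\lambda),M(y.\lambda))\leq 1$. You say the space of singular vectors of a fixed weight is at most one-dimensional ``by Kac--Kazhdan applied to the even Kac--Moody subalgebra; the argument carries over to the super setting because \dots the relevant Shapovalov determinant restricts to the even Kac--Moody subalgebra.'' This does not carry over. The Kac--Kazhdan proof of $\dim\Hom\leq 1$ for symmetrizable Kac--Moody Lie algebras rests crucially on $U(\mathfrak{n}^-)$ being an integral domain; for a Lie superalgebra $U(\mathfrak{n}^-)$ has nilpotent odd generators, so the same localization argument fails. Moreover, restricted to $\mathfrak{g}_{\bar 0}$ a super Verma module is filtered by a whole family of $\mathfrak{g}_{\bar 0}$-Verma modules whose highest weights are shifted by sums of odd roots, so the dimension of the space of $\mathfrak{g}_{\bar 0}$-singular vectors of a fixed weight is a priori unbounded; typicality controls where those singular vectors appear but does not make the reduction to the even subalgebra automatic. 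The paper addresses precisely this point by a separate two-stage induction on $l(w_+)+l(y_-)$: it uses \cite{IK}, Cor.~4.1 ($T_\alpha(M(w))=M(r_\alpha w)$) and the left-exactness of Enright functors to push both $w_+$ and $y_-$ to the identity, reducing to the elementary base case $\Hom(M(e),M(y_+))$; in the inductive step with $w\in W_-$ it uses that multiplication by $f_\alpha$ (for $\alpha\in\Pi_0$) is a non-zero divisor on $M(w)$ to bound $\dim\Hom(M(w),M(y))$ by $\dim\Hom(M(r_\alpha w),M(y))$. None of this is captured by the citation you invoke, and without such an argument your bound on $\dim\Hom$ does not follow.
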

\begin{proof}

Recall that the Bruhat order is the unique order satisfying 
$e\geq w$ forces $w=e$,  and for each $\alpha\in\Pi_0$ one has:

$l(r_{\alpha}w)<l(w)$ forces $w\geq r_{\alpha}w$;

$w\geq w'$ forces  $r_{\alpha}w\geq r_{\alpha}w'$ or $w\geq r_{\alpha}w'$;

$w\geq w'$ forces $r_{\alpha}w\geq r_{\alpha}w'$ or $r_{\alpha}w\geq w'$.

We claim that any order $\leq '$ with the properties 

$l(r_{\alpha}w)<l(w)$ forces $ r_{\alpha}w\leq ' w$, and
$y\leq' w$ forces  $r_{\alpha}y\leq' r_{\alpha}w$ or $r_{\alpha}y\leq ' w$,

satisfies $y\leq w\ \Longrightarrow\ y\leq ' w$.

Indeed, the first property implies that $e\leq' x$ for all $x$.
Let us prove the assertion by induction on $l(w)$.
If $l(w)=0$, then $w=y=e$ and thus $y\leq' w$. 
Now take any $w$ with $l(w)>0$ and $\alpha\in\Pi_0$ such that 
$l(r_{\alpha}w)<l(w)$.
Then the  property implies $r_{\alpha}w\leq 'w$.
One has $y\leq r_{\alpha}w$  or $ r_{\alpha}y \leq r_{\alpha}w$. 
In the first case,
the induction hypothesis gives $y\leq' r_{\alpha}w$, so $r_{\alpha}w\leq 'w$
implies $y\leq ' w$, as required. In the case $r_{\alpha}y \leq r_{\alpha}w$
the induction hypothesis gives $r_{\alpha}y \leq' r_{\alpha}w$ and 
the second property gives $y\leq ' w$ or $y\leq' r_{\alpha}w\leq ' w$.
The claim follows.

\subsubsection{}\label{typ1}
Next, let us show that  for any typical weight $\lambda$ ,
$M(r_{\alpha}.\lambda)$ is a submodule of $M(\lambda)$ and $\dim\Hom(M(r_{\alpha}.\lambda), M(\lambda))=1$
if $r_{\alpha}.\lambda<\lambda$ and $\alpha\in\Pi_0(\lambda)$.

The proof is by induction on $\alpha$ (with respect to the order given by $\Pi_0$). 
Indeed, if $\alpha\in\Pi_0$ this immediately follows from typicality of $M(\lambda)$.
Take $\gamma\in\Pi_0$ such that
$r_{\gamma}\alpha<\alpha$. Since $\alpha\in\Pi_0(\lambda)$ one has $\gamma\not\in\Pi_0(\lambda)$
so the Enright functor $T_{\gamma}$ is an equivalence of categories. By induction,
$M(r_{r_{\gamma}\alpha}.(r_{\gamma}.\lambda))$ is a submodule of $M(r_{\gamma}.\lambda)$
so $M(r_{\alpha}.\lambda)$ is a submodule of $M(\lambda)$
(since $r_{\alpha}=r_{\gamma}r_{r_{\gamma}\alpha}r_{\gamma}$).
This proves that $M(r_{\alpha}.\lambda)$ is a submodule of $M(\lambda)$
for any $\alpha\in\Pi_0(\lambda)$.  Moreover, by induction
\begin{equation}\label{dimhom1}
\dim Hom(M(r_{r_{\gamma}\alpha}.(r_{\gamma}.\lambda)),M(r_{\gamma}.\lambda))=1.
\end{equation}
If $\dim\Hom(M(r_{\alpha}.\lambda), M(\lambda))>1$, then 
there exists an exact sequence
$$0\to N\to M(r_{\alpha}.\lambda)\oplus M(r_{\alpha}.\lambda)\to M(\lambda)$$
and $N_{r_{\alpha}.\lambda}=0$ that is
$\Hom(M(r_{\alpha}.\lambda), N)=0$. Using the Enright functor we obtain
the exact sequence
$$0\to N'\to M(r_{\gamma}r_{\alpha}.\lambda)\oplus 
M(r_{\gamma}r_{\alpha}.\lambda)\to M(r_{\gamma}.\lambda)$$
with $\Hom(N',M(r_{\gamma}r_{\alpha}.\lambda))=0$.
Since $r_{\gamma}r_{\alpha}=r_{r_{\gamma}\alpha}r_{\gamma}$, this contradicts~(\ref{dimhom1}).

\subsubsection{}
Denote $M(w.\lambda)$ as $M(w)$. Let us show the existence of the embedding $M(w)\subset M(y)$
for $y\leq _k w$.

By~\S~\ref{typ1}, $M(r_{\alpha}x)$ is a submodule of $M(x)$  if $r_{\alpha}x>_k x$ for $\alpha\in\Pi_0(\lambda)$.

It remains to show that for $\alpha\in\Pi_0(\lambda)$
 $M(w)\subset M(y)$ the module $M( r_{\alpha}y)$ contains
  $M(r_{\alpha}w)$ or $M(w)$.

Indeed, using the Enright functors as in~\S~\ref{typ1} 
we can reduce the question to the case when $\alpha\in\Pi_0$.
If $M(r_{\alpha}y)$ contains $M(y)$ it contains $M(w)$ as well.
Otherwise, $M(r_{\alpha}y)$ is a submodule of  $M(y)$
and $f_{\alpha}$ acts locally nilpotently on $M(y)/M(r_{\alpha}y)$.
If $M(r_{\alpha}w)$ contains $M(w)$, then $f_{\alpha}$ acts
and injectively on $L(w.\lambda)$ so
$\Hom(L(w.\lambda), M(y)/M(r_{\alpha}y)=0$
and thus  $\Hom (M(w),M(r_{\alpha}y))=\Hom (M(w),M(y))$, as required.
If $M(r_{\alpha}w)$  is a submodule of $M(w)$, it is also a submodule of $M(y)$
and, by above, $\Hom (M(r_{\alpha}w),M(r_{\alpha}y))=\Hom (M(r_{\alpha}w),M(y))$.

\subsubsection{}
It remains to verify that $\dim\Hom(M(w),M(y))\leq 1$ for each $y,w$.

Write $w=w_-w_+, y=y_-y_+$ with $y_-,w_-\in W_-, y_+,w_+\in W_+$. We proceed by induction on 
$l(w_+)+l(y_-)$.

Assume that $l(w_+)\not=0$. Then there exists 
$\alpha\in\Pi(\lambda)$ such that $r_{\alpha}\in W_+$ and $M(w)\subset M(r_{\alpha}w)$.
Using the Enright functors as in~\S~\ref{typ1} 
we can reduce the question to the case when $\alpha\in\Pi_0$.
Then, by~\cite{IK} Cor. 4.1, $T_{\alpha}(M(w))=M(r_{\alpha}w)$
and $\dim \Hom (M(w),M(y))\leq \dim \Hom (T_{\alpha}(M(w), T_{\alpha}(M(y))$.
Since $T_{\alpha}(M(y))$ is either $M(y)$ or $M(r_{\alpha}y)$,
we obtain $\dim \Hom (M(w),M(y))\leq \dim \Hom (M(r_{\alpha}w),M(y'))$ for some $y'$.
Arguing like this we obtain
$\dim \Hom (M(w),M(y))\leq \dim \Hom (M(w_-),M(y'))$.

It remains to verify that $\dim \Hom (M(w),M(y))\leq 1$
for $w\in W_-$.  We prove this by induction on $l(w)$.

If $l(w)=0$, then $w=e$. If $y\not\in W_+$, 
there exists $\alpha\in\Pi(\lambda)$ 
such that $r_{\alpha}\in W_-$ and $M(y)\subset M(r_{\alpha}y)$.
Then $f_{\alpha}$ acts locally nilpotently on $M(y)/M(r_{\alpha}y)$
and injectively on $M(e)$. Hence 
$\dim \Hom (M(e),M(y))\leq \dim \Hom (M(e),M(r_{\alpha}y)$
and therefore $\dim \Hom (M(e),M(y))=\dim \Hom (M(e),M(y_+))$.
However $y_+\lambda\leq\lambda$, so that we have $\dim \Hom (M(e),M(y_+))\leq 1$.

Assume that $\dim \Hom (M(w),M(y))\leq 1$ for all $w\in W_-$ with $l(w)\leq r$.
Take $w\in W_-$ with $l(w)=r+1$. Take $\alpha\in\Pi(\lambda)$ 
such that $r_{\alpha}\in W_-$ and $l(r_{\alpha}w)=r$. Then 
$M(r_{\alpha}w)\subset M(w)$ and $\dim  \Hom (M(r_{\alpha}w),M(y))\leq 1$.
As before, using Enright functors we can assume that $\alpha\in\Pi_0$.
 But then 
the embedding of $M(r_{\alpha}w)$ in $M(w)$ is given by the multiplication 
of the highest weight vector to $f_{\alpha}$ which is non-zero divisor
so $\dim  \Hom (M(w),M(y))\leq 1\leq \dim  \Hom (M(r_{\alpha}w),M(y))\leq 1$,
as required. This completes the proof.
\end{proof}

\subsection{}
Now let $L=L(\lambda,\Pi)$ be a $\fg$-module.
Assume that $\ol{\fg}^{\lambda}$ is one of the  affine Lie superalgebra with 
the set of even roots 
which is the union of affine or finite root systems  of rank at most two.

Notice that $W(L)$ is the direct product of several ($1,2$ or $3$) copies of 
the infinite (or finite, if $\dim\ol{\fg}^{\lambda}<\infty$) dihedral groups
(generated by $s_0,s_1$ subject to the relations $s_0^2=s_1^2$
and the braid relations if $\dim\ol{\fg}^{\lambda}$ is finite),
so all Kazhdan-Lusztig polynomials are trivial, namely are either $0$
or $1$.

\subsubsection{}
Let $\lambda$ be a non-critical extremal typical weight. By~\Prop{propextr}, 
 for $y\leq _k z, y,z\in W(\lambda)$ 
the module $M(y.\lambda)$ contains a unique singular vector $v(z)$
of weight $z.\lambda$ and this vector gives rise to an embedding
$M(z.\lambda)\subset M(y.\lambda)$. 

Consider a Kac-Moody algebra $\tilde{\fg}$ with  the set of 
simple roots $\Pi_0(L)$ (and the set of real roots
$\Delta_{\ol{0}}(L)\cap \Delta_{re}$); this algebra  coincides
with $\fg_{\lambda,\ol{0}}$ iff $\Delta_{\ol{0}}(L)$ is indecomposable.
Let $\tilde{\fh}$ be the Cartan subalgebra of this Kac-Moody algebra and
let $\rho_{\ol{0}}\in \tilde{\fh}^*$ be a Weyl vector.
We introduce the $\cdot$-action of the Weyl group $W$ on  $\tilde{\fh}^*$
by the usual formula $w\cdot\mu:=w(\mu+\rho_{\ol{0}})-\rho_{\ol{0}}$.
Consider $\lambda_0\in \tilde{\fh}^*$ satisfying
$$(\lambda_0+\rho_{\ol{0}},\alpha)=(\lambda+\rho,\alpha)\ \text{ for each } \alpha\in \Pi_0$$
(if $\Delta_{\ol{0}}$ is indecomposable, then $\lambda_0+\rho_{\ol{0}}=\lambda+\rho$).

Denote by $\breve{M}(w)$
the $\tilde{\fg}$-Verma module with the highest weight $w\cdot \lambda_0$.
For $y\leq _k w$ the module $\breve{M}(w)$ contains a unique singular vector $v_0(z)$
of weight $z\cdot\lambda_0$ and this vector gives rise to an embedding
$M(z\cdot\lambda_0)\subset M(w\cdot\lambda_0)$.

\subsubsection{}
Since all Kazhdan-Lusztig polynomials for the Weyl group $W$ in question are $0$ or $1$,
any submodule of $\breve{M}(w)$ is generated by the singular vectors 
(and is a sum of the submodules $\breve{M}(y)$).
Define a map $\Psi$ from the set of submodules of $\breve{M}(w)$ to the set of submodules
of $M(w)$ given by $\breve{M}(y)\mapsto M(y)$. It is easy to see that this map is compatible 
with inclusions.  

Let $\breve{M}'(w)$ be the maximal proper submodule of $\breve{M}(w)$. Then $\Psi(\breve{M}')$
is a proper submodule of $M(w)$.

\subsubsection{}
Suppose that  for some $w$ the module $M(w)$ has a submodule which is not generated by singular vectors. 
Then for some $w'\geq_k w$ we have
$[M(w): L(w')]>1$. Let $w,w'$ be such a pair with the minimal value of $w.\lambda-w'.\lambda$.
Then,  if $N$ is a submodule of $\Psi(\breve{M}')$ and
$\nu\leq w.\lambda-w'.\lambda$, one has 
$$p_{w.\lambda-\nu}(\ch \Psi (N))=p_{w\cdot\lambda_0-\nu}(R_{\ol{1}}\ch N),$$
where
 $p_{\nu}(\sum a_{\mu}e^{\mu}):=a_{\nu}$ and $R_{\ol{1}}:=\prod_{\beta\in\Delta_{\ol{1}}^+}(1+e^{-\beta})$.

Let $\{M^i(w)\}_{i\geq 0}, \{\breve{M}^i(w)\}_{i\geq 0},$ be the Jantzen filtrations of $M(w)$,
$\breve{M}(w)$ respectively. 
Recall that for each $i$ the modules $M^i(w)/M^{i+1}(w), \breve{M}^i(w)/\breve{M}^{i+1}(w)$ are semisimple.
This implies $M(z)\subset M^i(w),\ \breve{M}(z)\subset \breve{M}^i(w)$ if $z\geq_k w$ and $l(z^{-1}w)\leq i$.

It is easy to see that 
$$\breve{M}^i(w)=\sum_{z\geq_k w:\ l(z^{-1}w)\leq i} \breve{M}(z).$$ 
Therefore $\Psi(\breve{M}^i(w))\subset M^i(w)$. Clearly, $\breve{M}^1=\breve{M}'$. Therefore
for $\nu=w.\lambda-w'.\lambda$ one has 
$$\dim M^i(w)_{w.\lambda-\nu}\geq \dim (\Psi (M^i))_{w.\lambda-\nu}=
p_{w\cdot\lambda_0-\nu}(R_{\ol{1}}\ch \breve{M}^i)$$
for each $i>0$. Since $[M(w): L(w')]>1$ one has
$$\dim M^1(w)_{w.\lambda-\nu}>\dim (\Psi (M^1))_{w.\lambda-\nu}=
p_{w\cdot\lambda_0-\nu}(R_{\ol{1}}\ch \breve{M}^1).$$

However, the Jantzen sum formula  implies
\begin{equation}\label{Jantzen}
e^{-w.\lambda}\sum_{i=1}^{\infty} \ch M^i(w)=e^{-w\cdot\lambda_0}R_{\ol{1}}\sum_{i=1}^{\infty} \ch \breve{M}^i(w)
\end{equation}
so
$$\sum_{i=1}^{\infty}\dim M^i(w)_{w.\lambda-\nu}=
p_{w\cdot\lambda_0-\nu}(R_{\ol{1}}\sum_{i=1}^{\infty} \ch \breve{M}^i),$$
a contradiction.

We conclude that all submodules of $M(w)$ are generated by  singular vectors
(and lie in the image of $\Psi$). Combining the inclusions
$\Psi(\breve{M}^i(w))\subset M^i(w)$ and~(\ref{Jantzen}) we obtain
$M^i(w)=\Psi(\breve{M}^i(w))$. This gives
$$M^i(w)=\Psi(\breve{M}^i(w))=\sum_{z\geq_k w:\ l(z^{-1}w)\leq i} {M}(z).$$ 

\subsubsection{}
Denote by $\breve{L}(\nu)$ (resp., $\breve{M}(\nu)$)
the irreducible (resp., Verma) $\tilde{\fg}$-module with 
the highest weight $\nu$.
One has
$$\ch \breve{L}(w\cdot\lambda_0)=\sum_{y\in C} sgn(yw^{-1}) \ch \breve{M}(y\cdot\lambda_0),$$
where 
$C:=\{y\in W(\lambda_0)/\Stab_{W(\lambda_0)}(\lambda_0+\rho_{\ol{0}})|\ 
y\cdot\lambda_0\leq w\cdot\lambda_0\}$.

This implies
$$\ch L(w.\lambda)=\sum_{y\in C} sgn(yw^{-1}) \ch M(y.\lambda).$$

From the construction of $\lambda_0$, one readily sees that
$C=\{y\in W(\lambda)/\Stab_{W(\lambda)}(\lambda+\rho)|\ \ 
y.\lambda\leq w.\lambda\}$.

 \subsubsection{}
 \begin{cor}{}
Let $\ol{\fg}^{\lambda}$ be one of the  affine Lie superalgebra with 
the set of even roots 
which is the union of affine or finite root systems  of rank at most two and
 let $\ol{\lambda}$ be a non-critical extremal typical weight. For
 each $w\in W(L)$ the Jantzen filtration
 $M^i(w.\lambda)$ is given by
$M^i(w.\lambda)=\sum_{z\in W(\lambda): z\geq_k w,\ l(z^{-1}w)\leq i} {M}(z)$
and
$$\ch L(w.\lambda)=\sum_{y\in C} sgn(yw^{-1}) \ch M(y.\lambda),$$
where $C=\{y\in W(\lambda)/\Stab_{W(\lambda)}(\lambda+\rho)|\ \ 
y.\lambda\leq w.\lambda\}$.
\end{cor}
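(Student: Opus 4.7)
The plan is to assemble the corollary from the material developed in the preceding subsections, where almost all of the work has already been done. First, I would invoke \Prop{propextr} to secure, for every pair $y \leq_k z$ in $W(\lambda)$, an embedding $M(z.\lambda) \hookrightarrow M(y.\lambda)$ with one-dimensional Hom space. This gives me a stock of submodules $M(z) \subset M(w)$ for all $z \geq_k w$, and allows me to define the map $\Psi$ from the lattice of submodules of the Kac--Moody Verma module $\breve{M}(w)$ (over the auxiliary algebra $\tilde{\fg}$ with simple roots $\Pi_0(L)$) to the lattice of submodules of $M(w)$ by $\breve{M}(y) \mapsto M(y)$.

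Next I would exploit the hypothesis that the even root subsystem of $\ol{\fg}^{\lambda}$ is a union of root systems of rank $\leq 2$. This forces $W(L)$ to be a product of (possibly infinite) dihedral groups, so all Kazhdan--Lusztig polynomials for $\tilde{\fg}$ are $0$ or $1$. Consequently every submodule of $\breve{M}(w)$ is generated by its singular vectors and is of the form $\sum_{z \geq_k w,\ l(z^{-1}w)\leq i} \breve{M}(z)$ at the $i$-th step of the Jantzen filtration. Applying $\Psi$ then gives an inclusion $\Psi(\breve{M}^i(w)) \subset M^i(w)$.

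The heart of the argument, and the step I expect to be the main obstacle, is promoting this inclusion to equality, which simultaneously forces all submodules of $M(w)$ to be generated by singular vectors. I would do this by the Jantzen sum formula~(\ref{Jantzen}): a strict inequality $\dim M^1(w)_{w.\lambda-\nu} > p_{w\cdot\lambda_0-\nu}(R_{\bar{1}} \ch \breve{M}^1)$ at the minimal counterexample $\nu = w.\lambda - w'.\lambda$ (minimality guarantees $\Psi$ is a bijection below $\nu$) would contradict the equality of the total sums $\sum_i \ch M^i$ versus $R_{\bar{1}} \sum_i \ch \breve{M}^i$ multiplied by $e^{-w.\lambda}$. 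Hence $\Psi(\breve{M}^i(w)) = M^i(w)$, yielding the asserted formula $M^i(w.\lambda) = \sum_{z \geq_k w,\ l(z^{-1}w)\leq i} M(z)$.

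Finally, for the character formula I would transport the classical Kac--Moody character expansion $\ch \breve{L}(w \cdot \lambda_0) = \sum_{y \in C} \sgn(yw^{-1}) \ch \breve{M}(y \cdot \lambda_0)$ (valid since all KL polynomials are trivial) through $\Psi$: multiplying both sides by $R_{\bar 1}$ and comparing coefficients, together with the identification $y.\lambda \leftrightarrow y \cdot \lambda_0$ under the shift between $\rho$ and $\rho_{\bar 0}$, gives $\ch L(w.\lambda) = \sum_{y \in C} \sgn(yw^{-1}) \ch M(y.\lambda)$ with $C$ as described. The only subtlety is to check that the indexing set $C$ on the $\tilde{\fg}$ side matches the indexing set $C$ on the $\fg$ side, which follows from the definition $\lambda_0 + \rho_{\bar 0} = \lambda + \rho$ on $\Pi_0$ and the identification $\Stab_{W(\lambda_0)}(\lambda_0 + \rho_{\bar 0}) = \Stab_{W(\lambda)}(\lambda + \rho)$.
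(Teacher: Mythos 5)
Your proposal is correct and follows essentially the same route as the paper: Proposition~\ref{propextr} for the embedding lattice, triviality of Kazhdan--Lusztig polynomials from the rank~$\leq 2$ hypothesis, the map~$\Psi$ to the auxiliary Kac--Moody Verma lattice, the Jantzen sum formula contradiction at a minimal counterexample, and transport of the Kac--Moody character expansion through~$\Psi$. The minor points you flag (the matching of the indexing set $C$ via the construction of $\lambda_0$ and the stabilizer identification) are exactly the closing observations the paper makes.
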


\section{$\pi$-Relatively integrable modules}\label{sectrelint}
This section is continuation of Section~\ref{enr}. Throughout the
section, $L$ is an irreducible highest weight $\fg$-module.

\subsection{Definition of $\pi$-relative integrability}
We retain notations of~\S~\ref{defintegr}. Recall the definition of $\Pi_0(L)$ 
from~\S~\ref{PiL}.

\subsubsection{}
\begin{defn}{}
For a subset $\pi\subset \Pi_0(L)$ we
 call $L$ {\em $\pi$-relatively integrable} if $F(L)$
 is $\pi$-integrable. We call $L$ {\em relatively integrable} if 
$F(L)$ is integrable.
\end{defn}

\subsubsection{}\label{relint0}
In the light of~\Lem{FLproperty}, the above notion does not depend on the choice of $\Pi$
(if $L(\lambda,\Pi)\cong L(\lambda',\Pi')$, then $L(\lambda,\Pi)$
is  $\pi$-relatively integrable
if and only if $L(\lambda',\Pi')$ is $\pi$-relatively integrable).
Moreover, by~\Cor{corFLsim}, the linkage $\sim$
preserves relative integrability.

For each $\pi\subset \Pi_0(L)$ we  denote by $W(\pi)$ the subgroup of $W(L)$ generated by 
$r_{\alpha},\alpha\in\pi$.

\subsubsection{}
In the affine Lie algebra case, relative integrability of $L$ implies
admissibility in the sense of~\cite{KW2}.
Any boundary admissible module in the sense of~\cite{KW4} 
is $\Pi_0(L)$-relatively integrable (these are the modules $L$,
such that $\dim F(L)=1$).

\subsection{Properties}
In this section we will prove several useful properties of the characters
of relatively integrable modules. 

Note that the term $R_{\ol{0}}\ch {L}$
does not depend on the choice of $\Pi$ (since $\Delta_{\ol{0}}^+$
is fixed) and lies in $\cR(\Pi)$ for each $\Pi$.

\subsubsection{}
\begin{lem}{lemFenright}
Take  $\gamma\in\Pi_0(L)$.

(i) There exists $L'\sim L$ such that the root corresponding to 
$\gamma$ in $\Pi_0(L')$ in the sense of~\Cor{corFLsim} lies in $\Pi_0$.

(ii) Assume  that $f_{\gamma}$ acts locally nilpotently
on $F(L)$.  Denote by $\Delta^{\#}$ the connected component of 
$\Delta_{\ol{0}}$ which contains $\gamma$ and by $\rho^{\#}$
the corresponding Weyl vector (i.e., the Weyl vector for  $\Delta^{\#}\cap \Delta_{\ol{0}}^+$).
The element $e^{\rho^{\#}}R_{\ol{0}}\ch {L}$ is a
$W(\gamma)$-skew-invariant element of $\cR_{W(\gamma)}$ for
$W(\gamma)=\{r_{\gamma}, Id\}$.
\end{lem}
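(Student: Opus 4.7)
The element $\gamma\in\Pi_0(L)$ is a real positive even root of $\fg$, and every such root is $W$-conjugate to some simple root in $\Pi_0$; choose $w\in W$ with $w\gamma\in\Pi_0$. By the lemma in~\S\ref{WbetainDeltaL}, such a $w$ can be realized through a sequence of Enright functors $T_{\alpha_i}(a_i)$ with $\alpha_i\in\Pi_0\setminus\Delta(L_{i-1})$, producing $L'\sim L$ with $\Delta(L')=w\Delta(L)$. Following~\S\ref{FLFL'}, the isomorphism $\iota$ from \Cor{corFLsim} acts on $\fh^*$ as the composition of the reflections $r_{\alpha_i}$, and hence sends $\gamma$ to $w\gamma$. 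Since $w\gamma\in\Delta_{\ol{0}}(L')\cap\Delta_{\ol{0}}^+$ and a $\fg$-simple even root cannot be written as a nontrivial sum of positive roots inside any sub-root-system, $w\gamma$ is also simple in $\Delta_{\ol{0}}(L')^+$; that is, $w\gamma\in\Pi_0(L')\cap\Pi_0$.

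\textbf{Plan for (ii).} By (i), together with \Prop{propsimch} and \Cor{corFLsim}, the desired conclusion is invariant under the linkage $\sim$, so I may replace $L$ by some $L'\sim L$ and assume $\gamma\in\Pi_0\cap\Pi_0(L)$. Since $\gamma\in\Pi_0\subset\Delta^{\#}$, one has $(\rho,\gamma^\vee)=(\rho^{\#},\gamma^\vee)=1$. Using the identity $\ol{\lambda}+\rho_L=\lambda+\rho$, I compute $(\ol{\lambda}+\rho_L,\gamma^\vee)=(\lambda+\rho,\gamma^\vee)=(\lambda,\gamma^\vee)+1$. Local nilpotency of $f_\gamma$ on the irreducible highest weight $\ol{\fg}^\lambda$-module $F(L)=\ol{L}(\ol{\lambda})$ along the real even root $\gamma\in\Delta(L)^+_{\ol{0}}$ is equivalent to this quantity being a positive integer; thus $(\lambda,\gamma^\vee)\in\ZZ_{\geq 0}$, which is exactly the condition for $L$ to be integrable along $\gamma$ as a $\fg$-module, so $\ch L$ is $r_\gamma$-invariant. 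The classical Weyl denominator identity, applied to the component $\Delta^{\#}$, yields $r_\gamma(e^{\rho^{\#}}R_{\ol{0}})=-e^{\rho^{\#}}R_{\ol{0}}$ (the action of $r_\gamma$ on the other components of $\Delta_{\ol{0}}$ is trivial). Multiplying, the product $e^{\rho^{\#}}R_{\ol{0}}\ch L$ is $r_\gamma$-skew-invariant, which is the asserted $W(\gamma)$-skew-invariance.

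\textbf{Main obstacle.} The delicate point is (i): realizing the chosen $w$ through an admissible chain of Enright functors, where each $\alpha_i$ must lie outside the current $\Delta(L_{i-1})$, and observing that the isomorphism $\iota$ from \Cor{corFLsim} depends on the specific chain chosen, not merely on the endpoint $\Delta(L')=w\Delta(L)$. The key observation that saves us is that if at some stage one is completely stuck—every simple reflection already lies in $\Delta(L_{i-1})$—then $\Pi_0\subset\Delta(L_{i-1})$, which forces $\Delta_{\ol{0}}(L_{i-1})=\Delta_{\ol{0}}$ and hence $\Pi_0(L_{i-1})=\Pi_0$, so the current image of $\gamma$ is already in $\Pi_0$ and we may take $L':=L_{i-1}$; otherwise detours through non-optimal reflections are always available. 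Once (i) is in place, (ii) reduces to a mechanical but careful transfer of skew-invariance along the linkage chain, tracking how both $\ch L$ and the factor $e^{\rho^{\#}}R_{\ol{0}}$ transform under the Weyl reflections induced by each Enright step through~\Prop{propsimch}.
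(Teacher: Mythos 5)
Your proof of (i) has a genuine gap. You pick $w\in W$ with $w\gamma\in\Pi_0$ in advance and try to realize $w$ via the lemma of \S\ref{WbetainDeltaL}, but that lemma only produces an $L'\sim L$ with $\Delta(L')=w\Delta(L)$; the isomorphism $\iota$ from \Cor{corFLsim} is the product of only those $r_{\alpha_i}$ for which an Enright functor was actually applied (i.e.\ $\alpha_i\notin\Delta(L_{i-1})$), and whenever $\alpha_i\in\Delta(L_{i-1})$ the step is skipped since $r_{\alpha_i}$ already preserves $\Delta(L_{i-1})$. So $\iota$ may equal a different $w'$ with $w'\Delta(L)=w\Delta(L)$ but $w'\gamma\neq w\gamma$. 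Your fallback ("stuck" case) only handles $\Pi_0\subset\Delta(L_{i-1})$; the typical failure mode is that the particular reflection you want is inside $\Delta(L_{i-1})$ while others are not, and "detours through non-optimal reflections are always available" is asserted without proof and without a termination argument ensuring $\gamma$ eventually lands in $\Pi_0$.

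The observation you are missing is the crux of the paper's argument: if $\gamma\in\Pi_0(L)$ with $\gamma\notin\Pi_0$, and $\alpha\in\Pi_0$ is chosen with $(\gamma,\alpha^\vee)>0$ (such $\alpha$ always exists), then automatically $\alpha\notin\Delta(L)$. Indeed, $\alpha\in\Delta_{\ol{0}}(L)^+$ would force $\alpha\in\Pi_0(L)$ (since $\alpha$ is indecomposable already in $\Delta_{\ol{0}}^+$), and two distinct elements of $\Pi_0(L)$ pair nonpositively, contradicting $(\gamma,\alpha^\vee)>0$. Hence one may always apply $T_\alpha$ and strictly decrease the height of $\gamma$, giving (i) by a clean induction with no a priori choice of $w$. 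For (ii), your base case ($\gamma\in\Pi_0$) is right in spirit, but the reduction to it is not actually carried out: \Prop{propsimch} concerns preservation of~(\ref{KRWconj}) under $\sim$, not preservation of $W(\gamma)$-skew-invariance of $e^{\rho^{\#}}R_{\ol{0}}\ch L$. One must verify the inductive identity $e^{\rho^{\#}}R_{\ol{0}}\ch L=r_\alpha\bigl(e^{\rho^{\#}}R_{\ol{0}}\ch L'\bigr)$ together with the $r_\alpha$-invariance of $R_{\ol{1}}e^{\rho^{\#}-\rho}$ in both cases $\alpha\in\Pi$ and $\alpha/2\in\Pi$, and check that both sides lie in $\cR_{W(\alpha)}$ — exactly the step the paper's induction performs in lock-step with (i), and which "mechanical but careful transfer" does not substitute for.
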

\begin{proof}
Let us show that (ii) holds for $\gamma\in\Pi_0$.
Choose $\Pi$ such that $\gamma\in\Pi$ 
or $\gamma/2\in\Pi$ and denote by $\lambda$ the highest weight of $L$: 
$L=L(\lambda,\Pi)$. By~\Lem{FLproperty} one has $F(L)=\ol{L}(\ol{\lambda},\Pi(L))$.
Since $f_{\gamma}$ acts locally nilpotently on $F(L)$
one has
$$(\lambda+\rho_{\Pi},\gamma^{\vee})=(\ol{\lambda}+\rho_{L,\Pi},\gamma^{\vee})
\in\mathbb{Z}_{>0}.$$
This means that $f_{\gamma}$ acts locally nilpotently on $L$,
so $L$ can be decomposed as a direct sum of $\mathfrak{sl}_2(\gamma)$-modules.
Therefore $(1-e^{-\gamma})e^{\gamma/2}\ch L$ is a 
$W(\gamma)$-skew-invariant element of $\cR_{W(\gamma)}$.
Since $\rho^{\#}-\gamma/2$ and $\Delta_{\ol{0}}\setminus\{\gamma\}$
are $r_{\gamma}$-invariant, $e^{\rho^{\#}}R_{\ol{0}}\ch {L}$ is also a
$W(\gamma)$-skew-invariant element of $\cR_{W(\gamma)}$.

Now we prove (i) and (ii)  by induction on $\gamma$.
If $\gamma\not\in\Pi_0$, there exists $\alpha\in\Pi_0$ such that
$r_{\alpha}\gamma<\gamma$. Since $\gamma\in\Pi_0(L)$, one has
$r_{\gamma'}\gamma\geq \gamma$ for each $\gamma'\in\Pi_0(L)$, so
$\alpha\not\in\Pi_0(L)$ and thus
$\alpha\not\in\Delta(L)$ (because $\alpha\in\Pi_0$).
 Using the Enright functor $T_{\alpha}(a)$  
we obtain  $L'=T_{\alpha}(a)(L)\sim L$ and $r_{\alpha}\gamma<\gamma$ is the root
corresponding to $\gamma$ in $\Pi_0(L')$. This proves (i).

For (ii) choose $\Pi$ which contains $\alpha$ or $\alpha/2$.
Recall that, by~\Cor{corFLsim}, 
$F(T_{\alpha}(a)(L)\cong F(L')$
under the identification of Kac-Moody superalgebras with the sets of 
simple roots $\Pi(L)$  and
 $r_{\alpha}\Pi(L)$; under this identification $\mathbb{C}f_{\gamma}$
is identified with $\mathbb{C}f_{r_{\alpha}\gamma}$, so
$f_{r_{\alpha}\gamma}$ acts  locally nilpotently
on $F(L')$. 

Since $T_{\alpha}(a)$ provides the equivalence of categories,
$R e^{\rho}\ch L'\in \cR_{W(\alpha)}$ and
$Re^{\rho}\ch L=r_{\alpha}(R e^{\rho}\ch L')$.
Therefore
$$e^{\rho^{\#}}R_{\ol{0}}\ch {L}=R_{\ol{1}}e^{\rho^{\#}-\rho}Re^{\rho}\ch L
=R_{\ol{1}}e^{\rho^{\#}-\rho}r_{\alpha}(R e^{\rho}\ch L').$$
Note that
$R_{\ol{1}}e^{\rho^{\#}-\rho}$ is $r_{\alpha}$-invariant element of $\cR_{W(\alpha)}$
(if $\alpha\in\Pi$, then $\Delta_{\ol{1}}$ and $\rho^{\#}-\rho$
are $r_{\alpha}$-invariant; if $\alpha/2\in\Pi$, then $\Delta_{\ol{1}}\setminus\{\alpha/2\}$ 
and $e^{\rho^{\#}-\rho}(1+e^{-\alpha/2})$  is $r_{\alpha}$-invariant).
Hence $e^{\rho^{\#}}R_{\ol{0}}\ch {L}'\in \cR_{W(\alpha)}$ and
$$e^{\rho^{\#}}R_{\ol{0}}\ch {L}=r_{\alpha}(e^{\rho^{\#}}R_{\ol{0}}\ch {L}').$$
By induction hypothesis, $e^{\rho^{\#}}R_{\ol{0}}\ch {L'}$ is a
$W(r_{\alpha}\gamma)$-skew-invariant element of $\cR_{W(r_{\alpha}\gamma)}$,
so $e^{\rho^{\#}}R_{\ol{0}}\ch {L}$ is a
$W(\gamma)$-skew-invariant element of $\cR_{W(\gamma)}$.
\end{proof}

\subsubsection{}
\begin{cor}{propreint}
Let $L$ be $\pi$-relatively integrable ($\pi\subset\Pi_0(L)$).
Assume that $\pi$ lies in a connected component
$\Delta^{\#}$  of $\Delta_{\ol{0}}$; let $\rho^{\#}$ be
the corresponding Weyl vector.

Then the element $R_{\ol{0}}e^{\rho^{\#}}\ch L$ is a
$W(\pi)$-skew-invariant element of $\cR_{W(\pi)}$ and
the element $Re^{\rho}\ch L$ is a $W(\pi)$-skew-invariant element of
$\cR_{W(\pi)}[\cY^{-1}]$ (see~\S~\ref{infprod} for notation).
\end{cor}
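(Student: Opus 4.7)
The plan is to reduce the assertion to the single-reflection statement in \Lem{lemFenright}~(ii) and then to use that $W(\pi)$ is generated by the simple reflections $\{r_{\gamma}\mid \gamma\in\pi\}$.

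First I would set $Y:=R_{\ol{0}}e^{\rho^{\#}}\ch L$. Since $L$ is $\pi$-relatively integrable, $F(L)$ is $\pi$-integrable, so for every $\gamma\in\pi\subset\Pi_0(L)$ the element $f_{\gamma}$ acts locally nilpotently on $F(L)$. Thus the hypothesis of \Lem{lemFenright}~(ii) holds for each such $\gamma$, and the lemma gives $r_{\gamma}Y=-Y$ together with $r_{\gamma}Y\in\cR$; that is, $Y$ is a $W(\gamma)$-skew-invariant element of $\cR_{W(\gamma)}$ for each individual $\gamma\in\pi$.

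Next I would propagate this single-reflection skew-invariance to all of $W(\pi)$. Since $\pi$ sits in the single connected component $\Delta^{\#}$, the subgroup $W(\pi)\subset W$ is generated (as a Coxeter group) by the reflections $\{r_{\gamma}\mid \gamma\in\pi\}$; any $w\in W(\pi)$ may thus be written as $w=r_{\gamma_1}\cdots r_{\gamma_k}$ with $\gamma_i\in\pi$, and iterating $r_{\gamma_i}Y=-Y$ yields $wY=(-1)^kY=\sgn(w)\,Y$. In particular $wY=\pm Y\in\cR$ for every $w\in W(\pi)$, which both puts $Y$ inside $\cR_{W(\pi)}$ and establishes the required $W(\pi)$-skew-invariance.

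Finally I would deduce the statement about $Re^{\rho}\ch L$ by isolating the odd part of the denominator. Using $R=R_{\ol{0}}/R_{\ol{1}}$, we have
$$Re^{\rho}\ch L \;=\; \frac{R_{\ol{0}}e^{\rho^{\#}}\ch L}{R_{\ol{1}}e^{\rho^{\#}-\rho}}\;=\;\frac{Y}{R_{\ol{1}}e^{\rho^{\#}-\rho}},$$
an identity in $\cR_{W(\pi)}[\cY^{-1}]$ since $R_{\ol{1}}\in\cY$. By the last paragraph of \S~\ref{RPi}, applied to the connected component $\Delta^{\#}\supset\pi$, the factor $R_{\ol{1}}e^{\rho^{\#}-\rho}$ is a $W(\pi)$-invariant element of $\cR_{W(\pi)}$. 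Combined with the $W(\pi)$-skew-invariance of $Y\in\cR_{W(\pi)}$ established above, this shows that $Re^{\rho}\ch L$ is a $W(\pi)$-skew-invariant element of $\cR_{W(\pi)}[\cY^{-1}]$, finishing the proof. There is no real obstacle: the Enright-functor reduction (from a general $\gamma\in\Pi_0(L)$ to the case $\gamma\in\Pi_0$) has already been carried out in \Lem{lemFenright}, and the $W(\pi)$-invariance of $R_{\ol{1}}e^{\rho^{\#}-\rho}$ is recorded in \S~\ref{RPi}; the remaining step is the elementary Coxeter-group fact that signs multiply along words in simple reflections.
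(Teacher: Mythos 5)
Your proof is correct and follows essentially the same route as the paper's: reduce to the single-reflection statement of Lemma~\ref{lemFenright}~(ii), propagate skew-invariance along words in the generating reflections $\{r_\gamma : \gamma\in\pi\}$, then handle $Re^{\rho}\ch L$ by dividing out the $W(\pi)$-invariant factor $R_{\ol{1}}e^{\rho^{\#}-\rho}$ recorded in \S~\ref{RPi}. Your version is in fact slightly cleaner than the paper's: the paper's proof contains a few evident typos (it writes that $Re^{\rho}$ is $W$-\emph{invariant}, calls the ratio $R_{\ol{1}}e^{\rho^{\#}-\rho}$ skew-invariant when it is invariant, and records the identity $Re^{\rho}\ch L = R_{\ol{1}}e^{\rho^{\#}-\rho}\cdot R_{\ol{0}}e^{\rho^{\#}}\ch L$, which should be the reciprocal), whereas your factorization $Re^{\rho}\ch L = Y/(R_{\ol{1}}e^{\rho^{\#}-\rho})$ is the correct one and the signs work out as intended.
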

\begin{proof}
From~\Lem{lemFenright} it follows that $R_{\ol{0}}e^{\rho^{\#}}\ch L$ is a
$W(\pi)$-skew-invariant element of $\cR_{W(\pi)}$.
Recall that $R_{\ol{1}}\in \cY$, so $Re^{\rho}\ch L\in\cR_{W(\pi)}[\cY^{-1}]$.
It remains to verify that $Re^{\rho}\ch L$ is $W(\pi)$-skew-invariant.
Since $R_{\ol{0}}e^{\rho^{\#}}$ is  $W(\Delta^{\#})$-skew-invariant 
and $Re^{\rho}$ is $W$-invariant,
their ratio $R_{\ol{1}}e^{\rho^{\#}-\rho}$ is  $W(\Delta^{\#})$-skew-invariant,
and, in particular, is $W(\pi)$-skew-invariant.
Hence $Re^{\rho}\ch L=R_{\ol{1}}e^{\rho^{\#}-\rho}\cdot 
R_{\ol{0}}e^{\rho^{\#}}\ch L$  is  $W(\pi)$-skew-invariant.
\end{proof}

\subsubsection{}
Denote by $R_{L,\ol{0}}, R_{L,\ol{1}}(\Pi)$ the following analogues of $R_{\ol{0}},R_{\ol{1}}(\Pi)$:
$$\begin{array}{ll}
   R_{L,\ol{0}}:=\prod_{\alpha\in\Delta^+(L)_{{\ol{0}}}}(1-e^{-\alpha}), & 
R_{L,\ol{1}}(\Pi):=\prod_{\alpha\in\Delta^+(L)_{{\ol{1}}}}(1+e^{-\alpha})
  \end{array}
$$
Note that these elements lie in $\cY$ (and $\cY\subset \cR(\Pi')$ for each $\Pi'$)
and $R_L(\Pi)=R_{L,\ol{1}}^{-1}(\Pi)R_{L,\ol{0}}$.

Recall (see~\S~\ref{RPi})  that the elements $R(\Pi)e^{\rho_{\Pi}}\in\cR(\Pi)$ are equivalent for all $\Pi$:
the expansion of $R(\Pi)e^{\rho_{\Pi}}$ in $\cR(\Pi)$ does not belong to $\cR(\Pi')$,
however, the expansion of $R(\Pi)e^{\rho_{\Pi}}$ in $\cR(\Pi')$
coincides with the expansion of $R(\Pi')e^{\rho_{\Pi'}}$ in $\cR(\Pi')$.
 Similarly, $R_{L}(\Pi)e^{\rho_{L,\Pi}}$  are equivalent for all $\Pi$.
Hence the elements $R_{\ol{1}}(\Pi)e^{-\rho_{\Pi}},\ 
R_{\ol{1},L}(\Pi)e^{-\rho_{L,\Pi}}$  
are equivalent for all $\Pi$; since these elements lie in $\cY$, the expansion
of $R_{\ol{1}}(\Pi)e^{-\rho_{\Pi}}$ (resp., of $R_{\ol{1},L}(\Pi)e^{-\rho_{L,\Pi}}$)
in $\cR(\Pi')$ is equal to the expansion of
$R_{\ol{1}}(\Pi")e^{-\rho_{\Pi"}}$ (resp., of $R_{\ol{1},L}(\Pi")e^{-\rho_{L,\Pi"}}$)
in $\cR(\Pi''')$ for any sets of simple roots
$\Pi,\Pi',\Pi",\Pi'''$. This allows
us to use the notation $R_{\ol{1},L}e^{-\rho_{L}}$ for
 $R_{\ol{1},L}(\Pi)e^{-\rho_{L,\Pi}}$ .

\subsubsection{}
\begin{lem}{lemRRchL}
For a  set of simple roots $\Pi$ let
$X(\Pi)$ be the expansion of $R_{\ol{1},L}Re^{\rho-\rho_{L}}\ch L$ in $\cR(\Pi)$.
For any $\Pi$ and $\Pi'$ one has $X(\Pi)=X(\Pi')$ 
(in particular, $X(\Pi)\in\cR(\Pi')$). 
\end{lem}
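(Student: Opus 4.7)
The plan is to reduce to a single odd reflection and split into two cases. By \Prop{propS}(a), any two sets of simple roots are connected by a finite chain of odd reflections, so by induction on the length of such a chain it suffices to prove $X(\Pi)=X(r_\beta\Pi)$ for an isotropic simple root $\beta\in\Pi$. Rewriting
$$R_{\ol{1},L}\,R\,e^{\rho-\rho_L} \;=\; R_{\ol{0}}\cdot e^{\rho-\rho_L}\cdot\!\!\!\!\prod_{\alpha\in\Delta^+_{\ol{1}}\setminus\Delta^+_{\ol{1}}(L)}\!\!\!(1+e^{-\alpha})^{-1},$$
I would track how this formal product and its expansion change under $r_\beta$, distinguishing two cases according to whether $\beta\in\Pi(L)$ (equivalently, $\beta\in\Delta(L)$).

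\emph{Case A: $\beta\in\Pi(L)$.} Both $\Delta^+_{\ol{1}}$ and $\Delta^+_{\ol{1}}(L)$ swap $\beta\leftrightarrow-\beta$ under $r_\beta$, so their difference is unchanged and contains neither $\pm\beta$; likewise $\rho-\rho_L$ is unchanged since both $\rho$ and $\rho_L$ shift by $\beta$. The product is therefore literally the same formal expression in $\Pi$ and $r_\beta\Pi$, does not involve $\pm\beta$, and (since all remaining roots share identical sign conventions in $\cR(\Pi)$ and $\cR(r_\beta\Pi)$) its expansion is the same element of $\cV$. Multiplying by $\ch L$, which is the same element in all $\cR(\Pi)$, yields $X(\Pi)=X(r_\beta\Pi)$.

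\emph{Case B: $\beta\notin\Pi(L)$.} Now $\beta$ does belong to the index set, contributing a factor $(1+e^{-\beta})^{-1}$ in $\Pi$ which becomes $e^\beta(1+e^\beta)^{-1}$ in $r_\beta\Pi$ (the extra $e^\beta$ absorbing the shift $\rho\mapsto\rho+\beta$); the two expressions agree as rational functions. Multiplying by $(1+e^{-\beta})$ kills this factor in either case, producing a common element of $\cR(\Pi)\cap\cR(r_\beta\Pi)$. Hence $X(\Pi)(1+e^{-\beta})=X(r_\beta\Pi)(1+e^{-\beta})$, so $X(\Pi)\sim X(r_\beta\Pi)$ in the sense of \S~\ref{equivcV} and $X(\Pi)$ has pole of order at most one at $\beta$. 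Applying \Lem{lemXX'} then reduces the desired equality $X(\Pi)=X(r_\beta\Pi)$ to showing that $\supp X(\Pi)\cap(\mu+\mathbb{Z}\beta)$ is finite for every $\mu$.

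For this finiteness I would combine \Cor{corsuA}(i), giving $\supp(Re^\rho\ch L)\subset\lambda+\rho-\mathbb{Z}_{\geq 0}(\Delta_{ess}(\lambda)\cap\Delta^+)\subset\lambda+\rho-\mathbb{Z}_{\geq 0}(\Delta(L)\cap\Delta^+)$, with the evident bound $\supp(R_{\ol{1},L}e^{-\rho_L})\subset-\rho_L-\mathbb{Z}_{\geq 0}\Delta^+_{\ol{1}}(L)$, to obtain $\supp X(\Pi)\subset\lambda+\rho-\rho_L-\mathbb{Z}_{\geq 0}(\Delta(L)\cap\Delta^+)$. Since $\beta$ is a simple root of the linearly independent set $\Pi$ and $\beta\notin\Delta(L)$ in Case B, neither $\beta$ nor $-\beta$ lies in the cone $\mathbb{Q}_{\geq 0}(\Delta(L)\cap\Delta^+)$, so every affine line $\mu+\mathbb{Q}\beta$ meets the translated cone in a bounded set, which yields the required finiteness. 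The main technical difficulty is the bookkeeping in Case B: correctly identifying the $\beta$-factor, controlling how the two distinct expansion conventions $\varphi_\Pi$ and $\varphi_{r_\beta\Pi}$ transform it, and verifying that all other factors have matching expansions. Once the pole structure and the equivalence $X(\Pi)\sim X(r_\beta\Pi)$ are in place, the cone argument closes the proof.
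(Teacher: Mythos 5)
Your proof has the same skeleton as the paper's: reduce to a single odd reflection $r_\beta$ via \Prop{propS}(a); in Case A ($\beta\in\Delta(L)$) observe that the surviving infinite product runs over roots positive for both $\Pi$ and $r_\beta\Pi$, so the two expansions literally coincide; in Case B ($\beta\notin\Delta(L)$) establish that $X(\Pi)$ has a pole of order $\leq 1$ at $\beta$ and is equivalent to $X(r_\beta\Pi)$, then apply \Lem{lemXX'}. Where you diverge is the finiteness hypothesis of \Lem{lemXX'}. The paper isolates $Y:=Re^{\rho}\ch L$, applies \Lem{lemXX'} to $Y$ alone, and shows $\supp Y\cap\{\mu+\mathbb{Z}\beta\}$ has \emph{at most one} element: the Casimir constraint $\|\mu\|^2=\|\mu+r\beta\|^2$ forces $(\mu,\beta)=0$, hence $\beta\in D(\mu)_{iso}\subset\Delta(\mu)=\Delta(L)$ by \Cor{corsuA}(iii), contradicting $\beta\notin\Delta(L)$; one then concludes since $R_{\ol{1},L}e^{-\rho_L}$ already lies in $\cR(\Pi)\cap\cR(r_\beta\Pi)$. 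You instead work directly with $X(\Pi)$ and argue via convexity: $\supp X(\Pi)$ sits in a translate of $-\mathbb{Z}_{\geq 0}\Pi(L)$, and $\pm\beta\notin\mathbb{Q}_{\geq 0}\Pi(L)$. This is correct, but two steps deserve explicit justification: that $\beta\notin\mathbb{Q}_{\geq 0}\Pi(L)$ uses linear independence of $\Pi$ (express $\beta$ as a nonnegative combination of positive roots and compare $\Pi$-coordinates, using that the only root in $\mathbb{Q}\beta$ is $\pm\beta$ itself since $\beta$ is isotropic), and passing from ``the line meets the closed cone in a bounded interval'' to ``finitely many lattice points'' requires noting that $\mathbb{Q}_{\geq 0}\Pi(L)$ is finitely generated, hence closed and polyhedral, so a separating linear functional nonnegative on $\Pi(L)$ and strictly negative on $\beta$ gives the bound. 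The paper's Casimir-plus-$\Delta(\mu)=\Delta(L)$ argument is tighter (it yields a one-element intersection and avoids all convexity machinery), whereas yours is more elementary and sidesteps factoring out $R_{\ol{1},L}e^{-\rho_L}$; both are valid once the details are filled in.
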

\begin{proof}
By above, the elements  $R_{\ol{1},L}Re^{\rho-\rho_{L}}\ch L$ are equivalent
for all $\Pi$, so $X(\Pi)$ is equivalent to $X(\Pi')$.
It remains to show that $X(\Pi)\in\cR(\Pi')$.

Since any two sets of simple roots are connected by a chain of odd reflections,
it is enough to consider the case $\Pi'=r_{\beta}\Pi$, where $\beta\in\Pi$ is
an odd isotropic root.
We denote by $R_{\ol{1},L},R,\rho,\rho_{L}$ the corresponding elements for  $\Pi$
and set $\cR:=\cR(\Pi), \cR':=\cR(r_{\beta}\Pi)$.
Observe that
$\ch L\in\cR\cap\cR'$, since $L$ is an irreducible highest weight module.

If $\beta\in\Delta(L)$, then  the element $R_{\ol{1},L}Re^{\rho-\rho_{L}}$
has the same expansion in $\cR$ and in $\cR'$, since
$$R_{\ol{1},L}Re^{\rho-\rho_{\ol{1},L}}=\prod_{\alpha\in\Delta_{\ol{0}}^+}(1-e^{-\alpha})\cdot
\prod_{\alpha\in\Delta_{\ol{1}}^+(\Pi)\setminus\Delta_{\ol{1}}(L)}(1+e^{-\alpha})^{-1}e^{\rho-\rho_{\ol{1},L}}$$
and 
$\Delta_{\ol{1}}^+(\Pi)\setminus\Delta_{\ol{1}}(L)$ lies in $\Delta^+(r_{\beta}\Pi)
\cap\Delta^+(\Pi)$.

Consider the case $\beta\not\in\Delta(L)$. Let us show that the expansion of
$Re^{\rho}\ch L$ in $\cR$ lies in $\cR'$. Indeed, denote this expansion by $Y$.
Since $\ch L\in\cR\cap\cR'$, $Y$
has a pole of order $\leq 1$ at $\beta$.
By~\Lem{lemXX'}, in order to show that $Y$ does not have a pole, i.e. that
$Y\in\cR'$, it is enough
to verify that for each $\mu\in\fh^*$ the set $\supp Y\cap\{\mu+\mathbb{Z}\beta\}$ is finite.
In fact this set contains at most one element, since
 the action of the Casimir element
gives  $||\mu||^2=||\mu+r\beta||^2$ if $\mu,\mu+r\beta\in\supp (Y)$,  so $(\mu,r\beta)=0$. 
However, for $\mu\in\supp Y$ one has $\Delta(\mu)=\Delta(L)$,
so $\beta\not\in\Delta(\mu)$, that is $r=0$. Hence $Y\in\cR'$. 
Since the infinite product
$R_{\ol{1},L}e^{-\rho_{L}}$ lies
in $\cR$ and in $\cR'$, we conclude that $X(\Pi)=R_{\ol{1},L}e^{-\rho_{L}}Y\in\cR(\Pi')$. 
The assertion follows.
\end{proof}

\subsubsection{}
By above, $X(\Pi)$ does not depend on $\Pi$ (for fixed $L$). 
The next lemma shows that the equivalence relation $\sim$ preserves
$X(\Pi)$.

\begin{lem}{lemRRch2}
Let $X(L)$ be the expansion of $R_{\ol{1},L}Re^{\rho-\rho_{L}}\ch L$ in $\cR(\Pi)$.
Recall that for $L\sim L'$ one has $F(L)\cong F(L')$ via the natural identification
$\Delta(L)$ and $\Delta(L')$. Under this identification $X(L)=X(L')$.
\end{lem}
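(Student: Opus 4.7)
My plan is to reduce to a single step of the linkage and transport everything by $r_\alpha$. By the description of $\sim$ in \S\ref{simseq}, an arbitrary linkage $L \sim L'$ is built from two kinds of elementary moves: odd reflections $\Pi \mapsto r_\beta \Pi$, which fix $L$ and for which Lemma \ref{lemRRchL} already establishes the claim; and Enright steps $L' = T_\alpha(a)(L)$, with $\alpha \not\in \Delta(L)$ and $\alpha$ (or $\alpha/2$) in the current $\Pi$. Thus it suffices to treat a single Enright step, which I will do next.

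Fix $\Pi$ with $\alpha \in \Pi$ or $\alpha/2 \in \Pi$. The proof of Proposition \ref{propsimch} already gives the character identity $Re^\rho\ch L' = r_\alpha(Re^\rho \ch L)$ in $\cR(\Pi)$. From \S\ref{FLFL'} the natural identification $\iota : \ol{\fg}^{\lambda} \iso \ol{\fg}^{\lambda'}$ acts as $r_\alpha$ on $\fh^*$, and $\Pi(L')$ corresponds to $\Pi(L)$ via $r_\alpha$ (so $\Delta(L') = r_\alpha \Delta(L)$). Because $\alpha \not\in \Delta(L)$, the simple reflection $r_\alpha = r_{\alpha/2}$ permutes the set $\Delta^+ \setminus \{\alpha, \alpha/2\}$ and therefore restricts to a bijection $\Delta(L)^+_{\ol{1}} \iso \Delta(L')^+_{\ol{1}}$; hence $r_\alpha(R_{\ol{1},L}) = R_{\ol{1},L'}$. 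Finally, $r_\alpha(\rho_L)$ satisfies the Weyl-vector condition for $\Pi(L')$, so I will take $\rho_{L'} := r_\alpha(\rho_L)$.

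The rest is a one-line calculation in $\cR(\Pi)$:
\[
r_\alpha\bigl(X(L)\bigr) \;=\; r_\alpha\!\bigl(R_{\ol{1},L}\,e^{-\rho_L}\bigr)\cdot r_\alpha\!\bigl(Re^\rho \ch L\bigr) \;=\; R_{\ol{1},L'}\,e^{-\rho_{L'}}\cdot Re^\rho \ch L' \;=\; X(L'),
\]
which, read through the identification $\iota = r_\alpha$, is precisely the asserted equality $X(L) = X(L')$.

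The main technical obstacle lies in justifying that $r_\alpha$ genuinely commutes with the formal-series expansions used to realize $X(L)$ as an honest element of $\cR(\Pi)$ (rather than merely up to the equivalence $\sim$ on $\cV$). For the factor $Re^\rho\ch L$ this is what is proved inside Proposition \ref{propsimch}; for the infinite product $R_{\ol{1},L}e^{-\rho_L}$ it boils down, via the conventions of \S\ref{infprod}, to the bijection of positive odd roots displayed above. The only mildly delicate variant is the case $\alpha/2 \in \Pi$, which is handled uniformly since $r_\alpha = r_{\alpha/2}$ and the condition $\alpha \not\in \Delta(L)$ likewise prevents $\alpha/2$ from lying in $\Delta(L)$.
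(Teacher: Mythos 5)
Your proof is correct and follows essentially the same route as the paper's: reduce by \S\ref{simseq} to a single Enright step $L'=T_\alpha(a)(L)$ with $\alpha\in\Pi_0\setminus\Delta(L)$ and $\alpha$ or $\alpha/2$ simple, transport everything by $r_\alpha$ using $Re^\rho\ch L'=r_\alpha(Re^\rho\ch L)$, and observe $r_\alpha(R_{\ol1,L})=R_{\ol1,L'}$, $r_\alpha(\rho_L)=\rho_{L'}$. The extra justifications you supply (the bijection $r_\alpha:\Delta(L)^+_{\ol1}\iso\Delta(L')^+_{\ol1}$ and the $\alpha/2\in\Pi$ case) are accurate fillers for details the paper leaves implicit.
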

\begin{proof}
It is enough to verify the assertion for $L'=T_{\alpha}(a)(L)$, where
$\alpha\in\Pi_0\setminus\Delta(L)$. Choose $\Pi$ 
which contains $\alpha$ or $\frac{\alpha}{2}$.
One has 
$$Re^{\rho}\ch L(\lambda,\Pi')=\sum a_{\nu}e^{\nu},\ \ 
Re^{\rho}\ch L'=r_{\alpha}\bigl(Re^{\rho}\ch L\bigr).$$

By~\S~\ref{FLFL'}, $\Delta(L')=r_{\alpha}\Delta(L)$ and
$\Pi(L')=r_{\alpha}\Pi(L)$. Thus
$$r_{\alpha}(R_{\ol{1},L})=R_{\ol{1},L'}, \ \ r_{\alpha}(\rho_L)=\rho_{L'}.$$
Hence $r_{\alpha} X(\Pi)$ is the expansion 
$R_{\ol{1},L'}Re^{\rho-\rho_{L'}}\ch L'$ in $\cR(\Pi)$.
\end{proof}

\subsubsection{}
Let $L$ be $\pi$-relatively integrable ($\pi\subset \Pi_0(L)$).
Assume that $\pi$ admits a Weyl vector $\rho_{\pi}$
($(\rho_{\pi},\alpha)=(\alpha,\alpha)/2$ for each $\alpha\in\pi$).
By above, the element $R_{\ol{1},L}Re^{\rho+\rho_{\pi}-\rho_{L}}\ch L$ 
does not depend on $\Pi$ (and have the same expansions in all algebras
$\cR(\Pi')$).

\begin{prop}{RRchLrel}
The element $R_{\ol{1},L}Re^{\rho+\rho_{\pi}-\rho_{L}}\ch L$ is a 
$W(\pi)$-skew-invariant element of $\cR_{W(\pi)}$.
\end{prop}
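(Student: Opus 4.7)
The plan is to isolate the $W(\pi)$-skew-invariance of $Re^\rho\ch L$ already known from Corollary~\ref{propreint}, and reduce the proposition to checking $W(\pi)$-invariance of the remaining factor. By Lemma~\ref{lemRRchL}, the element $Y:=R_{\ol{1},L}Re^{\rho+\rho_\pi-\rho_L}\ch L$ is well-defined in $\cR$ independently of the choice of $\Pi$, so the statement to prove is meaningful. Write $Y=F\cdot(Re^\rho\ch L)$ with $F:=R_{\ol{1},L}e^{\rho_\pi-\rho_L}$; Corollary~\ref{propreint} shows $Re^\rho\ch L$ is $W(\pi)$-skew-invariant in $\cR_{W(\pi)}[\cY^{-1}]$, so it suffices to show that $F$ is $W(\pi)$-invariant and then use that $Y\in\cR$ to conclude $Y\in\cR_{W(\pi)}$.

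To verify $r_\alpha F=F$ for $\alpha\in\pi$, the first step is the computation
\[
r_\alpha R_{\ol{1},L}=e^{(\rho_{L,\ol{1}},\alpha^\vee)\alpha}\,R_{\ol{1},L},
\]
where $\rho_{L,\ol{1}}$ stands for the half-sum of positive odd roots of $\Delta(L)$ (the pairings used below are well-defined because the set $B:=\{\beta\in\Delta^+_{\ol{1}}(L):r_\alpha\beta<0\}$ is finite, as $r_\alpha$ is an element of finite length in the Kac-Moody Weyl group $W(L)$). I would establish this by the standard pairing trick: decompose $\Delta^+_{\ol{1}}(L)=A\sqcup B$ according to the sign of $r_\alpha\beta$, observe that $r_\alpha$ preserves $A$ while $\beta\mapsto -r_\alpha\beta$ is an involution on $B$, and use $(1+e^{\gamma})=e^{\gamma}(1+e^{-\gamma})$ on the $B$-factors to get a shift equal to $\sum_{\beta\in B}\beta=\rho_{L,\ol{1}}-r_\alpha\rho_{L,\ol{1}}=(\rho_{L,\ol{1}},\alpha^\vee)\alpha$.

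Combining this with $r_\alpha\rho_\pi=\rho_\pi-\alpha$ (from the defining relation $(\rho_\pi,\alpha^\vee)=1$) and $r_\alpha\rho_L=\rho_L-(\rho_L,\alpha^\vee)\alpha$ gives
\[
r_\alpha F=F\cdot e^{((\rho_L+\rho_{L,\ol{1}},\alpha^\vee)-1)\alpha}.
\]
Thus $r_\alpha F=F$ is equivalent to the identity $(\rho_L+\rho_{L,\ol{1}},\alpha^\vee)=1$, which is exactly the statement that $\rho_L+\rho_{L,\ol{1}}$ coincides with the Weyl vector $\rho_{L,\ol{0}}$ of the even root subsystem $\Pi_0(L)$. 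This is the standard decomposition $\rho=\rho_{\ol{0}}-\rho_{\ol{1}}$ applied to the basic or affine Lie superalgebra $\ol{\fg}^\lambda$ with root system $\Delta(L)$; since $\alpha\in\Pi_0(L)$, the defining property $(\rho_{L,\ol{0}},\alpha^\vee)=1$ delivers the identity.

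The main technical obstacle is the verification of $(\rho_L+\rho_{L,\ol{1}},\alpha^\vee)=1$. For finite-dimensional $\Delta(L)$ this is classical; in the affine setting, where $\rho_{L,\ol{0}}$ and $\rho_{L,\ol{1}}$ are infinite formal sums, one argues componentwise, using that $r_\alpha$ acts trivially outside the connected component of $\Pi_0(L)$ containing $\alpha$, and that on that component the Weyl vector exists and the identity reduces to the basic/affine case. Once $F$ is $W(\pi)$-invariant, $Y=F\cdot Re^\rho\ch L$ is $W(\pi)$-skew-invariant in $\cR_{W(\pi)}[\cY^{-1}]$; since $Y\in\cR$, applying any $w\in W(\pi)$ produces $\pm Y\in\cR$, which by definition of $\cR_{W(\pi)}$ completes the argument.
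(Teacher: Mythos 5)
The decomposition $Y=F\cdot(Re^\rho\ch L)$ with $F=R_{\ol{1},L}e^{\rho_\pi-\rho_L}$ is a clean alternative to the paper's factorization $Y=(R_{\ol{1},L}R_{\ol{1}}^{-1})\cdot e^{\rho+\rho_\pi-\rho_L-\rho^\#}\cdot(R_{\ol{0}}e^{\rho^\#}\ch L)$, and the pairing computation showing $r_\alpha F=F$ is equivalent to $(\rho_L,\alpha^\vee)+\tfrac12\sum_{\beta\in B}(\beta,\alpha^\vee)=1$ is on the right track. However, there are two genuine problems.

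First, the justification of the key identity is not correct as written. You claim that $\rho_L+\rho_{L,\ol{1}}=\rho_{L,\ol{0}}$ and ``argue componentwise, using that $r_\alpha$ acts trivially outside the connected component of $\Pi_0(L)$ containing $\alpha$.'' That premise is false: odd roots of $\Delta(L)$ connect different connected components of $\Pi_0(L)$ (e.g.\ $\vareps_i-\delta_j$ in $A(m,n)^{(1)}$), so $r_\alpha$ does not act trivially on the odd positive roots outside the even component of $\alpha$, and the set $B$ contributing to $c_{\ol{1}}$ is not confined to one component. Moreover, when $\Delta(L)$ is affine with disconnected $\Pi_0(L)$ the vector $\rho_{L,\ol{0}}$ does not exist in $\fh^*$, so the decomposition has no literal meaning. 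The identity is true, but the correct route is via the $W(L)$-skew-invariance of the Weyl denominator $R_Le^{\rho_L}$ of $\ol{\fg}^\lambda$, proved as in \S\ref{RPi} by passing to a set of simple roots of $\Delta(L)$ that contains $\alpha$ or $\alpha/2$; this needs to be invoked, not a ``componentwise'' splitting.

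Second, and more seriously, the concluding step ``since $Y\in\cR$, applying any $w\in W(\pi)$ produces $\pm Y\in\cR$'' is not valid. The identity $wY=\sgn(w)Y$ is established only in the localization $\cR_{W(\pi)}[\cY^{-1}]$, where $Re^\rho\ch L$ is skew-invariant, and the expansion map $\varphi_\Pi$ is \emph{not} equivariant for the $W$-action on formal sums. A counterexample inside the paper's formalism: take $A=(1+e^{-\alpha})/(1-e^{-\alpha})\in\cR_{W(\alpha)}[\cY^{-1}]$ with $\alpha\in\Pi_0$; then $r_\alpha A=-A$ in the localization, and $\varphi_\Pi(A)=1+2\sum_{n\geq 1}e^{-n\alpha}\in\cR(\Pi)$, yet $r_\alpha\varphi_\Pi(A)=1+2\sum_{n\geq 1}e^{n\alpha}\notin\cR(\Pi)$ and is certainly not $-\varphi_\Pi(A)$. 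So knowing $Y\in\cR$ and $Y$ skew-invariant in the localization does not give $Y\in\cR_{W(\pi)}$, let alone $wY=\sgn(w)Y$ as formal sums. This is precisely why the paper's proof proceeds differently: it establishes membership in $\cR_{W(\gamma)}$ factor by factor for the base case $\gamma\in\Pi_0$ (each factor being individually in $\cR_{W(\gamma)}$ via an explicit choice of $\Pi$ containing $\gamma$ or $\gamma/2$), and then uses Enright functors and Lemma~\ref{lemRRch2} to handle $\gamma\notin\Pi_0$. Your argument bypasses both the choice of $\Pi$ and the Enright induction, but in doing so loses the control needed to conclude the $\cR_{W(\pi)}$-membership.
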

\begin{proof}
Denote by $X(L)$ the expansion of $R_{\ol{1},L}Re^{\rho+\rho_{\pi}-\rho_{L}}\ch L$
in $\cR(\Pi)$ (this does not depend on $\Pi$ by~\Lem{lemRRchL}).
It is enough to verify that $X(L)e^{\rho_{\pi}}$ is 
$W(\gamma)$-skew-invariant element of $\cR_{W(\gamma)}$ for each $\gamma$.
We prove this by induction on $\gamma\in\Delta_{\ol{0}}^+$.

Assume first that $\gamma\in\Pi_0$.   Take $\Pi$ which contains 
$\gamma$ or $\frac{\gamma}{2}$
and set $\cR:=\cR(\Pi)$. By~\Cor{propreint}, $R_{\ol{0}}e^{\rho^{\#}}\ch L$
is a $W'$-skew-invariant element of $\cR_{W'}$. 
Clearly, $\Delta_{\ol{1}}^+\setminus \Delta_{\ol{1}}(L)^+$
is $r_{\gamma}$-invariant, so
$R_{\ol{1},L} R_{\ol{1}}^{-1}$  is a $W(\gamma)$-invariant element of $\cR_{W(\gamma)}$. 
Since $\gamma$ or $\frac{\gamma}{2}$
lies in $\Pi$, $\rho-\rho_L$ is $r_{\gamma}$-invariant;
since $\gamma\in\Pi_0$, $\rho^{\#}-\rho_{\pi}$ is $r_{\gamma}$-invariant.
Hence $e^{\rho+\rho_{\pi}-\rho_L-\rho^{\#}}$
is a $W(\gamma)$-invariant element of $\cR_{W(\gamma)}$, so
$X(L)e^{\rho_{\pi}}$ is a $W(\gamma)$-skew-invariant element of $\cR_{W(\gamma)}$ as well.

Now take $\gamma\not\in\Pi_0$ and $\alpha\in\Pi_0$ such that $\gamma':=r_{\alpha}\gamma<\gamma$.
Then $\alpha\not\in\Delta(L)$ (see the proof of~\Lem{lemFenright}).
Let $L':=T_{\alpha}(a)(L)$. By~\S~\ref{relint0}, we conclude that $L'$ is $r_{\alpha}\pi$-relatively integrable. By induction $X(L')e^{\rho_{r_{\alpha}\pi}}$
is a $W(\gamma')$-skew-invariant element of $\cR_{W(\gamma')}$.
Clearly, $\rho_{r_{\alpha}\pi}$
can be chosen equal to $r_{\alpha}\rho_{\pi}$. Moreover, by~\Lem{lemRRch2}, $X(L')=r_{\alpha}(X(L))$. Then $X(L)e^{\rho_{\pi}}=r_{\alpha}(X(L')e^{\rho_{r_{\alpha}\pi}})$
and thus $X(L)e^{\rho_{\pi}}$ is a $W(\gamma)$-skew-invariant element of $\cR_{W(\gamma)}$,
as required.
\end{proof}

\section{Character formulas  for some typical and relatively integrable modules}
\label{sect10}
In this section we prove formula~(\ref{KRWconj}) from~\S~\ref{KRWconjecture} for some cases. 

\subsection{}
Recall that a module $L(\lambda,\Pi)$ is typical if 
$(\lambda+\rho,\beta)\not=0$ for all 
isotropic $\beta\in\Delta_{\ol{1}}$.

Recall (see Section~\ref{sect8}) that we call $\lambda\in\fh^*$ {\em extremal}
if for each connected component $\pi$ of $\Pi_0(\lambda)$, $\lambda$
is maximal or minimal in its $W(\pi)$-orbit. We say that $\lambda$ is regular
if $Stab_{W}\lambda=\{Id\}$.

\subsubsection{}
\begin{thm}{thmint12}
If $L=L(\lambda,\Pi)$ is typical and $\lambda+\rho$ is a regular extremal weight, then
$$Re^{\rho}\ch L=\sum_{w\in W(L): w(\lambda+\rho)\leq \lambda+\rho} \sgn(w) e^{w(\lambda+\rho)}$$
and~(\ref{KRWconj}) holds.
\end{thm}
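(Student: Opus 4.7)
The plan is to establish both~(\ref{KRWconj}) and the displayed character formula simultaneously, by reducing the question to the representation theory of the even Kac-Moody algebra $\ol{\fg}^{\lambda}$. Since $L$ is typical, $\Delta(L)$ contains no isotropic odd roots, so $\ol{\fg}^{\lambda}$ is a symmetrizable Kac-Moody Lie algebra and $F(L)=\ol{L}(\ol{\lambda})$ is an irreducible highest weight module for it. The displayed formula is equivalent, via the identity $Re^{\rho}\ch M(\mu)=e^{\mu+\rho}$, to the Verma-module expansion
\begin{equation*}
\ch L(\lambda)=\sum_{y\in W(L):\ y.\lambda\leq\lambda}\sgn(y)\,\ch M(y.\lambda),
\end{equation*}
so both assertions are preserved by the linkage $\sim$ (Proposition~\ref{propsimch}), and I may replace $L$ by any $L'\sim L$ with $\Pi(L')\subset\Delta^{+}$.

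Next I would compute $R_Le^{\rho_L}\ch F(L)$ directly. Decompose $W(L)=\prod_{i\in I}W_i$ according to the connected components of $\Pi_0(L)$, and partition $I=I_{+}\sqcup I_{-}$ according to whether $\lambda+\rho$ is maximal or minimal in its $W_i$-orbit. Setting $W_{+}:=\prod_{i\in I_{+}}W_i$, the condition $w(\lambda+\rho)\leq\lambda+\rho$ on $w=\prod w_i$ is equivalent to $w\in W_{+}$. Regularity together with the extremal hypothesis implies that $\ol{\lambda}+\rho_L=\lambda+\rho$ is dominant regular on the $I_{+}$-components and antidominant regular on the $I_{-}$-components, so $F(L)$ is integrable with respect to the Kac-Moody subalgebra $\ol{\fg}^{\lambda}_{+}$ attached to $I_{+}$, while its restriction to the subalgebra $\ol{\fg}^{\lambda}_{-}$ attached to $I_{-}$ agrees with a Verma module (an antidominant regular irreducible is a Verma). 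Applying the Weyl-Kac formula on the $I_{+}$ side and cancelling the corresponding factor of $R_L$ against the Verma character on the $I_{-}$ side yields
\begin{equation*}
R_Le^{\rho_L}\ch F(L)=\sum_{w\in W_{+}}\sgn(w)\,e^{w(\lambda+\rho)},
\end{equation*}
which matches the proposed right-hand side; in particular, the two statements of the theorem become equivalent.

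It remains to establish the Verma-module expansion for $\ch L(\lambda)$. By Corollary~\ref{corsuA}(ii), typicality forces $\supp(Re^{\rho}\ch L)\subset W(L)(\lambda+\rho)$, and regularity makes this orbit free, so $Re^{\rho}\ch L=\sum_{w\in W(L)}a_w\,e^{w(\lambda+\rho)}$. My plan to determine the coefficients $a_w$ is via the Jantzen sum formula for $M(\lambda)$: typicality cancels all odd-root contributions pairwise, reducing the filtration of $M(\lambda)$ to the Jantzen filtration of the corresponding Verma module $\breve{M}(\ol{\lambda})$ over $\ol{\fg}^{\lambda}_{\ol{0}}$, in the spirit of the computation that leads to~(\ref{Jantzen}). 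Invoking the Kazhdan-Lusztig-Kashiwara-Tanisaki theorem for symmetrizable Kac-Moody algebras (or, when $\Delta(L)$ has small rank, the argument of Section~\ref{sect8} together with the embeddings provided by Proposition~\ref{propextr}), every submodule of $M(\lambda)$ is generated by its singular vectors, and an inclusion-exclusion over $W_{+}$ (which for extremal regular $\lambda+\rho$ plays the role of the ``lower set'' in the Bruhat order of $W(L)$) produces the required identity. The main technical obstacle is ensuring that the mixed max/min extremal configuration selects exactly the Verma subquotients indexed by $W_{+}$, rather than by the full integral Weyl group $W(L)$; this is secured by the extremal condition itself, since on the $I_{-}$-components the weight $\lambda+\rho$ is antidominant regular and thus no proper singular vectors appear on that side.
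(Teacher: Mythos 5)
Your computation of the right-hand side $R_Le^{\rho_L}\ch F(L)$ is sound: splitting $W(L)$ through the connected components of $\Pi_0(L)$ where $\lambda+\rho$ is maximal versus minimal, applying Weyl--Kac on the integrable pieces and reading off the Verma character on the antidominant pieces, reproduces the paper's implicit derivation of~(\ref{KRWconj}) from the character formula. One small inaccuracy: typicality removes the isotropic odd roots from $\Delta(L)$ but not the non-isotropic ones, so $\ol{\fg}^{\lambda}$ may be a Kac--Moody superalgebra of type $B(0,n)^{(1)}$, say, rather than a Lie algebra; as the introduction remarks, this changes nothing about Weyl--Kac.

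The genuine gap is in pinning down the coefficients $a_w$ in $Re^{\rho}\ch L=\sum_w a_we^{w(\lambda+\rho)}$. You invoke Kashiwara--Tanisaki/Kazhdan--Lusztig for symmetrizable Kac--Moody algebras, but that machinery is established for Lie algebras, not for the Lie superalgebra $\fg$, and in any case it does not assert that every submodule of a Verma module is generated by singular vectors (this is already false for Lie algebras of rank $\geq 2$). Your fallback is the argument of Section~\ref{sect8} together with Prop~\ref{propextr} and the Jantzen sum formula~(\ref{Jantzen}); but that argument works only because all Kazhdan--Lusztig polynomials for the rank-$\leq 2$ Weyl groups occurring there are $0$ or $1$, and the Jantzen sum formula by itself only controls $\sum_i\ch M^i$, not the individual layers. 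Neither tool extends to the generality of the theorem. The paper closes this gap differently, via Theorem~\ref{thmadm23}: first, Proposition~\ref{propKK} together with the extremal hypothesis gives the \emph{sharper} support bound $\supp(Re^{\rho}\ch L)\subset W(\pi)(\lambda+\rho)$ (not merely $W(L)(\lambda+\rho)$, which is all Corollary~\ref{corsuA}(ii) yields); second, and crucially, Corollary~\ref{propreint} (built on the Enright-functor Lemma~\ref{lemFenright}) shows that $Re^{\rho}\ch L$ is a $W(\pi)$-skew-invariant element of $\cR_{W(\pi)}[\cY^{-1}]$. Skew-invariance on a regular free orbit then forces $Re^{\rho}\ch L=c\,\cF_{W(\pi)}(e^{\lambda+\rho})$, and $c=1$ by comparing leading coefficients --- with no Jantzen filtration or Kazhdan--Lusztig input whatsoever. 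Replacing your Jantzen/KL step by this skew-invariance argument would complete the proof; as written, the central coefficient identity remains unestablished.
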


\subsubsection{}
Recall that $\Pi_0(L)$ is the set of simple roots
of $\Delta(L)\cap \Delta^+_{\ol{0}}$.

\begin{cor}{corint2}
 If $L=L(\lambda,\Pi)$ is a typical  module and $F(L)$ is $\Pi_0(L)$-integrable, then
$$Re^{\rho}\ch L=\sum_{w\in W(L)} \sgn(w) e^{w(\lambda+\rho)}$$
and~(\ref{KRWconj}) holds.
\end{cor}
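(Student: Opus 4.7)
The plan is to reduce \Cor{corint2} to \Thm{thmint12} by showing that the hypothesis of $\Pi_0(L)$-integrability of $F(L)$ already forces $\lambda+\rho$ to be a regular extremal weight, and then to check that the defining inequality of the sum in \Thm{thmint12} becomes vacuous in this situation.

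First I would unpack what $\Pi_0(L)$-integrability of the highest weight $\ol{\fg}^{\lambda}$-module $F(L)$ means. Recall from~\S~\ref{ollambda} that $F(L)=\ol{L}(\ol{\lambda})$ where $\ol{\lambda}=\lambda+\rho-\rho_L$, and from~\S~\ref{PiL} that $\Pi_0(L)$ is the set of simple roots of $\Delta(L)\cap\Delta^+_{\ol{0}}$. Local nilpotency of the root vectors $f_{\alpha}$ on the irreducible highest weight module $F(L)$ for each $\alpha\in\Pi_0(L)$ is equivalent to
\[
(\ol{\lambda}+\rho_L,\alpha^{\vee})=(\lambda+\rho,\alpha^{\vee})\in\mathbb{Z}_{>0}\quad\text{for all }\alpha\in\Pi_0(L).
\]

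Next I would verify that $\lambda+\rho$ is then a regular extremal weight in the sense of Section~\ref{sect8}. The strict positivity above means $\lambda+\rho$ is strictly dominant for $\Pi_0(L)$, hence maximal in its $W(\pi)$-orbit for every connected component $\pi$ of $\Pi_0(L)=\Pi_0(\lambda+\rho)$; this is precisely extremality. For regularity, the stabilizer of $\lambda+\rho$ in $W$ is generated by reflections $r_{\alpha}$ with $\alpha\in\Delta_{\ol{0}}$ non-isotropic and $(\lambda+\rho,\alpha)=0$. Any such $\alpha$ satisfies $\langle\lambda+\rho,\alpha^{\vee}\rangle=0\in\mathbb{Z}$, so $\alpha\in D(\lambda+\rho)_e\cup\{2\gamma:\gamma\in D(\lambda+\rho)_o\}\subset\Delta(L)$; thus $r_{\alpha}\in W(L)$, contradicting the strict dominance of $\lambda+\rho$ for $\Pi_0(L)$. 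Typicality of $L$ guarantees that $\Delta(L)$ contains no isotropic roots (since $D(\lambda+\rho)_{iso}=\emptyset$ and reflections by non-isotropic roots preserve the squared norm), so no further stabilizer contributions can arise from $\Delta(L)_{\ol{1}}$ either.

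Applying \Thm{thmint12}, we obtain both~(\ref{KRWconj}) and the restricted character formula
\[
Re^{\rho}\ch L=\sum_{w\in W(L):\,w(\lambda+\rho)\leq\lambda+\rho}\sgn(w)\,e^{w(\lambda+\rho)}.
\]
Since $\Pi_0(L)\subset\Delta_{\ol{0}}^+$, for any $w\in W(L)$ we have $(\lambda+\rho)-w(\lambda+\rho)\in\mathbb{Z}_{\geq 0}\Pi_0(L)\subset\mathbb{Z}_{\geq 0}\Delta_{\ol{0}}^+=Q_{\ol{0}}^+$, so the inequality $w(\lambda+\rho)\leq\lambda+\rho$ holds automatically. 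The sum therefore extends over all of $W(L)$, yielding the stated formula; convergence in $\cR(\Pi)$ is guaranteed by the inclusion $\supp\subset\lambda+\rho-\mathbb{Z}_{\geq 0}\Pi_0(L)$.

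I do not expect any real obstacle here: all the work has been done in \Thm{thmint12}, and the corollary is essentially a bookkeeping exercise that translates $\Pi_0(L)$-integrability of $F(L)$ into the hypotheses of that theorem and then observes that dominance makes the support condition in the sum trivial.
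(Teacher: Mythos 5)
You take the same route as the paper---reduce to \Thm{thmint12} by arguing that $\lambda+\rho$ is regular and extremal, then note that dominance makes the summation condition vacuous. Your extremality check and the final bookkeeping are fine, but your argument for regularity of $\lambda+\rho$ has a gap that the paper's one-line assertion also glosses over.

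You claim that any non-isotropic $\alpha\in\Delta_{\ol{0}}$ with $(\lambda+\rho,\alpha)=0$ lies in $D(\lambda+\rho)_e\cup\{2\gamma:\gamma\in D(\lambda+\rho)_o\}\subset\Delta(L)$, hence $r_\alpha\in W(L)$, contradicting strict dominance. This fails precisely when $\alpha/2$ is a non-isotropic odd root (so when $\fg$ involves $B(m,n)$ or $G(3)$): the definition of $D(\lambda)_e$ explicitly requires $\alpha/2\not\in\Delta_{\ol{1}}$, so such an $\alpha$ is excluded; and $\alpha/2\not\in D(\lambda)_o$ either, since $(\lambda+\rho,(\alpha/2)^{\vee})=2(\lambda+\rho,\alpha^{\vee})=0$ is even, not odd. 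So $r_\alpha$ can fix $\lambda+\rho$ with $\alpha\notin\Delta(L)$. A concrete instance over $\fg=B(1,1)$: take $\Pi=\{\vareps_1-\delta_1,\delta_1\}$ and $\lambda$ with $\lambda+\rho=\vareps_1$. Then $L$ is typical, $\Delta(L)=\{\pm\vareps_1\}$, $F(L)$ is the two-dimensional $\Pi_0(L)$-integrable module, but $r_{2\delta_1}$ fixes $\lambda+\rho$, so $\Stab_W(\lambda+\rho)\not=\{Id\}$. Thus $\lambda+\rho$ is not regular in the sense defined in Section~\ref{sect10}, and the hypothesis of \Thm{thmint12} as literally stated is not verified.

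The corollary is nonetheless true, and the clean way to see it is to bypass \Thm{thmint12} and invoke \Thm{thmadm23} directly with $\pi=\Pi_0(L)$: since $F(L)$ is $\Pi_0(L)$-integrable, the set $\pi$ defined there is all of $\Pi_0(L)$, so the hypothesis of \Thm{thmadm23} on roots in $\Pi_0(L)\setminus\pi$ is vacuous. The only regularity actually used in the proof of \Thm{thmadm23} is $\Stab_{W(\pi)}(\lambda+\rho)=\{Id\}$, which does follow from strict dominance of $\lambda+\rho$ for $\Pi_0(L)$; and since $W(\pi)=W(L)$ when $\pi=\Pi_0(L)$, one recovers exactly the claimed formula. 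So either rephrase your regularity claim as $W(L)$-regularity (which is what strict dominance genuinely gives) and read \Thm{thmint12} accordingly, or route the deduction through \Thm{thmadm23}.
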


The above theorem admits the following generalization.

\subsubsection{}
\begin{thm}{thmadm23}
Let $L=L(\lambda,\Pi)$ is a typical module. Set 
$$\pi:=\{\alpha\in\Pi_0(L)|\ (\lambda+\rho,\alpha^{\vee})>0\}$$ 
(that is $\pi\subset \Pi_0(L)$ is maximal such that $F(L)$ is $\pi$-integrable).
Write $\pi=\pi_f\coprod \pi_{aff}$, where
$\pi_f$ (resp., $\pi_{aff}$) is the union of connected finite (resp., affine) 
type diagrams in $\pi$. 
Assume that for each $\alpha\in\Pi_0(L)\setminus\pi$ one has 
$$(\lambda+\rho,w_0\alpha^{\vee})<0,$$
where $w_0$ is the product of the longest elements in $W({\pi}_f)$.
Then
$$Re^{\rho}\ch L=\sum_{w\in W(\pi)} \sgn(w) e^{w(\lambda+\rho)}.$$ 
\end{thm}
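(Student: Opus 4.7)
The plan is to mimic the general strategy used in earlier KW-type proofs in the paper (notably \S\ref{proofI}), organizing the argument around two structural facts about $Re^\rho\ch L$. Write $\Lambda:=\lambda+\rho$. First, since $L$ is typical, Corollary~\ref{corsuA} gives $\supp(Re^\rho\ch L)\subset W(L)\Lambda$ together with $\supp(Re^\rho\ch L)\subset \Lambda-\mathbb{Z}_{\geq 0}\Pi(L)$ and $\|\mu\|^2=\|\Lambda\|^2$ for every $\mu$ in the support. Secondly, the hypothesis that $F(L)$ is $\pi$-integrable, together with Proposition~\ref{RRchLrel} (or Corollary~\ref{propreint}) applied to each connected component of $\pi$ and then combined, implies that $Re^\rho\ch L$ is a $W(\pi)$-skew-invariant element of $\cR_{W(\pi)}$. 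Because the coefficient of $e^\Lambda$ in $Re^\rho\ch L$ equals $1$, skew-invariance automatically produces the expected sum $\sum_{w\in W(\pi)}\sgn(w)e^{w\Lambda}$ on the orbit $W(\pi)\Lambda$.

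Form the difference $Z:=Re^\rho\ch L-\sum_{w\in W(\pi)}\sgn(w)e^{w\Lambda}$. It is $W(\pi)$-skew-invariant, contains no term at $e^\Lambda$, and has support in $(\Lambda-\mathbb{Z}_{\geq 0}\Pi(L))\setminus\{\Lambda\}$; by \S\ref{compex} the support decomposes into \emph{regular} (free) $W(\pi)$-orbits inside $W(L)\Lambda$. Assuming $Z\neq 0$, choose $\mu=y\Lambda\in\supp Z$ and pick $y\in W(L)\setminus W(\pi)$ to be the minimal-length representative of the coset $W(\pi)y$, so that $(y\Lambda,\alpha^\vee)>0$ for every $\alpha\in\pi$; write $y\Lambda=\Lambda-\nu$ with a nonzero $\nu\in\mathbb{Z}\Pi_0(L)$ satisfying $\nu\in\mathbb{Z}_{\geq 0}\Pi(L)$. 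Deriving a contradiction from these constraints, together with the hypothesis $(\Lambda,w_0\alpha^\vee)<0$ for $\alpha\in\Pi_0(L)\setminus\pi$, reduces the theorem to a single combinatorial claim: in the presence of the hypotheses, the only $y\in W(L)$ satisfying $y\Lambda\leq_{\Pi(L)}\Lambda$ are those in $W(\pi)$.

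The main obstacle, and the step that will take most of the work, is precisely this combinatorial claim. The plan is to consider $w_0y\Lambda$, apply the identity $(y\Lambda,w_0\alpha^\vee)=(w_0y\Lambda,\alpha^\vee)$, and use the hypothesis on $\Lambda$ to transfer sign information from $\Lambda$ to $y\Lambda$; then, peeling off from a reduced expression of $y$ a simple reflection $r_\beta$ with $\beta\in\Pi_0(L)\setminus\pi$ (which must exist because $y$ is a minimal-length coset representative outside $W(\pi)$), induction on $\ell(y)$ produces a strictly negative $\beta$-coefficient in the $\Pi_0(L)$-expansion of $\nu$, contradicting $\nu\in\mathbb{Z}_{\geq 0}\Pi(L)$. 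The delicate point is that the hypothesis involves $w_0\in W(\pi_f)$ and so interacts nontrivially only with the simple roots of $\Pi_0(L)$ adjacent to $\pi_f$: the bookkeeping must simultaneously respect the finite/affine decomposition $\pi=\pi_f\sqcup\pi_{aff}$ and the translation lattice of $W(\pi_{aff})$, and track the $\Pi(L)$-coefficients of $\nu$ across the boundary between $\pi_f$ and $\Pi_0(L)\setminus\pi$. Once this is carried out, $\supp Z=\emptyset$ follows and the KW-type formula for $Re^\rho\ch L$ is established.
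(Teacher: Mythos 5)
Your overall architecture (establish $W(\pi)$-skew-invariance via Proposition~\ref{RRchLrel}/Corollary~\ref{propreint}, observe $\supp(Re^\rho\ch L)\subset W(L)(\lambda+\rho)\cap(\lambda+\rho-\mathbb{Z}_{\geq 0}\Pi(L))$ via Corollary~\ref{corsuA}, then analyze the difference $Z$) is sensible and matches a template used elsewhere in the paper, but the step you yourself flag as ``the main obstacle'' is precisely where the paper goes a different way, and your sketch of that step does not work as written. The paper does not form $Z$ at all. It proves the sharper inclusion $\supp(Re^{\rho}\ch L)\subset W(\pi)(\lambda+\rho)$ directly, by inducting along the \emph{chain} $\mu_r<\dots<\mu_0=\lambda+\rho$ supplied by Proposition~\ref{propKK}: since $L$ is typical, each step is a reflection $\mu_k=r_\gamma\mu_{k-1}$ with $\gamma\in\Delta^+$ non-isotropic and $(\mu_{k-1},\gamma^{\vee})>0$; assuming $\mu_{k-1}=w(\lambda+\rho)$ with $w\in W(\pi)$, the identity $(w(\lambda+\rho),\gamma^{\vee})=(w_0(\lambda+\rho),(w_0w^{-1}\gamma)^{\vee})$, the hypothesis, and the $\pi_f$-dominance of $\lambda+\rho$ (which forces $(w_0(\lambda+\rho),\alpha^{\vee})<0$ for every positive non-isotropic $\alpha$ outside $\Delta(\pi_{aff})$) force $\gamma\in\Delta(\pi)$, so $\mu_k\in W(\pi)(\lambda+\rho)$. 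Once the support lies in $W(\pi)(\lambda+\rho)$, skew-invariance plus the coefficient-one normalization immediately give the formula, with no coset analysis needed.

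The gap in your version: your claim ``$y\in W(L)$ and $\Lambda-y\Lambda\in\mathbb{Z}_{\geq0}\Pi(L)$ imply $y\in W(\pi)$'' is likely true under the hypothesis, but the ``peel off $r_\beta$ and induct on $\ell(y)$'' plan does not establish it. If $y=r_{\beta_1}\cdots r_{\beta_\ell}$ is reduced with $\beta_1\in\Pi_0(L)\setminus\pi$ and you set $y':=r_{\beta_1}y$, then $\Lambda-y'\Lambda=(\Lambda-y\Lambda)-(y'\Lambda,\beta_1^{\vee})\beta_1$, and there is no reason for this to remain in $\mathbb{Z}_{\geq0}\Pi(L)$ --- the intermediate weight $y'\Lambda$ is typically incomparable with $\Lambda$ (you can already see this in the $A_2$ case with $\pi=\{\alpha_1\}$: $r_2r_1\Lambda$ is incomparable with $\Lambda$, while $r_1r_2r_1\Lambda\leq\Lambda$ fails). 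So the inductive hypothesis cannot be applied to $y'$, and the ``strictly negative $\beta$-coefficient'' you want does not fall out of a single peel. What makes the paper's route work is that Proposition~\ref{propKK} hands you intermediate weights that are \emph{monotone} ($\mu_k<\mu_{k-1}$), so the relevant pairing $(\mu_{k-1},\gamma^{\vee})>0$ is part of the data, instead of something you have to wrestle out of a reduced expression. If you want to keep the $Z$-style framing, the correct fix is to track the monotone chain from Proposition~\ref{propKK} rather than a reduced word of a single $y$; that is essentially what the paper does, and it bypasses the finite/affine bookkeeping you were worried about because the inequality $(w_0(\lambda+\rho),\alpha^{\vee})<0$ for $\alpha\in\Pi_0(L)\setminus\pi_{aff}$ already covers both $\pi_f$ and the complement in one stroke.
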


\subsubsection{Remark}
We do not expect~\Thm{thmadm23} to hold in general.
Namely, the coefficients of the character formula may involve
non-trivial Kazhdan-Lusztig polynomials.

\subsection{Proof of Theorems~\ref{thmint12},\ref{thmadm23} and~\Cor{corint2}}
First note that~\Thm{thmint12} is a particular case of~\Thm{thmadm23}:
if $\lambda$ is extremal, then $\pi$ is a union of connected components of $\Pi_0(L)$
and each root $\alpha\in\Pi_0(L)\setminus\pi$ lies in a connected component
 $\tilde{\pi}\subset\Pi_0(L)$ such that $\lambda$ is a minimal element
 in its $W(\tilde{\pi})$-orbit. One has 
 $(\lambda+\rho,w\alpha^{\vee})=(\lambda+\rho,\alpha^{\vee})$ for each $w\in W(\pi)$;
 if $\lambda$ is regular, then $(\lambda+\rho,\alpha^{\vee})<0$.
 Thus a regular typical extremal weight satisfies the assumptions
 of~\Thm{thmadm23}.

 Now let us deduce~\Cor{corint2}  from~\Thm{thmint12}. 
 First, consider the case when $L'=L(\lambda',\Pi)$ is a typical  module and $\fg_{\pm\alpha}$ acts
nilpotently on $L'$ for some $\alpha\in\Pi_0$. We claim that
$(\lambda'+\rho, \alpha^{\vee})\in\mathbb{Z}_{>0}$. Indeed,
choosing $\Pi'$ which contains
$\alpha$ or $\alpha/2$ we obtain $L'=L(\lambda'+\rho-\rho',\Pi')$ and
$(\lambda'+\rho-\rho',\alpha^{\vee})\geq 0$; since $(\rho',\alpha^{\vee})>0$, we get
$(\lambda'+\rho,\alpha^{\vee})>0$. Since  $\fg_{\pm\alpha}$ acts locally
nilpotently on $L'$, $\alpha\in \Delta(L)$, so $(\lambda+\rho, \alpha^{\vee})\in\mathbb{Z}_{>0}$,
as required.

Next let  $L=L(\lambda,\Pi)$ be a typical  module such that $F(L)$ is $\Pi_0(L)$-integrable.
By above, $(\lambda+\rho,\alpha^{\vee})=(\ol{\lambda}+\rho_L,\alpha^{\vee})\in\mathbb{Z}_{>0}$
for each $\alpha\in\Pi_0(L)$. Hence $\lambda+\rho$ is regular and extremal.
Hence~\Cor{corint2} follows from~\Thm{thmint12}.

\subsubsection{Proof of~\Thm{thmadm23}}
We claim that 
$$\supp (Re^{\rho}\ch L)\subset W(\pi)(\lambda+\rho).$$

Indeed, by~\Prop{propKK} it is enough to verify that
for each $w\in W(\pi)$ if
 $(w.\lambda+\rho,\gamma^{\vee})>0$ for some non-isotropic even positive root $\gamma$,
 then $\gamma\in \Delta(\pi)$. One has
$(w.\lambda+\rho,\gamma^{\vee})=(w_0\lambda+\rho, (w_0w^{-1}\gamma)^{\vee})$.
Since $\gamma\not\in\Delta(\pi)$,  the root $w_0w^{-1}\gamma$ lies in 
$\Delta^+\setminus\Delta(\pi)$. 
By the assumptions, $(w_0.\lambda+\rho,\alpha^{\vee})<0$ for $\alpha\in\Pi_0(L)\setminus\pi$
and for $\alpha\in\pi_f$; therefore this inequality holds for
$\alpha\in \Pi_0(L)\setminus\pi_{aff}$ and thus for a positive non-isotropic even root $\alpha$
which does not lie in $\Delta(\pi_{aff})$. Hence $(w.\lambda+\rho,\gamma^{\vee})<0$,
as required.

By Prop. 3.12 in~\cite{K2},  
$\lambda+\rho$ is a unique $\pi$-maximal element in its $W(\pi)$-orbit (by definition 
of $\pi$).

We claim that $F(L)$ is $\pi$-integrable. Indeed, 
take $\alpha\in\pi$ and let $\Pi'$ be a subset of simple roots for $\Delta(L)$ 
such that $\alpha$ or $\alpha/2$ lies in $\Pi'$.
Since $F(L)$ is typical, $F(L)=L(\lambda',\Pi')$, where $\lambda+\rho=\lambda'+\rho'$,
so $(\lambda'+\rho',\alpha^{\vee})>0$. Since $\alpha\in\Delta(L)$, we conclude that
$\fg_{\lambda,\pm\alpha}$ acts locally nilpotently on $F(L)$. Hence 
$F(L)$ is $\pi$-integrable. 

Using~\Prop{propreint}, we conclude that 
$Re^{\rho}\ch L$ is $W(\pi)$-skew-invariant.
Therefore $Re^{\rho}\ch L\in\cR_{W(\pi)}$ and so
$$Re^{\rho}\ch L=c\cF_{W(L)}  e^{(\lambda+\rho)}.$$
Since the coefficient of $e^{\lambda+\rho}$ in $Re^{\rho}\ch L$ is $1$, 
$c=1$, as required.
\qed


\subsubsection{}
\begin{thm}{thmadm1}
Let $L$ be a non-critical module such that $(F(L),\Pi(L))$ satisfies the conditions 
of Section~\ref{sectKW} 
and $S\subset\Pi$. Then~(\ref{KRWconj}) holds.
\end{thm}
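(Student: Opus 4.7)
The plan is to reduce (\ref{KRWconj}) to an identity for $Re^\rho\ch L$ in $\cR(\Pi)$ by applying the KW-formula of Section~\ref{sectKW} to $F(L)$, and then to prove that identity by mimicking the argument of \S\ref{proofI}. Since $(F(L),\Pi(L))$ satisfies the hypotheses of Section~\ref{sectKW}, formula~(\ref{KWcharfor}) yields
$$R_L e^{\rho_L}\ch F(L)=\cF_{W(\pi)}\Bigl(\frac{e^{\ol{\lambda}+\rho_L}}{\prod_{\beta\in S}(1+e^{-\beta})}\Bigr),$$
where $\pi=\{\alpha\in\Pi_0(L):(\alpha,\alpha)>0\}$. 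Using the definition $\ol{\lambda}+\rho_L=\lambda+\rho$ from \S\ref{ollambda}, the target (\ref{KRWconj}) becomes
$$Re^\rho\ch L=\cF_{W(\pi)}\Bigl(\frac{e^{\lambda+\rho}}{\prod_{\beta\in S}(1+e^{-\beta})}\Bigr)\qquad(\star)$$
in $\cR(\Pi)$, which makes sense because $S\subset\Pi$ by assumption.

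To establish $(\star)$, I would adapt the argument of \S\ref{proofI}, with $\Pi(L)$ playing the role that $\Pi$ played there. First, \Lem{lemexpA} ensures the right-hand side of $(\star)$ lies in $\cR(\Pi)$. Second, $\pi$-integrability of $F(L)$ means that $L$ is $\pi$-relatively integrable, so by \Prop{RRchLrel}, together with the manifest $W(\pi)$-invariance of $R_{\ol{1},L}e^{\rho_\pi-\rho_L}$, the element $Re^\rho\ch L$ is $W(\pi)$-skew-invariant in $\cR_{W(\pi)}[\cY^{-1}]$. Let $Z$ denote the difference of the two sides of $(\star)$. By \Cor{corsuA}(i) applied to $L$, one has $\supp(Re^\rho\ch L)\subset\lambda+\rho-\mathbb{Z}_{\geq 0}\Pi(L)$ with $\|\lambda+\rho-\nu\|^2=\|\lambda+\rho\|^2$ on this support, and the analogue of~(\ref{suppYw}) combined with \Lem{lemap11}(i)--(ii) applied inside $\Delta(L)$ gives the same support inclusion for the $\cF_{W(\pi)}$-term together with the uniqueness relation~(\ref{uki}). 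On the slice $\lambda+\rho-\mathbb{Z}S$, both sides reduce to $e^{\lambda+\rho}\prod_{\beta\in S}(1+e^{-\beta})^{-1}$, using that $L_{\lambda-\nu}=0$ for $0\neq\nu\in\mathbb{Z}S$ (a consequence of $S\subset\Delta(L)$ and $(\ol{\lambda},S)=0$, which forces $F(L)$ to be trivial in the $\mathbb{Z}S$-direction, hence so is $L$).

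If $Z\neq 0$, pick a maximal weight $\lambda+\rho-\mu$ in $\supp Z$. The argument of \S\ref{maxinorbit} gives $(\lambda-\mu,\alpha)\geq 0$ for every $\alpha\in\pi$, and then the auxiliary map $p:\mathbb{Z}_{\geq 0}\Pi(L)\to \tfrac12\mathbb{Z}_{\geq 0}\pi$ of \S\ref{proof11}, coupled with $2(\lambda+\rho,\mu)=\|\mu\|^2$ and the non-criticality of $L$, forces $\mu\in\mathbb{Z}S$, contradicting the matching of coefficients on $\lambda+\rho-\mathbb{Z}S$ recorded above. The main obstacle will be verifying that each of these ingredients transfers faithfully from the ambient $\fg$ of Section~\ref{sectKW} to the integral Kac-Moody superalgebra $\ol{\fg}^{\lambda}$. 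In particular, one must check that $L$ is non-critical with respect to the primitive imaginary root $\delta'\in\Delta(L)$ (which reduces to $\delta'=k\delta$ with $k\neq 0$, as noted in \S\ref{ollambda}), and that the $W(\pi)$-invariance/skew-invariance bookkeeping of Section~\ref{sectKW} and \Lem{lemap11}(ii) continues to function when $W(\pi)$ is the Weyl group of a sum of finite and affine components of $\Delta(L)_{\ol{0}}$ rather than of $\Delta_{\ol{0}}$ itself.
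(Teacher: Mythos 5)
Your proposal is correct, and it follows the paper's proof of Theorem~\ref{thmadm1} very closely through the support bounds, the $W(\pi)$-skew-invariance furnished by \Prop{RRchLrel}, and the $p$-map argument that forces a maximal $\mu\in\supp Z$ into $\mathbb{Z}S$. Where you genuinely diverge is in the final contradiction. The paper (in \S\ref{proofadm1}) exploits \Lem{lemRRchL}: since $Z'=R_{\ol{1},L}e^{-\rho_L}Z$ is independent of the choice of $\Pi$, it passes to $\Pi'=r_\beta\Pi$ for a $\beta\in S$ with $x_\beta\neq 0$ and observes that $-\beta\in\Pi'(L)$, so $\mu\notin\mathbb{Z}_{\geq 0}\Pi'(L)$ --- a quick contradiction that needs no coefficient computation. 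You instead replay the slice-matching of \S\ref{ukiki}: on $\lambda+\rho-\mathbb{Z}_{\geq 0}S$, both $Re^{\rho}\ch L$ and the $\cF_{W(\pi)}$-sum reduce to $e^{\lambda+\rho}\prod_{\beta\in S}(1+e^{-\beta})^{-1}$, so $\mu\in\mathbb{Z}_{\geq 0}S\setminus\{0\}$ is impossible. This works (using $S\subset\Pi$, so $\mathbb{Z}_{\geq 0}S\cap\Delta^+=S$ and the convolution localizes), but one phrase should be tightened: the vanishing $L_{\lambda-\nu}=0$ for $0\neq\nu\in\mathbb{Z}_{\geq 0}S$ does not follow by transporting triviality from $F(L)$ back to $L$ --- that direction of implication is exactly what the conjecture is trying to establish. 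Rather, it follows \emph{directly} on $L$ because $\beta\in S\subset\Pi$ is isotropic with $(\lambda,\beta)=0$, so $f_\beta v_\lambda$ is a singular vector in $L(\lambda)$, hence zero; mutual orthogonality of $S$ then kills the whole $\mathbb{Z}_{\geq 0}S\setminus\{0\}$ slice. Also, the appeal to \Lem{lemexpA} for convergence of the $\cF_{W(\pi)}$-sum is superfluous (and, as stated, \Lem{lemexpA} concerns $W'$ generated by simple reflections of $\Delta_{\ol 0}$, not of $\Delta(L)_{\ol 0}$): by (\ref{KWcharfor}) the sum already equals $R_Le^{\rho_L}\ch F(L)$ as an element of $\cR(\Pi(L))\hookrightarrow\cR(\Pi)$, so membership in $\cR(\Pi)$ is automatic. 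The paper's route via \Lem{lemRRchL} is a bit cleaner because it sidesteps the slice computation, but both are valid.
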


\subsubsection{}
\begin{thm}{thmadmvac}
Let $L$ be a non-critical vacuum module such that $\Delta(L)$ is affine and 
$F(L)$ is integrable. Then~(\ref{KRWconj}) holds.
\end{thm}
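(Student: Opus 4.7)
The plan is to show that both sides of~(\ref{KRWconj}) coincide with the same explicit KW-type expression. For those $L$ with $\ol{\fg}^{\lambda}$ having $h^{\vee}\neq 0$ or equal to $A(n,n)^{(1)}$, the statement is covered by Theorem~\ref{thmadm1}: under our assumptions $(F(L),\Pi(L))$ fits the framework of Section~\ref{sectKW} (integrability of $F(L)$ yields $\pi$-integrability for the appropriate $\pi$, and $F(L)$ is maximally atypical since every isotropic root of $\Delta(L)$ is orthogonal to $\ol{\lambda}+\rho_L=\lambda+\rho$), so~(\ref{KRWconj}) follows at once. It remains to treat the $h^{\vee}=0$ affine cases $D(n+1,n)^{(r)}$, $A(2n-1,2n-1)^{(2)}$, $A(2n,2n)^{(4)}$ and $D(2,1,a)^{(1)}$, which I would handle by matching with Section~\ref{sectKW0}.

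For these cases I would first use Proposition~\ref{propsimch} together with the $\Pi$-independence of~(\ref{KRWconj}) (\S~\ref{KRWconjecture}) to choose $\Pi$ freely: the linkage relation $\sim$ preserves the hypotheses by Corollary~\ref{corFLsim} and Lemma~\ref{WbetainDeltaL}. I would pick $\Pi$ compatible with a finite part $\dot{\Pi}\subset\Pi$ with $(\lambda,\dot{\Pi})=0$, arranged so that $\dot{\Pi}(L):=\dot{\Pi}\cap\Pi(L)$ is a finite part of $\Pi(L)$ and a maximal isotropic $S\subset\dot{\Pi}(L)$ satisfies the hypotheses of Section~\ref{sectKW0} for $\ol{\fg}^{\lambda}$. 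With compatible Weyl vectors $\rho=\rho_{\Pi}$ and $\rho_L=\rho_{L,\Pi(L)}$ chosen as in~\S~\ref{choicerho}, we have $(\rho-\rho_L,\Pi(L))=0$, whence $(\ol{\lambda},\dot{\Pi}(L))=(\lambda+\rho-\rho_L,\dot{\Pi}(L))=0$; combined with integrability and non-criticality (\S~\ref{ollambda}), this makes $F(L)$ a non-critical integrable vacuum $\ol{\fg}^{\lambda}$-module, and formula~(\ref{vacform}) of Section~\ref{sectKW0} yields an explicit expression
$$R_L e^{\rho_L}\ch F(L)=j^{-1}\sum_{w\in W'}\sgn(w)\, w\Bigl(\frac{e^{\lambda+\rho}}{\prod_{\beta\in S}(1+e^{-\beta})}\Bigr),$$
where $W'\subset W(L)$ and $j$ are as in~(\ref{vacform}).

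The remaining step is to prove the same formula for $Re^{\rho}\ch L$ by re-running the proof of~(\ref{vacform}) with $L$ over $\fg$ in place of $F(L)$ over $\ol{\fg}^{\lambda}$. The necessary ingredients all transfer: (i) by Proposition~\ref{propreint}, applied to the $\pi$-relatively integrable module $L$, $Re^{\rho}\ch L$ is a $W(\pi)$-skew-invariant element of $\cR_{W(\pi)}[\cY^{-1}]$; (ii) by Corollary~\ref{corsuA}, $\supp(Re^{\rho}\ch L)\subset\lambda+\rho-\mathbb{Z}_{\geq 0}\Pi$ and every weight in this support has Casimir norm $\|\lambda+\rho\|^2$; (iii) the coefficient of $e^{\lambda+\rho}$ in $Re^{\rho}\ch L$ is $1$, and $L_{\lambda-\nu}=0$ for $0\neq\nu\in\mathbb{Z}S$ since $S\subset\dot{\Pi}$ annihilates the highest weight vector. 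Setting $Z:=Re^{\rho}\ch L-(\text{KW-sum})$ and supposing $Z\neq 0$, picking a maximal element $\lambda+\rho-\mu$ in $\supp Z$, the Casimir identity together with $(\lambda+\rho,S)=0$, the $W(\pi)$-skew-invariance, and the argument of~\S~\ref{proof11} (adapted case-by-case as in~\S~\ref{A2n-12n-1}--\S~\ref{A2n4}) force $\mu\in\mathbb{Z}S$; then (iii) produces a contradiction, so $Z=0$.

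The principal obstacle will be this transfer of the proof of~(\ref{vacform}) from $F(L)$ over $\ol{\fg}^{\lambda}$ to $L$ over the strictly larger $\fg$: the support $\supp(Re^{\rho}\ch L)$ can contain weights involving positive roots of $\fg\setminus\Delta(L)$, so the reduction to $\mu\in\mathbb{Z}S$ requires the Casimir analysis and the $W(\pi)$-skew-invariance to interact correctly in each of the four $h^{\vee}=0$ families. In particular, the pole-order analysis provided by Lemmas~\ref{lemexpA} and~\ref{lemXX'}, used in~\S~\ref{A2n4} to control isotropic simple roots, must be adapted, and the ``branching'' isotropic node conditions of~\S~\ref{choiceS} require attention when $\ol{\fg}^{\lambda}=D(n+1,n)^{(1)}$; the case $D(2,1,a)^{(1)}$ must be handled separately as in~\S~\ref{D21a}, where the non-integrality of $a$ is accommodated by the construction of $\Delta(L)$ outlined in Section~\ref{sectDeltaL}.
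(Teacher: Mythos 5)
Your reduction for the case $h^{\vee}(\ol{\fg}^{\lambda})\neq 0$ or $\Delta(L)=A(n,n)^{(1)}$ is exactly the paper's: normalize $\Pi(L)$ by odd reflections inside $\dot{\Delta}$ so that Section~\ref{sectKW} applies to $F(L)$, then invoke Theorem~\ref{thmadm1}. For the $h^{\vee}=0$ families, however, the paper does not prove that both sides of~(\ref{KRWconj}) separately equal a KW-sum; it again normalizes $\Pi(L)$ (by odd reflections inside $\dot{\Delta}$) to one of the standard sets of simple roots used in~\S\ref{D21a}, \S\ref{A2n-12n-1}, \S\ref{A2n4}, and then simply cites the already-established Theorem~\ref{thmadm4}, which is the ``transferred'' version of the argument you propose to redo. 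So your route is defensible but is essentially a re-derivation of Theorem~\ref{thmadm4} from scratch, and two points in your sketch need repair. First, one cannot literally ``re-run the proof of~(\ref{vacform}) with $L$ over $\fg$'': that proof uses genuine $\pi\cup\pi''$-integrability of the module over its own affine superalgebra, while $L$ is only $\pi\cup\pi''$-\emph{relatively} integrable as a $\fg$-module. The correct skew-invariance input is Proposition~\ref{RRchLrel} (used in~\S\ref{proofs1}), not Proposition~\ref{propreint}, which you cite: the latter carries the extra hypothesis that $\pi$ lie in a single connected component of $\Delta_{\ol{0}}$ and does not by itself give the $W(\pi\cup\pi'')$-skew-invariance required here. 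Second, the endgame ``force $\mu\in\mathbb{Z}S$, then (iii) gives a contradiction'' is the $h^{\vee}\neq 0$ argument of~\S\ref{proof11}; in~\S\ref{A2n-12n-1}, \S\ref{A2n4}, \S\ref{D21a} the maximal displacement is driven to $0$ by different means (splitting $\mu=\mu'+\mu''$ across $\mathbb{Q}\pi\oplus\mathbb{Q}\pi''$, the pole-order analysis via Lemma~\ref{lemXX'}, etc.), not through $\mu\in\mathbb{Z}S$. Finally, note the advantage of the paper's framing $Z=Re^{\rho}\ch L-R_Le^{\rho_L}\ch F(L)$ from~\S\ref{proofs1}: Corollary~\ref{corsuA2} localizes $\supp Z$ to $\ol{\lambda}+\rho_L-\mathbb{Z}_{\geq 0}\Pi(L)$ from the outset, so all the arithmetic stays inside $\Delta(L)$; in your framing the same localization holds, but must be observed separately for the KW-sum.
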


\subsubsection{}
\begin{cor}{corPi0(L)con}
If $\Delta(L)_{\ol{0}}$ is connected (i.e., $\Delta(L)=A(0,m), B(0,n), C(n)$,
their untwisted affinizations, or $A(0,2n-1)^{(2)},
C(n+1)^{(2)}, A(0,2n)^{(4)}$) and $F(L)$ is $\Pi_0(L)$-integrable, 
then~(\ref{KRWconj}) holds.
\end{cor}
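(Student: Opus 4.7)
The plan is to split into two cases depending on whether $L$ is typical. If $L$ is typical, the hypothesis that $F(L)$ is $\Pi_0(L)$-integrable combined with~\Cor{corint2} immediately yields~(\ref{KRWconj}) together with the explicit Weyl--Kac-type character formula, so nothing further is needed. One may thus assume $L$ is atypical. Since $B(0,n)=\osp(1,2n)$ and its affinization $B(0,n)^{(1)}$ have no isotropic roots, atypicality forces $\Delta(L)$ to be one of $A(0,m)$, $C(n)$, $A(0,m)^{(1)}$, $C(n)^{(1)}$, $A(0,2n-1)^{(2)}$, $C(n+1)^{(2)}$, or $A(0,2n)^{(4)}$, each of defect one. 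Consequently a single simple isotropic root $\beta'$ of some set of simple roots of $\fg$ with $(\lambda'+\rho',\beta')=0$, for the corresponding description $L=L(\lambda',\Pi')$, suffices to meet the KW-condition of~\S\ref{sectKWass}.

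To produce such $(\Pi',\beta')$ I would follow the strategy of the last paragraph of Remark~\ref{remint}. For each algebra on the list every odd isotropic root of $\Delta(L)$ lies in some set of simple roots of $\fg$; this is known in the untwisted cases (see also~\S\ref{WbetainDeltaL}) and is verified directly from the Dynkin diagrams~(\ref{isoaffine}) and the analogous diagrams of the remaining twisted cases. Given any atypical isotropic $\beta\in\Delta(L)$ and a chain $r_{\beta_1},\ldots,r_{\beta_s}$ of odd reflections sending $\Pi$ to some set of simple roots containing $\beta$, I track the successive highest weights $\lambda^j$ via~(\ref{lambdalambda'}): either the smallest $j$ with $(\lambda^{j-1}+\rho^{j-1},\beta_j)=0$ gives $(\Pi',\beta'):=(\Pi^{j-1},\beta_j)$, or no such $j$ exists and then $\lambda^{s+1}+\rho^{s+1}=\lambda+\rho$, so $\beta$ itself lies in $\Pi':=\Pi^{s+1}$ and serves as $\beta'$. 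Since $\beta'\in\Delta(L)\cap\Pi'$ it is automatically simple for $\Pi'(L)$, and $S:=\{\beta'\}$ is a maximal isotropic subset of both $\Pi'$ and $\Pi'(L)$.

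With $\Pi'$ and $S$ in hand I would verify the hypotheses of~\Thm{thmadm1}: the non-critical module $F(L)$ is $\pi$-integrable for the distinguished $\pi\subset\Pi_0(L)$ of~\S\ref{defintegr} (automatic from $\Pi_0(L)$-integrability); $(\bar\lambda,\beta')=0$ follows from $\bar\lambda+\rho_L=\lambda'+\rho'$ (see~\S\ref{ollambda}) together with $(\rho_L,\beta')=\tfrac12(\beta',\beta')=0$; and $\#S=\mathrm{defect}(\ol{\fg}^\lambda)=1$. When $\Delta(L)$ is finite $\ol{\fg}^\lambda$ is basic; when it is affine, a direct computation from the form data of Section~\ref{sect3} shows that the dual Coxeter number of $\ol{\fg}^\lambda$ is nonzero for each algebra on the list. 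Hence~\Thm{thmadm1} applies in every atypical subcase and delivers~(\ref{KRWconj}). The principal obstacle I anticipate is the diagrammatic step in the second paragraph---explicitly confirming for each of the twisted superalgebras $A(0,2n-1)^{(2)}$, $C(n+1)^{(2)}$, $A(0,2n)^{(4)}$ that every atypical isotropic root can be placed in a simple root system of $\fg$---since the rest of the argument merely assembles previously established results.
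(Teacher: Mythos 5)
Your typical case is handled exactly as in the paper, via~\Cor{corint2}. In the atypical case, however, there is a genuine gap in the step where you produce the pair $(\Pi',\beta')$.

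You assert that ``for each algebra on the list every odd isotropic root of $\Delta(L)$ lies in some set of simple roots of $\fg$,'' and you cite~\S\ref{WbetainDeltaL} and the diagrams~(\ref{isoaffine}) in support. But this conflates the root system $\Delta(L)$ of $\ol{\fg}^{\lambda}$ with the root system $\Delta$ of the ambient $\fg$: the hypothesis of the corollary constrains $\Delta(L)$, not $\Delta$. Section~\ref{WbetainDeltaL} says every odd root is essentially simple only when $\fg$ \emph{itself} is of type $A(m,n)$, $C(n)$, $A(m,n)^{(1)}$, or $C(n)^{(1)}$; for any other $\fg$ (e.g.\ $\fg=\osp(2m,2n)^{(1)}$ with $\Delta(L)$ of type $C(k)^{(1)}$), a generic isotropic root $\beta\in\Delta(L)$ with $(\lambda+\rho,\beta)=0$ need not occur in any set of simple roots $\Pi'$ of $\fg$, so the odd-reflection chain to a $\Pi''\ni\beta$ that your argument relies on (borrowed from the last paragraph of Remark~\ref{remint}) may not exist. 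The diagrams~(\ref{isoaffine}) are sets of simple roots of certain \emph{ambient} affine superalgebras consisting of isotropic roots; they say nothing about whether a root of a subsystem $\Delta(L)\subset\Delta$ is essentially simple for $\Delta$.

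The paper's proof bridges exactly this gap with the linkage $\sim$ (Enright functors), which your proposal never uses: one picks a connected component $\pi$ of $\Pi_0$ with $\Delta(\pi)\cap\Delta(L)=\emptyset$ (available precisely because $\Delta(L)_{\ol 0}$ is connected, so it meets at most one component of $\Delta_{\ol 0}$), then invokes~\Lem{lemapp1} to find $w\in W(\pi)$ for which $w\beta$ is essentially simple, and passes to $L'=T(L)\sim L$ via the product of Enright functors corresponding to a reduced word for $w$. Since $(w(\lambda+\rho),w\beta)=(\lambda+\rho,\beta)=0$ and $w\beta$ lies in a set of simple roots, the hypotheses of~\Thm{thmadm1} are met for $L'$, and~\Prop{propsimch} transfers the conclusion back to $L$. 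Without this maneuver your chain of odd reflections is not licensed, so the argument does not go through for general $\fg$.

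The remainder of your reduction (checking $(\ol\lambda,\beta')=0$, that the defect is one, and that the dual Coxeter number is nonzero for the listed types) is sound once the correct $\Pi'$ is in hand, but the central diagrammatic claim you flag at the end as ``the principal obstacle'' is in fact false as stated, not merely tedious to verify.
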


In particular, by the results of Section~\ref{sectKW}, we obtain
the character formula for all admissible modules over  $A(0,n)^{(1)},
C(n)^{(1)}$.

\subsubsection{}
\begin{thm}{dimFL=1}
Let $L$ be a non-critical $\fg$-module such that  for each connected component $\Delta^i$
of $\Delta(L)$ one has either $(F(L),\Delta^i)=0$ or $F(L)$ is $\Delta^i_{\ol{0}}$-integrable and
$\Delta^i$-typical (i.e., $(F(L),\beta)\not=0$ for each $\beta\in\Delta^i$).
Then~(\ref{KRWconj}) holds.
\end{thm}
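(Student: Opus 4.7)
The plan is to exploit the orthogonal decomposition $\Delta(L)=\coprod_i \Delta^i$ and prove~(\ref{KRWconj}) by combining Theorem~\ref{thmadmvac} (applied to the vacuum components $X=\{i:(F(L),\Delta^i)=0\}$) with Corollary~\ref{corint2} (applied to the typical integrable components $Y$). First I would note that $F(L)$ is in fact $\Pi_0(L)$-integrable: on $Y$-components this is assumed, and on $X$-components each $F(L)_i$ is one-dimensional so all root vectors act as zero. By Proposition~\ref{RRchLrel}, applied to each connected component $\pi\subset\Pi_0(L)$ admitting a Weyl vector, the element $R_{\ol 1,L}Re^{\rho+\rho_\pi-\rho_L}\ch L$ is a $W(\pi)$-skew-invariant element of $\cR_{W(\pi)}$.

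Using Proposition~\ref{propsimch} and Lemma~\ref{WbetainDeltaL}, I would then adjust $\Pi$ so that the simple roots of each $\Delta^i$ lie in $\Pi(L)\subset\Pi$. Since the Kac--Moody subsuperalgebras $\fg'_i\subset\ol{\fg}^{\lambda}$ corresponding to distinct components commute and are mutually orthogonal, the irreducible highest weight module $F(L)$ factors (over the common Cartan) so that $R_Le^{\rho_L}\ch F(L)=\prod_i R_{L,i}e^{\rho_{L,i}}\ch F(L)_i$. For $i\in X$ this factor equals $R_{L,i}e^{(\lambda+\rho)|_i}$ (using $\ol{\lambda}+\rho_L=\lambda+\rho$), and for $i\in Y$ the Weyl--Kac character formula for typical integrable highest weight modules over the Kac--Moody superalgebra $\fg'_i$ (the analogue of Corollary~\ref{corint2} inside that subalgebra) yields $\sum_{w\in W(\Delta^i)}\sgn(w)e^{w(\lambda+\rho)|_i}$.

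I would then verify the identity in $\cR(\Pi)$ by a support analysis. Corollary~\ref{corsuA2} locates $\supp(Re^\rho\ch L)$: every element is reached from $\lambda+\rho$ by a chain whose steps are confined to a single $\Delta^i_{ess}$, so the support decomposition respects the orthogonal split. Combined with the skew-invariance from the first step, the Casimir norm equation $\|\lambda+\rho-\mu\|^2=\|\lambda+\rho\|^2$ for $\mu\in\supp$, and the fact that the coefficient of $e^{\lambda+\rho}$ on each side equals $1$, this reduces the identity to a maximality argument: any putative extra support of the difference $Re^\rho\ch L-R_Le^{\rho_L}\ch F(L)$ must simultaneously be maximal in some $W(\pi)$-orbit and lie on the norm-sphere through $\lambda+\rho$, which forces it to vanish on each component separately.

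I expect the main obstacle to be the affine $X$-components, where $R_{L,i}e^{\rho_{L,i}+\ol{\lambda}|_i}$ lives in $\cY$ rather than being a finite sum, and where Theorem~\ref{thmadmvac} itself is needed in its translation-group form~(\ref{KWcharvac}). Ensuring that the tensor-product factorization of $\ch F(L)$, the component-wise skew-invariances, and the chain decomposition of Corollary~\ref{corsuA2} all assemble consistently in $\cR(\Pi)$---respecting the convergence conventions of~\S~\ref{infprod}---is the delicate technical point. An additional subtlety arises when an $X$-component is of type $D(2,1,a)$ with $a\notin\mathbb{Q}$ or when the relevant Weyl vector on $\pi$ is not available; in those cases one must apply Proposition~\ref{RRchLrel} only to suitable sub-components and patch together the resulting partial skew-invariances, much as in~\S~\ref{proofI}.
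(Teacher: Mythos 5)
Your overall framework is the same as the paper's — split $\Delta(L)=\coprod_i\Delta^i$, use Corollary~\ref{corsuA2} for the support decomposition, use relative integrability (via Proposition~\ref{RRchLrel}, Corollary~\ref{propreint}) for skew-invariance, and run a Casimir-plus-maximality argument against the difference $Z$. But the last step, where you write that the maximality and norm conditions ``force it to vanish on each component separately,'' is exactly the point where the proof has its teeth, and it does \emph{not} follow from what you have set up. On a trivial (that is, $(F(L),\Delta^j)=0$) component the constraints you list --- $\mu_j\in\mathbb{Z}_{\geq 0}(\Delta^j\cap\Delta^+)$, $\|\ol{\lambda}+\rho_L-\mu_j\|^2=\|\ol{\lambda}+\rho_L\|^2$, and $(\ol{\lambda}-\mu_j,\alpha^{\vee})\geq 0$ for $\alpha\in\Pi_0(L)$ --- do \emph{not} pin down $\mu_j$. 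They leave free the $\delta$-direction (when $\Delta^j$ is affine) and, worse, the $\xi_j$-direction when $\Delta^j$ is of type $A(m,n)$, $C(m)$, $A(m,n)^{(1)}$, or $C(m)^{(1)}$, where $\mathbb{Q}\Delta^j\supsetneq\mathbb{Q}\Delta^j_{\ol 0}$. Controlling those directions is where the paper spends most of its effort in the proof of this theorem: it establishes the constraint (labelled \eqref{olmabdas} in the text) via the (Fin)/(Aff) statements from Theorem~4.3 of \cite{K2}; it disposes of $\mathbb{Z}\delta$ by non-criticality; it handles $A(m,n)^{(1)}$ with the separate arithmetic Lemma~\ref{lemAmnxi} and the cyclic dot--cross argument in \S~\ref{sequences}; and for finite $A(m,n)$ or $C(m)$ it needs the linkage $\sim$ together with Lemma~\ref{lemapp1} to move the isotropic root $\vareps_1-\delta_1$ into a set of simple roots and exploit the independence of $\mu_j$ from $\Pi$. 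None of this is in your sketch, and your stated expectation that the main obstacle is the $\cY$-convergence in the affine factors, or the availability of $\rho_\pi$, misses where the difficulty actually lies.

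Two smaller points. First, your opening gambit of ``combining Theorem~\ref{thmadmvac} with Corollary~\ref{corint2}'' component-wise does not literally apply: both statements are about a $\fg$-module $L$ whose entire integral root system $\Delta(L)$ is of the prescribed type, whereas here $L$ is a single $\fg$-module whose $\Delta(L)$ has mixed components, and (\ref{KRWconj}) is a single identity in $\cR(\Pi)$ that does not factor into component-wise identities of the $\fg$-character. The paper does not invoke either result in this proof; it re-runs the support argument from scratch (citing Theorem~\ref{thmint12} only for the orbit-maximality fact on the typical components). Second, your reading of Corollary~\ref{corsuA2} is slightly off: it gives a decomposition $\mu=\sum_i\mu^i$ such that each $\mu^i$ admits a chain lying in $\Delta^i_{ess}$, not that the original chain is confined to one component; this distinction matters when the decomposition is non-unique on affine components.

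So: the right skeleton, but the hard half of the proof --- establishing $\mu_j=0$ on the trivial components, including the $A/C$-type exceptions --- is missing, and it does not fall out of the Casimir/maximality bookkeeping you describe.
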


\subsubsection{}
\begin{cor}{cordimFL=1}
Let $L$ be a non-critical $\fg$-module such that  $\dim F(L)=1$.
Then~(\ref{KRWconj}) holds.
\end{cor}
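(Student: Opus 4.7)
The plan is to deduce the corollary directly from Theorem~\ref{dimFL=1}: the condition $\dim F(L)=1$ forces the first alternative $(F(L),\Delta^i)=0$ to hold on every connected component of $\Delta(L)$, so that theorem applies. First I would fix a set of simple roots $\Pi$ and write $L=L(\lambda,\Pi)$, so that $F(L)=\ol{L}(\ol{\lambda},\Pi(L))$ with $\ol{\lambda}=\lambda+\rho-\rho_{L}$ as in~\S~\ref{ollambda}.

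The main step is to observe that one-dimensionality of $F(L)$ forces $\langle\ol{\lambda},\alpha^{\vee}\rangle=0$ for every simple root $\alpha\in\Pi(L)$. For non-isotropic $\alpha$ this is standard: the $\fsl(2)$- or $\osp(1,2)$-triple generated by $e_{\alpha},f_{\alpha},h_{\alpha}$ admits a one-dimensional highest-weight module only when the $h_{\alpha}$-eigenvalue vanishes. For an isotropic simple root $\alpha$, the elements $e_{\alpha},f_{\alpha}$ together with $h_{\alpha}:=\{e_{\alpha},f_{\alpha}\}$ span a copy of $\fsl(1,1)$; on a one-dimensional module with highest-weight vector $v$, both $e_{\alpha}v$ and $f_{\alpha}v$ lie in weight subspaces different from $\ol{\lambda}$ and hence vanish, so $h_{\alpha}v=\{e_{\alpha},f_{\alpha}\}v=0$, giving $(\ol{\lambda},\alpha)=0$. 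Invoking the convention $\alpha^{\vee}=\alpha$ for isotropic $\alpha$ recalled in~\S~\ref{normal}, both cases yield $\langle\ol{\lambda},\alpha^{\vee}\rangle=0$.

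Since $\Pi(L)$ generates $\Delta(L)$ as a $\mathbb{Z}$-module, this gives $(\ol{\lambda},\Delta(L))=0$. As the unique weight of the one-dimensional module $F(L)$ is $\ol{\lambda}$, it follows that $(F(L),\Delta^i)=0$ for every connected component $\Delta^i$ of $\Delta(L)$. This is precisely the first alternative in the hypothesis of Theorem~\ref{dimFL=1}, holding on every component, so that theorem immediately yields~(\ref{KRWconj}). There is no genuine obstacle beyond this reduction: all the content lies in Theorem~\ref{dimFL=1} itself, of which the corollary is the extremal ``trivial action on every linked component'' specialization.
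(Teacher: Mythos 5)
Your proof is correct, and since the paper states the corollary without proof immediately after Theorem~\ref{dimFL=1}, your reduction is exactly the intended reasoning. The key observation — that $\dim F(L)=1$ forces $(\ol{\lambda},\Pi(L))=0$ (via the $\fsl_2$, $\osp(1,2)$, or $\fsl(1,1)$ triple attached to each simple root of $\Pi(L)$, depending on its type), hence $(\ol{\lambda},\Delta(L))=0$, hence $(F(L),\Delta^i)=0$ on every component — is precisely the degenerate case of the dichotomy in Theorem~\ref{dimFL=1} where the first alternative holds everywhere, and your argument cleanly establishes it.
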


\subsection{The case when $F(L)$ is a vacuum module}
Consider the case when $F(L)$ is a vacuum module (i.e. $F(L)=L(\ol{\lambda},\Pi(L))$, where
$(\ol{\lambda},\dot{\Pi}(L))=0$ for some finite part $\dot{\Pi}(L)$ of $\Pi(L)$), which is
 integrable  (i.e. is $\pi$-integrable for 
a connected component $\pi$ of $\Pi_0(L)$, see~\S~\ref{defintegr}).
We denote by $\alpha_0$ the affine root in $\Pi(L)$, i.e.
$$\Pi(L)=\dot{\Pi}(L)\cup\{\alpha_0\}.$$
Normalize the bilinear form in such a way that $||\alpha||^2\in\mathbb{Q}_{>0}$ for 
$\alpha\in\pi$.

The following theorems improve the result of~\Thm{thmadm1} in the case
when $F(L)$ is  an integrable vacuum module with $\Delta(L)\not=A(n,n)^{(1)}$.

\subsubsection{}
\begin{thm}{thmadm2}
Let $F(L)$ be an integrable vacuum module such that the dual Coxeter number of $\Delta(L)$
is non-zero.

Assume that $\Pi(L)$ is such that
 $||\alpha||^2\geq 0$ for each $\alpha\in\Pi(L)$;
if $\Pi(L)=C(m)$ or $\Pi(L)=A(m,n)^{(1)}, m>n$, assume, 
in addition, that the affine root in $\Pi(L)$ is not isotropic.
Then~(\ref{KRWconj}) holds.
\end{thm}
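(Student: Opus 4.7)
The plan is to imitate the strategy of Section~\ref{sectKW}, but applied to the module $F(L)$ over the Kac--Moody superalgebra $\ol{\fg}^\lambda$, and then transfer the resulting identity across the map $F$. The hypotheses were designed so that $(F(L), \Pi(L), S)$ satisfies the assumptions of \S\ref{sectKWass} for the superalgebra with root system $\Delta(L)$: $F(L)$ is a non-critical $\pi(L)$-integrable module of maximal atypicality (where $\pi(L) = \{\alpha\in\Pi_0(L)\mid ||\alpha||^2 > 0\}$), and we can choose $S \subset \Pi(L)\cap\dot{\Pi}(L)$ a maximal set of mutually orthogonal isotropic roots. Hence Section~\ref{sectKW} applies to $F(L)$, giving
\begin{equation*}
R_L e^{\rho_L} \ch F(L) = \sum_{w \in W(\pi(L))} \sgn(w)\, w\!\left(\frac{e^{\ol{\lambda}+\rho_L}}{\prod_{\beta\in S}(1+e^{-\beta})}\right),
\end{equation*}
which, using $\ol{\lambda}+\rho_L = \lambda+\rho$, we can rewrite with $\lambda+\rho$ in the numerator.

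First I would set $Z := Re^\rho \ch L - \sum_{w\in W(\pi(L))} \sgn(w)\, w\!\bigl(e^{\lambda+\rho}/\prod_{\beta\in S}(1+e^{-\beta})\bigr)$, viewed as an element of $\cR(\Pi)$ via \Lem{lemexpA} and Lemma~\ref{lemRRchL}. The goal is to show $Z=0$. By Proposition~\ref{RRchLrel} the element $R_{\ol{1},L}Re^{\rho+\rho_{\pi(L)}-\rho_L}\ch L$ is $W(\pi(L))$-skew-invariant in $\cR_{W(\pi(L))}$, and the right-hand side of the displayed identity is $W(\pi(L))$-skew-invariant by construction; multiplying $Z$ by the $W(\pi(L))$-invariant factor $R_{\ol{1},L}e^{\rho_{\pi(L)}-\rho_L}$ thus yields a $W(\pi(L))$-skew-invariant element of $\cR_{W(\pi(L))}$.

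Next, by Corollary~\ref{corsuA}(i) applied to $L$, the support of $Re^\rho\ch L$ lies in $\lambda+\rho-\mathbb{Z}_{\geq 0}(\Delta_{ess}(L)\cap \Delta^+) \subseteq \lambda+\rho-\mathbb{Z}_{\geq 0}\Pi(L)$, while the support of the KW-sum is contained in $\lambda+\rho-\mathbb{Z}_{\geq 0}\Pi(L)$ by the argument of~\S\ref{proofI} using~(\ref{suppYw}) together with the fact that $r_\alpha\rho_L \in \rho_L - \Delta(L)^+$ for $\alpha \in \pi(L)$. The coefficient of $e^{\lambda+\rho}$ in both expressions is $1$, as $(\lambda+\rho-\mathbb{Z}_{\geq 0}S) \cap \supp Y_w = \emptyset$ for $w \neq \Id$ in $W(\pi(L))$ (via \Lem{lemap11}(ii) applied inside $\Delta(L)$). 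Suppose $Z \neq 0$ and let $\lambda+\rho-\mu$ be maximal in $\supp Z$ with respect to the $\Pi(L)$-order. The skew-invariance argument of~\S\ref{maxinorbit} then forces $(\lambda-\mu,\alpha) \geq 0$ for all $\alpha\in\pi(L)$, and the Casimir constraint $||\lambda+\rho-\mu||^2 = ||\lambda+\rho||^2$ combined with the calculation of~\S\ref{proof11} (applied with the projection $p$ defined using $\Pi(L)$ and $S$) forces $\mu \in \mathbb{Z}S\setminus\{0\}$; note non-criticality of $L$ forces non-criticality of $F(L)$, which is needed to kill the imaginary $s\delta$ contribution.

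Finally, since $\mu \in \mathbb{Z}S$, the argument of~\S\ref{ukiki} reduces the computation of the coefficient of $e^{\lambda+\rho-\mu}$ on both sides to the term $Y_{\Id} = e^{\lambda+\rho}/\prod_{\beta\in S}(1+e^{-\beta})$; this requires knowing that $L_{\lambda-\nu}=0$ for each nonzero $\nu\in\mathbb{Z}S$, which follows from the analogous property for $F(L)$ (whose highest-weight vector is annihilated by $\fg_{\ol{\fg}^\lambda,\beta}$ for $\beta \in S$ since $(\lambda+\rho,S)=0$) together with the definition $F(L) = \ol{L}(\ol{\lambda})$. This yields the contradiction $Z = 0$, hence~(\ref{KRWconj}). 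The main obstacle I anticipate is verifying carefully that the proof of~\S\ref{proof11} goes through when the Weyl group acting is $W(\pi(L)) \subset W$ rather than the full $W$ of Section~\ref{sectKW}, in particular that~\Lem{lemap11} and the non-criticality argument function correctly inside $\Delta(L)$; the exclusion of $C(m)$ and $A(m,n)^{(1)}$ with $m>n$ in the isotropic-affine-root case is precisely what ensures that the finite part $\dot{\Pi}(L)$ contains $S$ and that the analogue of the key inequality $\mathbb{Q}S$ being maximal isotropic in $\mathbb{Q}\Delta(L)$ holds.
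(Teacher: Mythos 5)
Your proposal correctly reproduces the framework of Section~\ref{proofs1}: multiply $Z$ by $R_{\ol{1},L}e^{\rho_{\pi(L)}-\rho_L}$ to get a $W(\pi(L))$-skew-invariant element (via Proposition~\ref{RRchLrel}), deduce the support estimate, and use the Casimir constraint together with the projection argument of \S\ref{proof11} to obtain $\mu\in\mathbb{Z}_{\geq 0}S\setminus\{0\}$. Up to this point you are on the right track and essentially matching the paper. The gap is in the final step.

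You close the argument by appealing to \S\ref{ukiki}, asserting that $L_{\lambda-\nu}=0$ for nonzero $\nu\in\mathbb{Z}S$ and that this ``follows from the analogous property for $F(L)$.'' This is not valid. The vanishing argument in \S\ref{ukiki} worked precisely because in Section~\ref{sectKW} one has $S\subset\Pi$, i.e.\ the elements of $S$ are \emph{simple} roots of $\fg$: only then does $\mathbb{Z}_{\geq 0}S$ form a face of $\mathbb{Z}_{\geq 0}\Pi$, so that the coefficient of $e^{\lambda+\rho-\mu}$ in $Re^{\rho}\ch L$ reduces to the contribution of $\prod_{\beta\in S}(1+e^{-\beta})^{-1}$. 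In Theorem~\ref{thmadm2} no such assumption $S\subset\Pi$ is made (that extra hypothesis belongs to Theorem~\ref{thmadm1}, not to \ref{thmadm2}), so the decomposition of $\mu$ into pieces coming from $R$ and from $e^{\rho}\ch L$ may be nontrivial, and the weight spaces $L_{\lambda-\nu}$ need not vanish. Nor can you transport the vanishing of $F(L)_{\ol{\lambda}-\nu}$ back to $L$: the map $F$ is only a correspondence of highest weights between modules over different algebras, and the relation between $\ch L$ and $\ch F(L)$ is exactly the identity you are trying to prove, so invoking it here is circular.

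What the paper does instead is exploit the \emph{vacuum} hypothesis on $F(L)$. Since $(\ol\lambda_\Pi,\dot{\Pi}(L))=0$ and $F(L)$ is $\dot{\Pi}_0(L)$-integrable, inequality~(\ref{C331}) yields $(\nu,\alpha^{\vee})\leq 0$ for all $\alpha\in\dot{\Pi}_0(L)$ (the condition~(\ref{c333})). When $\mathbb{Q}\dot\Delta(L)_{\ol 0}=\mathbb{Q}\dot\Delta(L)$ (i.e.\ $\dot\Delta(L)\neq A(m,n),C(n)$), the statement (Fin) from \cite{K2} Thm.~4.3 then forces $\nu\in-\mathbb{Q}_{\geq 0}\dot\Delta(L)_{\ol 0}^+$, which together with $\nu\in\mathbb{Z}_{\geq 0}\Delta^+$ gives $\nu=0$. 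The residual cases $\Pi(L)=C(m)$ or $A(m,n)^{(1)},\ m>n$ are handled by a direct computation with $S=\{\vareps_i-\delta_i\}$ inside each connected component of $Iso$, using the extra hypothesis that the affine root is non-isotropic to ensure $Iso\subset\dot{\Pi}(L)$; your proposal mentions the exclusion of these cases but does not carry out that computation. To fix your proof you would need to replace the \S\ref{ukiki} step by this vacuum-condition argument.
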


\subsubsection{}
\begin{thm}{thmadm4}
Let $F(L)$ be a non-critical integrable vacuum module.

If $\Delta(L)=D(2,1,a)^{(1)}$ and $a\not=-\frac{1}{2},-2$, then (\ref{KRWconj}) holds
for any $\Pi(L)$; for $\Delta(L)=D(2,1,-\frac{1}{2})^{(1)}=D(2,1,-2)^{(1)}$, (\ref{KRWconj}) holds
if $||\alpha_0||^2\geq 0$.

If $\Delta(L)=A(2n-1,2n-1)^{(2)}, D(n+1,n)^{(1)}$ and $\Pi(L)$
is as in~\S~\ref{A2n-12n-1}, then~(\ref{KRWconj}) holds.

If $\Delta(L)=A(2n,2n)^{(4)}, D(n+1,n)^{(2)}$ with $\Pi(L)$
is as in~\S~\ref{A2n4} and the level of $F(L)$ is not $1$, then~(\ref{KRWconj}) holds.

If $\Delta(L)=A(2n,2n)^{(4)}, D(n+1,n)^{(2)}$ with $\Pi(L)$
is as in~\S~\ref{A2n4} and the level of $F(L)$ is $1$, then~(\ref{KRWconj}) holds if
$\dot{\Pi}(L)\subset \Pi$. 
\end{thm}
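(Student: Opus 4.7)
The plan is to adapt the strategy of Section~\ref{sectKW0} from the module $F(L)$ to the module $L$ itself. Since $F(L)$ is an integrable vacuum module over $\ol{\fg}^{\lambda}$, and $\Delta(L)$ is one of the five listed types with vanishing dual Coxeter number, the KW-formula~(\ref{vacform}) for $F(L)$ is already available from Sections~\ref{D21a},~\ref{A2n-12n-1},~\ref{A2n4}. Using $\lambda+\rho=\ol\lambda+\rho_L$ and~(\ref{KWcharvac}), this converts~(\ref{KRWconj}) into the assertion
\[
Re^{\rho}\ch L \;=\; j_{\lambda}^{-1}\sum_{w\in W(\pi\cup\dot{\Pi}(L)_0)}\sgn(w)\, w\Bigl(\frac{e^{\lambda+\rho}}{\prod_{\beta\in S}(1+e^{-\beta})}\Bigr).
\]
So it suffices to prove this KW-type identity directly for $L$ by mimicking the arguments of Section~\ref{sectKW0}, with $\Pi(L)$ in place of $\Pi$ and $\pi$-relative integrability in place of $\pi$-integrability.

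First I would set $Z$ equal to the difference of the two sides, assume $Z\neq 0$, and pick a maximal element $\lambda+\rho-\mu\in\supp Z$. By Proposition~\ref{RRchLrel}, $Re^{\rho}\ch L$ is a $W(\pi)$-skew-invariant element of $\cR_{W(\pi)}[\cY^{-1}]$, while by Lemma~\ref{lemexpA} the explicit sum lies in $\cR(\Pi)$; thus $Z\in\cR(\Pi)$. Corollary~\ref{corsuA}(i) forces $\supp Re^{\rho}\ch L\subset\lambda+\rho-\mathbb{Z}_{\geq 0}(\Delta_{ess}(L)\cap\Delta^+)\subset\lambda+\rho-\mathbb{Z}_{\geq 0}\Pi(L)$, and the prescribed choice of $\Pi(L)$ in each case guarantees that $\lambda+\rho$ is maximal in its $W(\pi\cup\dot{\Pi}(L)_0)$-orbit. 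Applying the Casimir as in~\S\ref{maxinorbit} then yields the twin constraints $2(\lambda+\rho,\mu)=(\mu,\mu)$ and $(\lambda-\mu,\alpha^{\vee})\geq 0$ for all $\alpha\in\pi\cup\dot{\Pi}(L)_0$, precisely as in~(\ref{maxfinn}).

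The argument then splits case by case, each time importing the case analysis already carried out in Section~\ref{sectKW0}. For $\Delta(L)=D(2,1,a)^{(1)}$ one mimics~\S\ref{D21a}, writing $\mu=j\delta-\sum_i e_i\varepsilon_i$ and exploiting the opposite sign behaviour of the form on $\pi$ and on its complement $\pi''$ to force $\mu=0$; the exclusion of $a=-\tfrac12,-2$ (unless $\|\alpha_0\|^2\geq 0$) is exactly what guarantees that $\pi$ is the full set of simple roots on which the form is positive, so that the decomposition $\mu=\mu'+\mu''$ along $\mathbb{Q}\pi\oplus\mathbb{Q}\pi''$ has the desired sign-definite square norms. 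For $\Delta(L)=A(2n-1,2n-1)^{(2)}$ or $D(n+1,n)^{(1)}$ with $\Pi(L)$ as in~\S\ref{A2n-12n-1}, the decomposition~(\ref{mu'mu''})--(\ref{QpiPi}) carries over, since the prescribed $\Pi(L)$ provides the injection $\alpha\mapsto\beta(\alpha)$ from $\pi$ into $\Pi(L)$ needed to establish~(\ref{QpiPi}), leading to $\mu\in\mathbb{Q}\delta$ and then $\mu=0$ by non-criticality. For $\Delta(L)=A(2n,2n)^{(4)}$ or $D(n+1,n)^{(2)}$ at level different from one, the inequalities on $(e_i,d_i)$ in~\S\ref{A2n4} already force $j=0$, whence $\mu=0$.

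The main obstacle is the level~$1$ subcase of $A(2n,2n)^{(4)}$ and $D(n+1,n)^{(2)}$, where the Casimir together with the positivity of the $e_i,d_i$ still permits $\mu=j\delta-\sum_{i=1}^{j}\varepsilon_i$ with $1\leq j\leq n$. Here one imports the pole-order argument at the end of~\S\ref{A2n4}: choose $\alpha=\delta_j-\varepsilon_j$, invoke Lemma~\ref{lemexpA}(c) to ensure $Z$ has a pole of order at most one at $\alpha$, and then use Lemma~\ref{lemXX'} together with the Casimir-induced finiteness of $\supp Z\cap\{\lambda+\rho-\mu+\mathbb{Z}\alpha\}$ to compare $Z(\Pi)$ with $Z(r_{\alpha}\Pi)$; this would force $\mu+\alpha\in\mathbb{Z}_{\geq 0}(r_{\alpha}\Pi)$, which is seen to fail for any nonzero $\mu$ of the above form. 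The hypothesis $\dot{\Pi}(L)\subset\Pi$ is precisely what ensures $\alpha\in\Pi$, so that the odd reflection $r_{\alpha}$ stays inside $\Pi$ and the equivalence of $Z(\Pi)$ and $Z(r_{\alpha}\Pi)$ needed by Lemma~\ref{lemXX'} is available; without it the pole analysis breaks down, which is why the theorem must impose this hypothesis in the exceptional level~$1$ case.
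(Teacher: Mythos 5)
Your overall strategy matches the paper exactly: convert (\ref{KRWconj}) via the vacuum KW-formula for $F(L)$ into a KW-type identity for $L$, derive the constraints of (\ref{maxfinn}) on the maximal $\mu\in\supp Z$, and delegate to the case analyses of \S\ref{D21a}, \S\ref{A2n-12n-1}, \S\ref{A2n4}. The treatment of the level-$1$ pole argument and the explanation of the hypothesis $\dot{\Pi}(L)\subset\Pi$ are both correct and match the paper's reasoning.

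However, there is a genuine gap in your treatment of the $D(2,1,a)^{(1)}$ case, and your explanation of the $a=-\tfrac12,-2$ exclusion is incorrect. You claim the exclusion ``guarantees that $\pi$ is the full set of simple roots on which the form is positive, so that the decomposition $\mu=\mu'+\mu''$ has the desired sign-definite square norms.'' But for every rational $a\in(-1,0)$, $\pi$ is \emph{always} the set of simple roots with positive square norm (this is how $\pi$ is defined), and the sign-definiteness of the decomposition along $\mathbb{Q}\pi\oplus\mathbb{Q}\pi''$ does not depend on $a$. Moreover, the argument of \S\ref{D21a} fixes $\Pi$ to be the set of simple roots consisting entirely of isotropic roots; simply ``mimicking'' it does not address the other three sets of simple roots $\Pi_2,\Pi_3,\Pi_4$ that the statement ``for any $\Pi(L)$'' requires you to handle. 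The actual mechanism is different: for $a=-\tfrac12$ one has $\|2\vareps_2\|^2=\|2\vareps_3\|^2$, and the Casimir constraint $\|\mu\|^2=2(\lambda+\rho,\mu)$ then admits the extra solution $\mu=j(\delta-2\vareps_2-2\vareps_3)$ with $k=2j$. This $\mu$ does lie in $\mathbb{Z}_{\geq 0}\Pi_2$ (because $\Pi_2$ imposes $e_1+e_2\leq 2j$ and $e_1+e_3\leq 2j$ but \emph{not} $e_2+e_3\leq 2j$), whereas it does not lie in $\mathbb{Z}_{\geq 0}\Pi_s$ for $s=1,3,4$. So the proof by contradiction works for $\Pi_1,\Pi_3,\Pi_4$ (exactly the sets with $\|\alpha_0\|^2\geq 0$) but genuinely fails for $\Pi_2$ when $a=-\tfrac12$. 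A careful case-by-case check of the inequalities $\mu\in\mathbb{Z}_{\geq 0}\Pi_s$ together with the Casimir constraint for each $s\in\{1,2,3,4\}$ is required, and is precisely what the paper carries out; without it you cannot detect the exceptional value of $a$ or justify the $\|\alpha_0\|^2\geq 0$ hypothesis.
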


\subsubsection{}
For  each $\alpha\in\dot{\pi}$ we write 
$\alpha=\sum_{\beta\in\Pi(L)} x_{\alpha,\beta} \beta$, and let
$\supp(\alpha):=\{\beta\in\Pi|\ x_{\alpha,\beta}\not=0\}$.
Consider the following conditions on a set of simple roots $\Pi(L)$:

(A)  $||\alpha_0||^2\geq 0$;

(B) for each $\alpha\in\pi$ there exists $\beta\in\supp(\alpha)$
such that $\beta\not\in\supp(\alpha')$ for each $\alpha'\in\pi$; 
this $\beta$ is denoted by $b(\alpha)$;

(C) $\rho_L\in X_1-X_2$, where
$$X_1:=\{\mu\in\fh^*|
(\mu,\alpha)\in \mathbb{Q}_{\geq 0} \ \text{ for all }\alpha\in \Pi(L)\},\ \ 
X_2:=\sum_{\alpha\in\dot{\Pi}(L)_0} \mathbb{Q}_{\geq 0}\alpha^{\vee}.$$

It is easy to see that (B) holds for each $\Pi$ if $\Delta\not=F(4)^{(1)}$, 
see~\S~\ref{appendix2}.
Note that (C)  holds if $\rho_L=0$ and if $||\alpha||^2\geq 0$ for each $\alpha\in\Pi(L)$
(in this case $\rho_L\in X_1$). We give examples of sets of simple roots $\Pi(L)$
satisfying (A),(B), (C) in~\S~\ref{appendix2}.

\subsubsection{}
\begin{thm}{thmadm3}
Let $F(L)$ be a non-critical integrable vacuum module such that $\Delta(L)\not=A(m,n)^{(1)}, C(n)^{(1)}$.
If $\Pi(L)$ satisfies the conditions (A)-(C), then~(\ref{KRWconj}) holds.
\end{thm}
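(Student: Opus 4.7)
The plan is to prove the theorem by showing that
$$Z := Re^{\rho}\ch L - R_L e^{\rho_L}\ch F(L)$$
vanishes, following the pattern of Theorems \ref{thmadm2} and \ref{thmadm4}. Let $\pi$ be the connected component of $\Pi_0(L)$ with respect to which $F(L)$ is integrable. By Proposition \ref{RRchLrel}, applied separately to $L$ (as a $\fg$-module) and to $F(L)$ (as an $\ol{\fg}^{\lambda}$-module), each term of $Z$, after multiplying by $R_{\ol{1},L}e^{\rho_{\pi}-\rho_L}$, becomes a $W(\pi)$-skew-invariant element of $\cR_{W(\pi)}[\cY^{-1}]$; hence so is $R_{\ol{1},L}e^{\rho_{\pi}-\rho_L}Z$. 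Condition (A) is needed here to ensure that $\rho_\pi$ exists for the full component $\pi$.

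First I would fix $\Pi$ compatible with $\Pi(L)$ and check $\supp Z \subset \lambda+\rho - \mathbb{Z}_{\geq 0}\Pi$: the containment for $Re^\rho\ch L$ is Corollary \ref{corsuA}(i), and for $R_Le^{\rho_L}\ch F(L)$ it follows from the KW-formula for integrable vacuum $\ol{\fg}^{\lambda}$-modules (Theorem \ref{thmadm4} and formulas (\ref{vacform}), (\ref{KWcharvac})), together with the identity $\ol{\lambda}+\rho_L = \lambda+\rho$. Assuming $Z \neq 0$, pick $\lambda+\rho-\mu$ with $\mu\in\mathbb{Z}_{\geq 0}\Pi$ maximal in $\supp Z$. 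The Casimir element yields $2(\lambda+\rho,\mu) = (\mu,\mu)$, and maximality plus skew-invariance yields, as in \S\ref{maxinorbit}, the dominance $(\lambda+\rho-\mu+\rho_\pi-\rho_L, \alpha^{\vee}) \geq 0$ for $\alpha \in \pi$.

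The heart of the proof is to show $\mu = 0$. Writing $\rho_L = \rho_1 - \rho_2$ via condition (C) with $\rho_1 \in X_1$ and $\rho_2 \in X_2$, the term $\rho_\pi - \rho_L$ contributes a controlled shift in the $\alpha^{\vee}$-direction for $\alpha \in \pi$, upgrading the inequality to $(\lambda+\rho-\mu, \alpha^{\vee}) \geq 0$ (here I use that $\rho_2\in X_2$ is supported on coroots from $\dot{\Pi}(L)_0$, giving a nonnegative contribution). Condition (B) furnishes an injection $b: \pi \hookrightarrow \Pi(L)$ and hence a linear projection $p: \mathbb{Z}_{\geq 0}\Pi \to \frac{1}{2}\mathbb{Z}_{\geq 0}\pi$ sending $b(\alpha) \mapsto \alpha$ and all other simple roots to $0$, exactly as in \S\ref{proof11}. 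The Casimir relation combined with the upgraded dominance gives $2(\lambda+\rho-\mu, p(\mu)) = -||p(\mu)||^2 \leq 0$; since $(-,-)$ is positive semidefinite on $\mathbb{Q}\pi$ (by (A) together with the fact that $\pi$ is a connected component of $\Pi_0(L)$), this forces $p(\mu) \in \mathbb{Q}\delta$, and noncriticality then gives $p(\mu)=0$. A parallel argument on the negative-semidefinite complement of $\mathbb{Q}\pi$ in $\mathbb{Q}\Pi_0(L)$, combined with the iterative use of \Lem{lemap11}(i) when a maximal isotropic subset $S$ is available, reduces $\mu$ to $\mathbb{Z}S$; then the coefficient-wise comparison of \S\ref{ukiki} and \Lem{lemap11}(ii) contradict the maximality of $\mu$ in $\supp Z$.

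The main obstacle will be the fine bookkeeping around condition (C): unlike in Theorem \ref{thmadm2}, where $||\alpha||^2 \geq 0$ for all $\alpha \in \Pi(L)$ implies $\rho_L \in X_1$ directly, here the $\rho_2 \in X_2$ shift must be shown not to destroy the dominance inequality used in the Casimir--projection argument, and in particular the orthogonality properties of $\rho_2$ against the relevant part of $\pi$ must be checked. For the exceptional affine algebras ($D(2,1,a)^{(1)}, F(4)^{(1)}, G(3)^{(1)}$) and twisted cases, the explicit construction of the injection $b$ from (B) and the decomposition from (C) would require case-by-case verification against the examples in \S\ref{appendix2}; the exclusion of $A(m,n)^{(1)}$ and $C(n)^{(1)}$ reflects precisely those situations where $\Delta_{\ol{0}}(L)$ is connected, already handled in Corollary \ref{corPi0(L)con}.
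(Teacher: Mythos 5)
Your setup (skew-invariance via \Prop{RRchLrel}, support containment, and the derivation of a maximal $\mu$ satisfying $2(\ol{\lambda}+\rho_L,\mu)=||\mu||^2$ together with the dominance conditions (C331) and (c333)) matches the preparatory work in \S\ref{proofs1}--\S\ref{FinAff}. However, your mechanism for concluding $\mu=0$ is not what the paper does and does not go through.

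The central problem is that you are trying to import the projection argument of \S\ref{proof11} into a setting where its prerequisites are absent. That argument hinges on the existence of a maximal isotropic set $S\subset\Pi$ with $(\lambda+\rho,S)=(S,S)=0$, so that the map $p$ kills an isotropic kernel orthogonal to $\lambda+\rho$, giving $2(\lambda+\rho-\mu,p(\mu))=-||p(\mu)||^2$. In Theorem~\ref{thmadm3} there is no distinguished $S$: $\Pi(L)$ is an arbitrary set of simple roots satisfying (A)--(C), and the ``other simple roots'' that your proposed map $p$ (sending $b(\alpha)\mapsto\alpha$ and everything else to $0$) kills need not be isotropic nor orthogonal to $\lambda+\rho$. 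Consequently the Casimir identity you write down is not justified. Your subsequent steps---``parallel argument on the negative-semidefinite complement \ldots reduces $\mu$ to $\mathbb{Z}S$'' and then invoking \S\ref{ukiki} and \Lem{lemap11}(ii)---all presuppose that $S$-structure and do not apply here. You also misallocate the roles of (B) and (C): condition (C) is not used to ``upgrade'' the dominance inequality (which already holds as (C331)), and condition (B) is not used to build a projection.

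The paper's actual argument is a bilinear-form estimate, not a projection. Write $\rho_L=\xi_1-\xi_2$ via (C); then $(\xi_1,\nu)\geq 0$ since $\nu\in\mathbb{Z}_{\geq 0}\Pi(L)$ and $(\xi_2,\nu)\leq 0$ by (c333), so $(\rho_L,\nu)\geq 0$, and the Casimir identity gives $2(\ol{\lambda},\nu)\leq||\nu||^2$. Next decompose $\nu=j(s\delta)-\nu'-\nu''$ with $\nu'\in\mathbb{Q}\dot{\pi}$ and $\nu''\in\mathbb{Q}\pi''$; from (c333) and (Fin) one gets $\nu'\in\mathbb{Q}_{\geq 0}\dot{\pi}$ and $\nu''\in\mathbb{Q}_{\geq 0}\pi''$. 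Condition (B) enters precisely at the point of showing $j(s\delta)-\nu'\in\mathbb{Q}_{\geq 0}\pi$: the injection $\alpha\mapsto b(\alpha)$ lets one read off the sign of each coefficient $y_\alpha$ from the coefficient of $b(\alpha)$ in $\nu\in\mathbb{Z}_{\geq 0}\Pi(L)$. One then bounds $||\nu||^2\leq||j(s\delta)-\nu'||^2\leq jk$ using $||\nu''||^2\leq 0$, $(\nu,\alpha)\leq 0$ for $\alpha\in\dot\pi$, and $(\nu,\alpha_0^\sharp)\leq k$ from (C331). Since also $2(\ol\lambda,\nu)=2jk$ (because $\ol\lambda$ vanishes on $\dot\Pi(L)$), this forces $j=0$ and thence $\nu=0$. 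Your approach, besides resting on an unavailable $S$, never produces the crucial inequality $||\nu||^2\leq jk$ that closes the argument.
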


\subsection{Proofs: Step 1}\label{proofs1}
Set
$$Z:=Re^{\rho}\ch L-R_Le^{\rho_L}\ch F(L)=0,\ \ \ Z':=R_{\ol{1}, L} e^{-\rho_{L}}Z.$$

We have to prove that $Z'=0$. Suppose that $Z'\not=0$.

Denote by $\lambda_{\Pi}$ (resp., $\ol{\lambda}_{\Pi}$)
 the highest weight of $L$ (resp., of $\ol{L}$); recall that 
 $\lambda_{\Pi}+\rho_{\Pi}=\ol{\lambda}_{\Pi}+\rho_{L,\Pi}$. 
By~\Cor{corsuA2}, $\supp(Z)\subset \ol{\lambda}_{\Pi}+\rho_{L,\Pi}-
\mathbb{Z}_{\geq 0}\Delta^+(L)$,
so for each $\Pi$
\begin{equation}\label{suppZ'}
\supp(Z')\subset \ol{\lambda}_{\Pi}-\mathbb{Z}_{\geq 0}\Pi(L).
\end{equation}
Clearly, $\lambda_{\Pi}+\rho_{\Pi}\not\in\supp(Z)$, so
$\ol{\lambda}_{\Pi}\not\in\supp(Z')$.

Let $F(L)$ be $\pi$-integrable, where $\pi\subset\Pi_0(L)$ is connected.
Since $\pi$ is connected, it admits a Weyl vector $\rho_{\pi}\in \fh^*$
($(\rho_{\pi},\alpha^{\vee})=1$ for each $\alpha\in\pi$).
By~\Prop{RRchLrel}, for each $\pi\subset\Pi_0(L)$ the element
 $e^{\rho_{\pi}}Z'$ is a 
$W(\pi)$-skew-invariant element of $\cR_{W(\pi)}$.
 Therefore $\supp(e^{\rho_{\pi}}Z')$ is a union of regular $W(\pi)$-orbits 
(the regularity means that the stabilizer of each element is trivial). From~(\ref{suppZ'})
it follows  that each orbit has a $\pi$-maximal element (i.e.,
maximal with respect to the following order:
$\nu'\geq_{\pi} \nu"$ if $\nu'-\nu"\in\mathbb{Z}_{\geq 0}\pi$).
Let $\mu=\ol{\lambda}_{\Pi}-\nu$ be a $\pi$-maximal element in its orbit.
Using the regularity of the orbit we obtain
$(\ol{\lambda}_{\Pi}-\nu,\alpha^{\vee})\geq 0$ for each $\alpha\in\pi$.

Combining maximality of $\nu$ and regularity of the orbit we obtain
\begin{equation}\label{C331}
(\ol{\lambda}_{\Pi}-\nu,\alpha^{\vee})\geq 0\ \text{ for each }\alpha\in\pi.
\end{equation}

Now  let $\ol{\lambda}_{\Pi}+\rho_L-\nu$ 
be a maximal element in $\supp (Z)$ with respect to the order
$\nu'\geq \nu"$ if $\nu'-\nu"\in\mathbb{Z}_{\geq 0}\Delta^+$.  Clearly, $\nu\not=0$.
Then $\ol{\lambda}_{\Pi}-\nu$  is a maximal element in $\supp (Z')$
with respect to the same order and so this is a $\Pi_0(L)$-maximal element in $\supp(Z')$.
Since $\ol{\lambda}_{\Pi}+\rho_L-\nu\in \supp(Z)$ we have 
$2(\ol{\lambda}_{\Pi}+\rho_L,\nu)=(\nu,\nu)$. Combining with~(\ref{suppZ'}) we get

\begin{equation}\label{C111}
2(\ol{\lambda}_{\Pi}+\rho_L,\nu)=(\nu,\nu),\ \nu\in\mathbb{Z}_{\geq 0}\Pi(L),\ \nu\not=0.
\end{equation}

We will show that~(\ref{C331}) contradicts~(\ref{C111}).

\subsection{Proofs of Theorems~\ref{thmadm1}, \ref{thmadmvac} and~\Cor{corPi0(L)con}}
\subsubsection{Proof of~\Thm{thmadm1}}\label{proofadm1}
Arguing as in~\S~\ref{proof11} we deduce from~(\ref{C331}) and~(\ref{C111})
 that  $\nu\in\mathbb{Z}_{\geq 0} S$.
Write $\nu=\sum_{\beta\in S} x_{\beta}\beta$, $x_{\beta}\geq 0$.
Let $\beta$ be such that $x_{\beta}\not=0$.
By~\Lem{lemRRchL},   $Z'$ does not depend on the choice of $\Pi$, so
$\supp(Z')\subset \ol{\lambda}_{\Pi'}-\mathbb{Z}_{\geq 0}\Pi'(L)$ for any $\Pi'$.
In particular, $\ol{\lambda}_{\Pi}-\nu\in \ol{\lambda}_{\Pi'}-\mathbb{Z}_{\geq 0}\Pi'(L)$
for any $\Pi'$.
For $\Pi'=r_{\beta}\Pi$ we have $\ol{\lambda}_{\Pi'}=\ol{\lambda}_{\Pi}$, so
$\nu\in \mathbb{Z}_{\geq 0}\Pi'(L)$. However,
$-\beta\in\Pi'(L)$ and  $S\setminus\{\beta\}\in \Pi'(L)$,  a contradiction.
\qed

\subsubsection{Proof of~\Thm{thmadmvac}}\label{proofadmvac} 
Recall that $L$ is a vacuum module means that $L=L(\lambda,\Pi)$, where
$(\lambda,\dot{\Delta})=0$ for some finite part $\dot{\Delta}$ of $\Delta$.
In particular, $\dot{\Delta}\subset \Delta(L)$ and the inclusion is strict, since 
$\Delta(L)$ is affine. Note that the elements of $\dot{\Pi}$ are indecomposable
in $\Delta(L)^+=\Delta^+\cap\Delta(L)$, so $\dot{\Pi}\subset\Pi(L)$.
Hence $\dot{\Pi}$ (resp., $\dot{\Delta}$) is a finite part of $\Pi(L)$ (resp., of $\Delta(L)$).

Consider $\Delta':=\Delta(L)$ with a set of simple roots $\Pi':=\Pi(L)$ and a finite part
$\dot{\Delta}$.

If the dual Coxeter number of $\Delta'$ is non-zero or 
$\Delta'=A(n,n)^{(1)}$,  then, by~\S~\ref{app3}, 
there exists a chain of odd reflections with respect to 
the roots in $\dot{\Delta}$ which  transform $\Pi'$ to a set of simple roots $\Pi''$
with the following property: for each $\alpha\in\Pi''$ one has $||\alpha||^2\geq 0$
(where $(-,-)$ is normalized in such a way that $(\rho',\delta)\geq 0$).
Acting by the same chain of odd reflections of $\Pi$, we obtain a set of simple roots
$\tilde{\Pi}$ for $\Delta$ such that $\tilde{\Pi}(L)=\Pi''$. 
Clearly, $L=L(\tilde{\lambda},\tilde{\Pi})$, where $(\tilde{\lambda},\dot{\Delta})=0$. Hence
$(F(L),\tilde{\Pi}(L))$ satisfies the conditions of Section~\ref{sectKW} 
and $S\subset\Pi$. By~\Thm{thmadm1} (\ref{KRWconj}) holds.

Similarly, if the dual Coxeter number of $\Delta'$ is zero and
$\Delta'\not=A(n,n)^{(1)}$,  then, by~\S~\ref{app3}, 
there exists a chain of odd reflections with respect to 
the roots in $\dot{\Delta}$ which  transform $\Pi'$ to a set of simple roots $\Pi''$
given in~\S~\ref{D21a},
\S~\ref{A2n-12n-1},\S~\ref{A2n4},  respectively. 
Acting by the same chain of odd reflections of $\Pi$, we obtain a set of simple roots
$\tilde{\Pi}$ for $\Delta$ such that $\tilde{\Pi}(L)=\Pi''$. 
One has $L=L(\tilde{\lambda},\tilde{\Pi})$, where $(\tilde{\lambda},\dot{\Delta})=0$ and $F(L)$
is integrable.  In this case the statement follows from~\Thm{thmadm4}.
\qed

\subsubsection{Proof of~\Cor{corPi0(L)con}}
If $L$ is typical, the assertion follows from~\Cor{corint2}. Assume that $L$ is not typical.
 
In the light of~\Thm{thmadm1} it is enough to verify that there exists $L'\sim L$ and
$\Pi$ such that $(\lambda'_{\Pi}+\rho_{\Pi},\beta)=0$
for some $\beta\in\Pi$, where $L'=L(\lambda'_{\Pi},\Pi)$.
Assume that this is not the case. 
Then for any $L'$ one has
$\lambda'_{\Pi}+\rho_{\Pi}=\lambda'_{\Pi'}+\rho_{\Pi'}$ for all $\Pi,\Pi'$.
Fix any $\Pi$ and let $\beta\in\Delta(L)$ be such that  $(\lambda_{\Pi}+\rho_{\Pi},\beta)=0$.

First, consider the case when $\beta\in\Pi'$ for some $\Pi'$.
Then $\lambda_{\Pi}+\rho_{\Pi}=\lambda_{\Pi'}+\rho_{\Pi'}$ implies that
$(\lambda_{\Pi'}+\rho_{\Pi'},\beta)=0$, so the assertion holds for  $L'=L$
and $\beta\in\Pi'$.

Assume that $\beta\not\in\Pi'$ for any $\Pi'$. Then 
$\Delta\not= A(m,n), C(n), A(m,n)^{(1)}, C(n)^{(1)}$. In particular, $\Pi_0$
is not connected. Let $\pi$ be a connected component of $\Pi_0$ such that
$\Delta(\pi)\cap \Delta(L)=\emptyset$. There exists $w\in W(\pi)$ and $\Pi'$ 
such that $w\beta\in\Pi'$ (by \Lem{lemapp1} the stronger assertion holds).
Write $w$ as a product of simple reflections
and let $T$ be the product of the corresponding Enright functors.
Then $w\beta\in \Delta(T(L))$
and  $T(L)=L(w(\lambda_{\Pi}+\rho_{\Pi})-\rho_{\Pi}, \Pi)$, so
the assertion holds for $L'=T(L)$ and $w\beta\in\Pi'$.
\qed

\subsection{Proofs for vacuum cases}\label{FinAff}
Since $F(L)$ is a vacuum module, $F(L)$ is not only $\pi$-integrable, but
also $\dot{\Pi}_0(L)$-integrable,
so $(\ol{\lambda}_{\Pi}-\nu,\alpha^{\vee})\geq 0$ for each $\alpha\in\dot{\Pi}_0(L)$.
For $\alpha\in\dot{\Pi}(L)$ one has $(\ol{\lambda}_{\Pi},\alpha^{\vee})=0$, so 
we obtain 
\begin{equation}\label{c333}
(\nu,\alpha^{\vee})\leq 0\ \text{ for each }\alpha\in\dot{\Pi}_0(L).
\end{equation}

We will frequently use the following statement, which is a part of Thm. 4.3 in~\cite{K2}:

(Fin) If $A$ is a Cartan matrix of a semisimple Lie algebra and $v$ is a vector
with rational coordinates, then $Av\geq 0$ implies $v>0$ or $v=0$;

(Aff)  If $A$ is a Cartan matrix of an affine Lie algebra and $v$ is a vector
with rational coordinates, then $Av\geq 0$ 
implies $v\in\mathbb{Q}\delta$.

\subsubsection{Proof of~\Thm{thmadm2}}
Arguing as in~\S\ref{proofadm1}
we obtain that for any $S\subset\dot{\Pi}(L)$ satisfying the conditions in~\S~\ref{PiSpi}
one has $\nu\in\mathbb{Z}_{\geq 0} S$, $\nu\not=0$. In particular,
$\nu\in \mathbb{Z}_{\geq 0}\dot{\Pi}$.

Consider the case $\dot{\Delta}(L)\not=A(m,n), C(n)$, i.e.
$\mathbb{Q}\dot{\Delta}_{\ol{0}}=\mathbb{Q}\dot{\Delta}$. Combining~(\ref{c333}) and
(Fin)  we obtain $\nu\in- \mathbb{Q}_{\geq 0} \dot{\Delta}_{\ol{0}}^+$.
Since $\nu\in \mathbb{Z}_{\geq 0}\Delta^+$ we get $\nu=0$, a contradiction.

If $\Pi(L)=C(m)$ or $\Pi(L)=A(m,n)^{(1)}, m>n$, then using the condition
that affine root is not isotropic, we obtain that the set of isotropic roots
$Iso\subset\Pi(L)$ lies in $\dot{\Pi}(L)$. It is easy to see that
the condition $||\alpha||^2\geq 0$ implies that each connected component
of $Iso$ is of type $A(n+1,n)$ (and not of type $A(n,n)$).
By above, it is enough to verify that for some $S$ 
$$\mathbb{Z}_{\geq 0} S\cap\{\mu|\ \forall \alpha\in\dot{\Pi}_0(L)\ 
(\mu,\alpha^{\vee})\leq 0\}=\{0\}.$$
Clearly, it is  enough to check the assertion for 
each connected component of $Iso\subset \dot{\Pi}(L)$.
Retain notation of~\S~\ref{choiceS} and
choose $S=\{\vareps_i-\delta_i\}$ in each component of $Iso$.
Write $\nu=\sum k_i (\vareps_i-\delta_i)$.
Taking $\alpha=\vareps_i-\vareps_{i+1},\delta_i-\delta_{i+1}\in \Pi_0$, we
obtain $k_i=k_{i+1}$ for each $i$, that is $\nu=k\sum_{i=1}^n (\vareps_i-\delta_i)$ for some 
$k\geq 0$. Then $(\nu,(\vareps_n-\vareps_{n+1})^{\vee})=k$, so $k\leq 0$, that is
$\nu=0$, a contradiction.
\qed

\subsubsection{Proof of~\Thm{thmadm4}}
$$ $$
Combining~(\ref{C111}) and~(\ref{c333}), we obtain that $\nu$ satisfies
the formulas~(\ref{maxfinn}).
Then arguing as in~\S~\ref{A2n-12n-1} (resp., \S~\ref{A2n4}, \S~\ref{D21a})
we get the assertion for $A(2n-1,2n-1)^{(2)}, D(n+1,n)^{(1)}$ (resp., $A(2n,2n)^{(4)}, D(n+1,n)^{(2)}$
and $D(2,1,a)^{(1)}$); the restriction
that $\dot{\Pi}(L)\subset \Pi$ for $\Delta(L)=A(2n,2n)^{(4)}, D(n+1,n)^{(2)}$
with level $1$, comes from the use of odd reflections in the proof of this case.

Now consider the remaining case $\Delta(L)=\fg=D(2,1,a)^{(1)}$.
Recall that $a\not=0,-1$ and $D(2,1,a)\cong D(2,1,a^{-1})\cong D(2,1,-1-a)$;
if $a\in\mathbb{Q}$, we assume (without loss of generality) that $-1<a<0$.
 
One has $D(2,1,a)_{\ol{0}}=A_1\times A_1\times A_1$; if we denote the root
in $i$th copy of $A_1$ by $2\vareps_i$, then
$||2\vareps_1||^2:||2\vareps_2||^2:||2\vareps_3||^2=1:a:(-a-1)$.

Let $L(\lambda)$ be a $\pi$-integrable vacuum module 
for some $\pi\subset \Pi_0$. If $\pi\setminus\dot{\Pi_0}$ contains one root, then
$\pi=\{\delta-2\vareps_r,\vareps_r\}$ and $L(\lambda)$ is $\pi$-integrable
 if and only if $2k/||2\vareps_r||^2\in\mathbb{Z}_{\geq 0}$. If $\pi\setminus\dot{\Pi_0}$ contains two roots, then
$\pi=\{\delta-2\vareps_r,\delta-2\vareps_q, 2\vareps_r, 2\vareps_q\}$, and, by above,
 $L(\lambda)$ is  $\pi$-integrable  if and only if
$2k/||2\vareps_r||^2,\ 2k/||2\vareps_q||^2\in\mathbb{Z}_{\geq 0}$; in particular, if $k\not=0$,
then $||2\vareps_r||^2/||2\vareps_q||^2\in\mathbb{Q}_{>0}$, so $a\in\mathbb{Q}$ and,
 since $-1<a<0$, $r,q=2,3$. If  $\pi\setminus\dot{\Pi_0}$ contains three roots,
then $\pi=\Pi_0$ and $k=0$. 

We see that $L(\lambda), k\not=0$ can be $A_1^{(1)}$-integrable for any copy 
$A_1^{(1)}$ in $\Pi_0$,
but it is $A_1^{(1)}\times A_1^{(1)}$-integrable  only if $a\in\mathbb{Q}$
and the roots of $\pi$ have positive integral square length for some normalization of $(-,-)$.

Recall that $\pi=\{\alpha\in\Pi_0|\ ||\alpha||^2\in\mathbb{Q}_{>0}\}$
for some normalization of $(-,-)$. 
If $a\not\in\mathbb{Q}$, then $\pi$ can be any copy of  $A_1^{(1)}$.
If  $a\in\mathbb{Q}$, then either ${\pi}=A_1^{(1)}$, 
which corresponds
to the longest root (the absolute value of $||2\vareps_i||^2$ is maximal; this
is $2\vareps_1$ if $-1<a<0$),
or ${\pi}=A_1^{(1)}\times A_1^{(1)}$ (and then $\dot\pi=\{2\vareps_2,2\vareps_3\}$, by above).

Let us show that~(\ref{C111}) contradicts~(\ref{C331}).

Recall that there are $4$ sets of simple roots: 
$$\Pi_1=\{\delta-\vareps_1-\vareps_2-\vareps_3,
-\vareps_1+\vareps_2+\vareps_3, \vareps_1+\vareps_2-\vareps_3,
\vareps_1-\vareps_2+\vareps_3\},$$
and 
$\Pi_2:=\{\delta-2\vareps_1,\vareps_1-\vareps_2-\vareps_3,2\vareps_2,2\vareps_3\}$,
with similar $\Pi_3,\Pi_4$ ($\delta-2\vareps_i\in \Pi_{i+1}$).

Take $\Pi_s$. Write $\mu=j\delta-\sum_{i=1}^3 e_i\vareps_i$.
By~(\ref{C331}), $e_1,e_2,e_3\geq 0$.
By~(\ref{C111}), $\mu\in\mathbb{Z}_{\geq 0}\Pi_i$;
in all $4$ cases, using $e_1,e_2,e_3\geq 0$, we get $e_1,e_2,e_3\leq 2j$. Hence
\begin{equation}\label{ejj}
0\leq e_i\leq 2j\ \text{ for }i=1,2,3.
\end{equation}
In particular, $\mu=0$ if $j=0$. Since $\mu=0$ contradicts~(\ref{C111}), we assume that $j>0$.

Consider the case $\pi=\{2\vareps_1,\delta-2\vareps_1\}$.
Normalize $(-,-)$ by $||2\vareps_1||^2=2$ (then $||2\vareps_2||^2=2a,\ 
||2\vareps_3||^2=-2(a+1)$).
Recall  that $k\in\mathbb{Z}_{>0}$.
By~(\ref{C331}) one has $e_1\leq k$ and 
$||\mu||^2=2(\lambda+\rho,\mu)$, that is $e_1^2+ae_2^2-(a+1)e_3^2=4jk$.
If $a\not\in\mathbb{Q}$, then we get $e_2=e_3$ and
$e_1^2-e_3^2=4jk$; if $a\in\mathbb{Q}$, then, by our assumption, $-1<a<0$ and we obtain
$e_1^2\geq 4jk$. Combining with $e_1\leq k$ with~(\ref{ejj}), we get $j=0$, a contradiction.

Consider the case $\pi=\{\delta-2\vareps_2,\delta-2\vareps_3, 2\vareps_2, 2\vareps_3\}$
(with $-1<a<0$).
Normalize $(-,-)$ by $||2\vareps_2||^2=2$ (then
$||2\vareps_1||^2=2/a, \  ||2\vareps_3||^2=-2(a+1)/a$).
Recall that $k, -ka/(a+1)\in\mathbb{Z}_{>0}$.
By~(\ref{C331}) one has $e_2\leq k, e_3\leq -ka/(a+1)$ and
$||\mu||^2=2(\lambda+\rho,\mu)$, that is 
$$e_1^2/a+e_2^2-\frac{a+1}{a}e_3^2=4jk.$$
Recall that $j>0$.
By~(\ref{ejj}), $e_2,e_3\leq 2j$, so we obtain $e_1=0,e_2=2j=k, e_3=2j=-ka/(a+1)$.
If $a\not=-\frac{1}{2}$ this is impossible. For $a=-\frac{1}{2}$ we get
$\mu=j(\delta-2\vareps_2-2\vareps_3)$, which does not lie in 
$\mathbb{Z}_{\geq 0}\Pi_s$ for $s=1,3,4$. Hence~(\ref{C111}) contradicts~(\ref{C331})
for $\Pi_i$ with $i=1,3,4$.
\qed

\subsubsection{Proof of~\Thm{thmadm3}}
By~(\ref{C111}), $2(\ol{\lambda}_{\Pi}+\rho_L,\nu)=||\nu||^2$.
The condition (C) implies $\rho_L=\xi_1-\xi_2$, where
$(\xi_1,\nu)\geq 0$ (since $\nu \in \mathbb{Z}_{\geq 0}\Pi(L)$)
and $(\xi_2,\nu)\leq 0$  (by~(\ref{c333})).
Hence $(\rho_L,\nu)\geq 0$, that is
$$2(\ol{\lambda}_{\Pi},\nu)\leq ||\nu||^2.$$

Let $s\delta$ be the minimal imaginary root in $\Delta(L)$ 
($\delta$ is the minimal imaginary root in $\Delta$).
Write $\dot{\Pi}(L)_0=\dot{\pi}\cup\pi''$ and recall that $||\alpha||^2\not\in\mathbb{Q}_{>0}$
for $\alpha\in\pi''$.

Write $\nu=j(s\delta)-\nu'-\nu''$, where $\nu'\in \mathbb{Q}\dot{\pi},\ \nu''\in \mathbb{Q}\pi''$.
From  (Fin)  (see~\S~\ref{FinAff}) we deduce from
the condition~(\ref{c333}) that
$$\nu'\in\mathbb{Q}_{\geq 0}\dot{\pi},\ \nu''\in \mathbb{Q}_{\geq 0}\pi''.$$
Note that
$j(s\delta)-\nu'=\nu+\nu''\in \mathbb{Z}_{\geq 0}\Pi(L)$ because $\pi''\subset \mathbb{Z}_{\geq 0}\Pi(L)$. Therefore
$j(s\delta)-\nu'\in \mathbb{Q}{\pi}\cap \mathbb{Z}_{\geq 0}\Pi(L)$.

We claim that $j(s\delta)-\nu'\in \mathbb{Q}_{\geq 0}{\pi}$.
Indeed, write ${\pi}=\dot{\pi}\cup\{\alpha_0^{\sharp}\}$. One has
$$j(s\delta)-\nu'=j\alpha_0^{\sharp}+\sum_{\alpha\in\dot{\pi}} 
y_{\alpha}\alpha=j\sum_{\beta\in\supp(\alpha_0^{\sharp})}
x_{\alpha_0^{\sharp},\beta}+\sum_{\alpha\in\dot{\pi}} \sum_{\beta\in\supp(\alpha)}
y_{\alpha}x_{\alpha,\beta}\beta.$$
In the light of assumption (B), the coefficient of the root $b(\alpha)$ is equal to 
$y_{\alpha}x_{\alpha,b(\alpha)}$;
thus $j(s\delta)-\nu\in \mathbb{Z}_{\geq 0}\Pi(L)$ gives 
$y_{\alpha}x_{b(\alpha)}\geq 0$, so $y_{\alpha}\geq 0$.
Hence  $j(s\delta)-\nu'\in \mathbb{Q}_{\geq 0}{\pi}$, as required.

Set $k:=(\ol{\lambda},s\delta)$. Then $k=(\ol{\lambda},\alpha_0^{\sharp})$, so
$(\nu,\alpha_0^{\sharp})\leq k$ (by~\ref{C331}). Using~(\ref{c333}) we get

$$\begin{array}{l}
||\nu||^2=||j(s\delta)-\nu'||^2+||\nu''||^2\leq ||j(s\delta)-\nu'||^2\\
=j(j(s\delta)-\nu',\alpha_0^{\sharp})+
\sum_{\alpha\in\dot{\pi}}y_{\alpha} (j(s\delta)-\nu',\alpha)
=j(\nu,\alpha_0^{\sharp})+
\sum_{\alpha\in\dot{\pi}}\sum y_{\alpha} (\nu,\alpha)\leq jk
\end{array}$$

Since $2(\ol{\lambda},\nu)=2jk$ we obtain $j=0$, so $\nu=-\nu'-\nu''$. 
Since $\nu\in \mathbb{Z}_{\geq 0}\Pi(L)$ with
$\nu'\in\mathbb{Q}_{\geq 0}\dot{\pi}^{\sharp},\ \nu''\in \mathbb{Q}_{\geq 0}\dot{\pi}''$ 
we get $\nu=0$, a contradiction. 
\qed

\subsection{Proof of~\Thm{dimFL=1}}
Let $L$ be a non-critical $\fg$-module such that  for each connected component $\Delta^i$
of $\Delta(L)$ one has either $(F(L),\Delta^i)=0$ or $F(L)$ is $\Delta^i_{\ol{0}}$-integrable and
$\Delta^i$-typical (i.e., $(F(L),\beta)\not=0$ for each $\beta\in\Delta^i$).

We want to prove~(\ref{KRWconj}) holds.

By~\Lem{lemRRchL},   $Z'$ does not depend on the choice of $\Pi$ and 
is preserved by $\sim$ ($Z'(L)=Z'(L')$ if $L'\sim L$).

Decompose $\Delta(L)$ in the union of irreducible components $\Delta(L)=\coprod \Delta^j$.
By~(\ref{suppZ'}), we can decompose
$\ol{\lambda}_{\Pi}-\mu=\sum \mu_j$ with  $\mu_j\in\mathbb{Z}\Delta^j$.
This decomposition might be not unique (even for fixed $\Pi$): $\mu_s$ is uniquely defined if
 $\Delta^s$ is finite, but $\mu_s-\mu'_s\in\mathbb{Z}\delta$ for different decompositions of $\mu$, 
 if $\Delta^s$ is affine.  By~(\ref{suppZ'}), there exists a decomposition, where
 $\mu_j\in\mathbb{Z}(\Delta^j\cap\Delta^+)$.

 Note that $\ol{\lambda}_{r_{\beta}\Pi}=\ol{\lambda}$ if $\beta\not\in\Delta(L)$ or
$(\ol{\lambda}_{\Pi},\beta)=0$, and $\ol{\lambda}_{r_{\beta}\Pi}=\ol{\lambda}-\beta$ otherwise.
In particular, if $(\ol{\lambda}_{\Pi},\Delta^j)=0$ for some $\Pi$, then this holds for each $\Pi$.

 Assume that $(\ol{\lambda}_{\Pi},\Delta^j)=0$.
We claim that for any two decompositions
$\ol{\lambda}_{\Pi}-\mu=\sum \mu_s$ and $\ol{\lambda}_{\Pi'}-\mu=\sum \mu'_s$ 
one has $\mu_j-\mu'_j\in\mathbb{Z}\delta$ (in particular, $\mu_j=\mu'_j$ if 
$\Delta^j$ is finite). Indeed, it is enough to consider the case
 $\Pi'=r_{\beta}\Pi$ and $\ol{\lambda}_{\Pi}\not=\ol{\lambda}_{\Pi'}$. Then
$\ol{\lambda}_{\Pi'}=\ol{\lambda}-\beta$ and $(\ol{\lambda}_{\Pi},\beta)\not=0$, that is
$\beta\not\in\Delta^j$. Therefore there exists a decomposition
$\ol{\lambda}_{\Pi'}-\mu=\sum \mu"_s$ with $\mu"_j=\mu_j$; since
$\mu"_j-\mu_j\in\mathbb{Z}\delta$ the claim follows.

Let us show that  
\begin{equation}\label{olmabdas}
(\ol{\lambda}_{\Pi},\Delta^j)=0\ \Longrightarrow\ 
\left\{\begin{array}{ll}
\mu_j\in\mathbb{Z}\delta\ \text{ if }\Delta^j\not=A(m,n)^{(1)}, C(m)^{(1)},\\
\mu_j\in\mathbb{Z}\delta +\mathbb{Z}\xi_j, \text{ if }\Delta^j=A(m,n)^{(1)}, C(m)^{(1)},
\end{array}\right.
\end{equation}
where for $\Delta^j=A(m,n)^{(1)}, C(m)^{(1)}$ the element $\xi_j\in\mathbb{Z}\Delta^j$
is such that $(\xi_j,\Delta^j_{\ol{0}})=0$.

Take $j$ such that  $(\ol{\lambda}_{\Pi},\Delta^j)=0$.  
 Since $(\Delta^j,\Delta^s)=0$ for $s\not=j$, (\ref{C331})
 gives 
 \begin{equation}\label{C31}
(\mu_j,\alpha^{\vee})\leq 0\ \text{ for each }\alpha\in\Pi_0(L)\cap\Delta^s.
\end{equation}

\subsubsection{}
Assume that $\Delta^j$ is finite and
$\mathbb{Q}\Delta^j=\mathbb{Q}\Delta^j_{\ol{0}}$. Then, from (Fin) (see~\S~\ref{FinAff}),
$\mu_j\in -(\mathbb{Q} \Delta^j_{\ol{0}}\cap\Delta^+)$; since
$\mu_j\in\mathbb{Z}(\Delta^j\cap\Delta^+)$, we get $\mu_j=0$, as required.

Assume that $\Delta^j$ is affine and
$\mathbb{Q}\Delta^j=\mathbb{Q}\Delta^j_{\ol{0}}$. Then, from (Aff) (see~\S~\ref{FinAff}), $\mu_j\in\mathbb{Z}\delta$,
as required.

Assume that  $\Delta^j$ is $A(m,n)^{(1)}, C(n)^{(1)}$.
Since $L$ is non-critical, $(\rho_L,\delta)\not=0$, so the case $A(n,n)^{(1)}$ is excluded.
Let $\Delta^j$ be of type $A(m,n)^{(1)}$ with  $m\not=n$, or $C(n)^{(1)}$.
In this case $\mathbb{Q}\Delta^j$ lies in 
$\mathbb{Q}\Delta^j_{\ol{0}}+\mathbb{Z}\xi_j$, 
where $\xi_j\in\fh^*$ is orthogonal  $\Delta^j_{\ol{0}}$.
Combining (Aff)  and~(\ref{C31}) we obtain $\mu_j\in\mathbb{Z}\delta +\mathbb{Z}\xi_j$,
as required.

\subsubsection{}
Now consider the remaining case $\Delta^j=A(m,n), C(m)$.
By above, $\mu_j$ is uniquely defined, and, in particular,
 does not depend on $\Pi$.

One has $\mathbb{Q}\Delta^j=\mathbb{Q}\Delta^j_{\ol{0}}+\mathbb{Z}\xi$, 
where $\xi\in\fh^*$ is orthogonal  $\Delta^j_{\ol{0}}$.
Combining (Fin)  in~\S~\ref{FinAff} and~(\ref{C31}) we obtain $\mu_j=x\xi-\mu'$, where $\mu'\in \mathbb{Q}^+\Pi^j_0$.
Since $\mu_j\in \in\mathbb{Z}_{\geq 0}\Pi^j$ for each $\Pi$, we obtain
$x\xi\in \mathbb{Z}_{\geq 0}\Pi^j$ for each $\Pi$.
Thus for each set of simple roots $\Pi$ the corresponding set of simple roots $\Pi^j$
(the set of simple roots for $\Delta^j\cap \Delta^+(\Pi)$) is such that
$x\xi\in \mathbb{Z}_{\geq 0}\Pi^j$.

Consider the root $\vareps_1-\delta_1$ in $A(m,n)$ or $C(m)$. Assume that this root is simple in
$\Delta$, i.e. lies in a set of simple roots $\Pi$. If $\Pi^j$ is the 
set of simple roots in $\Delta^j\cap \Delta^+(\Pi)$, then $r_{\vareps_1-\delta_1}(\Pi^j)$
is the set of simple roots in $\Delta^j\cap \Delta^+(r_{\vareps_1-\delta_1}\Pi)$.
Write  $\xi=n\sum_{i=1}^m\vareps_i-m\sum_{i=1}^n\delta_i$ for $A(m,n)$ and $\xi=-\delta_1$
for $C(m)$.
Since $x\xi\in\mathbb{Z}_{\geq 0}\Pi^j$ and $\vareps_1-\delta_1\in\Pi^j$, 
 one has $x\geq 0$; similarly,  $x\xi\in\mathbb{Z}_{\geq 0}r_{\vareps_1-\delta_1}(\Pi^j)$
gives $x\leq 0$. Hence $x=0$. Since $x=0$, we have $\mu_j=-\mu'\in -\mathbb{Q}^+\Pi^j_0$.
Combining with $\mu_j\in  \mathbb{Z}_{\geq 0}\Pi^j$, we get $\mu_j=0$.

It remain to show that  $\vareps_1-\delta_1$ lies in some set of simple roots for $\Delta$.
 If $\Delta$ is of type $A(m',n')$, $C(n')$ or $A(m',n')^{(1)}$, $C(n')^{(1)}$,
then any odd root lies in a set of simple roots, so this holds.

Let us show that for other root systems $\Delta$ this can be achieved for some $L'\sim L$.
Denote by $\iota_L$ the embedding $A(m,n)\to\Delta$ with the image $\Delta^j$.
Since $m,n>1$, $\Delta$ is not exceptional or affinization of exceptional.
In the light of~\Lem{lemapp1} the root $\iota_L(\vareps_1-\delta_1)$ or 
$\iota_L(\delta_1-\vareps_1)\in X$ (see~\Lem{lemapp1} for notations). 
We may (and will) assume that $\iota_L(\vareps_1-\delta_1)\in X$.
Denote by $\Delta'$ the connected component of $\Delta_{\ol{0}}$  containing $\delta_1-\delta_2$
and by $\pi'$ its set of simple roots $(\pi'\subset\Pi_0$).
Take $\alpha\in\pi'$. If $\alpha\in\Delta(L)$ and $(\alpha,\beta)\not=0$, then $\alpha\in\Delta^j$,
and so $\alpha\in\Pi_0^j$ (since $\alpha\in\Pi_0$), which implies $(\beta,\alpha^{\vee})=-1$.
As a result, if $\alpha\in\pi'$ is such that $(\beta,\alpha^{\vee})>0$, then
$\alpha\not\in\Delta(L)$, and we can  apply the  Enright functor $T_{\alpha}(a)$ to $L$.
Set $L':=T_{\alpha}(a)(L)$.
Clearly, $\Delta(L')=r_{\alpha}(\Delta(L))$ and $\iota_{L'}=r_{\alpha}\iota_L$.
In particular,
$r_{\alpha}\Delta^j=A(m,n)$ and the $\iota_{L'}(\vareps_1-\delta_1)=
r_{\alpha}\beta<\beta$. By~\Lem{lemapp1}, repeating this procedure we
obtain $L"\sim L$, where  $\beta":=\iota_{L"}(\vareps_1-\delta_1)$ is an essentially simple
isotropic root, (see~\S~\ref{esse}), 
that is $\beta"\in \Pi$ for some $\Pi$
(and $\iota_{L"}(\delta_1-\vareps_1)\in \Pi'$ for $\Pi'=r_{\beta"}(\Pi)$).

\subsubsection{}
Now fix $\Pi$ and let $\ol{\lambda}_{\Pi}+\rho_L-\nu\in \supp(Z)$.
From~\Cor{corsuA2} it follows that   $\nu$
can be decomposed as a sum $\nu=\sum \nu_j$ with
\begin{equation}\label{C11}
2(\ol{\lambda}_{\Pi}+\rho_L,\nu_j)=
||\nu_j||^2,\ \nu_j\in \mathbb{Z}_{\geq 0}(\Delta^j\cap \Delta^+).
\end{equation}
and, moreover, that $\ol{\lambda}_{\Pi}+\rho_L-\nu_j\in W(\Delta^j) (\ol{\lambda}_{\Pi}+\rho_L)$
if the restriction of $\ol{\lambda}_{\Pi}$ to $\Delta^j$ is typical, i.e.
$(\ol{\lambda}_{\Pi}+\rho_{L,\Pi},\beta)\not=0$ for each $\beta\in\Delta^j$.

Consider the case when the restriction of $\ol{\lambda}_{\Pi}$ to $\Delta^j$ is typical, i.e.
$(\ol{\lambda}_{\Pi}+\rho_{L,\Pi},\beta)\not=0$ for each $\beta\in\Delta^j$.  By above,
$\ol{\lambda}_{\Pi}+\rho_L-\nu_j\in W(\Delta^j) (\ol{\lambda}_{\Pi}+\rho_L)$
and $\ol{\lambda}_{\Pi}+\rho_L$ is $\Pi_0(L)$-maximal 
in its  $W(\Delta^j)$-orbit by~\Thm{thmint12}. Since $(\nu-\nu_j,\Delta^j)=0$,
$\ol{\lambda}_{\Pi}+\rho_L-\nu\in W(\Delta^j) (\ol{\lambda}_{\Pi}+\rho_L-(\nu-\nu_j))$
and $\ol{\lambda}_{\Pi}+\rho_L-(\nu-\nu_j)$ is $\Pi_0(L)$-maximal in its $W(\Delta^j)$-orbit.
Therefore $Z\in \cR_{W(\Delta^j)}$. Since $Z$ is $W(\Delta^j)$-skew-invariant
(see~\Cor{propreint}), we conclude that
$\ol{\lambda}_{\Pi}+\rho_L-(\nu-\nu_j)\in \supp(Z)$. Note that
$\ol{\lambda}_{\Pi}+\rho_L-(\nu-\nu_j)>_{\Pi}\ol{\lambda}_{\Pi}+\rho_L-\nu$.

Now  let $\ol{\lambda}_{\Pi}+\rho_L-\nu$ 
be a maximal element in $\supp (Z)$ with respect to the order
$\nu'\geq \nu"$ if $\nu'-\nu"\in\mathbb{Z}_{\geq 0}\Delta^+$. 
Then $\ol{\lambda}_{\Pi}-\nu$  is a maximal element in $\supp (Z')$
with respect to the same order and so is a $\Pi_0(L)$-maximal element in its $W(L)$-orbit.
By above, $\nu_j=0$ for each $j$ such that 
the restriction of $\ol{\lambda}_{\Pi}$ to $\Delta^j$ is typical.

Let us show that $\nu_j=0$ for each $j$.
By the assumption $(\ol{\lambda}_{\Pi},\Delta^j)=0$. Then,
by~(\ref{olmabdas}), $\nu_j=0$ if $\Delta^j$ is finite and $\nu_j=k_j\delta$
if $\Delta^j\not=A(m,n)^{(1)}, C(m)^{(1)}$ is affine; in the latter case,
since $\Delta^j$ is not critical,
$(\rho_L,\delta)\not=0$, so~(\ref{C11}) forces $k_j=0$. 
If $\Delta^j=A(m,n)^{(1)}$ or $C(m)^{(1)}$, then $\nu_j=x_j\xi_j+k_j\delta$.
In the light of~\Lem{lemAmnxi},~(\ref{C11}) forces $\nu_j=0$ for $\Delta^j=A(m,n)^{(1)}$.
For $\Delta^j=C(n)^{(1)}$ we can (and will) normalize the form in such a way that 
$||\alpha||^2\geq 0$
for $\alpha\in\Delta^j$; then $||\xi_j||^2<0$ and $(\rho_L,\nu_j)\geq 0$
since $\nu_j\in \mathbb{Z}_{\geq 0}(\Delta^j\cap \Delta^+)$. Now (\ref{C11}) forces $x_j=0$, that is $2(\rho_L,k_j\delta)=0$; hence $\nu_j=0$, as required.

We conclude that $\nu=0$, a contradiction.
\qed

\section{Examples: $\fg^{\lambda}$ of type  $A(1,1)^{(1)}$ and $B(1,1)^{(1)}$}\label{sectexa}
In this section we establish the KW-formula in two more cases: $\ol{\fg}^{\lambda}$  is of types $A(1,1)^{(1)}$ or
$B(1,1)^{(1)}$.


\subsection{Case $\mathfrak{gl}(2,2)^{(1)},\ 
\mathfrak{sl}(2,2)^{(1)},\ \mathfrak{psl}(2,2)^{(1)}$}\label{A11}
Let $\fg:=\fgl(2,2)^{(1)}$ or $\fsl(2,2)^{(1)},\mathfrak{psl}(2,2)^{(1)}$.

Consider $\Pi=\{\alpha_0,\alpha_1,\alpha_2,\alpha_3\}$, where $||\alpha_i||^2=0$ and
 $(\alpha_i,\alpha_{i+1})\not=0$ (where $\alpha_4=\alpha_0$). Let
  $\Lambda_0,\Lambda_1,\Lambda_2,
\Lambda_3$ be the corresponding fundamental weights, i.e. $(\Lambda_i,\alpha_j)=\delta_{ij}$.
Let $L=L(\lambda,\Pi)$ be a non-critical module and $\Delta(L)\cong \Delta$.
We show that~(\ref{KRWconj}) holds if $L$ is a non-critical module such that
$F(L)\cong L(\lambda', \Pi)$, where $\lambda'=k_0\Lambda_0+k_2\Lambda_2$ or
$\lambda=k_0\Lambda_0+k_1\Lambda_1$, $k_0,k_1,k_2\in\mathbb{Z}_{\geq 0}$.

Note that we do not assume that $\ol{\lambda}$ is of above form (or that $\Pi(L)\cong \Pi$).

\subsection{Marked diagrams}\label{markdia}
Fix a irreducible highest weight module $L$. For each $\Pi$ take
$\lambda$ such that $L=L(\lambda,\Pi)$. Consider the Dynkin diagram of $\Pi(L)$ and
assign to each edge $\alpha-\alpha'$ the scalar product $(\alpha,\alpha')$ and to each node $\alpha$ the number
$x_{\alpha}:=(\lambda+\rho_{\Pi},\alpha)$ (which is integral).

 We call the diagram $\Pi(L)$ endowed by these numbers a {\em marked diagram} $D(L,\Pi)$ corresponding to  $(L,\Pi)$.

\subsubsection{}
If $\beta$ is an odd node of  a marked diagram $\Pi(L)$, we define
the action of $r_{\beta}$ on the marked diagram $D(L,\Pi)$ in such a way that $r_{\beta}*D(L,\Pi)=D(L,r_{\beta}\Pi)$
in the case when $\Pi(L)=\Pi$. This means that

the nodes connected to $\beta$ change their parity and other nodes (including $\beta$)
preserve the parity;

the scalar products between the node corresponding to $\beta$ and other nodes change
to the opposite; other scalar products do not change;

the mark $x_{\beta}$ of $\beta$ is changed to $-x_{\beta}$;
if the mark $x_{\beta}\not=0$, the new mark of the node $\beta'$
connected to $\beta$ is $x_{\beta}+x_{\beta'}$, and
if $x_{\beta}=0$, the new marks of the node $\beta'$
connected to $\beta$ is $x_{\beta'}+(\beta,\beta')$;
other marks do not change.

\subsubsection{Example}
The second diagram is obtained from the first one by the reflection with respect to the upper-right node:

$$\xymatrix{
 &^{k+1}\bigcirc \ar@{-}[r]^{-1}\ \ \ar@{-}[d]_{-1}&\bigotimes^{0}  \ar@{-}[d]^{1} &  &^{k}\bigotimes \ar@{-}[r]^{1}  \ar@{-}[d]_{-1}   &\bigotimes^{0} \ar@{-}[d]^{-1}\\
&\ ^{0}\bigotimes  \ar@{-}[r]^{1}   &\bigcirc^{-1} &  &^{0}\bigotimes \ar@{-}[r]^{1}  &\bigotimes^{0}
  }
$$

\subsubsection{}
By~\S~\ref{Pilambda}, $D(L,r_{\beta}\Pi)=r_{\beta}*D(L,\Pi)$
if $\beta\in\Pi$ and $D(L,r_{\beta}\Pi)=D(L,\Pi)$ otherwise.

We say that two marked diagrams $D,D'$ are {\em connected by an odd reflection} if $D'=r_{\beta}D$
for some odd node $\beta\in D$.

Denote by $DM(L)$ the set of marked diagrams $D(L',\Pi')$ for all $\Pi'$ (compatible with $\Pi_0$)
and all $L'$ such that $L\sim L'$. One readily sees that any two diagrams in $DM(L)$ are connected by
a chain of odd reflections $r_{\beta}*$. In~\Cor{cordia} below we show that if $D\in DM(L)$ and
$v\in D$ is an odd node, then $s_v* D\in DM(L)$. This implies that $DM(L)$ is the set of diagrams
obtained from $D$ by the action of chains of odd reflections; in particular, $DM(L)=DM(L')$
if $DM(L)\cap DM(L')$ is non-empty. Take a pair $(L,\Pi)$ and let $L'$ be such that $D(L,\Pi)=D(L',\Pi)$
with $\Delta(L')=\Delta$. Then $L'$ is partially integrable and $DM(L)=DM(L')$.

\subsubsection{}
\begin{lem}{lemoddroot22}
For each odd node of the marked diagram $D\in DM(L)$ there exists a pair $(L',\Pi')$ such that
$L\sim L'$, $D(L',\Pi')=D$ and the root in $\Pi'(L')$ which corresponds to this node is simple, i.e. lies in $\Pi'$.
\end{lem}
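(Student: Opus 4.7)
The plan exploits two features of the setting: $\fg$ is of type $A(1,1)^{(1)}$ (so every real odd isotropic root lies in some set of simple roots, and $W$ acts transitively on the set of real odd isotropic roots, as one checks directly from the explicit root system; moreover every set of simple roots of $\fg$ contains an odd isotropic root -- for instance, all four simple roots of $\mathfrak{psl}(2,2)^{(1)}$ are odd isotropic), and the equivalence $\sim$ is generated by Enright functors $T_\alpha(a)$ with $\alpha \in \Pi_0 \setminus \Delta(L)$.

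Given the odd node $\beta$ of $D$, first fix a representative $(L^{(0)}, \Pi^{(0)})$ with $D(L^{(0)}, \Pi^{(0)}) = D$, and let $\beta_0 \in \Pi^{(0)}(L^{(0)}) \subset \Delta$ be the root corresponding to $\beta$. Pick an odd isotropic $\gamma \in \Pi^{(0)}$ and, by $W$-transitivity, choose $w \in W$ with $w \beta_0 = \gamma$. By the lemma at the end of \S~\ref{WbetainDeltaL}, write $w = r_{\alpha_s} \cdots r_{\alpha_1}$ with $\alpha_i \in \Pi_0$ and construct inductively $L^{(0)} \sim L^{(1)} \sim \cdots \sim L^{(s)} =: L'$ via $L^{(i)} := T_{\alpha_i}(a_i)(L^{(i-1)})$, where $\alpha_i \notin \Delta(L^{(i-1)})$. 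Then $\Delta(L') = w \Delta(L^{(0)})$, and the identification of \Cor{corFLsim} carries $\beta_0$ to $w \beta_0 = \gamma$. In particular $\gamma$ lies in both $\Pi^{(0)}(L')$ and $\Pi^{(0)}$, so the candidate pair is $(L'', \Pi'') := (L', \Pi^{(0)})$.

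The main obstacle is to verify $D(L', \Pi^{(0)}) = D$ along the entire chain. A single Enright step $L_{\text{new}} = T_\alpha(a)(L_{\text{old}})$ with $\alpha \notin \Delta(L_{\text{old}})$, performed relative to a fixed $\Pi$ satisfying $\alpha \in \Pi$ or $\alpha/2 \in \Pi$, satisfies $\lambda_{\text{new}} + \rho_\Pi = r_\alpha(\lambda_{\text{old}} + \rho_\Pi)$ by \Thm{thmEnr}(a); the simple roots of $\Delta(L_{\text{new}}) \cap \Delta^+(\Pi)$ are the $r_\alpha$-image of those of $\Delta(L_{\text{old}}) \cap \Delta^+(\Pi)$, and since $r_\alpha$ preserves the inner product and transports the markings $(\lambda + \rho_\Pi, \cdot)$ correctly, the two marked diagrams coincide as abstract objects. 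When $\alpha_i$ does not satisfy $\alpha_i \in \Pi^{(0)}$ or $\alpha_i/2 \in \Pi^{(0)}$, the Enright step is preceded by a detour through odd reflections supplied by \Prop{propS}(b); a careful choice of detour ensures that each auxiliary reflecting isotropic root lies outside $\Delta(L^{(i-1)})$, so each auxiliary reflection preserves the marked diagram (cf.\ the proof of \Lem{FLproperty}). Composing over all $s$ Enright steps and their detours yields $D(L', \Pi^{(0)}) = D$, completing the proof.
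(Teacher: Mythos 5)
Your proposal diverges substantially from the paper's proof, which proceeds by induction on $\htt_\Pi(\beta)$ with a case analysis on $\htt_\Pi\alpha\in\{1,2,3\}$; the global ``transport by $w\in W$'' strategy you use has two genuine gaps.

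\textbf{Transitivity.} The claim that $W$ acts transitively on real odd isotropic roots is false for $\fg=\fgl(2,2)^{(1)}$: there $\vareps_1,\vareps_2,\delta_1,\delta_2,\delta$ are linearly independent, $W$ permutes $\{\vareps_i\}$ and $\{\delta_j\}$ and shifts by $\mathbb{Z}\delta$, so $\vareps_1-\delta_1$ and $\delta_1-\vareps_1$ lie in distinct $W$-orbits. Hence picking a $\gamma\in\Pi^{(0)}$ ``arbitrarily'' and expecting $w\beta_0=\gamma$ is not justified in all three cases the section covers. (It does hold for $\fsl$ and $\fpsl$, where $\vareps_1+\vareps_2-\delta_1-\delta_2=0$ identifies the two orbits, so this is perhaps repairable by choosing $\gamma$ in the $W$-orbit of $\beta_0$; but you did not note that this choice is needed.)

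\textbf{Preservation of $D$ under the detour.} This is the real problem. You assert that ``a careful choice of detour ensures that each auxiliary reflecting isotropic root lies outside $\Delta(L^{(i-1)})$''. This is unjustified and in general impossible. When $\Delta(L^{(i-1)})$ is large---and for atypical modules in $A(1,1)^{(1)}$ it can equal all of $\Delta_{re}$---\emph{every} isotropic root is inside $\Delta(L^{(i-1)})$, so no such auxiliary odd reflection exists. Moreover, if $\beta'\in\Delta(L^{(i-1)})$, then $r_{\beta'}$ \emph{does} change the marked diagram (by $r_{\beta'}*$); when you try to ``undo'' it after the Enright step $T_\alpha$, the node you reflect back at sits in $\Pi^{(1)}(L^{(i)})=r_\alpha\Pi^{(1)}(L^{(i-1)})$, not in $\Pi^{(1)}(L^{(i-1)})$, and these are identified via $r_\alpha$. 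Since an Enright root $\alpha$ extracted from an odd reflection typically satisfies $(\alpha,\beta')\neq 0$, the ``undo'' reflection does not reverse the ``do'' reflection on the abstract diagram. You would need to check a commutativity that simply fails. The paper sidesteps this entirely: its odd-reflection steps use \emph{only} roots $\alpha_1\in\Pi\setminus\Pi(L)$ (hence outside $\Delta(L)$, since $\alpha_1\in\Pi$), which genuinely leave $D(L,\cdot)$ unchanged, and the specific combinatorics of the $4$-cycle diagram (the $\htt_\Pi\alpha\in\{1,2,3\}$ cases, and the case $\beta=j\delta+\beta'$) guarantee such an $\alpha_1$ exists and that $\htt_\Pi(\beta)$ strictly decreases. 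Replacing this local descent with a one-shot Weyl transport abandons exactly the mechanism that makes the marked diagram control possible.
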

\begin{proof}
Let $D=D(L,\Pi)$ and let $\beta\in\Pi(L)$ be the root corresponding to the odd node in $D$.
We prove the assertion by induction on $\htt_{\Pi}(\beta)$.

If $\htt_{\Pi}\beta=1$, then $\beta\in\Pi$, as required.

If $\beta$ is of the form $\beta=j\delta+\beta'$ for $\beta'\in\Pi, j\in\mathbb{Z}_{>0}$, then $\beta'\not\in\Pi(L)$ and
for $\Pi':=r_{\beta'}\Pi$ one has  $\Pi'(L)=\Pi(L)$. Moreover, $\htt_{\beta}=4j+1$ and
$\htt_{\Pi'}\beta=4j-1$, because $\-\beta'\in\Pi'$, so $\htt_{\Pi'}(\delta+\beta')=3$.
Hence $\htt_{\Pi'}\beta<\htt_{\Pi}\beta$ and the assertion follows by induction.

Assume that $\beta\not=j\delta+\beta'$ for $\beta'\in\Pi, j\in\mathbb{Z}_{\geq 0}$.
Take $\alpha\in\Pi_0$ such that $||\alpha||^2=2$ and
$r_{\alpha}\beta<\beta$ (thus  $(\alpha,\beta)=1$).
Note that $\alpha\not\in\Pi(L)$, because for $\alpha,\beta\in\Pi(L)$ one has $r_{\alpha}\beta\geq \beta$.
Since $\alpha\in\Pi_0$, one has $1\leq \htt_{\Pi}\alpha\leq 3$.

If $\htt_{\Pi}\alpha=1$, i.e.
 $\alpha\in\Pi$, then $\alpha\not\in\Delta(L)$ (because $\alpha\not\in\Pi(L)$). Applying the Enright functor $T_{\alpha}$ (see~\S~\ref{enr})
we have
$$\Pi(T_{\alpha}(L))=r_{\alpha}\Pi(L),\ \ D(T_{\alpha}(L),\Pi)=D(L,\Pi)$$
and the node corresponding to $\beta$ is $r_{\alpha}\beta=\beta-t\alpha, t>0$. Thus 
$\htt_{\Pi} r_{\alpha}\beta=\htt_{\Pi}\beta-t<\htt_{\Pi}\beta$
and the assertion follows by induction.

Assume that $\htt_{\Pi}\alpha=2$. Then $\alpha=\alpha_1+\alpha_2$, where $\alpha_1,\alpha_2\in\Pi$ are odd roots  and $(\alpha_1,\alpha_2)=1$.
If $\alpha_1,\alpha_2\in\Pi(L)$, then $\Pi(L)$ contains three odd roots $\beta,\alpha_1,\alpha_2$ and since $(\alpha_1,\alpha_2)=1$ one has
$\{(\beta,\alpha_1),(\beta,\alpha_2)\}=\{0,-1\}$ that is $(\beta,\alpha_1+\alpha_2)=-1$, a contradiction. Thus at least one of the
roots $\alpha_1,\alpha_2$, say $\alpha_1$, is not in $\Pi(L)$. Since $\pi$ contains two non-orthogonal odd roots,
it contains only odd roots, so $\beta=j\delta\pm\beta'$ for some $\beta'\in\Pi, j\in\mathbb{Z}_{>0}$. From the above assumption
we obtain $\beta=j\delta-\beta'$. Since $\beta\in\Pi(L)$ one has $\beta'\not\in\Pi(L)$.  Moreover, $(\beta,\alpha_1+\alpha_2)=(\alpha_1,\alpha_2)$ forces
$\beta'\not\in\{\alpha_1,\alpha_2\}$.
For $\Pi':=r_{\alpha_1}\Pi$ one has $\alpha\in\Pi'$, $\Pi'(L)=\Pi(L)$.
Since $\alpha_1\not=\beta'$, one has $\htt_{\Pi'}(\delta-\beta')\leq \htt_{\Pi} (\delta-\beta')$
so $\htt_{\Pi'}\beta\leq \htt_{\Pi}\beta$. Since $\alpha\in\Pi'$, the assertion follows by induction from the above.

Now assume that $\htt_{\Pi}\alpha=3$, i.e. $\alpha=\alpha_1+\alpha_2+\alpha_3$, where $\alpha_i\in\Pi$ is odd for $i=1,3$ and even for $i=2$.
One readily sees that $||\alpha||^2=2$ forces  $||\alpha_2||^2=-2$. Since  $(\beta,\alpha)=1$,
$\beta$ is of the form $j\delta+\alpha_i$ or $j\delta+\alpha_i+\alpha_2$
for $i\in\{1,3\}$ and $j\in\mathbb{Z}_{\geq 0}$.  From the above assumption
we get $\beta=j\delta+\alpha_i+\alpha_2$. Since $\beta\in\Pi(L)$, one has
 $\alpha_i\not\in\Pi(L)$. Set $\Pi':=r_{\alpha_i}\Pi$.
One has $\Pi'(L)=\Pi(L)$ and $\htt_{\Pi'}\beta<\htt_{\Pi}\beta$ since $1=\htt_{\Pi'}(\alpha_i+\alpha_2)<\htt_{\Pi}(\alpha-i+\alpha_2)=2$.
The assertion follows by induction.
\end{proof}

\subsubsection{}
\begin{cor}{cordia}
If $D\in DM(L)$ and $v$ is an odd node of $D$, then $s_v*D\in DM(L)$.
\end{cor}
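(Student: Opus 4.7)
The plan is to reduce directly to Lemma~\ref{lemoddroot22}, which does essentially all the work. Given $D \in DM(L)$ and an odd node $v$ of $D$, apply that lemma to obtain a pair $(L',\Pi')$ with $L \sim L'$, $D(L',\Pi') = D$, and such that the root $\beta \in \Pi'(L')$ corresponding to the node $v$ actually lies in $\Pi'$ itself. Since $\beta \in \Pi'$ is an odd isotropic root, $r_\beta \Pi'$ is again a legitimate set of simple roots, so $(L', r_\beta \Pi')$ is a pair of the type whose marked diagram defines elements of $DM(L')$.

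The second ingredient is the compatibility of the formal action $s_v *$ on marked diagrams with the passage $\Pi' \mapsto r_\beta \Pi'$ for a pair where $\beta \in \Pi'$. This compatibility was exactly the content of the definition of $r_\beta *$ in~\S~\ref{markdia} (changes of parity, of the relevant scalar products, and of the marks), combined with the description in~\S~\ref{Pilambda} of how $\Pi(L)$ transforms under odd reflections from $\Pi$. Consequently $D(L', r_\beta \Pi') = r_\beta * D(L',\Pi') = s_v * D$. Finally, since $L \sim L'$ and $\sim$ is an equivalence relation, every pair contributing to $DM(L')$ also contributes to $DM(L)$, so $DM(L) = DM(L')$; thus $s_v * D \in DM(L)$.

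There is no real obstacle once Lemma~\ref{lemoddroot22} is available: the corollary is a formal matching of two definitions. The one point worth checking carefully is the bookkeeping in the last paragraph of~\S~\ref{markdia}, namely that $D(L', r_\beta \Pi')$ and $r_\beta * D(L',\Pi')$ indeed coincide when $\beta \in \Pi'$; but this is precisely the case for which the operation $r_\beta *$ was designed, so the verification is immediate from the definitions (and independent of whether the mark $x_\beta$ is zero or nonzero, both sub-cases being built into the definition).
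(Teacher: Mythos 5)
Your argument is correct and is exactly the (implicit) derivation the paper intends: Lemma~\ref{lemoddroot22} produces a pair $(L',\Pi')$ with $L\sim L'$, $D(L',\Pi')=D$, and $\beta\in\Pi'$, and then the identity $D(L',r_\beta\Pi')=r_\beta*D(L',\Pi')$ (already recorded in~\S~\ref{markdia} for $\beta\in\Pi'$) together with $DM(L')=DM(L)$ gives $s_v*D\in DM(L)$. The paper states the corollary without a written proof precisely because this reduction is immediate, so there is nothing further to add.
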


In the light of~\Cor{cordia} it is enough to verify formula~(\ref{KRWconj}) 
for one set of simple roots $\Pi(L)$ (if the formula holds for $L$ and
$F(L)\cong F(L')$, then the formula holds for $L'$).
We check the formula for the cases $\ol{\lambda}=k\Lambda_0+j\Lambda_2,
\ol{\lambda}=k\Lambda_0+j\Lambda_1$, $k,j\in\mathbb{Z}_{\geq 0}, k+j\not=0$,
where $\Pi(L)$ consists of odd roots (in particular, $\ol{\rho}=0$).

\subsubsection{Case $\ol{\lambda}=k_0\Lambda_0+k_2\Lambda_2, k_0,k_2\in\mathbb{Z}_{\geq 0}, k_0+k_2\not=0$}
In this case, $F(L)$ is integrable and $(\ol{\lambda}+\rho,\alpha_i)=0$ for $i=1,3$. 
Set $S:=\{\alpha_1,\alpha_3\}$ and $Z(\Pi)$ is the expansion of
\begin{equation}\label{Znew}
Z(\Pi):=Re^{\rho}\ch L-\cF_{ W(\pi)} \bigl(\frac{e^{(\lambda+\rho)}}{\prod_{\beta\in S}
(1+e^{-\beta})}\bigr)
\end{equation}
in $\cR(\Pi)$.
Arguing as in~Section~\ref{sectKW}, we obtain that the $\Pi$-maximal element
in $\supp Z(\Pi)$ is $\ol{\lambda}-\mu$, where $\mu\in \mathbb{Z}S$, that is  $\mu=a_1\alpha_1+a_3\alpha_3$.
where $a_1,a_3\geq 0, a_1+a_3\not=0$.
By~\Lem{lemoddroot22}, we can assume that $\alpha_1$ is a simple root. Then
$L(\lambda,\Pi)=L(\lambda,r_{\alpha_1}\Pi)$. Since $\lambda+\rho-(a_1-1)\alpha_1-a_3\alpha_3\not\in\supp Z(\Pi)$ and $\lambda+\rho-a_1\alpha_1-a_3\alpha_3\not\in\supp Z(\Pi)$,
\Lem{lemXX'} gives $\lambda+\rho-(a_1-1)\alpha_1-a_3\alpha_3\in \supp Z(\Pi')$
or $\lambda+\rho-a_1\alpha_1-a_3\alpha_3\in\supp Z(\Pi')$, where 
$\Pi':=r_{\alpha_1}\Pi$.
It is easy to see that $\supp Z(\Pi')\subset \lambda+\rho'-\mathbb{Z}_{\geq 0}\Pi'$,
so $-(a_1-1)\alpha_1-a_3\alpha_3$ or $-a_1\alpha_1-a_3\alpha_3$ lie in
$\rho'-\mathbb{Z}_{\geq 0}\Pi'=\alpha_1-\mathbb{Z}_{\geq 0}\Pi'$;
since $-\alpha_1,\alpha_3\in\Pi'$, we obtain $a_1\leq 0$, that is $a_1=0$.
Similarly, $a_3=0$, a contradiction. Hence $\supp Z$ is empty, that is
$Z=0$ and~(\ref{KRWconj}) holds.

\subsubsection{Case $\ol{\lambda}=k\Lambda_0+j\Lambda_1,\ k,j\in\mathbb{Z}_{\geq 1}$}
In this case $F(L)$ is $\pi\cup\{\alpha_0+\alpha_1\}$-integrable, where
$\pi=\{\alpha_0+\alpha_3,\alpha_1+\alpha_2\}\cong A_1^{(1)}$.
Set $S:=\{\alpha_2\}$ and define $Z:=Z(\Pi)$ as in~(\ref{Znew}).
Arguing as in~Section~\ref{sectKW}, we obtain that the $\Pi$-maximal element
in $\supp Z$ is of the form $\ol{\lambda}-\mu$, where
$\mu\in \mathbb{Z}_{\geq 0}\Pi(L), \mu\not=0$, $||\mu||^2=2(\ol{\lambda},\mu)$
and $(\ol{\lambda}-\mu,\alpha^{\vee})\geq 0$ for $\alpha\in \pi\cup\{\alpha_0+\alpha_1\}$.

Set 
$$\alpha_1=\vareps_1-\delta_1,
\alpha_2=\delta_1-\vareps_2,\alpha_3=\vareps_2-\delta_2,\alpha_0=s\delta-\vareps_1+\delta_2$$
($\delta$ is the minimal imaginary root in $\Delta$, $s\delta$  is the minimal imaginary root in $\Delta(L)$).
Note that $\xi:=\alpha_1+\alpha_3$
is orthogonal to $\Delta(L)$. Write 
$$\mu=a(s\delta)+b\xi+(d_1+d_2)\vareps_2-d_1\delta_1-d_2\delta_2=a\alpha_0+(a+b)\alpha_1+(a-d_1)\alpha_2+(a+b+d_2)\alpha_3.$$
The condition $\mu\in \mathbb{Z}_{\geq 0}\Pi(L)$ gives
$$a,b,d_1,d_2\in\mathbb{Z},\ \ a,a+b, a-d_1,a+b+d_2\geq 0.$$

The above conditions $(\ol{\lambda}-\mu,\alpha^{\vee})\geq 0$ give
$d_1\geq d_2, -j\leq d_1+d_2\leq k$. The condition
$||\mu||^2=2(\ol{\lambda},\mu)$
is equivalent to 
$$d_1d_2=ka+j(a+b).$$
Since $a,a+b\geq 0$ one has $d_1d_2\geq 0$. If $d_1+d_2\geq 0$, then the above  inequalities
 imply $0\leq d_1\leq a,d_1+d_2\leq k$, so $d_1d_2+d_1^2\leq ka$ and thus $d_1=d_2=0$.
 If $d_1+d_2<0$, then $d_1,d_2<0$ and the above  inequalities
 imply $-d_2\leq a+b, -d_1-d_2\leq j$, so $d_1d_2+d_2^2\leq j(a+b)$ and 
thus again $d_1=d_2=0$. Therefore
 $d_1=d_2=0$ and thus $a=a+b=0$ (since $j,k>0$).  Hence $\mu=0$, a contradiction, so~(\ref{KRWconj})
holds.

\subsection{Case $\fg_L=B(1,1)^{(1)}$}\label{B11}
Let $\fg$ be an affine Lie superalgebra and
$L$ be a $\fg$-module such that $\Delta(L)=B(1,1)^{(1)}$ 
and $F(L)$ satisfies the KW-condition. The case of typical $F(L)$
was considered in Section~\ref{sect8}. Below we establish  the KW-character formula
for atypical case when the KW-condition holds and $F(L)$ is $\pi$-integrable
for "sufficiently large" $\pi$ (in the standard notation $\pi=\{2\delta_1,\delta-2\delta_1\}$
or $\pi=\{\vareps_1,\delta-\vareps_1,2\delta_1\}$).

\subsubsection{}\label{betainPi}
Consider the embedding $\iota_L: B(1,1)^{(1)}\to\Delta$ given by the identification
$\Delta(L)\cong B(1,1)^{(1)}$. 

Recall (see~\S~\ref{esse}) that a root is called essentially simple if it lies
in some set of simple roots.
Let $\beta\in B(1,1)^{(1)}$ be an isotropic essentially simple root; let us show
that for some $L'\sim L$ the root $\iota_{L'}(\beta)$ is essentially simple.

Indeed, the non-isotropic roots of $B(1,1)^{(1)}$ are
$A_1^{(1)}\times B(0,1)^{(1)}$; 
each essentially simple root of $B(1,1)^{(1)}$
is of the form $\pm(\alpha_1-\alpha_2)$, were $\alpha_1$ (resp., $\alpha_2$) is a simple root of
$A_1^{(1)}$ (resp., of $B(0,1)^{(1)}$). Assume that $\iota_L(\alpha_1)\not\in\Pi_0$. Then there exists $\gamma\in\Pi_0$ such that 
$r_{\gamma}\iota(\alpha_1)<\alpha_1$ (see~\S\ref{ordering} for $<$);
note that $\gamma\not\in\iota(\Delta(L))$ (since $r_{\alpha'}\alpha\geq \alpha$
for $\alpha'\in\Pi_0(L)\setminus\{\alpha\}$) and that $r_{\gamma}\alpha_2=\alpha_2$.
For $L':=T_{\gamma}(L)$ we have $\iota_{L'}(\alpha_1)<\iota_L(\alpha_1)$ and 
$\iota_{L'}(\alpha_2)=\iota_{L'}(\alpha_2)$. 
A similar reasoning works if $\iota_L(\alpha_2)\not\in\Pi_0$.
Hence there exists $L'$ such that $\iota_{L'}(\alpha_1),\iota_{L'}(\alpha_2)\in\Pi_0$.
By~\S~\ref{ess24}, $\iota_{L'}(\alpha_1)-\iota_{L'}(\alpha_2)$ is an essentially simple root,
as required.

\subsubsection{}
Now assume that $L=L(\lambda,\Pi)$  is such that $F(L)$ satisfies KW-condition.
Let us show that  for some set of simple roots $\Pi'$ of $\Delta$ and
some $L'\sim L$ one has 
$L'=L(\lambda',\Pi')$, where $(\lambda'+\rho,\beta')=0$ for some $\beta'\in\Pi'$.

Since $F(L)$ satisfies KW-condition and
$\fg_L=B(1,1)^{(1)}$, one has $(\lambda+\rho,\iota_L(\beta))=0$,
where $\beta\in B(1,1)^{(1)}$ is essentially simple.

Indeed, if this does not hold, then for all $L'\sim L$ the value $\lambda'+\rho'$ does 
 not depend on $\Pi'$ (i.e., if $L'=L(\lambda',\Pi')=L(\lambda'',\Pi'')$, then
$\lambda'+\rho'=\lambda"+\rho"$). Then taking $L'=T_{\gamma_1}\ldots T_{\gamma_s}(L)$ as above, 
we obtain $L'=L(\lambda',\Pi')$, where $\lambda'+\rho'=w(\lambda+\rho)$,
and $\iota_{L'}(\beta)=w\iota_L(\beta)$, where
$w:=r_{\gamma_1}\ldots r_{\gamma_s}$. Hence $(\lambda'+\rho',\iota_{L'}(\beta))=0$.
Since $\beta':=\iota_{L'}(\beta)$, we can choose $\Pi'$ containing $\beta'$
and $L',\Pi'$ satisfies our requirements.

\subsubsection{}
By above, we can (and will) assume that $L=L(\lambda,\Pi)$ is such that
$(\lambda+\rho,\beta)=0$ for some $\beta\in\Pi$.

For $B(1,1)^{(1)}$ there are three sets of simple roots:
$$\Pi_1:=\{\delta-\delta_1-\vareps_1,\vareps_1-\delta_1,\delta_1\},\ \
\Pi_2:=\{\delta-2\delta_1,\delta_1-\vareps_1,\vareps_1\},\ \ 
\Pi_3:=\{\delta_1+\vareps_1-\delta,\delta-2\delta_1,\delta-\varepsilon_1\}$$
and the set $\Pi_1$ can be obtained from $\Pi_2$ (resp., from $\Pi_3$)
by an odd reflection with respect to the unique isotropic root in $\Pi_2$ 
(resp., in $\Pi_3$). Hence
we can (and will) assume that $\Pi(L)\cong\Pi_1$; we identify $\Pi(L)$
and $\Pi_1$.

If $F(L)$ is integrable,~(\ref{KRWconj}) follows from~\Thm{thmadm1} 
(since $S=\{\beta\}\subset \Pi$
and $\Pi_1$ satisfies the conditions of Section~\ref{sectKW} ).

\subsubsection{}
Now consider the case when $F(L)$ is non-critical $\pi$-integrable, where $\pi=\{\delta-\vareps_1,\vareps_1,2\delta_1\}$ (subprincipal case in~\cite{KW4}).
The formula for $\ol{\lambda}=0$ is proved in~\Thm{dimFL=1}, so we assume that 
$\ol{\lambda}\not=0$.
We will show that if 
$\ol{\lambda}\not=0$ and KW-condition holds, i.e. $(\lambda+\rho,\beta)=0$
for some $\beta\in\Pi(L)$, then
\begin{equation}\label{subprB11}
Re^{\rho}\ch L=\frac{1}{2}\cF_{ W(\pi)} \bigl(\frac{e^{\lambda+\rho}}
{1+e^{-\beta}}\bigr).
\end{equation}
This implies~(\ref{KRWconj}).

\subsubsection{}
Set $y_0:=(\ol{\lambda},\delta-\delta_1-\vareps_1), \ 
y_1:=(\ol{\lambda},\vareps_1-\delta_1),\ y_2:=(\ol{\lambda},\delta_1)$. The module $L(\ol{\lambda},\Pi_1)$ is $\pi$-integrable
if and only if 
either $\ol{\lambda}=0$ or it is one of the following cases, cf.~\cite{KW4}:

$y_1=y_2=0,\  -2(y_0+1)\in\mathbb{Z}_{\geq 0}$; 

$y_0=y_2=0,\  -2(y_1+1)\in\mathbb{Z}_{\geq 0}$;

$y_0,y_1\not=0,\  2y_2, -2(y_0+y_2+1),-2(y_1+y_2+1)\in\mathbb{Z}_{\geq 0}$.

KW-condition holds for first two cases and does not hold for atypical modules
in the third case (since $y_0,y_1\not=0$).
Note that $B(1,1)^{(1)}$
admits an automorphism given by $\vareps_1\mapsto \delta-\vareps_1,\ \delta_1\mapsto \delta_1$,
which interchanges the isotropic roots of $\Pi_1$; this automorphisms interchanges the first
and the second cases. For the third case KW-condition does not hold.

Therefore we may (and will) consider the first case (when $y_1=y_2=0$, i.e., 
 $F(L)$ is a vacuum module). In this case
$\beta=\vareps_1-\delta_1$.
Since $\ol{\lambda}$ is non-critical, $y_0\not=-1$.
Hence
$$y_1=y_2=0,\ \ 2y_0\in\mathbb{Z},\ y_0<-1.$$

Using the denominator identity for $B(1,1)$ (see~(\ref{denomfin})) we  rewrite~(\ref{subprB11})
in the form
$$Re^{\rho}\ch L=\cF_{W(\pi')} \bigl(\frac{e^{\lambda+\rho}}
{1+e^{-\beta}}\bigr),$$
where $\pi':=\{\delta-\vareps_1,\vareps_1\}\cong A_1^{(1)}$. 
Set $\alpha_1=\vareps_1,\alpha_0:=\delta-\vareps_1$ and
let $r_1,r_0\in W(\pi')$ be the corresponding reflections.

First, let us show that the support of the right-hand side is in 
$\lambda+\rho-\mathbb{Z}_{\geq 0}\Pi_1$ and
that the coefficient of $e^{\lambda+\rho}$ is equal to $1$.
 For $w\in W(\pi')$ let
$$Y_w:=\frac{e^{w(\lambda+\rho+\beta)}}
{1+e^{w\beta}}\in \cR(\Pi_1)$$
(i.e., $Y_w$ is the expansion in $\cR(\Pi_1)$ of the fraction in the right-hand side). 
It is enough to verify that for each $w\in W(\pi')$
\begin{equation}\label{eqb11}
\supp Y_w\subset \lambda+\rho-\mathbb{Z}_{\geq 0}\Pi_1\end{equation}
and for $w\not=Id$ 
\begin{equation}\label{eqb111}
\lambda+\rho\not\in\supp Y_w.\end{equation}
Our reasoning is based on the formula~(\ref{suppYw}).
One has $(\lambda+\rho,\alpha_1^{\vee})=-1,\ (\lambda+\rho,\alpha_0^{\vee})=-2y_0-1$
and so $(r_{1}(\lambda+\rho),\alpha_1^{\vee})=1$,
$(r_{1}(\lambda+\rho),\alpha_0^{\vee})=-2y_0-3\geq 0$.
Therefore $r_{1}(\lambda+\rho)$ is maximal in its $W(\pi')$-orbit;
this establishes~(\ref{eqb11}) for 
$w\not=Id, r_1, r_0r_1$ and~(\ref{eqb111}) for the same $w$ if
$y_0<-3/2$; for $y_0=-3/2$ this gives ~(\ref{eqb111}) for $w\not=r_1, r_0r_1, r_1r_0r_1$.
For $w=r_1,r_0r_1,r_1r_0r_1$ one has $w(-\beta)\in \Delta^+(\Pi_1)$ and,
moreover, $w(-\beta)\geq_1\delta_1+\vareps_1$, where $\geq_1$ stands for $\Pi_1$-partial order.
Thus for such $w$ one has
$$\supp Y_w\subset w(\lambda+\rho)+w\beta\leq_1 r_1(\lambda+\rho)+w\beta
\leq_1\lambda+\rho+\vareps_1-(\delta_1+\vareps_1)=
\lambda+\rho-\delta_1.$$
This establishes~(\ref{eqb11}),(\ref{eqb111}).

Now for $Z:=Re^{\rho}\ch L-\frac{1}{2}\cF_{W(\pi)} \bigl(\frac{e^{\lambda+\rho+\beta}}
{1+e^{\beta}}\bigr)$ we have $\supp Z\subset \lambda+\rho-\mathbb{Z}_{\geq 0}\Pi_1$
and $\lambda+\rho\not\in \supp Z$. 

Since $F(L)$ is $\pi$-integrable, $Z$ is $W(\pi)$-skew-invariant.
Arguing as in~\S~\ref{proofs1} we conclude that it is enough to verify that if
$\mu\in \mathbb{Z}_{\geq 0}\Pi_1$ satisfies $2(\lambda+\rho,\mu)=||\mu||^2$
and $(\lambda-\mu,\alpha^{\vee})\geq 0$ for $\alpha\in\pi$, then  $\mu=0$.
Write
$$\mu=s_0(\delta-\delta_1-\vareps_1)+s_1(\vareps_1-\delta_1)+
s_2\delta_1,\ \  s_0,s_1,s_2\geq 0.$$
Then $(\lambda-\mu,\alpha^{\vee})\geq 0$ for $\alpha\in\pi$ gives
$$s_0-s_1, s_0+s_1-s_2\geq 0,\ \ \  s_1-s_0\geq y_0$$
and $2(\lambda+\rho,\mu)=||\mu||^2$ gives
$$(s_0+s_1-s_2)^2-(s_0-s_1)^2=2y_0s_0+s_2.$$

The last formula can be rewritten as
$$(s_0+s_1-s_2)^2=(s_0-s_1)^2+y_0(2s_0-s_2)+(y_0+1)s_2.$$
By above, $y_0<-1, 0\leq s_0-s_1\leq -y_0, 2s_0-s_2$, so
the right-hand side is at most $(y_0+1)s_2<-s_2$. Hence
$s_2=s_0+s_1-s_2=0$ that is $\mu=0$, as required.
This completes the proof of~(\ref{subprB11}).

\section{Appendix}
Let $\Delta$ be a root system of a finite-dimensional 
basic Lie superalgebra or an associated (untwisted or twisted) affine Lie superalgebra.
As before, we fix $\Delta^+_{\ol{0}}$ and consider sets of simple roots $\Pi$
such that $\Delta(\Pi)^+$ contains  $\Delta^+_{\ol{0}}$.

\subsection{Essentially simple roots}\label{esse}
Let us call a  root $\beta$ {\em essentially simple} if 
there exists $\Pi$ which contains $\beta$.

The set of non-isotropic essentially simple roots coincides with the set of simple roots 
for the non-isotropic part of $\Delta$, see~\Prop{propS} (ii).
For $\Delta= A(m,n), C(n), A(m,n)^{(1)}$ and  
$C(n)^{(1)}$ any odd isotropic root
is essentially simple. 

\subsubsection{}\label{essen}
We say that $\Delta'$ is a root subsystem of $\Delta$ if $\Delta$ (resp., $\Delta'$)
has a subset of simple roots $\Pi$ (resp., $\Pi'$) such that $\Pi'\subset\Pi$.

Note that if $\Delta'$ is a root subsystem of $\Delta$, then the essentially simple
roots of $\Delta'$ are essentially simple for $\Delta$
(if $\beta$ is essential for $\Delta'$, then $\beta$ lies in a 
subset of simple roots for  ${\Delta}'$ which is obtained from
${\Pi}'$ by a chain of odd reflections; therefore $\beta$ lies 
in a subset of simple roots for  ${\Delta}$ which is obtained from
${\Pi}$ by the same  chain of odd reflections).

\subsubsection{}\label{ess24}
The following fact is useful. If $\alpha_1,\alpha_2$ are non-isotropic essentially simple roots
and $\alpha_1-\alpha_2$ is an isotropic root, then $\alpha_1-\alpha_2$ is
essentially simple (for instance, $\Delta=B(m,n)$ and 
$\alpha_1=\vareps_m,\alpha_2=\delta_n$).

Indeed, let $\Pi_i$ be a set of simple roots  containing $\alpha_i$.
Since $\alpha_1-\alpha_2\in\Delta$, $\Pi_1\not=\Pi_2$.
Since $\alpha_1,\alpha_2\in\Delta^+(\Pi_i)$ for $i=1,2$ we
have $\alpha_1-\alpha_2\in\Delta^+(\Pi_2), \alpha_2-\alpha_1\in\Delta^+(\Pi_1)$.
Recall that $\Pi_1$ can be obtained from $\Pi_2$ by a chain of odd reflections.
Since for an odd reflection $r_{\gamma}$ we have 
$$\Delta^+(r_{\gamma}\Pi)=(\Delta^+(\Pi)\setminus\{\gamma\})\cup\{-\gamma\},$$
$r_{\alpha_1-\alpha_2}$ is one of the reflections in this chain. Hence
$\alpha_1-\alpha_2$ is essentially simple.

\subsection{Finite parts}
\label{app3}
Let $X$ be an affine Dynkin diagram. We call its connected
subdiagram $\dot{X}$
its {\em finite part}, if $X\setminus\dot{X}$ contains exactly one root.
By~\Lem{lemdeltasum}, $\dot{X}$ is of finite type. We call a root subsystem $\dot{\Delta}$
a finite part of the affine root system $\Delta$ if $\dot{\Delta}$
has a subset of simple roots $\dot{\Pi}$ which is a finite part of a subset of simple 
roots for $\Delta$. 

In this section we will describe the finite parts of affine root systems.

Let $\Delta\not=F(4)^{(1)}, G(3)^{(1)}$ and  $\dot{\Delta}$ be its finite part.
We will show that each 
set of simple roots $\dot{\Pi}$ such that $\dot{\Delta}=\Delta(\dot{\Pi})$ is a finite part of 
some $\Pi$ (where $\Delta=\Delta(\Pi)$). Moreover, we will show that
each $\dot{\Pi}$ can be uniquely extended to $\Pi$:
if $\dot{\Delta}=\Delta(\dot{\Pi})$, then there exists 
 a unique subset of simple roots $\Pi$ for $\Delta$ containing $\dot{\Pi}$; moreover,
$\dot{\Pi}\cong \dot{\Pi}'$ forces $\Pi\cong\Pi'$: if there exists a $(-,-)$-preserving map
$\iota: \dot{\Delta}\to\dot{\Delta}'$
such that $\iota(\dot{\Pi})=\dot{\Pi'}$,  then $\iota$ can be extended to
a  $(-,-)$-preserving map $\tilde{\iota}:\Delta\to\Delta$ such that  $\iota({\Pi})={\Pi'}$.

Now let $X$ be a disjoint union of Dynkin diagrams of affine Lie algebras.
We call  $\dot{X}\subset X$ its finite part if  for each connected component
$X^j$ of $X$, $\dot{X}\cap X^j$  is a finite part of $X^j$. From the description of finite parts
given below it follows that for $\Delta\not=F(4)^{(1)}, G(3)^{(1)}$ the following holds:
if $\dot{\Delta}$ is a finite part of $\Delta$, then $\dot{\Delta}_{\ol{0}}$ 
is a finite part of $\Delta_{\ol{0}}$.

\subsubsection{}
The finite part of $X_l^{(1)}$ is $X_l$ if $X=A,C,D$.
The finite parts of other relevant to this paper 
affine Lie algebras are given by the following tables

\begin{tabular}{|l|l|l|l|l|}
\hline
$B_k^{(1)}$ & $A_{2k}^{(2)}$ & $A_{2k-1}^{(2)}$ & $D_{k+1}^{(2)}$ & $G_2^{(1)}$\\
\hline
$B_k, D_k$& $B_k, C_k$ & $C_k, D_k$ &  $B_k$ & $G_2$.\\
\hline
\end{tabular}

We claim that the finite parts of affine root systems for classical affine Lie superalgebras,
which are not Lie algebras, 
are given by the following tables:

\begin{tabular}{|l|l|l|l|l|}
\hline
$A(k,l)^{(1)} $& $B(0,l)^{(1)}$ & $B(k,l)^{(1)}, k>1 $& $C(k)^{(1)} $ &  $D(k,l)^{(1)}$ \\
\hline
$A(k,l)$  & $B(0,l), C(l)$& $B(k,l)$,$D(k,l)$ & $C(k)$ & $D(k,l)$ \\
\hline
\end{tabular}

\begin{tabular}{|l|l|l|l|l|}
\hline
$A(2k,2l-1)^{(2)}$ &   $A(2k-1,2l-1)^{(2)}$  & $A(2k,2l)^{(4)}$ &  $D(k+1,l)^{(2)}$ & $C(l+1)^{(2)}$\\
\hline
$B(k,l), D(l,k)$ & $D(k,l), D(l,k)$ & $B(k,l), B(l,k)$ & $B(k,l)$ &  $B(0,l)$ \\\hline
\end{tabular}

where we take $D(1,n):=C(n+1), D(1,1):=A(1,0)$.

The finite parts of $G(3)^{(1)}$ are $G(3), D(2,1,-\frac{3}{4}), A(2,0)$; the finite parts of
$F(4)^{(1)}$ are $F(4), A(3,0)$; the finite part of $D(2,1,a)^{(1)}$ is
$D(2,1,a)$.

\subsubsection{}
Let $\dot{\Delta}$ be a finite part of $\Delta$.
By definition, $\dot{\Delta}$ has a subset of simple roots $\dot{\Pi}$ 
which is a finite part
of a subset of simple roots for $\Delta$, which we denote by $\Pi$.
Since any other subset of simple roots for $\dot{\Delta}$ 
can be obtained via a chain of odd reflections,
any subset of simple roots $\dot{\Pi}'$ of $\dot{\Delta}$ 
is a finite part of some $\Pi'$. Let us show that $\Pi'$ is unique and that 
$\dot{\Pi}\cong\dot{\Pi'}$ implies $\Pi\cong \Pi'$: i.e., if there exists a $(-,-)$-preserving map
$\iota: \dot{\Delta}\to\dot{\Delta}'$
such that $\iota(\dot{\Pi})=\dot{\Pi'}$,  then $\iota$ can be extended to
a  $(-,-)$-preserving map $\tilde{\iota}:\Delta\to\Delta$ such that  $\iota({\Pi})={\Pi'}$.

For $\Delta=A(m,n)^{(1)}$ all Dynkin diagrams are cycles and the sum of all simple
roots is $\delta$; hence $\dot{\Pi}$ determines $\Pi$.

Since any set of simple roots for $\dot{\Delta}$ are connected by a chain of odd reflections,
it is enough to verify the assertion for one choice of $\dot{\Pi}$.
Let $\dot{\Pi}$ be a finite part of $\Pi$.
Denote the unique root in $\Pi\setminus\dot{\Pi}$
by $\alpha_0$.

Consider the case $\Delta\not=A(m,n)^{(1)},F(4)^{(1)}, G(3)^{(1)}$.
For each $\Pi$ any proper connected subdiagram 
of $\Pi$ is of finite type which is not $F(4), G(3)$.
Hence any proper connected subdiagram has at most one
branching node $\beta$, this node has three branches, 
two of these branches have length one (excluding the branching point)
and consist of the nodes $\gamma_1$, $\gamma_2$ respectively with
$||\gamma_1||^2=||\gamma_2||^2, (\gamma_1,\beta)=(\gamma_2,\beta)$,
which are connected if and only if $||\gamma_1||^2=0$.

It is not hard to show that $\dot{\Delta}\not=A(k,l)$ for $(k,l)\not=(1,1)$.
Therefore $\dot{\Delta}$ is $B(k,l)$ or $D(k,l)$.
Take $\dot{\Pi}=\{\vareps_1-\vareps_2,\ldots, \vareps_k-\delta_1,\ldots, a\delta_l\}$
($a=1$ for $B(k,l)$, $a=2$ for $D(k,l)$) and write $\alpha_1:=\vareps_1-\vareps_2,
\ldots, \alpha_{k+l}:=a\delta_l$.
 If in $\Pi$ the node $\alpha_0$ is connected to a node which is not 
$\alpha_1$,  then, by above, $\Pi\setminus\{\alpha_1\}$ is of type $A,B,C,D$ and
contains $\alpha_{k+l}=a\delta_l$; then 
$(\alpha_0,\alpha_i)=0$ for $i>2$ and $\alpha_2$ is a branching node;
since $\Pi\setminus\{\alpha_l\}$ is of finite type, 
$||\alpha_0||^2=||\alpha_1||^2, (\alpha_0,\alpha_2)=(\alpha_1,\alpha_2)$ and
$(\alpha_1,\alpha_2)=0$; hence $\Pi$ is uniquely defined (and it 
is of the type $B(k,l)^{(1)}$, $D(k,l)^{(1)}$ respectively).
Consider the remaining case when  $\alpha_0$ is connected only  to $\alpha_1$;
then the subdiagram $\alpha_0-\alpha_1$ can be one of the following:
$$\otimes-\bigcirc; \ \ \  \bigcirc-\bigcirc;\ \ \ 
\bigcirc\Longrightarrow\bigcirc; \ \ \ \
\bigcirc\Longleftarrow\bigcirc;\ \ \ \ \bullet-\bigcirc.$$
For $\dot{\Pi}=B(k,l)$ the above subdiagrams 
correspond to $\Pi$
of the types $B(k,l+1), B(k+1,l), A(2k,2l-1)^{(2)}, D(k+1,l)^{(2)}$ and $A(2k,2l)^{(4)}$
respectively. For $\dot{\Pi}=D(k,l)$ the above subdiagrams 
corresponds to $\Pi$
of the types $D(k,l+1), D(k+1,l), A(2k-1,2l-1)^{(2)}, B(k,l)^{(1)}$ and $A(2l,2k-1)^{(2)}$
respectively. We conclude that  in each type
of $\Delta$ the set of simple roots $\Pi$ containing $\dot{\Pi}$ is uniquely defined
(up to isomorphism).

\subsubsection{Root systems $F(4), F(4)^{(1)}$}
Recall that the non-isotropic roots  of $F(4)$ are $B_3\times A_1$.

The root system $F(4)$ has $6$ sets of simple roots, which are pairwise non-isomorphic;
the root system $F(4)^{(1)}$ has $7$ sets of simple roots, among them $4$ non-isomorphic.

The finite parts of $F(4)^{(1)}$ are $F(4)$ and $A(3,0)$.
Each $\Pi$ for $F(4)^{(1)}$ contains $\dot{\Pi}$ of type $F(4)$ and 
each $\dot{\Pi}$ of type $F(4)$
can be uniquely, up to isomorphism, extended to some $\Pi$ for $F(4)^{(1)}$.
Only $4$ (out of $7$)  sets of simple roots for $F(4)^{(1)}$
 contain $\dot{\Pi}$ of the type $A(3,0)$; three of these sets of simple roots are non-isomorphic
and each $\dot{\Pi}$ of the type $A(3,0)$ can be uniquely, up to isomorphism, extended to some $\Pi$. 

There are $4$ subsets $\dot{\Pi}_1,\ldots,\dot{\Pi}_4$ of simple roots for $F(4)$ satisfying
$||\alpha||^2\geq 0$ for each $\alpha\in\dot{\Pi}$;  there
are three subsets $\Pi_1,\Pi_2,\Pi_3$ of simple roots for $F(4)^{(1)}$ satisfying
$||\alpha||^2\geq 0$ for each $\alpha\in\Pi$ ($\dot{\Pi}_i\subset\Pi_i$);
one has $\Pi_1\cong \Pi_3$. If we consider the affine root $\alpha_0(i):=\Pi_i\setminus\dot{\Pi}_i$
($i=1,2,3$), then $\alpha_0(1),\alpha_0(2)$ are long roots in $B_3$ and $\alpha_0(3)$
is isotropic.

Each of the sets  $\Pi_1,\Pi_2,\Pi_3$  contains a finite part $A(3,0)$:
if we denote by $\alpha_0(i)'$ the corresponding affine root, then 
this root is isotropic for $\Pi_1,\Pi_3$ and is a short root for $\Pi_2$.

\subsubsection{Root systems $G(3), G(3)^{(1)}$}\label{Groots}
The non-isotropic roots of  $G(3)$ are $G_2\times B(0,1)$. We write $G_2$ 
in terms $\vareps_1,\vareps_2,\vareps_3$ subject to the relations
 $\vareps_1+\vareps_2+\vareps_3=0,\  ||\vareps_1||^2=||\vareps_2||^2=||\vareps_3||^2$;
 we choose the set of simple roots $\vareps_2-\vareps_3,\vareps_3$.
 For $B(0,1)$ we take the simple root $\delta_1$ with
 $||\delta_1||^2=-3||\vareps_i||^2$. Then the 
 isotropic roots are $\pm\delta_1\pm\vareps_i, i=1,2,3$.
 
 The sets of simple roots for $G(3)$ are the following:
 
 $$\begin{array}{l}
 \delta_1+\vareps_1,\vareps_2-\vareps_3,\vareps_3;\\
 -\delta_1-\vareps_1,\vareps_2-\vareps_3,\delta_1-\vareps_2;\\
 \vareps_3, \delta_1-\vareps_3 ,-\delta_1+\vareps_2;\\
  \delta_1,-\delta_1+\vareps_3, \vareps_2-\vareps_3.
 \end{array}$$
 
 Recall that we call an isotropic root essentially simple if it lies in some set of simple roots.
 The set of essentially simple roots for $G(3)$ is $\{\pm(\delta_1+\vareps_1);
 \pm(\delta_1-\vareps_2); \pm(\delta_1-\vareps_3)\}$.

The non-isotropic roots of  $G(3)^{(1)}$ are $G_2^{(1)}\times B(0,1)^{(1)}$.
Using the above notations, we write the set of simple roots for $G_3^{(1)}$ (resp., for $B(0,1)^{(1)}$)
as $\delta+\vareps_1-\vareps_2,\vareps_2-\vareps_3,\vareps_3$ (resp., $\delta-2\delta_1,\delta_1$).
Here is the  sets of simple roots for  $G(3)^{(1)}$ and their finite parts:
 
 $$\begin{array}{ll}
 \delta-2\delta_1, \delta_1+\vareps_1,\vareps_2-\vareps_3,\vareps_3;& G(3),D(2,1,-\frac{3}{4})\\
\delta -\delta_1+\vareps_1, -\delta_1-\vareps_1,\vareps_2-\vareps_3,\delta_1-\vareps_2;
&G(3),D(2,1,-\frac{3}{4}), A(2,0)\\
 \delta+\vareps_1-\vareps_2,\vareps_3, \delta_1-\vareps_3 ,-\delta_1+\vareps_2;
 &G(3),D(2,1,-\frac{3}{4}), A(2,0) \\
  \delta+\vareps_1-\vareps_2,   \delta_1,-\delta_1+\vareps_3, \vareps_2-\vareps_3;
 &G(3), A(2,0) \\
  -\delta +\delta_1-\vareps_1,\delta-2\delta_1,\vareps_2-\vareps_3,\delta+\vareps_1-\vareps_2;
  &D(2,1,-\frac{3}{4}), A(2,0).
 \end{array}$$
 
 The set of essentially simple roots is  the union
 of the corresponding set for $G(3)$ with $\{\pm (\delta -\delta_1+\vareps_1)\}$.
 
Note that $G(3)$ and $D(2,1,-\frac{3}{4})$ have $4$ sets of simple roots; each set occurs
exactly once as a finite part of a set of simple roots for  $G(3)^{(1)}$
(in other words, $\dot{\Pi}$ can be uniquely extended to $\Pi$).

\subsection{}
In this subsection we will prove the following statement.

\begin{lem}{lemAmnxi}
Let $\Delta=A(m,n)^{(1)}, m\not=n$. 
If $\nu\in \mathbb{Z}_{\geq 0}(\Pi)$ is such that $(\nu, \Delta_{\ol{0}})=0$
and $2(\rho,\nu)=(\nu,\nu)$, then $\nu=0$.
\end{lem}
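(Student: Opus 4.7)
The plan is to use the orthogonality condition to write $\nu$ explicitly as $\nu = j\delta + t\xi$, where $\xi := (n+1)\sum_{i}\vareps_{i} - (m+1)\sum_{j}\delta_{j}$, and to check that $\mathbb{Q}\delta + \mathbb{Q}\xi$ is the full orthogonal complement of $\mathbb{Q}\Delta_{\ol{0}}$ inside $\mathbb{Q}\Delta$ (two-dimensional precisely because $m\neq n$). The hypothesis $\nu \in \mathbb{Z}_{\geq 0}\Pi$ immediately forces $j \geq 0$: since every Kac label of $A(m,n)^{(1)}$ equals one, the coefficient of the unique affine simple root in the $\Pi$-expansion of $\nu$ equals $j$.

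Next, I compute $(\xi,\xi) = (m+1)(n+1)(n-m) = -2A\,h^\vee$ where $A := (m+1)(n+1)/2$ and $h^\vee = m-n \neq 0$. Substituting $\nu = j\delta + t\xi$ into $(\nu,\nu) = 2(\rho,\nu)$ and dividing by $h^\vee$ yields $j = -At^2 - Ct$, where $C := (\rho,\xi)/h^\vee$. Writing $\xi = \sum_{\alpha \in \Pi} c_\alpha\, \alpha$, the constraint $k_\alpha := j + tc_\alpha \geq 0$ for every $\alpha$ rearranges to
\begin{equation*}
k_\alpha \;=\; -At^2 + t(c_\alpha - C).
\end{equation*}

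The plan is then to establish the identity $\sum_{\alpha \in \Pi} c_\alpha = (m+n+2)\,C$, i.e.\ the average of the $c_\alpha$ over $\Pi$ equals $C$. Granting this, there exist $\alpha^+, \alpha^- \in \Pi$ with $c_{\alpha^+} \geq C$ and $c_{\alpha^-} \leq C$. If $t < 0$, evaluating the displayed formula at $\alpha^+$ gives $t(c_{\alpha^+}-C) \leq 0$ and $-At^2 < 0$, forcing $k_{\alpha^+} < 0$, a contradiction; the case $t > 0$ is symmetric via $\alpha^-$. Hence $t = 0$, so $j = 0$, so $\nu = 0$.

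The main obstacle is verifying $\sum_{\alpha}c_\alpha = (m+n+2)C$ for an arbitrary $\Pi$. The strategy will be to show that the linear functional $\psi_\Pi(\mu) := \phi_\Pi(\mu) - (|\Pi|/h^\vee)(\rho,\mu)$ (where $\phi_\Pi$ is the height functional sending each simple root to $1$) annihilates the subspace $\mathbb{Q}\delta + \mathbb{Q}\xi$. Vanishing on $\delta$ is immediate from $\delta = \sum_\alpha \alpha$ together with $\sum_\alpha (\alpha,\alpha) = 2(\rho,\delta) = 2h^\vee$; vanishing on $\xi$ can be checked by direct computation in the distinguished Dynkin diagram and then transported to any $\Pi$ via a chain of odd reflections, tracking how the coefficients $c_\alpha$, the Weyl vector $\rho$, and the height $\phi_\Pi$ all shift in parallel.
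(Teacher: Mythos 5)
Your approach is genuinely different from the paper's, and the core idea is sound, but there is a real gap in the middle.

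The paper fixes $\Pi$ and then exploits the cyclic symmetry of the $A(m,n)^{(1)}$ diagram: it proves the combinatorial fact that $\sum_{k} f_k(X)=0$ over all cyclic rotations, extracts a rotation $X$ with $-2n<f(X)\leq 0$, writes the quadratic $s^2mn + 2jd^2 = -df(X)s$ in the $\dot{\Pi}$-adapted coordinates attached to that rotation, and bounds it. You instead keep a single $\Pi$, substitute $\nu = j\delta + t\xi$ into the quadratic to get $k_\alpha = -At^2 + t(c_\alpha - C)\geq 0$, and argue by averaging: if $\sum_\alpha c_\alpha = |\Pi|\,C$ then some $c_\alpha\geq C$ and some $c_\alpha\leq C$, which is incompatible with $t\neq 0$ because $-At^2<0$. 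Conceptually your averaging identity is doing the same job as the paper's $\sum_k f_k(X)=0$ (both say that the "linear term" in the quadratic is centered), and your version is arguably cleaner in that it never has to pick a good rotation. Your steps 2--5 are airtight once the identity is granted; note that $j\geq 0$ is never actually used, so the loosely worded assertion about "the unique affine simple root" (there is no canonical one in a cycle, and for a general $\Pi$ there need be no node with $c_\alpha=0$) is harmless.

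The gap is the identity $\sum_\alpha c_\alpha = |\Pi|\,(\rho,\xi)/h^\vee$ itself: you state a plan to prove it, not a proof. What you propose is correct, but the step you label "transported to any $\Pi$ via a chain of odd reflections" is a nontrivial computation that you have not performed. Concretely, under $r_\beta$ for an isotropic $\beta\in\Pi$ with cyclic neighbours $\alpha_1,\alpha_2$, one needs
$$\bigl(\phi_{r_\beta\Pi}-\phi_\Pi\bigr)(\xi)=-2c_\beta+c_{\alpha_1}+c_{\alpha_2},\qquad (\rho_{r_\beta\Pi}-\rho_\Pi,\xi)=(\beta,\xi),$$
and the identity is preserved precisely because the discrete Laplacian $-2c_\beta+c_{\alpha_1}+c_{\alpha_2}$ equals $a_{i+1}-a_i=\mp(m+n+2)$ (the second difference of the partial sums $c_i$ jumps by $\mp|\Pi|$ exactly at an isotropic node), while $(\beta,\xi)=\mp h^\vee$. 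This, together with the base-case computation in a distinguished diagram, would close the argument. As written, however, the proposal asserts the identity without the verification, so the proof is incomplete at its crux. For comparison, the paper does carry out its analogous combinatorial lemma ($\sum_k f_k(X)=0$) in full in \S\ref{sequences}.
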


\subsubsection{}\label{sequences}
We start with the following problem.
Let $X=(x_1,\ldots,x_{m+n})$ be a sequence of $m+n$ numbers, 
where $m$ numbers are equal to $1$ 
and $n$ numbers to $-1$. Let $f(X)$ be the "total
number of disorders":
$$f(X):=\sum_{i<j}\frac{1}{2}(x_i-x_j).$$
Clearly, $|f(X)|\leq mn$.
Let $X_1,\ldots, X_{m+n}$ be the sequences obtained from $X$ by cyclic permutations.
We claim that there exist $i,j$ such that $0\leq f(X_i)<2m$ and $-2n<f(X_j)\leq 0$.

Indeed, let $\sigma(X)$ be the sequence obtained from $X$ by moving last element to the first place: $\sigma(x)_i:=x_{i-1}$ if $1<i\leq m+n$, $\sigma(x)_1:=x_{m+n}$.
Set $f_k(X):=(f(\sigma^k(X))$. Since $\sigma^{m+n}=Id$, $f_k$ has period $m+n$.

We claim that $\sum_{k=0}^{m+n-1} f_k(X)=0$ for each $X$. 
Indeed,  let $X$ be any sequence and $s\in\{1,2,\ldots,m+n-1\}$ 
be such that  $x_s=1$ and $x_{s+1}=-1$; let $X'$ be the sequence obtained from $X$
by switching $x_s$ and $x_{s+1}$ ($x'_i=x_i$ if $i\not=s$,
$x'_s=-1, x'_{s+1}=1$). Then $f(X')=f(X)-2$. Note that $\sigma^j(X')$ is obtained from 
$\sigma^j(X)$ by the same operation (for different index $s$) if $j+s\not\equiv 0$ modulo $m+n$;  if $j+s\not\equiv 0$ modulo
$m+n$, then for $Y:=\sigma^j(X)$ we have $y_1=-1, y_{m+n}=1$
and $Y':=\sigma^j(X')$ is obtained from $Y$ by switching $y_1$ and $y_{m+n}$.
Clearly, $f(Y')=f(Y)+2(m+n-1)$; by above, if $j+s\not\equiv 0$ modulo
$m+n$, then $f(\sigma^j(X'))=f(\sigma^j(X'))-2$. Hence  
$\sum_{k=0}^{m+n-1} f_k(X)=\sum_{k=0}^{m+n-1} f_k(X')$. Since any sequence can be obtained from
the sequence $X_0=(1,\ldots,1,-1,\ldots,-1)$ by a chain of above operations,
we obtain $\sum_{k=0}^{m+n-1} f_k(X)=\sum_{k=0}^{m+n-1} f_k(X_0)$. One has
readily sees that $\sum_{k=0}^{m+n-1} f_k(X_0)=0$, as required.

Note that $f(\sigma(X))=f(X)+2n$ if $x_{m+n}=1$ and  $f(\sigma(X))=f(X)-2m$ if $x_{m+n}=-1$,
so $f_{i+1}-f_i$ is $2n$ or $-2m$. Since $\sum_{k=0}^{m+n-1} f_k(X)=0$, $f_k(X)$ contains
positive and negative elements. If $i$ is such that $f_{i+1}(X)<0\leq f_i(X)$,
then $f_i(X)<2m$, and if $j$ is such that $f_{j+1}(X)>0\geq f_j(X)$, then $f_j(X)>-2n$.
The claim follows.

\subsubsection{}
Recall that a set of simple roots  for $A(m,n)$ can be naturally
encoded  as a  sequence of $m$ dots and $n$ crosses, see~\S~\ref{dotcross}):  
for instance, $\Pi=\{\vareps_1-\vareps_2,\vareps_2-\delta_1,\delta_1-\vareps_3\}$ 
is encoded by the sequence $\cdot\ \cdot\ \times\ \cdot\ $;
putting $1$ instead of dots and 
$-1$ instead  of crosses, we obtain a sequence considered in~\S~\ref{sequences}.
Similarly, a set of simple roots for $A(m,n)^{(1)}$ can be encoded
by the same sequence viewed as a cycle. The inverse procedure can be described as follows:
to a sequence $X$ as in~\S~\ref{sequences} we assign the Dynkin diagram of $A(m,n)$-type with 
$||\alpha_i||^2=x_{i+1}+x_i$ for $i=1,\ldots, m+n-1$
and the Dynkin diagram of $A(m,n)^{(1)}$-type
with $||\alpha_i||^2=x_{i+1}+x_i$ for $i=1,\ldots, m+n-1$ and $||\alpha_0||^2=x_1+x_{m+n}$.
Clearly, $X$ and $\sigma(X)$ give the same Dynkin diagram of $A(m,n)^{(1)}$-type.

\subsubsection{}
Let $\Pi$ be a set of simple roots for $A(m,n)^{(1)}$
and $\tilde{X}$ be the corresponding cyclic sequence (which we view as a set 
containing $m+n$ usual sequences).
Take a sequence $X\in\tilde{X}$ such that $-2n<f(X)\leq 0$.
Let $\dot{\Pi}$ be the corresponding Dynkin diagram of $A(m,n)$-type.
Since $m\not=n$, we may (and will) assume
that $m>n$ and use the standard notations for $A(m,n)$.
We set
$$E:=\sum_{i=1}^m\vareps_i, \  D:=\sum_{i=1}^n\delta_i.$$
We  take the standard form  $(-,-)$ (i.e., $||\vareps_i||^2=-||\delta_j||^2=1$).
Then

$$2(\rho,E)=-(\sum_{\alpha\in\dot{\Delta}_{\ol{1}}^+}\alpha,E)=-f(X).$$

The condition $(\nu, \Delta_{\ol{0}})=0$ is equivalent to
$\nu=j\delta+u(nE-mD)$;
since $\nu\in \mathbb{Z}_{\geq 0}(\Pi)$, we have $j\in\mathbb{Z}_{\geq 0}$,
$u\in\mathbb{Z}\frac{1}{d}$, where $d:=GCD(m,n)$. One has
 $(\rho,\delta)=m-n$. 
Since  $(\Delta,E-D)=0$,  we have $(\rho,\nu)=j(m-n)-(m-n)u(\rho,E)$,
so  $2(\rho,\nu)=(\nu,\nu)$ gives
$$2j(m-n)+(m-n)uf(X)=-u^2mn(m-n),$$
that is $u^2mn+uf(X)+2j=0$. Writing $u=s/d$ with $s\in\mathbb{Z}$, 
we obtain
$$s^2mn+2jd^2=-df(X)s.$$
Since  $0\leq -f(X)<2n$, we get $s=0$ or $s>0$ and $s^2mn+2jd^2<2nds$.
One has $2d\leq m$ (because $n<m$), so the only solution is $s=j=0$.
Hence $\nu=0$, as required.

\subsection{}
Recall that an odd isotropic root $\beta$  is essentially simple if 
it belongs to a set of simple roots.

Let $\pi$ be a connected component of $\Pi_0$. For $\nu\in\fh^*$
write $\nu\succ_{\pi} r_{\alpha}\nu$ if $\alpha\in\pi$
and $\nu-r_{\alpha}\nu\in\mathbb{Z}_{>0}\alpha$, and
consider the order $\succ_{\pi}$ on $\fh^*$ generated by this
($\nu\succ_{\pi}\mu$ if $\mu=r_{\alpha_1}r_{\alpha_2}\ldots r_{\alpha_s}\nu$
with $(r_{\alpha_i}r_{\alpha_i+1}\ldots r_{\alpha_s}\nu,\alpha_{i-1}^{\vee})\in\mathbb{Z}$);
write $\nu\succeq_{\pi}\mu$ if $\nu\succ_{\pi}\mu$ or $\nu=\mu$.

\begin{lem}{lemapp1}
Let $\Delta$ be a finite or affine root system 
and $\Iso$ be the set of odd isotropic roots.
Let $X\subset \Iso$ be the set of odd isotropic roots $\beta$ with the following
property: for each connected component $\pi$ of $\Pi_0$
there exists an essentially simple root $\beta'$ such that
$\beta'\succeq_{\pi}\beta$. Then $\Iso\subset (X\cup (-X))$.
\end{lem}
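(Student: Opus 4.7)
The plan is to reduce the statement to a concrete condition about orbits of connected components $\pi$ of $\Pi_0$ and then dispatch it by case analysis on the type of $\Delta$. First, I observe that the relation $\beta' \succeq_\pi \beta$ with $\beta'$ essentially simple is equivalent to the $W(\pi)$-orbit of $\beta$ containing an essentially simple element that is $\pi$-maximal, where $\mu$ is $\pi$-maximal when $(\mu,\alpha^\vee) \geq 0$ for all $\alpha\in\pi$ (so that no one-step reduction $\mu \succ_\pi r_\alpha\mu$ exists). Indeed, any chain $\beta' \succ_\pi \cdots \succ_\pi \beta$ requires $\beta'$ to be $\pi$-maximal in its orbit; conversely, starting from a $\pi$-maximum and reducing greedily produces a chain terminating at any fixed $\beta$ in the orbit (using the standard exchange argument for Weyl groups).

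The easy part is when $\Delta$ is of type $A(m,n)$, $C(n)$, $A(m,n)^{(1)}$ or $C(n)^{(1)}$: by the remark following Lemma~\ref{WbetainDeltaL}, every odd root is essentially simple, hence $\beta \succeq_\pi \beta$ trivially gives $\beta\in X$. For the remaining $\Delta$ (namely the $B, D, F, G$ families and their affinizations), $W$ acts transitively on odd isotropic roots but essentially simple roots form a strict subset. For $\pi$ of finite type, $W(\pi)\beta$ is a finite set of isotropic roots and possesses a unique $\pi$-dominant representative $\beta_\pi^+$ with $(\beta_\pi^+,\alpha^\vee)\geq 0$ for all $\alpha\in\pi$. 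Using the dot-cross description of simple roots for $B(m,n)$, $D(m,n)$, one checks directly that $\beta_\pi^+$ (or $-\beta_\pi^+$, depending on the sign) is always essentially simple: it is precisely a $\xi_i - \xi_{i+1}$ for a valid ordering of $\cB$, because the dominance conditions force its finite-root components to be the extremal (largest or smallest) elements of each type. The exceptional cases $F(4), G(3), D(2,1,a)$ are handled from the explicit tables in~\S\ref{app3} and~\S\ref{Groots}, where the finite list of isotropic roots can be enumerated and compared against the list of essentially simple roots one by one.

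For $\pi$ of affine type, write $\beta = k\delta + \bar\beta$ with $\bar\beta$ a finite odd isotropic root. Since $\delta$ is orthogonal to all real roots, reflections in $\pi$ act on $\beta$ via the finite Weyl group $\dot W(\pi)$ on $\bar\beta$ together with translations that shift the $\delta$-coefficient by multiples of $(\bar\beta,\gamma)$ for $\gamma$ in the coroot lattice. A $\pi$-maximum must satisfy $\bar\beta$ being $\dot\pi$-dominant and the additional affine condition $(\beta,\alpha_0^\vee) \geq 0$, which reduces via $\alpha_0 = \delta - \theta$ to $(\bar\beta^+,\theta^\vee) \leq 0$ where $\theta$ is the highest root of $\dot\pi$; if this fails, one replaces $\beta$ by $-\beta$ so that $-\bar\beta^+$ provides the correct sign. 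The lifted element $k'\delta + \bar\beta^+$ is then essentially simple by~\S\ref{essen}, because the corresponding finite part root lies in some $\dot\Pi \subset \Pi$ (via~\S\ref{app3}).

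The main obstacle is ensuring consistency of the sign choice: the lemma asserts that \emph{one} of $\pm\beta$ works for \emph{every} component $\pi$ simultaneously, not just for each $\pi$ separately. The delicate point is that for $\Delta$ with several components of $\Pi_0$ (e.g.\ $B(m,n)$ with $B_m \times C_n$, or $D(2,1,a)^{(1)}$ with three affine $A_1$ components), the "correct sign" computed from the $B_m$-orbit and the $C_n$-orbit must agree. This forces a careful verification that the choice of sign is dictated uniformly by, say, the $\cE$-component versus $\cD$-component of $\bar\beta$, independent of the specific $\pi$. The verification is made feasible by the fact that in all these types the essentially simple isotropic roots form a $W(\Pi_0)$-invariant subset of $\Iso$ (as can be read off the dot-cross description), which ensures that once one component's $\pi$-maximum is essentially simple, the others are too, with a matching sign convention.
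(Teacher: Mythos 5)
The core reduction in your first paragraph is wrong, and it causes the whole argument to collapse. You assert that the existence of an essentially simple $\beta'$ with $\beta'\succeq_\pi\beta$ is \emph{equivalent} to the $\pi$-dominant representative of the $W(\pi)$-orbit being essentially simple, and you justify this by claiming "any chain $\beta'\succ_\pi\cdots\succ_\pi\beta$ requires $\beta'$ to be $\pi$-maximal." That is false: $\vareps_2\succ_\pi\vareps_3\succ_\pi\cdots\succ_\pi\vareps_m$ is a perfectly good chain in which $\vareps_2$ is not dominant. Existence of \emph{some} comparable essentially simple root is strictly weaker than the extremal orbit representative being essentially simple, and it is only the weak version that the lemma needs.

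Moreover, the strong version you try to prove is simply false, and your sign-consistency hope does not rescue it. In $B(m,n)$ take $\beta=\vareps_i-\delta_j$, which is itself essentially simple and therefore trivially in $X$ (take $\beta'=\beta$ for every $\pi$). The $B_m$-dominant representative of its orbit is $\vareps_1-\delta_j$ (essentially simple), but the $C_n$-dominant representative is $\vareps_i+\delta_1$, which is not of the form $\pm(\vareps_k-\delta_l)$ and is therefore \emph{not} essentially simple. Passing to $-\beta=-\vareps_i+\delta_j$ flips the problem: the $C_n$-dominant representative $-\vareps_i+\delta_1=-(\vareps_i-\delta_1)$ is essentially simple, but the $B_m$-dominant representative $\vareps_1+\delta_j$ is not. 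So neither sign choice makes all $\pi$-extremal representatives essentially simple simultaneously; your criterion fails on a root that is manifestly in $X$. Your last-paragraph claim that the essentially simple isotropic roots form a $W(\Pi_0)$-invariant subset of $\mathrm{Iso}$ is also false in precisely this case: in $B(m,n)$ the group $W(\Pi_0)=W(B_m)\times W(C_n)$ acts transitively on $\mathrm{Iso}=\{\pm\vareps_i\pm\delta_j\}$, while the essentially simple isotropic roots are the proper subset $\{\pm(\vareps_i-\delta_j)\}$.

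What the paper actually does is weaker and that is the point: it establishes an explicit cofinality statement in the $\succeq_\pi$-order (e.g.\ that $\vareps_m$ is minimal among $\{\vareps_i\}$ and $-\vareps_1$ is the global minimum in the relevant orbit for $\pi=B_m,C_m,D_m$), and then exhibits, for each $\beta$ and each component $\pi$, \emph{some} essentially simple root comparable to $\beta$ without ever asking it to be the extremal orbit representative. For the affine components it does the analogous thing, bounding $s\delta\pm\vareps_i$ below by $\vareps_m$. Your proposal needs to drop the reduction to dominant representatives entirely and instead verify comparability directly, which is a different and easier check but requires the explicit chain manipulations the paper records.
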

\begin{proof}
Let $\pi$ be one of the root systems $B_m,C_m,D_m$ with the 
standard notations. 
Clearly, $\vareps_i\succeq_{\pi}\vareps_m\succeq_{\pi}-\vareps_{m-1}$
and $\pm\vareps_i\succeq_{\pi}-\vareps_1$ for each $i=1,\ldots,m$.
In particular, $\vareps_m$ is  the minimal element in $\{\vareps_i\}_{i=1}^m$ 
with respect
to the order $\succeq_{\pi}$.

Now let $\pi$ be one of the root systems $B_m^{(1)}, C_m^{(1)}, D_m^{(1)}, A_{2m-1}^{(2)}$
with the standard notations, or $\pi=A_{2m}^{2}=
 \{\delta-2\vareps_1,\vareps_1-\vareps_2,\ldots,\vareps_m\}$
and $Y:=\{\vareps_i, s\delta\pm\vareps_i: s\in\mathbb{Z}_{>0}\}_{i=1}^m$.
We claim that $\vareps_m$ is again the minimal element in $Y$ with respect
to the order $\succeq_{\pi}$. Indeed,
the finite part of $\pi$ is $B_m,C_m$ or $D_m$, so
 $s\delta\pm \vareps_i\succeq_{\pi} s\delta-\vareps_1$.
For $B_m^{(1)}, D_m^{(1)}, A_{2m-1}^{(2)}$ the affine root is $\delta-\vareps_1-\vareps_2$, so 
$s\delta-\vareps_1\succeq (s-1)\delta+\vareps_2$;
for $C_m^{(1)}, A_{2m}^{(2)}$ the affine root is $\delta-2\vareps_1$, so
$s\delta-\vareps_1\succeq (s-1)\delta+\vareps_1$. Hence, for $s>0$ one has
$s\delta\pm\vareps_i\succeq_{\pi}\vareps_m$, as required.

Now consider $\pi=D_{m}^{(2)}=
 \{\delta-\vareps_1,\vareps_1-\vareps_2,\ldots,\vareps_m\}$ with
 $Y:=\{\vareps_i, 2s\delta\pm\vareps_i, s\in\mathbb{Z}_{>0}\}_{i=1}^m$.
Since $2s\delta-\vareps_1\succeq 2(s-1)\delta+\vareps_1$, 
$\vareps_m$ is again the minimal element in $Y$ with respect
to the order $\succeq_{\pi}$.

Recall that for $\Delta= A(m,n), C(n), A(m,n)^{(1)}, C(n)^{(1)}$ any odd root
is essentially simple, so $X=\Delta_{\ol{1}}$.
Take $\Delta\not= A(m,n), C(n), A(m,n)^{(1)}, C(n)^{(1)}$.

If $\Delta$  is $B(m,n)$ or $D(m,n)$, then
$\Iso=\{\pm \vareps_i\pm\delta_j\}$. The roots
$\pm (\vareps_i-\delta_j)$ are essentially simple, so $X$ contains 
the roots $\pm (\vareps_i-\delta_j),\vareps_i+\delta_j$. Hence $\Iso\subset (X\cup (-X))$.

For $D(2,1,a)$, one has
 $\Iso=\{\pm\vareps_1\pm\vareps_2\pm\vareps_3\}$
 and all roots are essentially simple except for
 $\pm(\vareps_1+\vareps_2+\vareps_3)$. Thus $\vareps_1+\vareps_2+\vareps_3\in X$, so
 $\Iso\subset (X\cup (-X))$.

For $F(4)$ recall that $\Pi_0=A_1\times B_3$ and choose $\Pi_0=\{\delta_1;\vareps_1-\vareps_2,
\vareps_2-\vareps_3,\vareps_3\}$. In this case 
$\Iso=\{\pm\frac{1}{2}(\delta_1\pm\vareps_1\pm\vareps_2\pm\vareps_3)\}$. Take
 $\beta=\frac{1}{2}(\delta_1\pm\vareps_1\pm\vareps_2\pm\vareps_3)$; if 
at least two signs $\pm$ are $-$, then $\beta$ is essentially simple, so $\beta\in X$.
If $\beta$ is not essentially simple, 
then $-(\beta-\delta_1)$ is essentially simple, so $\beta-\delta_1$ is essentially simple,
 For $\pi=B_3$ we have
$$\vareps_1+\vareps_2+\vareps_3\succeq_{\pi} \vareps_1+\vareps_2-\vareps_3\succeq_{\pi} 
\vareps_1-\vareps_2+\vareps_3\succeq_{\pi} -\vareps_1+\vareps_2+\vareps_3\succeq_{\pi} 
-\vareps_1+\vareps_2-\vareps_3,$$
so $\beta \succeq_{\pi} \frac{1}{2}(\delta_1-\vareps_1+\vareps_2-\vareps_3)$;
the last root is essentially simple. Hence $\beta\in X$. 
Thus $X$ contains the roots of the form
$\frac{1}{2}(\delta_1\pm\vareps_1\pm\vareps_2\pm\vareps_3)$, so $\Iso\subset (X\cup (-X))$.

For $G(3)$ one has $\Iso=\{\pm \delta_1\pm\vareps_i\}_{i=1}^3$ and
and the essentially simple roots are $\pm(\delta_1+\vareps_1), \pm(\delta_1-\vareps_i)$, $i=2,3$.
It is easy to see that $X=\Iso\setminus\{-\delta_1-\vareps_i\}_{i=1}^3$.

In the remaining cases $\Delta$ is  affine.
Let $\dot{\Delta}$ be a finite part of $\Delta$. Clearly,
$\dot{\Iso}=Iso\cap\dot{\Delta}$ is the set of isotropic odd roots
in $\dot{\Delta}$; let $\dot{X}\subset \dot{\Iso}$ be the corresponding set
for $\dot{\Delta}$. Recall that any essentially simple root for $\dot{\Delta}$
is essentially simple for $\Delta$, so $\dot{X}\subset X$.

Note that $\Iso=\dot{\Iso}+\mathbb{Z}\delta$, 
except for $C(m)^{(2)}, D(m,n)^{(2)}$ with
$\Iso=\dot{\Iso}+\mathbb{Z}\delta$.
Let us show that $X$ contains $\dot{\Iso}+j\delta$ for $j\in\mathbb{Z}_{>0}$, where
$j$ is even for $C(m)^{(2)}, D(m,n)^{(2)}$.

First, consider the case when $\dot{\Delta}\not=F(4), G(3)$.
Then $\dot{\Delta}$ is $B(m,n)$ or $D(m,n)$ and
$\pi$ is one of the root systems $A_1^{(1)},
B_m^{(1)}, C_m^{(1)}, D_m^{(1)}, A_{2m-1}^{(2)},
A_{2m}^{2}, D_m^{(2)}$. If $\pi$ lies in the span
of $\vareps_i$s, then, by above, for $s>0$ (and $s$ even for $C(m)^{(2)}, D(m,n)^{(2)}$)
one has
$s\delta\pm\vareps_i\succeq_{\pi} \vareps_m$, so $s\delta\pm\vareps_i\pm\delta_j
\succeq_{\pi} \vareps_m\pm\delta_j$; $\vareps_m-\delta_j$
is essentially simple and $\vareps_m+\delta_j
\succeq_{\pi} -\vareps_1\pm\delta_j$, where $-\vareps_1\pm\delta_j$ is also
essentially simple. Hence $X$ contains the roots $s\delta\pm\vareps_i\pm\delta_j$ for $s>0$ (and $s$ even for $C(m)^{(2)}, D(m,n)^{(2)}$), as required.
 The similar reasoning shows that $X$ contains $\dot{\Iso}+\mathbb{Z}_{>0}\delta$
for $F(4)^{(1)}$ and $G(3)^{(1)}$.

Combining $\Iso=\dot{\Iso}+\mathbb{Z}\delta$ (resp., $\Iso=\dot{\Iso}+\mathbb{Z}\delta$
for $C(m)^{(2)}, D(m,n)^{(2)}$), the fact that
$X$ contains $\dot{X}$ and $\dot{\Iso}+\mathbb{Z}_{>0}\delta$
 (resp., $\dot{\Iso}+2\mathbb{Z}_{>0}\delta$
for $C(m)^{(2)}, D(m,n)^{(2)}$), and  the inclusion $\dot{Iso}\subset (\dot{X}\cup
(-\dot{X}))$,  we obtain $\Iso\subset (X\cup (-X))$ as required.
\end{proof}

\subsection{}
\label{appendix2}
Let $\Pi\not=A(m,n)^{(1)}, C(n)^{(1)}$ be a set of simple roots of affine type, $\dot{\Pi}$
be its finite part and $\pi\subset\Pi_0$ be as in~\S~\ref{defintegr}:
$\pi:=\{\alpha\in\Pi_0|\ (\alpha,\alpha)\in\mathbb{Q}_{>0}\}$ if the form $(-,-)$ is such that
$(\rho,\delta)\in\mathbb{Q}_{\geq 0}$.
Recall the conditions (A)--(C) from~\Thm{thmadm3}:

(A)  $||\alpha_0||^2\geq 0$;

(B) for each $\alpha\in\pi$ there exists $\beta\in\supp(\alpha)$
such that $\beta\not\in\supp(\alpha')$ for each $\alpha'\in\pi$; 
this $\beta$ is denoted by $b(\alpha)$;

(C) $\rho\in X_1-X_2$, where
$$X_1:=\{\mu\in\fh^*|
(\mu,\alpha)\in \mathbb{Q}_{\geq 0} \ \text{ for all }\alpha\in \Pi \},\ \ 
X_2:=\sum_{\alpha\in\dot{\Pi}_0} \mathbb{Q}_{\geq 0}\alpha^{\vee}.$$

Let us give some examples when these conditions hold. 

If $\Delta\not=F(4)$ is finite and $\pi$ is a connected component of $\Pi_0$, then
$\supp(\alpha)\cap\supp(\alpha')=\emptyset$ for each $\alpha,\alpha'\in\pi$, except for the pair
$\alpha,\alpha'=\vareps_{m-1}\pm\vareps_m$, for $\pi=D_m$.
Since for affine $\Delta$, $\dot{\Delta}$ is finite, this implies that
(B) holds for all affine sets of simple roots $\Pi$ which are not of
type $F(4)^{(1)}$.

\subsubsection{Case $B(m,n)^{(1)}, D(m,n)^{(1)}$}
\label{BC}
By above, the condition (B) always hold.
Condition (C) holds if $||\alpha||^2\geq 0$ for each $\alpha\in\Pi(L)$
(since $\rho\in X_1$). Let us describe other cases when (C) holds.

We write $\Pi=\{\alpha_0,\alpha_1,\ldots,\alpha_{m+n}\}$
with a standard enumeration: this means $(\alpha_i,\alpha_{i+1})\not=0$ for each $i$,
 except for the case $D(m,n)^{(1)}$,
where sometimes $(\alpha_{m+n-1},\alpha_{m+n})=0$, but 
$(\alpha_{m+n-2},\alpha_{m+n})\not=0$ (i.e., $\alpha_{m+n-2}$ 
is a branching point of the Dynkin diagram).
Note that this convention determines the enumeration, except for
$D(m,n)^{(1)}$ with the branching point $\alpha_{m+n-2}$, where
we can interchange $\alpha_{m+n-1}$ and $\alpha_{m+n}$;
note that in this case $||\alpha_{m+n-1}||^2=||\alpha_{m+n}||^2$.

Introduce the numbers $d_{m+n},\ldots,d_1$ by the following rule:
$$d_{m+n}:=\max(-||\alpha_{m+n}||^2,0),\ \ 
d_i:=\max(d_{i+1}-||\alpha_{i}||^2,0)\ \text{ for }i=1,\ldots,m+n-1,$$
if $\alpha_{m-n-2}$ is not a branching point and
$$d_{m+n}=d_{m+n-1}:=\max(-||\alpha_{m+n}||^2,0),\ \ 
d_i:=\max(d_{i+1}-||\alpha_{i}||^2,0)\ \text{ for }i=1,\ldots,m+n-2,$$
if $\alpha_{m-n-2}$ is a branching point (in this case 
$||\alpha_{m+n}||^2=||\alpha_{m+n-1}||^2$).

The property (C) holds if and only if the sum of $d_i$ with 
$(\alpha_0,\alpha_i)\not=0$
is not greater than $2$. If $m>n$, then $\alpha_1$ is  a branching point  
(i.e., $(\alpha_0,\alpha_2)\not=0$) and
(C) is equivalent to $d_1+d_2\leq 2$; if $m\leq n$, (C) is equivalent to  
$d_1\leq 2$.
For instance,  for $B(m,n)^{(1)}$ with $m\leq n$, if
$\Pi$  has $j$ roots of negative square length and there are $j$ roots 
of positive square length which precede  (counting from $\alpha_0$)
the roots of negative square length, then $\Pi$ satisfies (C).

\subsubsection{Conditions (B) and (C) for exceptional Lie superalgebras}\label{FG}
For $D(2,1,a)^{(1)}$ we have two sets of simple roots satisfying (A): 
one consists of isotropic roots
and another one with $||\alpha_0||^2>0$ 
(for $a\in\mathbb{Q},\ 0<a<1$, it takes the form $\Pi=\{\delta-2\vareps_1,
\vareps_1-\vareps_2-\vareps_3,2\vareps_2,2\vareps_3\}$). 
Both of them satisfy  conditions (B) and (C).

For  $G(3)^{(1)}$ there are three sets of simple roots, 
which satisfy (A), all of them satisfy (B) and (C);
these are the second, the third and the forth sets in~\S~\ref{Groots}.

For $F(4)^{(1)}$ there are two sets of simple roots which satisfy (A)-(C);
they correspond to the third and the forth
sets of simple roots in~\cite{K1}, 2.5.4.

\subsubsection{}
For $D(n+1,n)^{(1)}, A(2n-1,2n-1)^{(2)}$  for $\Pi$ as in~\S~\ref{A2n-12n-1}
we have $||\alpha_0||^2=0$, 
(so (A) holds) and $\rho=0$ (so (C) holds); it is easy to verify that (B)
 holds for both choices of $\pi$, so (A)-(C) hold for this $\Pi$
  (for both choices of $\pi$).

For $D(n+1,n)^{(2)}, A(2n,2n)^{(4)}$ with fixed $\pi$, we choose a presentation
of $\Delta$ via  $\vareps_i,\delta_j$, where $\pi$ lies in the span of $\vareps_i$s
(as it was done in~\S~\ref{A2n4}). The set of simple root 
$\{\delta-\delta_1,\delta_1-\vareps_1,\ldots, \vareps_n\}$
chosen in~\S~\ref{A2n4} does not satisfy (A). However, 
$\Pi=\{\delta-\vareps_1,\vareps_1-\delta_1,\ldots, \vareps_n-\delta_n,\delta_n\}$
 satisfies (A)-(C). Indeed, we can fix $(-,-)$ with $||\vareps_i||^2=1$. Then
 $||\delta-\vareps_1||^2=1$, so (A) holds.
 One has $\pi=\{a'(\delta-\vareps_1),\vareps_1-\vareps_2,\ldots,a\vareps_n\}$
 with $a,a'\in\{1,2\}$; thus $\supp(\alpha)\supp(\alpha')=\emptyset$
 for $\alpha\not=\alpha'\in\pi$ and (B) holds. 
 Note that $\vareps_i$ or $2\vareps_i$ (resp., $\delta_i$ or $2\delta_i$) lies in
 $\dot{\Delta}^+_{\ol{0}}$, so $\vareps_i,-\delta_i\in X_2$.
 One has $2\rho=\sum_{i=1}^n (\delta_i-\vareps_i)$, so $\rho\in- X_2$;
 hence (C) holds.


\end{document}